\newcommand{\RN}[1]{%
  \textup{\expandafter{\romannumeral#1}}%
}
\newcommand\remove[1]{}
\setlist[enumerate]{leftmargin=*}
\newtheorem{theorem}{Theorem}
\newtheorem{definition}{Definition}
\newtheorem{proposition}{Proposition}
\newtheorem{lemma}{Lemma}
\newtheorem{corollary}{Corollary}
\newtheorem{remark}{Remark}
\newcommand{\cG}{\mathcal{G}}
\newcommand{\cI}{\mathcal{I}}
\newcommand{\cJ}{\mathcal{J}}
\newcommand{\cN}{\mathcal{N}}
\DeclareMathOperator{\Binom}{Binom}
\DeclareMathOperator{\good}{good}
\DeclareMathOperator{\con}{con}
\DeclareMathOperator{\Var}{Var}
\DeclareMathOperator{\Cov}{Cov}
\DeclareMathOperator{\bad}{bad}
\DeclareMathOperator{\dist}{dist}
\DeclareMathOperator{\sign}{sign}
\DeclareMathOperator{\SSBM}{SSBM}
\DeclareMathOperator{\SIBM}{SIBM}
\begin{document}

\title{Exact recovery and sharp thresholds of Stochastic Ising Block Model}

\author{Min Ye}

\maketitle
{\renewcommand{\thefootnote}{}\footnotetext{

\vspace{-.2in}
 
\noindent\rule{1.5in}{.4pt}

Min Ye is with the Data Science and Information Technology Research Center, Tsinghua-Berkeley Shenzhen Institute, Tsinghua Shenzhen International Graduate School, Shenzhen 518055, China. Email: yeemmi@gmail.com
}}

\renewcommand{\thefootnote}{\arabic{footnote}}
\setcounter{footnote}{0}

\begin{abstract}
The stochastic block model (SBM) is a random graph model in which the edges are generated according to the underlying cluster structure on the vertices. The (ferromagnetic) Ising model, on the other hand, assigns $\pm 1$ labels to vertices according to an underlying graph structure in a way that if two vertices are connected in the graph then they are more likely to be assigned the same label. In SBM, one aims to recover the underlying clusters from the graph structure while in Ising model, an extensively-studied problem is to recover the underlying graph structure based on i.i.d. samples (labelings of the vertices).

In this paper, we propose a natural composition of SBM and the Ising model, which we call the Stochastic Ising Block Model (SIBM). In SIBM, we take SBM in its simplest form, where $n$ vertices are divided into two equal-sized clusters and the edges are connected independently with probability $p$ within clusters and $q$ across clusters. Then we use the graph $G$ generated by the SBM as the underlying graph of the Ising model and draw $m$ i.i.d. samples from it. The objective is to exactly recover the two clusters in SBM from the samples generated by the Ising model, without observing the graph $G$. As the main result of this paper, we establish a sharp threshold $m^\ast$ on the sample complexity of this exact recovery problem in a properly chosen regime, where $m^\ast$ can be calculated from the parameters of SIBM. We show that when $m\ge m^\ast$, one can recover the clusters from $m$ samples in $O(n)$ time as the number of vertices $n$ goes to infinity. When $m<m^\ast$, we further show that for almost all choices of parameters of SIBM, the success probability of any recovery algorithms approaches $0$ as $n\to\infty$.
\end{abstract}

\thispagestyle{empty}

\clearpage
\tableofcontents

\thispagestyle{empty}

\clearpage
\setcounter{page}{1}

\section{Introduction} \label{sect:intro}
The stochastic block model (SBM) is a generative model for random graphs, where the vertices are partitioned into several communities/clusters, and the edges are added independently in a way that depends on the community membership of its two endpoints. An extensively-studied problem in SBM is the exact recovery problem \cite{Bui87,Boppana87, Dyer89,Snijders97,Condon01,Mcsherry01,Bickel09,Choi12,Hajek16,Chen16,Vu18}, in which one aims to recover the exact underlying community structure from the graph topology, i.e., the edge connections in the graph.
Sharp thresholds for exact recovery were established in terms of the parameters of SBM, starting from the simplest case of two-community symmetric SBM \cite{ABH16,MNS16} to the most general case \cite{Abbe15}. See \cite{Abbe17} for a recent survey on this topic, where other interesting and important problems in SBM are also discussed.

Ising model, originally introduced in the context of statistical physics \cite{Ising25}, consists of binary random variables $\sigma_1,\dots,\sigma_n\in\{\pm 1\}$ whose pairwise dependency is modeled by an underlying dependency graph $G$.
Despite its simplicity, Ising
model has been used in a wide range of applications including finance, social networks, computer vision, biology, and signal processing.
A recent line of research on Ising model concerns estimating the dependency graph $G$ from independent samples of the random vector $\sigma=(\sigma_1,\dots,\sigma_n)\in\{\pm 1\}^n$ \cite{Bresler08,Ravikumar10,Santhanam12,Bresler15,Vuffray16,Hamilton17,Klivans17,Wu19}.
This problem is closely related to inferring social network structures, and a concrete example was presented in \cite{Banerjee08}.
The authors of \cite{Banerjee08} used Ising model to infer the political affinities among the U.S. senators from their voting records. In this example, $\sigma_i$ may represent the vote of U.S. senator $i$ on a random bill, and the dependency graph $G$ may depict the political affinities or network structure among the senators.

In this paper, we propose a natural composition of SBM and the Ising model, which we call the Stochastic Ising Block Model (SIBM). 
First we use SBM to generate a graph $G=([n],E(G))$ with vertex set $[n]$ and edge set $E(G)$ based on an (unknown) partition of the vertex set $[n]$.
Next we use $G$ as the underlying dependency graph of the Ising model and draw $m$ i.i.d. samples from it.
The objective is to exactly recover the partition of the vertex set in SBM from the samples generated by the Ising model, without observing the graph $G$.

In the above SIBM, we take SBM in its simplest form, where $n$ vertices are randomly divided into two equal-sized clusters and the edges are connected independently with probability $p$ within clusters and $q$ across clusters. 
We focus on the asymptotic regime of $p=a\log(n)/n$ and $q=b\log(n)/n$ for fixed $a>b>0$ and growing $n$.
In this regime, if the vertices $i$ and $j$ are connected in $G$, then the posterior probability of them belonging to the same cluster is $\frac{a}{a+b}>\frac{1}{2}$. On the other hand, if $i$ and $j$ are not connected, then the posterior probability of them belonging to the same cluster is 
\begin{equation}  \label{eq:rep}
\frac{1-p}{1-p+1-q}=\frac{1}{2}\big( 1-\frac{a-b}{2}\log(n)/n \big) ,
\end{equation}
implying a slight tendency towards being in different clusters. 
The Ising model that we use in the SIBM is a modification of a commonly used one based on this observation.
First recall that a commonly used Ising model on the graph $G$ is a probability distribution on the configurations $\sigma=(\sigma_1,\dots,\sigma_n)\in\{\pm 1\}^n$ such that\footnote{We use $\sigma$ to denote the random vector, and we usually use $\bar{\sigma}$ to denote the realization of $\sigma$. The subscript in $P_{\sigma|G}$ indicates that the distribution is determined by the graph $G$.}
\begin{equation} \label{eq:bv}
P_{\sigma|G}(\sigma=\bar{\sigma})=\frac{1}{Z_G(\beta)}
\exp\Big(\beta\sum_{\{i,j\}\in E(G)} \bar{\sigma}_i \bar{\sigma}_j \Big) ,
\end{equation}
where the parameter $\beta>0$ is called the inverse temperature and $Z_G(\beta)$ is the normalizing constant. The (random) vector $\sigma$ induces a (random) partition on the vertex set $[n]$ according to the sign of each coordinate. Since the objective in SIBM is to recover the original partition in SBM based on independent samples of $\sigma$, we would hope that the partition induced by $\sigma$ is close to the original partition in SBM. However, one can show that the distribution given in \eqref{eq:bv} is concentrated on the neighborhood of $\pm \mathbf{1}_n$, where $\mathbf{1}_n$ is the all-one vector of length $n$. Thus the samples of $\sigma$ provide little information about the partition in SBM. This happens because the distribution \eqref{eq:bv} does not reflect the small repulsive effect of non-edges in SBM, i.e., non-edge between vertices $i$ and $j$ implies that they have a slight tendency to be in the different clusters in SBM; see \eqref{eq:rep}. To overcome this issue, we use the following modification of the standard Ising model in SIBM:
\begin{equation}  \label{eq:im}
P_{\sigma|G}(\sigma=\bar{\sigma})=\frac{1}{Z_G(\alpha,\beta)}
\exp\Big(\beta\sum_{\{i,j\}\in E(G)} \bar{\sigma}_i \bar{\sigma}_j
-\frac{\alpha\log(n)}{n} \sum_{\{i,j\}\notin E(G)} \bar{\sigma}_i \bar{\sigma}_j \Big) ,
\end{equation}
where we add a new parameter $\alpha>0$, and $Z_G(\alpha,\beta)$ is again the normalizing constant. The small negative coefficient $-\alpha\log(n)/n$ in front of all non-edge pairs $\bar{\sigma}_i\bar{\sigma}_j$ makes $\sigma_i$ and $\sigma_j$ slightly more likely to take different signs, and so vertices $i$ and $j$ are slightly more likely to be in different clusters in the partition induced by $\sigma$. We choose the order of this coefficient to be $\Theta(\log(n)/n)$ in accordance with the calculation in \eqref{eq:rep} for SBM.

Before presenting the main results, let us first give a motivating example to validate the above definition of SIBM.
Think of the residents in a city. Half of them are Democrats and the other half are Republicans. Residents in the same political party are more likely to get to know each other and become friends, while this is less likely to happen for residents in different political parties. 
The two-community SBM defined above is a commonly used model of the friendship/social network between the residents.
Now let's say we want to find a partition of the residents based on their political party membership, but it is not possible to observe the whole social network among the residents due to privacy and complexity issues. One thing we can do is to take multiple political polls among the residents. It is natural to assume that if two residents are friends, i.e., if they are connected in the social network, then they tend to share similar opinions and give the same response to the polls; otherwise there is a slight tendency that they would give different answers. Therefore, a valid way to model the dependency of the residents' responses on their social network topology is the probability distribution in \eqref{eq:im}, where we view the random vector $\sigma$ as the residents' responses to a random poll. Finally, we hope to reveal the partition of the residents based on their responses to multiple independent polls, and this is analogous to the objective in SIBM defined above.

In this paper, our main focus is the optimal sample complexity for the exact recovery of the underlying partition in SBM. Below we use a vector $X=(X_1,\dots,X_n)\in\{\pm 1\}^n$ to represent this underlying partition, i.e., $X_i=X_j$ if vertices $i$
and $j$ are in the same cluster; otherwise $X_i=-X_j$. We also use $\sigma^{(1)},\dots,\sigma^{(m)} \in\{\pm 1\}^n$ to denote $m$ independent samples drawn from the distribution \eqref{eq:im} with respect to the {\em same} dependency graph $G$.
It is clear that the ground truth $X$, the graph $G$ and the samples form a Markov chain $X\to G\to \{\sigma^{(1)},\dots,\sigma^{(m)}\}$. Therefore, a necessary condition for $X$ to be recoverable from the samples (up to a global sign) is that $X$ must be recoverable from $G$. 
It is well known that the necessary and sufficient condition for the latter is  $\sqrt{a}-\sqrt{b}\ge\sqrt{2}$ \cite{MNS16,ABH16}, where $a$ and $b$ are parameters of the SBM defined above.
Under this condition, we prove the following results: If $\alpha<b\beta$, then all the samples are in the neighborhood of $\pm \mathbf{1}_n$, and one needs at least $\Omega(\log^{1/4}(n))$ samples to recover $X$. On the other hand, if $\alpha>b\beta$, then all the samples are in the neighborhood of $\pm X$, and $\Theta(1)$ samples suffice for the exact recovery.
As the main result of this paper, we establish a sharp threshold $m^\ast$ on the number of samples needed for exact recovery when $\alpha>b\beta$, where $m^\ast$ can be calculated from the parameters $a,b$ and $\beta$. We show that when $m\ge m^\ast$, one can recover $X$ (up to a global sign) from $m$ samples in $O(n)$ time; when $m<m^\ast$, we further show that for almost all choices of parameters $a,b$ and $\beta$, the success probability of all recovery algorithms approaches $0$ as $n\to\infty$.


{\bf Related works:} A static Ising block model was proposed in \cite{Berthet19}, and it is rather different from the SIBM proposed in this paper. More precisely, in SIBM we first use the ground truth $X$ to produce a graph $G$ and then use the graph $G$ to produce samples through an Ising model. In contrast, the samples in \cite{Berthet19} are produced directly from $X$ through a (different) Ising model. More specifically, given $X$, the samples in \cite{Berthet19} have the following distribution:
$$
P_{\sigma|X}(\sigma=\bar{\sigma})=\frac{1}{Z_{\alpha,\beta}}
\exp\Big(\frac{\beta}{2n} \sum_{(i,j):X_i=X_j} \bar{\sigma}_i \bar{\sigma}_j 
+ \frac{\alpha}{2n} \sum_{(i,j):X_i\neq X_j} \bar{\sigma}_i \bar{\sigma}_j \Big) ,
$$
where $Z_{\alpha,\beta}$ is the normalizing constant. The analysis of this model is rather different from ours, and the number of samples needed to recover $\pm X$ in this static model is $\Omega(\log(n))$, also quite different from our results.

As a final remark, we note that
the composition of SBM and Ising model has been considered in \cite{Peixoto19}. However, there are two major differences between \cite{Peixoto19} and this paper: First, the objective in \cite{Peixoto19} is to reconstruct the graph $G$ and the cluster structure $\pm X$ simultaneously while in this paper we only aim to recover $\pm X$. Second, the main focus in \cite{Peixoto19} is to propose some heuristic methods/algorithms and present their performance on synthetic and empirical datasets while in this paper we focus on rigorous proofs of performance guarantees and theoretical limits of the recovery algorithms.

{\bf Organization of the paper:}
In the next section, we formally define the new model SIBM and state the main results. In Section~\ref{sect:sketch}, we give a sketch of the proof which illustrates the main ideas. In Section~\ref{sect:aln}--\ref{sect:converse} we provide the complete proof. Finally, we conclude the paper in Section~\ref{sect:future} with some future directions.

\section{Problem setup and main results} \label{s:Preliminaries}
We first recall the definition of Symmetric Stochastic Block Model (SSBM) with two communities and the definition of Ising model.
\begin{definition}[SSBM with two communities] \label{def:SSBM}
Let $n$ be a positive even integer and let $p,q\in[0,1]$ be two real numbers. Let $X=(X_1,\dots,X_n)\in\{\pm 1\}^n$, and let $G=([n],E(G))$ be an undirected graph with vertex set $[n]$ and edge set $E(G)$. The pair $(X,G)$ is drawn under $\SSBM(n,p,q)$ if 

\noindent
(i) $X$ is drawn uniformly from the set of balanced partitions $\{(x_1,\dots,x_n)\in\{\pm 1\}^n:x_1+\dots+x_n=0\}$;

\noindent
(ii) the vertices $i$ and $j$ in $G$ are connected with probability $p$ if $X_i=X_j$ and with probability $q$ if $X_i=-X_j$, independently of other pairs of vertices.
\end{definition}
 
Note that for every $x=(x_1,\dots,x_n)\in\{\pm 1\}^n$ such that $x_1+\dots+x_n=0$,
$x$ and $-x$ correspond to the same balanced partition, and the conditional distribution $P(G|X=x)$ is the same as $P(G|X=-x)$ in the above definition. Therefore, we can only hope to recover $X$ from $G$ up to a global sign.
In this paper, we focus on the regime of $p=a\log(n)/n$ and $q=b\log(n)/n$, where $a>b> 0$ are constants. In this regime, it is well known that exact recovery of $X$ (up to a global sign) from $G$ is possible if and only if $\sqrt{a}-\sqrt{b}\ge \sqrt{2}$  \cite{ABH16,MNS16}.
 
Given a partition/labeling $X$ on $n$ vertices, the SBM specifies how to generate a random graph on these $n$ vertices according to the labeling. In some sense, Ising model works in the opposite direction, i.e., given a graph $G=([n],E(G))$, Ising model defines a probability distribution on all possible labelings $\{\pm 1\}^n$ of these $n$ vertices.


 \begin{definition}[Ising model]
Define an Ising model on a graph $G=([n],E(G))$ with parameters $\alpha,\beta>0$ as the probability distribution on the configurations $\sigma\in\{\pm 1\}^n$ such that\footnote{When there is only one sample, we usually denote it as $\bar{\sigma}$. When there are $m$ (independent) samples, we usually denote them as $\sigma^{(1)},\dots,\sigma^{(m)}$.}
\begin{equation} \label{eq:isingma}
P_{\sigma|G}(\sigma=\bar{\sigma})=\frac{1}{Z_G(\alpha,\beta)}
\exp\Big(\beta\sum_{\{i,j\}\in E(G)} \bar{\sigma}_i \bar{\sigma}_j
-\frac{\alpha\log(n)}{n} \sum_{\{i,j\}\notin E(G)} \bar{\sigma}_i \bar{\sigma}_j \Big) ,
\end{equation}
where the subscript in $P_{\sigma|G}$ indicates that the distribution depends on $G$, and 
\begin{equation}  \label{eq:zg}
Z_G(\alpha,\beta)=\sum_{\bar{\sigma} \in \{\pm 1\}^n} \exp\Big(\beta\sum_{\{i,j\}\in E(G)} \bar{\sigma}_i \bar{\sigma}_j
-\frac{\alpha\log(n)}{n} \sum_{\{i,j\}\notin E(G)} \bar{\sigma}_i \bar{\sigma}_j \Big) 
\end{equation}
is the normalizing constant.
\end{definition}

\begin{remark}
A more commonly used Ising model on a graph $G$ is obtained by setting $\alpha=0$ in \eqref{eq:isingma},
thereby only involving one parameter $\beta$. See the discussion in Section~\ref{sect:intro} on why we use the modified Ising model in SIBM.
\end{remark}

By definition we always have $P_{\sigma|G}(\sigma=\bar{\sigma})=P_{\sigma|G}(\sigma=-\bar{\sigma})$ in the Ising model. Next we present our new model, the Stochastic Ising Block Model (SIBM), which can be viewed as a natural composition of the SSBM and the Ising model. In SIBM, we first draw a pair $(X,G)$ under $\SSBM(n,p,q)$.  Then we draw $m$ independent samples $\{\sigma^{(1)},\dots,\sigma^{(m)}\}$ from the Ising model on the graph $G$, where $\sigma^{(u)}\in\{\pm 1\}^n$ for all $u\in[m]$.

\begin{definition}[Stochastic Ising Block Model]
Let $n$ and $m$ be positive integers such that $n$ is even. Let $p,q\in[0,1]$ be two real numbers and let $\alpha,\beta>0$. The triple $(X,G,\{\sigma^{(1)},\dots,\sigma^{(m)}\})$ is drawn under $\SIBM(n,p,q,\alpha,\beta,m)$ if

\noindent
(i) the pair $(X,G)$ is drawn under $\SSBM(n,p,q)$;

\noindent
(ii) for every $i\in[m]$, each sample $\sigma^{(i)}=(\sigma_1^{(i)},\dots,\sigma_n^{(i)}) \in\{\pm 1\}^n$ is drawn independently according to the distribution \eqref{eq:isingma}.
\end{definition}

Notice that we only draw the graph $G$ once in SIBM, and the samples $\{\sigma^{(1)},\dots,\sigma^{(m)}\}$ are drawn independently from the Ising model on the {\em same} graph $G$.
Our objective is to recover the underlying partition (or the ground truth) $\pm X$ from the samples $\{\sigma^{(1)},\dots,\sigma^{(m)}\}$, and we would like to use the smallest possible number of samples to guarantee the exact recovery of $X$ up to a global sign.
Below we use the notation $P_{\SIBM}(A):=E_G[P_{\sigma|G}(A)]$ for an event $A$, where the expectation $E_G$ is taken with respect to the distribution given by SSBM. In other words, $P_{\sigma|G}$ is the conditional distribution of  $\sigma$ given a fixed $G$ while $P_{\SIBM}$ is the joint distribution of both $\sigma$ and $G$.
By definition, we have $P_{\SIBM}(\sigma=\bar{\sigma})=P_{\SIBM}(\sigma=-\bar{\sigma})$ for all $\bar{\sigma}\in\{\pm 1\}^n$.

\begin{definition}[Exact recovery in SIBM]
Let $(X,G,\{\sigma^{(1)},\dots,\sigma^{(m)}\}) \sim \SIBM(n,p,q,\alpha,\beta,m)$.
We say that exact recovery is solvable for $\SIBM(n,p,q,\alpha,\beta,m)$ if there is an algorithm that takes $\{\sigma^{(1)},\dots,\sigma^{(m)}\}$ as inputs and outputs $\hat{X}=\hat{X}(\{\sigma^{(1)},\dots,\sigma^{(m)}\})$ such that
$$
P_{\SIBM}(\hat{X}=X \text{~or~} \hat{X}=-X) \to 1
\text{~~~as~} n\to\infty ,
$$
and we call $P_{\SIBM}(\hat{X}=X \text{~or~} \hat{X}=-X)$ the success probability of the recovery/decoding algorithm.
\end{definition}

As mentioned above, we consider the regime of $p=a\log(n)/n$ and $q=b\log(n)/n$, where $a>b> 0$ are constants. By definition, the ground truth $X$, the graph $G$ and the samples $\{\sigma^{(1)},\dots,\sigma^{(m)}\}$ form a Markov chain $X\to G\to \{\sigma^{(1)},\dots,\sigma^{(m)}\}$. Therefore, if we cannot recover $X$ from $G$, then there is no hope to recover $X$ from $\{\sigma^{(1)},\dots,\sigma^{(m)}\}$. Thus a necessary condition for the exact recovery in SIBM is $\sqrt{a}-\sqrt{b}\ge \sqrt{2}$, and we will limit ourselves to this case throughout the paper.

\vspace*{.1in}
\noindent{\bf Main Problem:} \emph{For any $a,b> 0$ such that $\sqrt{a}-\sqrt{b}> \sqrt{2}$ and any $\alpha,\beta>0$, what is the smallest sample size $m^\ast$ such that exact recovery is solvable for $\SIBM(n,a\log(n)/n, b\log(n)/n,\alpha,\beta,m^\ast)$?}
  
\vspace*{.1in}  It is this optimal sample size problem that we address---and resolve---in this paper. 
Our main results read as follows.

\begin{theorem} \label{thm:wt1}
For any $a,b> 0$ such that $\sqrt{a}-\sqrt{b}> \sqrt{2}$ and any $\alpha,\beta>0$, let
\begin{equation} \label{eq:defstar}
\beta^\ast := \frac{1}{2}
\log\frac{a+b-2-\sqrt{(a+b-2)^2-4ab}}{2 b} \text{~~and~~}
m^\ast := 2 \Big\lfloor \frac{\beta^\ast}{\beta} \Big\rfloor +1 .
\end{equation}
{\bf Case (i) when $\alpha>b\beta$}: If $m\ge m^\ast$, then exact recovery is solvable in $O(n)$ time for $\SIBM(n,a\log(n)/n, \linebreak[4] b\log(n)/n,\alpha,\beta,m)$, and the recovery algorithm does not require knowledge of the parameters of SIBM.
If $\beta^\ast/\beta$ is not an integer and $m<m^\ast$, then the success probability of all recovery algorithms approaches $0$ as $n\to\infty$. If $\beta^\ast/\beta$ is an integer and $m<m^\ast-2$, then the success probability of all recovery algorithms approaches $0$ as $n\to\infty$.
{\bf Case (ii) when $\alpha<b\beta$}: Exact recovery of $\SIBM(n,a\log(n)/n, b\log(n)/n,\alpha,\beta,m)$ is not solvable for any $m=O(\log^{1/4}(n))$, and in particular, it is not solvable for any constant $m$ that does not grow with $n$.
\end{theorem}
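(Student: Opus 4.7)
The theorem splits into three parts: achievability for $m \ge m^*$ in Case (i), a matching converse for Case (i), and the impossibility assertion of Case (ii). The backbone of all three is a precise description of where the Ising measure \eqref{eq:isingma} concentrates, conditional on a typical graph from $\SSBM(n, a\log n/n, b\log n/n)$.

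\textbf{Case (i), achievability.} When $\alpha > b\beta$, I would first show that the Ising measure \eqref{eq:isingma} places $1-o(1)$ of its mass on configurations within bounded Hamming distance of $\pm X$. Given this, the decoder is simple: (1) align each sample by setting $\tilde{\sigma}^{(u)} = \sign(\langle \sigma^{(u)},\sigma^{(1)}\rangle)\,\sigma^{(u)}$; (2) return $\hat{X}_i = \sign\bigl(\sum_{u=1}^m \tilde{\sigma}^{(u)}_i\bigr)$ coordinate-wise. Both steps run in $O(nm)=O(n)$ time and need no knowledge of $a,b,\alpha,\beta$. Correctness reduces to controlling, for each vertex $i$, the probability $p_i$ that a single aligned sample has $\sigma_i \ne X_i$.

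\textbf{Sharp threshold calculation.} Flipping $\sigma_i$ from $X_i$ in the Ising weight changes the exponent by roughly $-2\beta(d_i^+ - d_i^-) + 2(\alpha\log n / n)\bigl((n/2-1-d_i^+) - (n/2 - d_i^-)\bigr)$, where $d_i^\pm$ count edges from $i$ to its own/opposite cluster; under SSBM, $(d_i^+,d_i^-)$ are well approximated by independent Poissons with means $(a/2)\log n$ and $(b/2)\log n$. A Cram\'er-style large-deviations tilt then gives $p_i \asymp n^{-\beta/\beta^*}$ for the worst local configurations, where $\beta^*$ arises as the positive root of $b e^{2\beta^*}+a e^{-2\beta^*}=a+b-2$; an elementary manipulation shows this coincides with the closed form in \eqref{eq:defstar}. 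The majority vote errs at vertex $i$ with probability $\asymp\binom{m}{\lceil m/2\rceil} p_i^{\lceil m/2\rceil}$, which by a union bound across $n$ vertices is $o(1)$ precisely when $\lceil m/2\rceil > \beta^*/\beta$, i.e., $m \ge m^*$.

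\textbf{Converse and Case (ii).} For the converse when $m < m^*$, I would establish a matching lower bound on $p_i$ together with approximate independence of the rare flip events across distinct vertices, then invoke a second-moment argument to show that with probability $1-o(1)$ at least one vertex is flipped in a majority of the $m$ samples; this forces the MAP estimator---and hence every estimator---to misclassify that vertex. For Case (ii), when $\alpha < b\beta$, I would compare partition-function contributions: the ferromagnetic pull of the $\beta$ term on the $\Theta(n\log n)$ edges dominates the weak anti-ferromagnetic $-\alpha\log n/n$ coupling across the $\Theta(n^2)$ non-edges precisely when $\alpha < b\beta$, forcing $P_{\sigma|G}(\sigma \in \{\pm \mathbf{1}_n\}) = 1-o(1)$. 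Each sample then conveys essentially only a global sign, and a Fano-type bound on the mutual information shows $\Omega(\log^{1/4} n)$ samples are needed to distinguish $X$ from any nontrivial perturbation.

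\textbf{Main obstacle.} The hardest step, I expect, is the sharp two-sided large-deviations estimate of $p_i$: the upper bound (for achievability) requires tight control of the Poisson log-MGF to recover the exponent $\beta/\beta^*$ in \eqref{eq:defstar}, while the lower bound (for the converse) requires exhibiting sufficiently many pairwise almost-independent bad vertices for the second moment method to go through, despite $\sigma$ being globally coupled by the Ising distribution on the random SSBM graph. A secondary difficulty is ruling out simultaneous multi-vertex flips contributing nontrivially to the single-coordinate flipping probability in a way that would spoil the per-vertex analysis.
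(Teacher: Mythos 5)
Your overall architecture matches the paper's — align-then-majority-vote decoding (this is exactly Algorithm~\ref{alg:ez}), a per-vertex flip-probability analysis for achievability, and a concentration-plus-information argument for Case (ii) — but the quantitative core of the achievability step is off and the converse rests on a non sequitur. For achievability: the conditional probability that one sample flips vertex $i$ is governed by $\exp(2\beta(B_i-A_i))$, where $A_i,B_i$ are the within- and across-cluster degrees; averaged over $G$ this is $n^{g(\beta)-1}$ with $g(\beta)=\tfrac{be^{2\beta}+ae^{-2\beta}}{2}-\tfrac{a+b}{2}+1$, not $n^{-\beta/\beta^{\ast}}$ as you claim. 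More seriously, the union bound over the $n$ vertices requires controlling the random sum $\sum_i\exp\bigl(2u\beta(B_i-A_i)\bigr)$ with $u=\lfloor\frac{m+1}{2}\rfloor$, and its expectation $n^{g(u\beta)}$ is $\omega(1)$ whenever $u\beta$ exceeds the larger root of $g$ (already at $m=m^{\ast}=1$ when $\beta>\tfrac14\log\tfrac{a}{b}$), so any first-moment/Markov argument over the graph randomness fails precisely in the large-$\beta$ or large-$m$ regime. The paper closes this with the ``cut-off'' step: conditioning on the event $\{B_i<A_i\ \forall i\}$, which holds w.h.p.\ because $\sqrt a-\sqrt b>\sqrt 2$, the typical value of the sum is $O(n^{\tilde g(u\beta)})=o(1)$ for every $u\beta>\beta^{\ast}$. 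Your ``main obstacle'' (the global Ising coupling between coordinates) is also left unresolved; the paper handles it with a structural lemma showing every vertex has at most a constant number of ``bad'' neighbors w.h.p., which gives two-sided constant-factor bounds on the conditional flip probabilities (Lemmas~\ref{lm:us} and~\ref{lm:et}) and a stochastic-domination comparison (Lemma~\ref{lm:corre}).

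For the converse, the implication ``some vertex is flipped in a majority of the samples, hence the MAP estimator (and every estimator) misclassifies it'' is a genuine gap: the estimator sees the whole sample matrix, and a coordinate being wrong in most samples does not by itself make the posterior favor the wrong label. The paper instead exhibits, w.h.p., $\omega(1)$ pairs $i,i'$ with $X_i=-X_{i'}$ whose observation columns are identical and equal to a fixed pattern $v$ with exactly $\lfloor\frac{m+1}{2}\rfloor$ entries $-1$; an exchangeability argument (Lemma~\ref{lm:qq}, via Lemma~\ref{lm:cc}) then shows the posterior assigns equal mass to $X$ and $X^{(\sim\{i,i'\})}$, defeating every algorithm. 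Producing such pairs needs \emph{both} pattern counts $T_+=\Theta(n^{g(u\beta)})$ and $T_-=\Theta(n^{g((m-u)\beta)})$ to be $\omega(1)$, which is exactly where $\lfloor\frac{m+1}{2}\rfloor\beta<\beta^{\ast}$ enters, and it relies on the tight concentration $\sum_i\exp(2u\beta(B_i-A_i))=(1+o(1))n^{g(u\beta)}$ (Proposition~\ref{prop:con}) — a second-moment computation in the spirit of yours, but applied to these pattern counts and combined with the symmetry argument you are missing. Finally, in Case (ii) your intermediate claim $P_{\sigma|G}(\sigma\in\{\pm\mathbf 1_n\})=1-o(1)$ is stronger than what the energy comparison yields (it only shows $\pm\mathbf 1_n$ maximize $\bar\sigma M\bar\sigma^{T}$, giving concentration within Hamming distance $2n/\log^{1/3}(n)$) and is neither proved nor needed: the paper's route is a counting bound — each sample then carries only $O(n\log\log(n)/\log^{1/3}(n))$ bits while $X$ requires $\Theta(n)$ bits — which yields the $\Omega(\log^{1/4}(n))$ sample lower bound directly.
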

Note that the condition $\sqrt{a}-\sqrt{b} > \sqrt{2}$ guarantees that the term $\sqrt{(a+b-2)^2-4ab}$ in the definition of $\beta^\ast$ is a  real number.
When $\alpha>b\beta$ and $\beta^\ast/\beta$ is not an integer,
the above theorem establishes a sharp recovery threshold $m^\ast$ on the number of samples. It is worth mentioning that the threshold $m^\ast$ does {\em not} depend on the value of the parameter $\alpha$, as long as $\alpha$ satisfies $\alpha>b\beta$.
Below we present an equivalent characterization of the recovery threshold in terms of $\beta$.
\begin{theorem} \label{thm:wt2}
Let $a,b,\alpha,\beta> 0$ be constants satisfying that $\sqrt{a}-\sqrt{b} > \sqrt{2}$ and $\alpha>b\beta$. 
Let 
$$
(X,G,\{\sigma^{(1)},\dots,\sigma^{(m)}\}) \sim \SIBM(n,a\log(n)/n, b\log(n)/n,\alpha,\beta,m).
$$
If $\lfloor \frac{m+1}{2} \rfloor \beta>\beta^\ast$, then there is an algorithm that recovers $X$ from the samples in $O(n)$ time with success probability $1-o(1)$, and this recovery algorithm does not require knowledge of the parameters of SIBM. If $\lfloor \frac{m+1}{2} \rfloor \beta <\beta^\ast$, then the success probability of any recovery algorithm is $o(1)$. 
\end{theorem}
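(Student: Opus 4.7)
The plan is to derive Theorem~\ref{thm:wt2} as a direct corollary of Case~(i) of Theorem~\ref{thm:wt1}. Both statements are made under the same hypotheses ($\sqrt{a}-\sqrt{b}>\sqrt{2}$ and $\alpha>b\beta$), and Theorem~\ref{thm:wt2} is really just Theorem~\ref{thm:wt1} rephrased with the threshold placed on $\beta$ rather than on $m$. The task therefore reduces to checking that the $\beta$-side inequality translates into the correct integer condition on $m$, which I would do via the elementary identity $\lfloor (m+1)/2 \rfloor = \lceil m/2 \rceil$ valid for every nonnegative integer $m$.

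For the achievability direction, I would start from $\lceil m/2 \rceil \beta > \beta^\ast$, divide by $\beta>0$, and invoke integrality of $\lceil m/2 \rceil$ to conclude $\lceil m/2 \rceil \ge \lfloor \beta^\ast/\beta \rfloor + 1$. A short case split on the parity of $m$ then yields $m \ge 2\lfloor \beta^\ast/\beta \rfloor + 1 = m^\ast$, so Theorem~\ref{thm:wt1}, Case~(i), directly supplies the $O(n)$-time algorithm that does not require knowledge of the SIBM parameters.

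For the impossibility direction, I would start from $\lceil m/2 \rceil \beta < \beta^\ast$ and split on whether $\beta^\ast/\beta$ is an integer. If it is not, then $\lceil m/2 \rceil < \beta^\ast/\beta$ combined with the integrality of $\lceil m/2 \rceil$ forces $\lceil m/2 \rceil \le \lfloor \beta^\ast/\beta \rfloor$; a parity case split gives $m \le 2\lfloor \beta^\ast/\beta \rfloor = m^\ast - 1 < m^\ast$, and the non-integer converse clause of Theorem~\ref{thm:wt1} applies. If instead $\beta^\ast/\beta = k$ for some positive integer $k$, then $\lceil m/2 \rceil \le k-1$, so $m \le 2k-2 = m^\ast - 3 < m^\ast - 2$, and the integer converse clause of Theorem~\ref{thm:wt1} applies.

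There is no real analytic or probabilistic obstacle in this argument, since all the hard work is subsumed in Theorem~\ref{thm:wt1}. The only thing requiring attention is the bookkeeping of the parity of $m$ together with the integrality (or not) of $\beta^\ast/\beta$, which dictates precisely which clause of Theorem~\ref{thm:wt1} to invoke in each case; I would simply tabulate the four combinations to make sure no boundary is mishandled.
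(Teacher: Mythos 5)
Your proposal has a genuine gap: it is circular within the logical structure of this paper. Theorem~\ref{thm:wt1} (Case~(i)) and Theorem~\ref{thm:wt2} are equivalent reformulations of the same threshold statement, and the paper explicitly derives Theorem~\ref{thm:wt1} \emph{from} Theorem~\ref{thm:wt2} (via exactly the integer bookkeeping you carry out, namely $\lfloor\frac{m+1}{2}\rfloor\beta>\beta^\ast \Leftrightarrow m\ge m^\ast$, which the paper records in a footnote). There is no independent proof of Theorem~\ref{thm:wt1} anywhere; so invoking it to prove Theorem~\ref{thm:wt2} proves nothing. Your own closing remark --- that ``all the hard work is subsumed in Theorem~\ref{thm:wt1}'' --- is precisely the problem: that hard work is what a proof of Theorem~\ref{thm:wt2} is required to supply, and it is entirely absent from your argument.

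Concretely, the paper's proof of the achievability half is Proposition~\ref{prop:nr}: one must show that Algorithm~\ref{alg:ez} (align the samples with $\sigma^{(1)}$, then take a coordinatewise majority vote) succeeds whenever $\lfloor\frac{m+1}{2}\rfloor\beta>\beta^\ast$. This rests on the structural results that all samples lie within distance $o(n)$ (in fact $O(n^{\theta})$, $\theta<1$) of $\pm X$, that with high probability every vertex has at most $z=O(1)$ ``bad'' neighbors, and hence that the conditional probability of $u$ of the $m$ samples disagreeing with $X_i$ is comparable to $\exp(2u\beta(B_i-A_i))$, after which the single-sample analysis with $\beta$ replaced by $u\beta=\lfloor\frac{m+1}{2}\rfloor\beta>\beta^\ast$ shows no coordinate is flipped in a majority of samples. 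The converse half (Section~\ref{sect:converse}) requires showing, via Lemma~\ref{lm:cvs} and the tight concentration of $\sum_i\exp(2u\beta(B_i-A_i))$ around $n^{g(u\beta)}$, that when $\lfloor\frac{m+1}{2}\rfloor\beta<\beta^\ast$ there are $\omega(1)$ pairs $i,i'$ with $X_i=-X_{i'}$ whose sample columns coincide, which by the symmetry argument of Lemma~\ref{lm:qq} makes $X$ statistically indistinguishable from $X^{(\sim\{i,i'\})}$ and defeats every algorithm, including maximum likelihood. None of this probabilistic content can be recovered from the arithmetic equivalence between the two ways of writing the threshold; your parity/integrality case analysis is correct as far as it goes, but it only translates between two phrasings of the statement, it does not establish either of them.
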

Note that $\lfloor \frac{m+1}{2} \rfloor \beta>\beta^\ast$ if and only if $m\ge 2 \Big\lfloor \frac{\beta^\ast}{\beta} \Big\rfloor +1$, so Theorem~\ref{thm:wt1} and Theorem~\ref{thm:wt2} give the same threshold\footnote{We give a proof of the equivalence between the two inequalities: $\lfloor \frac{m+1}{2} \rfloor \beta>\beta^\ast$ implies that $\frac{\beta^\ast}{\beta}<\lfloor \frac{m+1}{2} \rfloor$. The smallest integer that is larger than $\frac{\beta^\ast}{\beta}$ is $\lfloor \frac{\beta^\ast}{\beta}\rfloor +1$, so $\lfloor \frac{\beta^\ast}{\beta} \rfloor +1 \le \lfloor \frac{m+1}{2} \rfloor\le \frac{m+1}{2}$, and thus $m\ge 2 \Big\lfloor \frac{\beta^\ast}{\beta} \Big\rfloor +1$. Now assume $m\ge 2 \Big\lfloor \frac{\beta^\ast}{\beta} \Big\rfloor +1$, then $\frac{m-1}{2} \ge \lfloor \frac{\beta^\ast}{\beta} \rfloor$. Since the right hand side is an integer, we have $\lfloor \frac{m+1}{2} \rfloor = \lfloor \frac{m-1}{2} \rfloor +1  \ge \lfloor \frac{\beta^\ast}{\beta} \rfloor +1 >\frac{\beta^\ast}{\beta}$.}.
Apart from the results on the sharp recovery threshold, we also prove a structural result on the distance between the samples and the ground truth $X$.
For $\sigma,X\in\{\pm 1\}^n$, we define
$$
\dist(\sigma, X)
:=|\{i\in[n]:\sigma_i\neq X_i\}| 
\quad \text{and} \quad
\dist(\sigma,\pm X)
:=\min(\dist(\sigma, X),\dist(\sigma, -X)) .
$$
\begin{theorem}  \label{thm:wt3}
Let $a,b,\alpha,\beta> 0$ be constants satisfying that $\sqrt{a}-\sqrt{b} > \sqrt{2}$ and $\alpha>b\beta$. Let $m$ be a constant integer that does not grow with $n$.
Let 
$$
(X,G,\{\sigma^{(1)},\dots,\sigma^{(m)}\}) \sim \SIBM(n,a\log(n)/n, b\log(n)/n,\alpha,\beta,m).
$$
Define $g(\beta)  := \frac{b e^{2\beta}+a e^{-2\beta}}{2}-\frac{a+b}{2}+1$.
If $\beta>\beta^\ast$, then
$$
P_{\SIBM}(\sigma^{(i)}=\pm X \text{~for all~} i\in[m]) = 1-o(1).
$$
If $\beta\le \beta^\ast$, then
$$
P_{\SIBM}(\dist(\sigma^{(i)},\pm X)= \Theta(n^{g(\beta)}) \text{~for all~} i\in[m]) = 1-o(1) .
$$
\end{theorem}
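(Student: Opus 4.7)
The plan is to analyze the distribution of a single Ising sample $\sigma$ on a fixed graph $G$, characterize the typical size of the discrepancy set $S=\{i:\sigma_i\neq X_i\}$ (modulo the sign-symmetry $\sigma\leftrightarrow -\sigma$), and then extend to all $m$ samples by a union bound since $m$ is constant. Writing $\sigma_S$ for the configuration obtained from $X$ by flipping the coordinates in $S$, the Hamiltonian difference
\begin{equation*}
\Delta H(S) := H(\sigma_S)-H(X) = -2\beta \sum_{\substack{\{i,j\}\in E(G)\\ |S\cap\{i,j\}|=1}} X_iX_j \;+\; \frac{2\alpha\log n}{n}\sum_{\substack{\{i,j\}\notin E(G)\\ |S\cap\{i,j\}|=1}} X_iX_j
\end{equation*}
gives $P_{\sigma|G}(\sigma=\sigma_S)/P_{\sigma|G}(\sigma=X)=e^{\Delta H(S)}$.

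The key calculation exploits independence of the edge indicators in SSBM: every pair $\{i,j\}$ with exactly one endpoint in $S$ contributes independently to $E_G[e^{\Delta H(S)}]$ a factor of either $pe^{-2\beta}+(1-p)e^{2\alpha\log n/n}$ (same cluster) or $qe^{2\beta}+(1-q)e^{-2\alpha\log n/n}$ (across clusters). Plugging in $p=a\log n/n$, $q=b\log n/n$, taking logarithms, and observing that the $\alpha$-contributions from same-cluster and across-cluster crossings cancel to leading order when $|S_L|\approx|S_R|$, one obtains for $|S|=k=o(n)$ balanced across the two communities the estimate
\begin{equation*}
\log E_G[e^{\Delta H(S)}] = k(g(\beta)-1)\log n + o(k\log n) ,
\end{equation*}
which identifies $g(\beta)$ as the exponent controlling the typical number of flipped coordinates (indeed $n\cdot n^{g(\beta)-1}=n^{g(\beta)}$).

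\emph{Case $\beta>\beta^\ast$ (so $g(\beta)<0$).} Apply the first-moment method to $R:=\sum_{0<|S|<n}e^{\Delta H(S)}$. For small $k=|S|$, $\binom{n}{k}E_G[e^{\Delta H(S)}]\le n^{kg(\beta)+o(1)}/k!$, which sums to $O(n^{g(\beta)})=o(1)$. For $k$ of order $n$, the exponent in $E_G[e^{\Delta H(S)}]$ is $\Theta(k(n-k)/n)\log n\cdot(g(\beta)-1)$, dominating $\binom{n}{k}\le 2^n$ by a factor of $\log n$ in the exponent; interpolating through intermediate $k$ gives $E_G[R]=o(1)$. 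Markov then yields $R=o(1)$ w.h.p., hence $P_{\sigma|G}(\sigma\in\{\pm X\})=1-o(1)$; a union bound over the $m$ samples finishes this case.

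\emph{Case $\beta\le\beta^\ast$ (so $0\le g(\beta)\le 1$).} The upper bound $\dist(\sigma,\pm X)=O(n^{g(\beta)})$ again follows from Markov applied to $|S|$, whose expectation under the Ising law is $\Theta(n^{g(\beta)})$ by the single-site computation above. The matching lower bound $\dist(\sigma,\pm X)=\Omega(n^{g(\beta)})$ is the main obstacle, since we must rule out that the Ising measure is atypically concentrated at $\pm X$. My plan is a Chebyshev-type second-moment argument on $|S|$, conditioned on a high-probability ``nice'' event for $G$ consisting of the concentration of $d_w(i),d_a(i)$ about $a\log(n)/2,\,b\log(n)/2$ and the minimum-margin guarantee $\min_i(d_w(i)-d_a(i))\ge 1$, both of which hold under $\sqrt a-\sqrt b>\sqrt 2$. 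Conditioning further on the positive-magnetization phase $\{|S|\le n/4\}$ (which carries probability $1/2-o(1)$ by sign symmetry), the conditional single-site flip probabilities are well-approximated by $e^{-2\beta(d_w(i)-d_a(i))}/(1+e^{-2\beta(d_w(i)-d_a(i))})$ and decouple to leading order, giving $\mathrm{Var}(|S|)=O(\mathbb{E}|S|)$ and completing the Chebyshev argument. The delicate part will be controlling residual Ising correlations when $\beta$ is near $\beta^\ast$, where a careful coupling to a product measure on the flip indicators is likely required.
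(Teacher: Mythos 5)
Your Case 1 contains a genuine error: it is not true that $\beta>\beta^\ast$ implies $g(\beta)<0$. The function $g$ is convex with minimum at $\frac14\log\frac ab$, and $g(\beta)=0$ has two roots $\beta^\ast<\beta'$; for $\beta>\beta'$ one has $g(\beta)>0$, so your annealed first-moment bound $E_G[R]\le\sum_k n^{kg(\beta)+o(k)}$ is $\omega(1)$ and Markov gives nothing, precisely in the regime of large $\beta$ where the theorem still asserts $\sigma=\pm X$. The reason the annealed bound is not tight there is that $E_G[e^{2\beta(B_i-A_i)}]$ is dominated by exponentially rare graphs in which some vertex (or small set) has $B_i\ge A_i$, since $e^{2\beta t\log n}$ is huge for $t\ge 0$; for a typical graph no such vertex exists. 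The missing idea is a ``cut-off'' (quenched) argument: use $\sqrt a-\sqrt b>\sqrt 2$ to show that with probability $1-o(1)$ every set $\cI$ with $|\cI|\le 2n/\log^{1/3}(n)$ satisfies $B_\cI<A_\cI$, restrict the likelihood-ratio sum to negative values of $t=(B_\cI-A_\cI)/(|\cI|\log n)$, and only then take expectations; the relevant exponent becomes $\max_{t\le 0}f_\beta(t)=\tilde g(\beta)=\min\bigl(g(\beta),\,1-\frac{(\sqrt a-\sqrt b)^2}{2}\bigr)<0$ for all $\beta>\beta^\ast$, which is how the paper closes this case. Without this conditioning step your proof of the $\beta>\beta^\ast$ statement fails for $\beta>\beta'$ (and your interpolation for $k=\Theta(n)$ is also shaky, since $g(\beta)-1>0$ once $\beta>\frac12\log\frac ab$, and for unbalanced $S$ you must invoke the $\alpha$-penalty rather than cancellation).

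In Case 2 your architecture matches the paper's (margin event $\min_i(A_i-B_i)\ge 1$, conditional flip probabilities $\asymp e^{-2\beta(A_i-B_i)}$, second moment/Chebyshev), but the two steps you defer are exactly the substance of the proof: (i) the constant-factor comparison of conditional flip probabilities with $e^{2\beta(B_i-A_i)}$ is not automatic — it requires first proving $\dist(\sigma,\pm X)=O(n^{g(\beta)+\delta})$ and that every vertex has $O(1)$ disagreeing neighbors, and then a stochastic-domination coupling of the flip indicators with independent Bernoullis (the Ising correlations are handled by domination, not by showing they are small); and (ii) the variance control over the graph randomness needs concentration of $\sum_i e^{2\beta(B_i-A_i)}$ around $n^{g(\beta)}$ \emph{conditioned on the margin event} — the unconditional variance can blow up because $g(2\beta)$ may exceed $g(\beta)$. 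Also note that your Markov upper bound ``$E_{\sigma|G}[|S|]=\Theta(n^{g(\beta)})$'' presupposes essentially the conclusion; the paper instead gets the $O(n^{g(\beta)+\delta})$ localization from the ratio sums with the $\binom nk\le n^k/k!$ refinement before any expectation of $|S|$ is computed. As written, the proposal would not go through without these additions.
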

One can show that (i) $g(\beta)$ is a strictly decreasing function in $[0,\beta^\ast]$, (ii) $g(0)=1$ and (iii) $g(\beta^\ast)=0$. Therefore, $0<g(\beta)<1$ when $0<\beta<\beta^\ast$. Thus Theorem~\ref{thm:wt3} implies that for all $\beta\le \beta^\ast$, $\dist(\sigma^{(i)},\pm X)=o(n)$ for all $i\in[m]$. In particular, for $\beta = \beta^\ast$, $\dist(\sigma^{(i)},\pm X)=\Theta(1)$ for all $i\in[m]$.

\section{Sketch of the proof}
\label{sect:sketch}

In this section, we illustrate the main ideas and explain the important steps in the proof of the main results. The complete proofs are given in Section~\ref{sect:aln}--\ref{sect:converse}.
As the first step, we prove that for $a>b>0$, if $\alpha>b\beta$, then all the samples are centered around $\pm X$ with probability $1-o(1)$; if $\alpha<b\beta$, then all the samples are centered around $\pm \mathbf{1}_n$.
We use the concentration results for adjacency matrices of random graphs with independent edges to prove this. Let $A=A(G)$ be the adjacency matrix of the graph $G$. Then \eqref{eq:isingma} can be written as
\begin{align*}
 P_{\sigma|G}(\sigma=\bar{\sigma})  
=  \frac{1}{Z_G(\alpha,\beta)}
\exp\Big( \frac{1}{2} \bar{\sigma} \Big( \big(\beta+\frac{\alpha\log(n)}{n} \big) A
-\frac{\alpha\log(n)}{n} (J_n-I_n) \Big) \bar{\sigma}^T 
\Big)  ,
\end{align*}
where $J_n$ is the all one matrix and $I_n$ is the identity matrix, both of size $n\times n$.
Define a matrix
$
M:= \big(\beta+\frac{\alpha\log(n)}{n} \big) E[A|X]
-\frac{\alpha\log(n)}{n} (J_n-I_n).
$
Then we can further write \eqref{eq:isingma}
as
$$
P_{\sigma|G}(\sigma=\bar{\sigma})
= \frac{1}{Z_G(\alpha,\beta)}
\exp\Big( \frac{1}{2} \bar{\sigma}  M \bar{\sigma}^T + \frac{1}{2} \big(\beta+\frac{\alpha\log(n)}{n} \big) \bar{\sigma}  (A-E[A|X])
  \bar{\sigma}^T 
\Big)  .
$$
One can show that if $\alpha>b\beta$, then $\bar{\sigma}  M \bar{\sigma}^T$ has two maximizers $\bar{\sigma}=\pm X$, and if $\alpha<b\beta$, then $\bar{\sigma}  M \bar{\sigma}^T$ has two maximizers $\bar{\sigma}=\pm \mathbf{1}_n$.
Moreover, \cite{Lei15} and \cite{Hajek16} proved the concentration of $A$ around its expectation $E[A|X]$ in the sense that
$\|A-E[A|X]\| = O(\sqrt{\log(n)})$ with probability $1-o(1)$, so the error term above is bounded by
$$
\left|\frac{1}{2} \big(\beta+\frac{\alpha\log(n)}{n} \big) \bar{\sigma}  (A-E[A|X])
  \bar{\sigma}^T \right| = O \big( n \sqrt{\log(n)} \big)
  \text{~~for all~} \bar{\sigma}\in\{\pm 1\}^n  .
$$
This allows us to prove that in both cases (no matter $\alpha>b\beta$ or $\alpha<b\beta$), the Hamming distance between the samples and the maximizers of $\bar{\sigma}  M \bar{\sigma}^T$ is upper bounded by $2n/\log^{1/3}(n)=o(n)$.
More precisely, if $\alpha>b\beta$, then $\dist(\sigma^{(i)},\pm X)< 2n/\log^{1/3}(n)$ for all $i\in[m]$ with probability $1-o(1)$; if $\alpha<b\beta$, then $\dist(\sigma^{(i)},\pm \mathbf{1}_n)< 2n/\log^{1/3}(n)$ for all $i\in[m]$ with probability $1-o(1)$; see Proposition~\ref{prop:1} for a rigorous proof.
In the latter case, each sample only takes $\sum_{j=0}^{2n/\log^{1/3}(n)}\binom{n}{j}$ values, so each sample contains at most $\log_2(\sum_{j=0}^{2n/\log^{1/3}(n)}\binom{n}{j})=O(\frac{\log\log(n)}{\log^{1/3}(n)} n)$ bits of information about $X$. On the other hand, $X$ itself is uniformly distributed over a set of $\binom{n}{n/2}$ vectors, so one needs at least $\log_2\binom{n}{n/2}=\Theta(n)$ bits of information to recover $X$. Thus if $\alpha<b\beta$, then exact recovery of $X$ requires at least $\Omega(\frac{\log^{1/3}(n)}{\log\log(n)})\ge \Omega(\log^{1/4}(n))$ samples; see Proposition~\ref{prop:ab} for a rigorous proof.

For the rest of this section, we will focus on the case $\alpha>b\beta$ and establish the sharp threshold on the sample complexity. We first analyze the typical behavior of one sample and explain how to prove Theorem~\ref{thm:wt3}. Then we use the method developed for the one sample case to analyze the distribution of multiple samples, which allows us to prove Theorem~\ref{thm:wt2}. Note that Theorem~\ref{thm:wt1} follows directly from Theorem~\ref{thm:wt2}.

\subsection{Why is $\beta^\ast$ the threshold?} \label{sect:why}

Let us analyze the one sample case, i.e., we take $m=1$.
Theorem~\ref{thm:wt3} implies that $\beta^\ast$ is a sharp threshold for the event $\{\sigma=\pm X\}$, i.e., $P_{\SIBM}(\sigma=\pm X)=1-o(1)$ if $\beta$ is above this threshold and $P_{\SIBM}(\sigma=\pm X)=o(1)$ if $\beta$ is below this threshold.
We already know that
$
P_{\SIBM} \big(\dist(\sigma,\pm X)< 2n/\log^{1/3}(n) \big) = 1- o(1) .
$
Therefore the following three statements are equivalent:
\begin{enumerate}[label=(\arabic*)]
\item $P_{\SIBM}(\sigma=\pm X)$ has a sharp transitions from $0$ to $1$ at $\beta^\ast$.
\item $P_{\SIBM} \big( 1\le \dist(\sigma,\pm X)< 2n/\log^{1/3}(n) \big)$ has a sharp transitions from $1$ to $0$ at $\beta^\ast$.
\item $\frac{P_{\SIBM} ( 1\le \dist(\sigma, X)< 2n/\log^{1/3}(n) )}{P_{\SIBM}(\sigma= X)}$ has a sharp transition from $\infty$ (or $\omega(1)$) to $0$ at $\beta^\ast$.
\end{enumerate}
Statements (2) and (3) are equivalent because $P_{\SIBM}(\sigma=\bar{\sigma})=P_{\SIBM}(\sigma=-\bar{\sigma})$ for all $\bar{\sigma}\in\{\pm 1\}^n$.
We will show that the above three statements are further equivalent to
\begin{enumerate}[label=(\arabic*)]
\setcounter{enumi}{3}
  \item  $\frac{P_{\SIBM} ( \dist(\sigma, X) = 1 )}{P_{\SIBM}(\sigma= X)}$ has a sharp transitions from $\infty$ (or $\omega(1)$) to $0$ at $\beta^\ast$.
\end{enumerate}
We first prove (4) and then show that it is equivalent to statement (3).
Instead of analyzing $\frac{P_{\SIBM} ( \dist(\sigma, X) = 1 )}{P_{\SIBM}(\sigma= X)}$, we analyze $\frac{P_{\sigma|G} ( \dist(\sigma, X) = 1 )}{P_{\sigma|G}(\sigma= X)}$ for a typical graph $G$.
To that end, we introduce some notation: 
For $\cI\subseteq[n]$, define $X^{(\sim\cI)}$ as the vector obtained by flipping the coordinates in $\cI$ while keeping all the other coordinates to be the same as $X$, i.e., $X_i^{(\sim\cI)}=-X_i$ for all $i\in\cI$ and $X_i^{(\sim\cI)}=X_i$ for all $i\notin\cI$. 
When $\cI$ only contains one element, e.g., $\cI=\{i\}$, we write $X^{(\sim i)}$ instead of $X^{(\sim\{i\})}$.
Then,
$$
\frac{P_{\sigma|G} ( \dist(\sigma, X) = 1 )}{P_{\sigma|G}(\sigma= X)}
=\sum_{i=1}^n \frac{P_{\sigma|G} ( \sigma= X^{(\sim i)} )} {P_{\sigma|G}(\sigma= X)} .
$$
Given the ground truth $X$, a graph $G$ and a vertex $i\in[n]$, define
\begin{align*}
& A_i=A_i(G):=|\{j\in[n]\setminus\{i\}:\{i,j\}\in E(G), X_j=X_i\}| , \\
& B_i=B_i(G):=|\{j\in[n]\setminus\{i\}:\{i,j\}\in E(G), X_j=-X_i\}| .
\end{align*}
Then by \eqref{eq:isingma}, we have
\begin{align*}
\frac{P_{\sigma|G}(\sigma=X^{(\sim i)} )}
{P_{\sigma|G}(\sigma=X)}
 = \exp\Big(2\big(\beta+\frac{\alpha\log(n)}{n} \big) (B_i-A_i)
-\frac{2\alpha\log(n)}{n} \Big) 
 = (1+o(1)) \exp (2 \beta (B_i-A_i))  ,
\end{align*}
where the second equality holds with high probability because $|B_i-A_i|=O(\log(n))$ with probability $1-o(1)$.
Note that $E[\exp (2 \beta (B_i-A_i))]$ is essentially the moment generating function of $B_i-A_i$. By definition,
$A_i\sim \Binom(\frac{n}{2}-1,\frac{a\log(n)}{n})$ and $B_i\sim \Binom(\frac{n}{2}, \frac{b\log(n)}{n})$, and they are independent. Therefore,
\begin{align*}
E_G[\exp (2 \beta (B_i-A_i))]
& =\Big(1-\frac{b\log(n)}{n}+\frac{b\log(n)}{n} e^{2\beta} \Big)^{n/2}
\Big(1-\frac{a\log(n)}{n}+\frac{a\log(n)}{n} e^{-2\beta} \Big)^{n/2-1}  \\
& = 
\exp\Big(\frac{\log(n)}{2} ( a e^{-2\beta}+b e^{2\beta} -a-b )
+o(1) \Big) 
 = (1+o(1)) n^{g(\beta)-1} ,
\end{align*}
where $E_G$ means that the expectation is taken over the randomness of $G$, and the function
$g(\beta)  := \frac{b e^{2\beta}+a e^{-2\beta}}{2}-\frac{a+b}{2}+1$ is defined in Theorem~\ref{thm:wt3}.
As a consequence,
\begin{equation} \label{eq:mew}
E_G \Big[ \frac{P_{\sigma|G} ( \dist(\sigma, X) = 1 )}{P_{\sigma|G}(\sigma= X)} \Big]
= (1+o(1)) \sum_{i=1}^n E_G[\exp (2 \beta (B_i-A_i))]
= (1+o(1)) n^{g(\beta)} .
\end{equation}
One can show that $g(\beta)$ is a convex function and takes minimum at $\beta=\frac{1}{4}\log\frac{a}{b}$, so $g(\beta)$ is strictly decreasing in the interval $(0,\frac{1}{4}\log\frac{a}{b})$. Furthermore, $\beta^\ast$ is a root of $g(\beta)=0$, and $0<\beta^\ast<\frac{1}{4}\log\frac{a}{b}$, so $g(\beta)>0$ for $\beta<\beta^\ast$ and $g(\beta)<0$ for $\beta^\ast<\beta<\frac{1}{4}\log\frac{a}{b}$.
Taking this into the above equation, we conclude that the expectation of $\frac{P_{\sigma|G} ( \dist(\sigma, X) = 1 )}{P_{\sigma|G}(\sigma= X)}$ has a sharp transition from $\omega(1)$ to $o(1)$ at $\beta^\ast$.
This at least intuitively explains why $\beta^\ast$ is the threshold. However, in order to formally establish statement (4) above, we need to prove that this sharp transition happens for a typical graph $G$, not just for the expectation.
Moreover, $g(\beta)$ is an increasing function in the interval $\beta\in(\frac{1}{4}\log\frac{a}{b}, +\infty)$, so the expectation first decreases in the interval $(0,\frac{1}{4}\log\frac{a}{b}]$ and then starts increasing. We will prove that there is a ``cut-off" effect when $\beta>\frac{1}{4}\log\frac{a}{b}$, i.e., although the expectation becomes much larger than $n^{g(\frac{1}{4}\log\frac{a}{b})}$, for a typical graph $G$, we always have
$$
\frac{P_{\sigma|G} ( \dist(\sigma, X) = 1 )}{P_{\sigma|G}(\sigma= X)} 
= O( n^{g(\frac{1}{4}\log\frac{a}{b})} ) =o(1)
$$
whenever $\beta>\frac{1}{4}\log\frac{a}{b}$.
Below we divide the proof into three cases: (i) $\beta\in(0,\beta^\ast]$, (ii) $\beta\in(\beta^\ast,\frac{1}{4}\log\frac{a}{b}]$, and (iii) $\beta\in(\frac{1}{4}\log\frac{a}{b},+\infty)$.
Case (ii) is the simplest case, and its proof is essentially an application of Markov inequality, so we start with this case.

\subsection{Proof for $\beta\in(\beta^\ast,\frac{1}{4}\log\frac{a}{b}]$: An application of Markov inequality}
\label{sect:simreg}

We know that $g(\beta)<0$ for $\beta\in(\beta^\ast,\frac{1}{4}\log\frac{a}{b}]$. By \eqref{eq:mew} and Markov inequality, for almost all\footnote{By almost all $G$, we mean there is a set $\cG$ such that $P(G\in\cG)=1-o(1)$ and for every $G\in\cG$ certain property holds. The probability $P(G\in\cG)$ is calculated according to SSBM defined in Definition~\ref{def:SSBM}.} $G$, we have $\frac{P_{\sigma|G} ( \dist(\sigma, X) = 1 )}{P_{\sigma|G}(\sigma= X)}=o(1)$. This proves that $\frac{P_{\SIBM} ( \dist(\sigma, X) = 1 )}{P_{\SIBM}(\sigma= X)}=o(1)$ in this interval. With a bit more extra effort, let us also prove that $\frac{P_{\SIBM} ( 1\le \dist(\sigma, X)< 2n/\log^{1/3}(n) )}{P_{\SIBM}(\sigma= X)}=o(1)$.
By definition,
$$
\frac{P_{\sigma|G} ( \dist(\sigma, X) = k )}{P_{\sigma|G}(\sigma= X)}
=\sum_{\cI\subseteq[n],|\cI|=k} \frac{P_{\sigma|G} ( \sigma= X^{(\sim \cI)} )} {P_{\sigma|G}(\sigma= X)} .
$$
Similarly to $A_i$ and $B_i$, for a set $\cI\subseteq[n]$, we define
$A_{\cI}=A_{\cI}(G):=|\{\{i,j\}\in E(G):  i\in\cI, j\in[n]\setminus\cI, X_i=X_j\}|$ and  
$B_{\cI}=B_{\cI}(G):=|\{\{i,j\}\in E(G): i\in\cI, j\in[n]\setminus\cI, X_i=-X_j\}|$.
Then by \eqref{eq:isingma} one can show that
$$
 \frac{P_{\sigma|G}(\sigma=X^{(\sim\cI)})}{P_{\sigma|G}(\sigma=X)} 
\le \exp\Big( 2\big(\beta+\frac{\alpha\log(n)}{n} \big) (B_{\cI}-A_{\cI})
\Big) = \exp ( 2 (\beta + o(1)) (B_{\cI}-A_{\cI})
 ) .
$$
Since we are only interested in the case $|\cI|<2n/\log^{1/3}(n)=o(n)$,
by definition we have $A_{\cI}\sim\Binom((\frac{n}{2}-o(n))|\cI|,\frac{a\log(n)}{n})$ and $B_{\cI}\sim\Binom((\frac{n}{2}-o(n))|\cI|,\frac{b\log(n)}{n})$, and they are independent. Therefore,
\begin{align*}
E_G[\exp ( 2 (\beta +o(1)) (B_{\cI}-A_{\cI}) ) ] = &
\exp\Big(\frac{|\cI|\log(n)}{2} ( a e^{-2\beta}+b e^{2\beta} -a-b +o(1) )
 \Big) \\
 = & n^{|\cI|(g(\beta)-1+o(1))} .
\end{align*}
As a consequence,
\begin{equation} \label{eq:nn}
E_G \Big[ \frac{P_{\sigma|G} ( \dist(\sigma, X) = k )}{P_{\sigma|G}(\sigma= X)} \Big]
\le \sum_{\cI\subseteq[n],|\cI|=k} 
n^{k (g(\beta)-1+o(1))}
= \binom{n}{k} n^{k (g(\beta)-1+o(1))}
< n^{k (g(\beta)+o(1))} .
\end{equation}
Then by Markov inequality, there is a set $\cG^{(k)}$ such that $P(G\in\cG^{(k)})=1-n^{k g(\beta)/4}$ and for every $G\in\cG^{(k)}$, $\frac{P_{\sigma|G} ( \dist(\sigma, X) = k )}{P_{\sigma|G}(\sigma= X)} \le n^{k g(\beta)/2}$.
Let $\cG=\cap_{k=1}^{2n/\log^{1/3}(n)} \cG^{(k)}$. By union bound, we have $P(G\in\cG)>1-\sum_{k=1}^\infty n^{k g(\beta)/4} = 1-o(1)$. Moreover, for every $G\in\cG$,
$\frac{P_{\sigma|G} ( 1\le \dist(\sigma, X)< 2n/\log^{1/3}(n) )}{P_{\sigma|G}(\sigma= X)} < \sum_{k=1}^\infty n^{k g(\beta)/2} = o(1)$.
This proves that $\frac{P_{\SIBM} ( 1\le \dist(\sigma, X)< 2n/\log^{1/3}(n) )}{P_{\SIBM}(\sigma= X)}=o(1)$, and so $P_{\SIBM}(\sigma=\pm X)=1-o(1)$ when $\beta\in(\beta^\ast,\frac{1}{4}\log\frac{a}{b}]$.

\subsection{Proof for $\beta\in(\frac{1}{4}\log\frac{a}{b},+\infty)$: The ``cut-off" effect}

The analysis in this interval is more delicate. Since $\frac{P_{\sigma|G} ( \dist(\sigma, X) = 1 )}{P_{\sigma|G}(\sigma= X)} 
= (1+o(1)) \sum_{i=1}^n \exp (2 \beta (B_i-A_i))$, we start with a more careful analysis of $\sum_{i=1}^n \exp (2 \beta (B_i-A_i))$.
Since $B_i-A_i$ takes integer value between $-n/2$ and $n/2$,
we can use indicator functions to write
\begin{align*}
 \exp (2 \beta (B_i-A_i))
=  \sum_{t\log(n)=-n/2}^{n/2}
\mathbbm{1}[B_i-A_i=t \log(n)] \exp (2\beta t \log(n) ) ,
\end{align*}
where the quantity $t\log(n)$ ranges over all integer values from $-n/2$ to $n/2$ in the summation.
Define $D(G,t):=|\{i\in[n]:B_i-A_i= t\log(n)\}|=\sum_{i=1}^n \mathbbm{1}[B_i-A_i=t \log(n)]$.
Therefore,
\begin{equation}  \label{eq:gour}
 \sum_{i=1}^n \exp (2 \beta (B_i-A_i))
=  \sum_{t\log(n)=-n/2}^{n/2}
D(G,t) \exp (2\beta t \log(n) ) .
\end{equation}
By Chernoff bound, we have $P(B_i- A_i\ge 0)\le \exp\big(\log(n)(-\frac{(\sqrt{a}-\sqrt{b})^2}{2} +o(1)) \big)$.
Define $\cG_1:=\{G:B_i- A_i < 0~\forall i\in[n]\}$. Then by union bound, $P(G\notin\cG_1)\le\exp\big(\log(n)(1-\frac{(\sqrt{a}-\sqrt{b})^2}{2} +o(1)) \big) = o(1)$, where the equality follows from the assumption that $\sqrt{a}-\sqrt{b}>\sqrt{2}$.
For every $G\in\cG_1$, $D(G,t)=0$ for all $t\ge 0$, and so
\begin{equation} \label{eq:duj}
 \sum_{i=1}^n \exp (2 \beta (B_i-A_i))
=  \sum_{t\log(n)=-n/2}^{-1}
D(G,t) \exp (2\beta t \log(n) ) .
\end{equation}
This indicates that there is a ``cut-off" effect at $t>0$, i.e., $D(G,t) \exp (2\beta t \log(n) )=0$ for all positive $t$ with probability $1-o(1)$, although its expectation can be very large, as we will show next.
Define a function
$$
f_{\beta}(t):=\sqrt{t^2+ab} -t\big(\log(\sqrt{t^2+ab}+t)-\log(b) \big) -\frac{a+b}{2} +1 +2\beta t.
$$
Using Chernoff bound, one can show that 
$$
E[D(G,t)
\exp\big(2\beta t \log(n) \big)]
\le \exp( f_{\beta}(t) \log(n) ) .
$$
(Using a more careful analysis, one can show that this bound is tight up to a $\frac{1}{\sqrt{\log(n)}}$ factor; see Appendix~\ref{ap:um}.)
The function $f_{\beta}(t)$ is a concave function and takes maximum value at $t^\ast=\frac{b e^{2\beta}-a e^{-2\beta}}{2}$, and its maximum value is $f_{\beta}(t^\ast)=\frac{b e^{2\beta}+a e^{-2\beta}}{2}-\frac{a+b}{2}+1
=g(\beta)$.
Therefore, if we take expectation on both sides of \eqref{eq:gour}, then the sum on the right-hand side is concentrated on a small neighborhood of $t^\ast$. When $\beta>\frac{1}{4}\log\frac{a}{b}$, we have $t^\ast>0$. Due to the ``cut-off" effect at $t>0$, we have $D(G,t)=0$ for all $t$ in the neighborhood of $t^\ast$ with probability $1-o(1)$, so the main contribution to the expectation comes from a rare event $G\notin\cG_1$. This explains why the behavior of a typical graph $G$ deviates from the behavior of the expectation.
Since $f_{\beta}(t)$ is a concave function, the sum $E[\sum_{t\log(n)=-n/2}^{-1}
D(G,t) \exp (2\beta t \log(n) )]$ is upper bounded by $O(\log(n))n^{f_{\beta}(0)}$  when $t^\ast>0$. Notice that $f_{\beta}(0)=g(\frac{1}{4}\log\frac{a}{b})=1-\frac{(\sqrt{a}-\sqrt{b})^2}{2}<0$. Now using \eqref{eq:duj} and Markov inequality, we conclude that $\sum_{i=1}^n \exp (2 \beta (B_i-A_i))=o(1)$ for almost all $G$, and so $\frac{P_{\sigma|G} ( \dist(\sigma, X) = 1 )}{P_{\sigma|G}(\sigma= X)} =o(1)$ for almost all $G$. Thus we have shown that $\frac{P_{\SIBM} ( \dist(\sigma, X) = 1 )}{P_{\SIBM}(\sigma= X)}=o(1)$ when $\beta>\frac{1}{4}\log\frac{a}{b}$. The analysis of $\frac{P_{\SIBM} ( \dist(\sigma, X) = k )}{P_{\SIBM}(\sigma= X)}$ for $1\le k<2n/\log^{1/3}(n)$ is similar to the analysis in Section~\ref{sect:simreg}, and we do not repeat it here. By now we have given a sketched proof of
$P_{\SIBM}(\sigma=\pm X)=1-o(1)$ when $\beta>\beta^\ast$; see Section~\ref{sect:equal} for a rigorous proof. Next we move to the case $\beta\le\beta^\ast$.

\subsection{Proof for $\beta\le\beta^\ast$: Structural results and tight concentration}

In the last inequality of \eqref{eq:nn}, we use a coarse bound $\binom{n}{k}<n^k$. Now let us use a tighter bound $\binom{n}{k}<n^k/(k!)$. 
For $k>n^{g(\beta)+\delta}$, we have
$
k!>(k/e)^k
=\exp(k\log(k)-k)
>\exp(k(g(\beta)+\delta)\log(n)-k)
=n^{k(g(\beta)+\delta-o(1))} .
$
Taking these into the last inequality of \eqref{eq:nn}, we obtain that for all $k>n^{g(\beta)+\delta}$,
$$
E_G \Big[ \frac{P_{\sigma|G} ( \dist(\sigma, X) = k )}{P_{\sigma|G}(\sigma= X)} \Big]
\le  \binom{n}{k} n^{k (g(\beta)-1+o(1))}
< n^{k (g(\beta)+o(1))} /(k!) 
< n^{-k(\delta-o(1))} .
$$
This immediately implies that  $P_{\SIBM} (\dist(\sigma, \pm X)<n^{g(\beta)+\delta} ) = 1- o(1)$ for any $\delta>0$. Since $g(\beta)<1$ for all $0<\beta\le\beta^\ast$, we have $P_{\SIBM} (\dist(\sigma, \pm X)<n^{\theta} ) = 1- o(1)$ for all $\theta\in (g(\beta), 1)$. This improves upon the upper bound 
$\dist(\sigma, \pm X)< 2n/\log^{1/3}(n)$ we obtained using spectral method at the beginning of this section.
More importantly, this allows us to prove a powerful structural result. (All the discussions below are conditioning on the event $\dist(\sigma,X)\le n/2$, i.e., $\sigma$ is closer to $X$ than to $-X$.) We say that $j$ is a ``bad" neighbor of vertex $i$ if the edge $\{i,j\}$ is connected in graph $G$ and $\sigma_j\neq X_j$. Then there is an integer $z>0$ such that with probability $1-o(1)$, every vertex has at most $z$ ``bad" neighbors.
Conditioning on the event every vertex has at most $z$ ``bad" neighbors, it is easy to show that $P_{\sigma|G}(\sigma_i=- X_i)$ differs from $\exp (2 \beta (B_i-A_i))$ by at most a constant factor $\exp(4\beta z)$.
Therefore, $E_{\sigma|G}[\dist(\sigma,X)]=\sum_{i=1}^n P_{\sigma|G}(\sigma_i=- X_i)$ differs from $\sum_{i=1}^n\exp (2 \beta (B_i-A_i))$ by at most a constant factor for almost all $G$.
We can further prove that the pairwise correlation of the events $\{\sigma_i=- X_i\}$ and $\{\sigma_j=- X_j\}$ is very small, so $\dist(\sigma,X)$ concentrates around its expectation. Thus we conclude that $\dist(\sigma,X)$ differs from $\sum_{i=1}^n\exp (2 \beta (B_i-A_i))$ by at most a constant factor for almost all $G$.
In Section~\ref{sect:why} (see \eqref{eq:mew}), we have shown that $E_G[\sum_{i=1}^n\exp (2 \beta (B_i-A_i))]=(1+o(1))n^{g(\beta)}$. Quite surprisingly, when $\beta\le\beta^\ast$, we can prove a very tight concentration around the expectation: For almost all graph $G$, we have $\sum_{i=1}^n\exp (2 \beta (B_i-A_i))=(1+o(1))n^{g(\beta)}$; see Proposition~\ref{prop:con} in Section~\ref{sect:struct} for a proof. Combining this with the above analysis, we conclude that $\dist(\sigma,X)=\Theta(n^{g(\beta)})$ with probability $1-o(1)$ when $\beta\le\beta^\ast$. This completes the sketched proof of Theorem~\ref{thm:wt3}.
See Sections~\ref{sect:theta}--\ref{sect:struct} for the rigorous proof of the above arguments.
As a final remark, we note that for the special case of $\beta=\beta^\ast$,  Theorem~\ref{thm:wt3} tells us that
$
P_{\SIBM}(\dist(\sigma,\pm X)= \Theta(1) ) = 1-o(1) ,
$
but this is not sufficient for us to draw any conclusion on $P_{\SIBM}(\sigma= \pm X)$.
In Proposition~\ref{prop:zj} (see Section~\ref{sect:struct}) we prove that when $\beta=\beta^\ast$, $P_{\SIBM}(\sigma= \pm X) \le \frac{1}{2}(1+o(1))$, where the $o(1)$ term goes to $0$ as $n\to\infty$.

\subsection{Multiple sample case: Proof of Theorem~\ref{thm:wt2}}
\label{sect:multi}

\begin{center}
    \begin{minipage}{.55\textwidth}
 \begin{algorithm}[H]
\caption{\texttt{LearnSIBM} in $O(n)$ time} \label{alg:ez}
Inputs: the samples $\sigma^{(1)},\sigma^{(2)}\dots,\sigma^{(m)}$ \\
Output: $\hat{X}$
\begin{algorithmic}[1]
\Statex \hspace*{-0.3in} 
{\bf Step 1: Align all the samples with $\sigma^{(1)}$ }
\For {$j=2,3,\dots,m$}
\If {$\sum_{i=1}^n \sigma_i^{(1)} \sigma_i^{(j)} <0$}
\State $\sigma^{(j)} \gets -\sigma^{(j)}$
\EndIf 
\EndFor
\Statex \hspace*{-0.3in}
{\bf Step 2: Majority vote at each coordinate}
\For {i=1,2,\dots,n}
\State $\hat{X}_i \gets \sign(\sum_{j=1}^m \sigma_i^{(j)})$
\State \Comment{If $\sum_{j=1}^m \sigma_i^{(j)}=0$, assign $\hat{X}_i$ a random sign}
\EndFor
\State Output $\hat{X}$
\end{algorithmic}
 \end{algorithm}
    \end{minipage}
\end{center}
For the multiple-sample case, we prove that the above simple algorithm can recover $X$ with probability $1-o(1)$ if and only if the Maximum Likelihood (ML) algorithm recovers $X$ with probability $1-o(1)$. 
Notice that Algorithm~\ref{alg:ez} does not require knowledge of the parameters of SIBM.
We already showed that each sample is either very close to $X$ or very close to $-X$, so after the alignment step in Algorithm~\ref{alg:ez}, all the samples are either simultaneously aligned with $X$ or simultaneously aligned with $-X$. We assume the former case. By the structural results discussed above, with probability $1-o(1)$, $P_{\sigma|G}(\sigma_i^{(j)}=- X_i)$ differs from $\exp (2 \beta (B_i-A_i))$ by at most a constant factor for all $j\in[m]$. Since the samples are independent, we further obtain that $P_{\sigma|G}(\sum_{j=1}^m\mathbbm{1}[\sigma_i^{(j)}=- X_i]\ge u)$ differs from $\exp (2 u\beta (B_i-A_i))$ by at most a constant factor. Here $u\beta$ plays the role of $\beta$ in the single-sample case. Therefore, if $u\beta>\beta^\ast$, then with probability $1-o(1)$ we have $\sum_{j=1}^m\mathbbm{1}[\sigma_i^{(j)}=- X_i] \le u-1$, or equivalently, $X_i(\sum_{j=1}^m \sigma_i^{(j)} ) \ge m-2u+2$ for all $i\in[n]$. In particular, if $\lfloor \frac{m+1}{2} \rfloor \beta>\beta^\ast$, then $X_i(\sum_{j=1}^m \sigma_i^{(j)} ) \ge m+2-2\lfloor \frac{m+1}{2} \rfloor\ge 1$ for all $i\in[n]$, which implies that $\hat{X}=X$ after the majority voting step in Algorithm~\ref{alg:ez}. See Section~\ref{sect:direct} for a rigorous proof of the above argument.

The proof of the converse results, i.e., even ML algorithm cannot recover $X$ with probability $1-o(1)$ when $\lfloor \frac{m+1}{2} \rfloor \beta < \beta^\ast$, also relies on the structural result and it is rather similar to the proof for $\beta\le\beta^\ast$ in the single-sample case. We refer the readers to Section~\ref{sect:converse} for details.

\section{Samples are concentrated around $\pm X$ or $\pm \mathbf{1}_n$} \label{sect:aln}

In this section, we show that for $a>b$, if $\alpha>b\beta$, then all the samples produced by $\SIBM(n, \linebreak[4]
a\log(n)/n, b\log(n)/n,\alpha,\beta,m)$ are very close to either $X$ or $-X$. More precisely, they differ from the ground truth $\pm X$ in at most $2n/\log^{1/3}(n)$ coordinates.
On the other hand, if $\alpha<b\beta$, then the samples differ from  $\pm \mathbf{1}_n$ in at most $2n/\log^{1/3}(n)$ coordinates, where $\mathbf{1}_n$ is the all-one vector of length $n$.
For the latter case, we prove that the number of samples needed for exact recovery of $X$ is at least $\Omega(\log^{1/4}(n))$.

Let $A=A(G)$ be the adjacency matrix of $G$. Then \eqref{eq:isingma} can be written as
\begin{align*}
& P_{\sigma|G}(\sigma=\bar{\sigma})=\frac{1}{Z_G(\alpha,\beta)}
\exp\Big(\frac{\beta}{2} \bar{\sigma} A \bar{\sigma}^T 
-\frac{\alpha\log(n)}{2n} \bar{\sigma}   (J_n-I_n-A) \bar{\sigma}^T
\Big)  \\
= & \frac{1}{Z_G(\alpha,\beta)}
\exp\Big( \frac{1}{2} \bar{\sigma} \Big( \big(\beta+\frac{\alpha\log(n)}{n} \big) A
-\frac{\alpha\log(n)}{n} (J_n-I_n) \Big) \bar{\sigma}^T 
\Big)  ,
\end{align*}
where $J_n$ is the all one matrix and $I_n$ is the identity matrix, both of size $n\times n$.
Conditioned on the ground truth $X$,
$A-E[A|X]$ is a symmetric matrix whose upper triangular part consists of independent entries. According to Theorem~5.2 in \cite{Lei15} and/or Theorem~5 in \cite{Hajek16}, both of which are built upon the classical results in \cite{Feige05}, the spectral norm of $A-E[A|X]$ is upper bounded by $O(\sqrt{\log(n)})$.
\begin{theorem}[Theorem~5.2 in \cite{Lei15}, Theorem~5 in \cite{Hajek16}] \label{thm:a2}
For any $r>0$, there exists $c>0$ such that the spectral norm of $A-E[A|X]$ satisfies
$$
P\big(\|A-E[A|X]\| \le c\sqrt{\log(n)} \big)\ge 1-n^{-r} .
$$
\end{theorem}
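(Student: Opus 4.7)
The plan is to use the $\varepsilon$-net and light--heavy decomposition pioneered by Feige and Ofek, since the underlying matrix is sparse enough that naive matrix Bernstein bounds are not sharp. Conditional on $X$, the matrix $W := A - E[A|X]$ is symmetric with independent, bounded, mean-zero entries strictly above the diagonal. Each entry $W_{ij}$ has $|W_{ij}|\le 1$ and variance at most $\max(p,q) = O(\log(n)/n)$. Since $\|W\| = \sup_{\|x\|=1}|x^T W x|$ for symmetric $W$, I would pick a $\tfrac{1}{2}$-net $\mathcal{N}$ of the unit sphere in $\mathbb{R}^n$ with $|\mathcal{N}|\le 5^n$; a standard approximation argument then gives $\|W\| \le 4\max_{x\in\mathcal{N}}|x^T W x|$. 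It therefore suffices to prove, for any $r>0$, a deviation bound $P(|x^T W x|>c\sqrt{\log n}/4) \le \exp(-(r+\log 5+1)n)$ for each fixed $x\in\mathcal{N}$, and then union bound over $\mathcal{N}$.

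For a fixed $x\in\mathcal{N}$ I would decompose $x^T W x = L(x) + H(x)$ into a \emph{light} part (the sum over index pairs $(i,j)$ with $|x_i x_j|\le \sqrt{\log(n)/n}$) and a \emph{heavy} part (the remaining pairs). The light part is a sum of independent bounded random variables with summed variance at most $\max(p,q)\sum_{i,j}(x_i x_j)^2 = O(\log(n)/n)$ and individual magnitude at most $\sqrt{\log(n)/n}$, so Bernstein's inequality dispatches it: $P(|L(x)| > \tfrac{c}{8}\sqrt{\log n}) \le \exp(-\Omega(cn))$, which is $\le\exp(-(r+\log 5+1)n)$ once $c$ is chosen large enough.

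The main obstacle is the heavy part. Here individual summands can be of order $1$, and Bernstein becomes too weak because the entries are sparse (each edge is present with probability only $O(\log(n)/n)$). The standard trick is to show a \emph{uniform} bound: with probability at least $1 - n^{-r}$, for every pair of disjoint subsets $S,T \subseteq [n]$ the number of edges of $G$ between $S$ and $T$ does not exceed a constant multiple of its expectation plus an additive correction, whenever $|S||T|$ is sufficiently small relative to $n/\log n$. Dyadic bucketing of the coordinates $\{x_i\}$ into scales reduces the heavy sum to counting edges inside $O(\log^2 n)$ such rectangles, each of which is controlled by a Chernoff bound plus union bound over the $\binom{n}{|S|}\binom{n}{|T|}$ possible $(S,T)$ pairs. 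This is exactly the combinatorial discrepancy argument of Feige--Ofek, and it yields $|H(x)| = O(\sqrt{\log n})$ uniformly over $x\in\mathcal{N}$ with probability $1-n^{-r}$.

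Combining the two pieces and taking $c$ large enough to absorb the entropy $5^n$ of the net, I would conclude $\|W\|\le c\sqrt{\log n}$ with probability at least $1-n^{-r}$. The heavy-pair discrepancy bound is the genuinely hard step; the light part and the net reduction are routine. I would follow the exposition in Feige--Ofek or, more closely, Lemma~2.5 of Lei--Rinaldo and Theorem~5 of Hajek--Wu--Xu to make the dyadic counting quantitative with the correct constants.
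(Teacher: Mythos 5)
This statement is not proved in the paper at all: it is imported verbatim as Theorem~5.2 of Lei--Rinaldo and Theorem~5 of Hajek--Wu--Xu, both built on Feige--Ofek, so your outline is a reconstruction of exactly the argument the paper leans on rather than an alternative to anything in the paper. At the level of strategy (net reduction, light/heavy splitting, Bernstein for light pairs, discrepancy counting for heavy pairs) your sketch matches the cited proofs.

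However, as written it has a quantitative gap that would kill the union-bound step. You place the light/heavy cut at $|x_i x_j|\le\sqrt{\log(n)/n}$. With that cut each light summand is bounded by $M=\sqrt{\log(n)/n}$ while the target deviation is $t=\Theta(c\sqrt{\log n})$, so the Bernstein exponent is of order $\min\bigl(t^2/\sigma^2,\;t/M\bigr)=\min\bigl(\Theta(c^2 n),\,\Theta(c\sqrt{n})\bigr)=\Theta(c\sqrt{n})$, i.e.\ the per-point failure probability is only $\exp(-\Theta(c\sqrt{n}))$, which cannot absorb the $e^{\Theta(n)}$ cardinality of the net no matter how large $c$ is. The correct threshold (as in Feige--Ofek and Lemma~2.5/Theorem~5.2 of Lei--Rinaldo) is $|x_i y_j|\le \sqrt{d}/n$ with $d\asymp\log n$, i.e.\ $\sqrt{\log n}/n$, which makes both $t^2/\sigma^2$ and $t/M$ of order $\Theta(cn)$ and lets the net entropy be beaten; the pairs between the two thresholds must then be handled by the heavy-pair discrepancy machinery, which is precisely why that part of the argument is unavoidable. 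A second, smaller slip: the quadratic-form net lemma needs an $\epsilon$-net with $\epsilon<1/2$ (the loss factor is $(1-2\epsilon)^{-1}$), so a $\tfrac12$-net with a factor $4$ is not valid as stated; take, e.g., a $\tfrac14$-net of size at most $9^n$ with loss factor $2$. With these two repairs your sketch is the standard proof of the quoted theorem.
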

Define a matrix
$$
M:= \big(\beta+\frac{\alpha\log(n)}{n} \big) E[A|X]
-\frac{\alpha\log(n)}{n} (J_n-I_n).
$$
Then we can further write \eqref{eq:isingma}
as
\begin{equation} \label{eq:M}
P_{\sigma|G}(\sigma=\bar{\sigma})
= \frac{1}{Z_G(\alpha,\beta)}
\exp\Big( \frac{1}{2} \bar{\sigma}  M \bar{\sigma}^T + \frac{1}{2} \big(\beta+\frac{\alpha\log(n)}{n} \big) \bar{\sigma}  (A-E[A|X])
  \bar{\sigma}^T 
\Big)  .
\end{equation}

By definition, all the diagonal entries of $M$ are $0$.
For $i\neq j$, 
$$
M_{ij}=\left\{ 
\begin{array}{cc}
 \big(a\beta-\alpha+\frac{a\alpha\log(n)}{n} \big) \frac{\log(n)}{n}
   & \text{~if~} X_i=X_j \\
  \big(b\beta-\alpha+\frac{b\alpha\log(n)}{n} \big) \frac{\log(n)}{n}
   & \text{~if~} X_i\neq X_j
\end{array}
\right. .
$$
Given $\bar{\sigma}\in\{\pm 1\}^n$, let $u=u(\bar{\sigma}):=|\{i\in[n]:X_i=\bar{\sigma}_i=1\}|$ and $v=v(\bar{\sigma}):=|\{i\in[n]:X_i=\bar{\sigma}_i=-1\}|$. Then 
$|\{i\in[n]:X_i=1,\bar{\sigma}_i=-1\}|=n/2-u$ and $|\{i\in[n]:X_i=-1,\bar{\sigma}_i=1\}|=n/2-v$. Therefore,
\begin{equation} \label{eq:sMs}
\begin{aligned}
 \frac{1}{2} \bar{\sigma} M \bar{\sigma}^T  
= & \frac{1}{2}\Big(\big(\frac{n}{2}-2u\big)^2 +\big(\frac{n}{2}-2v\big)^2 \Big) \frac{(a\beta-\alpha)\log(n)}{n} \\
& -\big(\frac{n}{2}-2u\big) \big(\frac{n}{2}-2v\big)
\frac{(b\beta-\alpha)\log(n)}{n} 
+O(\log^2(n))  \\
 = & \frac{1}{2}\Big(\big(\frac{n}{2}-2u\big)^2 +\big(\frac{n}{2}-2v\big)^2 \Big) \frac{(a\beta-b\beta)\log(n)}{n} \\
& +\frac{1}{2} (2u-2v)^2
\frac{(b\beta-\alpha)\log(n)}{n}
+O(\log^2(n))  .
\end{aligned}
\end{equation}
Notice that $u$ and $v$ take values between $0$ and $n/2$.
According to \eqref{eq:M}, the configuration $\bar{\sigma}$ that maximizes $\frac{1}{2}\bar{\sigma} M \bar{\sigma}^T$ is (roughly) the most likely output of the Ising model.
Since we assume $a>b$, the term $\frac{1}{2}\Big(\big(\frac{n}{2}-2u\big)^2 +\big(\frac{n}{2}-2v\big)^2 \Big) \frac{(a\beta-b\beta)\log(n)}{n}$ takes maximum at four points $u,v\in\{0,n/2\}$. If $b\beta<\alpha$, then the second term $\frac{1}{2} (2u-2v)^2
\frac{(b\beta-\alpha)\log(n)}{n}$ takes maximum whenever $u=v$, and if $b\beta>\alpha$, then the second term takes maximum when $|u-v|=n/2$.
To summarize, if $b\beta<\alpha$, then $\frac{1}{2}\bar{\sigma} M \bar{\sigma}^T$ takes maximum at $u=v=0$ and $u=v=n/2$, i.e., the two maximizers are $\bar{\sigma}=\pm X$. If $b\beta>\alpha$, then $\frac{1}{2}\bar{\sigma} M \bar{\sigma}^T$ takes maximum at $(u=0,v=n/2)$ and $(u=n/2,v=0)$, i.e., the two maximizers are $\bar{\sigma}=\pm \mathbf{1}_n$.
Taking into account the effect of the error term $\frac{1}{2} \big(\beta+\frac{\alpha\log(n)}{n} \big) \bar{\sigma}  (A-E[A|X])
  \bar{\sigma}^T$ in \eqref{eq:M}, we have the following proposition:

\begin{proposition} \label{prop:1}
Let $a>b>0$ and $\alpha,\beta>0$ be constants. Let $m$ be a positive integer that is upper bounded by some polynomial of $n$.
Let 
$$
(X,G,\{\sigma^{(1)},\dots,\sigma^{(m)}\})\sim \SIBM(n,a\log(n)/n, b\log(n)/n,\alpha,\beta, m) .
$$
If $b\beta <\alpha$, then for any (arbitrarily large) $r>0$, there exists $n_0(r)$ such that for all even integers $n>n_0(\delta, r)$,
$$
P_{\SIBM} \Big(\dist(\sigma^{(i)},\pm X)< 2n/\log^{1/3}(n)
\text{~~for all~} i\in[m] \Big) \ge 1- n^{-r} .
$$
If $b\beta >\alpha$, then for any (arbitrarily large) $r>0$, there exists $n_0(r)$ such that for all even integers $n>n_0(\delta, r)$,
$$
P_{\SIBM} \Big(\dist(\sigma^{(i)},\pm \mathbf{1}_n)< 2n/\log^{1/3}(n)
\text{~~for all~} i\in[m] \Big) \ge 1- n^{-r} .
$$
\end{proposition}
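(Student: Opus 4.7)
The plan is to leverage the decomposition in \eqref{eq:M}, which splits $\log P_{\sigma|G}(\sigma=\bar\sigma)$, up to an additive constant, into a deterministic quadratic $\tfrac12\bar\sigma M\bar\sigma^T$ plus a random perturbation $\tfrac12(\beta+\alpha\log(n)/n)\bar\sigma(A-E[A|X])\bar\sigma^T$. Theorem~\ref{thm:a2} lets me condition on the event $\cG^\ast:=\{\|A-E[A|X]\|\le c\sqrt{\log n}\}$, which has probability at least $1-n^{-r}$ and on which the perturbation is uniformly $O(n\sqrt{\log n})$ across all $\bar\sigma\in\{\pm 1\}^n$. The problem therefore reduces to showing that, on $\cG^\ast$, the deterministic term $\tfrac12\bar\sigma M\bar\sigma^T$ is beaten by its maximum by at least $\omega(n\sqrt{\log n})$ whenever $d^\ast:=\dist(\bar\sigma,\pm X)\ge D:=2n/\log^{1/3}(n)$ in the case $\alpha>b\beta$ (and similarly with $\pm\mathbf{1}_n$ in the case $\alpha<b\beta$).

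Next I would recast \eqref{eq:sMs} in coordinate-free form, using $n-2(u+v)=-\bar\sigma\cdot X$, $2u-2v=\bar\sigma\cdot\mathbf{1}_n$, and the parallelogram identity, to obtain
\[
\tfrac12\bar\sigma M\bar\sigma^T = c_1(\bar\sigma\cdot X)^2 + c_2(\bar\sigma\cdot\mathbf{1}_n)^2 + O(\log^2 n),
\]
with $c_1:=(a-b)\beta\log(n)/(4n)>0$ and $c_2:=((a+b)\beta-2\alpha)\log(n)/(4n)$, so that $c_1-c_2=(\alpha-b\beta)\log(n)/(2n)$. Since $X$ is balanced, the value of the quadratic at $\bar\sigma=\pm X$ is $c_1n^2$ (first term full, second zero) and at $\bar\sigma=\pm\mathbf{1}_n$ is $c_2n^2$; hence the maximum is attained at $\pm X$ precisely when $\alpha>b\beta$ and at $\pm\mathbf{1}_n$ precisely when $\alpha<b\beta$, matching the two cases of the proposition.

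The main step is to lower-bound the gap for a generic $\bar\sigma$. Take the case $\alpha>b\beta$; using the $\bar\sigma\mapsto-\bar\sigma$ symmetry, which preserves both the quadratic and $d^\ast$, I may assume $\dist(\bar\sigma,X)=d^\ast\le n/2$. Then $n^2-(\bar\sigma\cdot X)^2=4d^\ast(n-d^\ast)$, and the crucial balance-based bound is $(\bar\sigma\cdot\mathbf{1}_n)^2\le 4{d^\ast}^2$: the constraints $u+v=n-d^\ast$ and $u,v\in[n/2-d^\ast,n/2]$ force $|u-v|\le d^\ast$, so $|\bar\sigma\cdot\mathbf{1}_n|\le 2d^\ast$. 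Substituting gives
\[
\tfrac12(\pm X)M(\pm X)^T-\tfrac12\bar\sigma M\bar\sigma^T \;\ge\; 4d^\ast\bigl[c_1(n-d^\ast)-(c_2)_+ d^\ast\bigr]-O(\log^2 n) \;\ge\; \gamma\,d^\ast\log n
\]
for some $\gamma=\gamma(a,b,\alpha,\beta)>0$, where the final inequality uses $c_1>(c_2)_+$ (equivalent to $\alpha>b\beta$) to guarantee the bracket is $\Omega(\log n)$ for $d^\ast\le n/2$. The case $\alpha<b\beta$ is entirely symmetric, with $\mathbf{1}_n$ replacing $X$ and $(\bar\sigma\cdot X)^2\le 4{d^\ast}^2$, again from the balance of $X$, replacing the bound on $(\bar\sigma\cdot\mathbf{1}_n)^2$.

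To finish, I would union-bound. On $\cG^\ast$, for any single sample the conditional probability of landing at distance $\ge D$ from the maximizer set is at most
\[
\sum_{d=D}^{n/2}2\binom{n}{d}\exp\!\bigl(-\gamma d\log n + O(n\sqrt{\log n})\bigr) \;\le\; \exp\!\bigl(-\Omega(n\log^{2/3}(n))\bigr),
\]
since $\gamma D\log n = 2\gamma n\log^{2/3}(n)$ dominates both $n\log 2$ (from $\binom{n}{d}\le 2^n$) and the error $n\sqrt{\log n}$. One further union bound over the at-most-$n^{O(1)}$ samples and over $\cG^{\ast c}$ yields the claimed $1-n^{-r}$ estimate. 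The delicate point I expect is the gap bound when $c_2>0$: the $(\bar\sigma\cdot\mathbf{1}_n)^2$ term then actively works against the $(\bar\sigma\cdot X)^2$ term, and it is precisely the balance inequality $(\bar\sigma\cdot\mathbf{1}_n)^2\le 4{d^\ast}^2$ that prevents it from swamping the positive contribution from $(\bar\sigma\cdot X)^2$.
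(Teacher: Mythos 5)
Your proposal is correct and follows essentially the same route as the paper: the decomposition \eqref{eq:M}, the spectral bound of Theorem~\ref{thm:a2}, bounding the ratios $P_{\sigma|G}(\sigma=\bar{\sigma})/P_{\sigma|G}(\sigma=X)$ (resp.\ with $\mathbf{1}_n$ as reference) on the good graph event, and a union bound over configurations, samples, and the bad graph event. The only difference is organizational: where the paper splits $\Gamma^c$ into the three regions $\Gamma_1,\Gamma_2,\Gamma_3$ and bounds each configuration's ratio by $2^{-n}e^{-n}$, you rewrite \eqref{eq:sMs} as $c_1(\bar{\sigma}\cdot X)^2+c_2(\bar{\sigma}\cdot\mathbf{1}_n)^2+O(\log^2 n)$ and prove a distance-dependent gap of order $d^\ast\log n$ (using the balance bound $|\bar{\sigma}\cdot\mathbf{1}_n|\le 2d^\ast$), which is a valid and slightly sharper way to carry out the same estimate.
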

\begin{proof}
We only prove the case of $b\beta <\alpha$ as the proof of the other case is virtually identical.
Since $A-E[A|X]$ is a symmetric matrix,
$$
|\bar{\sigma}  (A-E[A|X]) \bar{\sigma}^T|\le
\|A-E[A|X]\| \bar{\sigma} \bar{\sigma}^T=n \|A-E[A|X]\|
$$
for every $\bar{\sigma}\in\{\pm 1\}^n$.
Therefore, by Theorem~\ref{thm:a2}, for any $r>0$, there is $c>0$ such that
\begin{equation} \label{eq:cnA}
\left|\frac{1}{2} \big(\beta+\frac{\alpha\log(n)}{n} \big) \bar{\sigma}  (A-E[A|X])
  \bar{\sigma}^T \right| \le c n \sqrt{\log(n)}
  \text{~~for all~} \bar{\sigma}\in\{\pm 1\}^n
\end{equation}
with probability at least $1-n^{-2r}$.
Define the set 
\begin{align*}
\Gamma:= & \Big\{\bar{\sigma}\in\{\pm 1\}^n:u\le n/\log^{1/3}(n), v\le n/\log^{1/3}(n) \Big\} \\
& \bigcup \Big\{\bar{\sigma}\in\{\pm 1\}^n:u\ge n/2- n/\log^{1/3}(n), v \ge n/2- n/\log^{1/3}(n) \Big\} ,
\end{align*}
where the first set consists of $\bar{\sigma}$'s that differ from $-X$ in at most $2n/\log^{1/3}(n)$ coordinates, and the second set consists of $\bar{\sigma}$'s that differ from $X$ in at most $2n/\log^{1/3}(n)$ coordinates.
Given a graph $G$ whose adjacency matrix satisfies \eqref{eq:cnA}, we will show that $P_{\sigma|G}(\sigma\notin\Gamma)< e^{-n}$. Since $P_{\sigma|G}(\sigma=X)<1$, it suffices to prove that $\frac{P_{\sigma|G}(\sigma\notin\Gamma)}{P_{\sigma|G}(\sigma=X)}<e^{-n}$. 
Define another three sets
\begin{align*}
\Gamma_1:= & \Big\{ \bar{\sigma}\in\{\pm 1\}^n: n/\log^{1/3}(n)< u< n/2- n/\log^{1/3}(n)
\Big\} \\
\Gamma_2:= & 
\Big\{\bar{\sigma}\in\{\pm 1\}^n: n/\log^{1/3}(n)< v< n/2- n/\log^{1/3}(n)
\Big\} ,\\
\Gamma_3:= & \Big\{\bar{\sigma}\in\{\pm 1\}^n:u\le n/\log^{1/3}(n), v \ge n/2- n/\log^{1/3}(n)  \Big\} \\
& \bigcup \Big\{\bar{\sigma}\in\{\pm 1\}^n:u\ge n/2- n/\log^{1/3}(n), v\le n/\log^{1/3}(n) \Big\} .
\end{align*}
It is easy to verify that  $\Gamma^c=\Gamma_1\cup\Gamma_2\cup\Gamma_3$.
Next we prove that for every $G$ satisfying \eqref{eq:cnA} and every $\bar{\sigma}\in\Gamma_1\cup\Gamma_2\cup\Gamma_3$, 
\begin{equation} \label{eq:xmb}
\frac{P_{\sigma|G}(\sigma=\bar{\sigma})}{P_{\sigma|G}(\sigma=X)}<2^{-n}e^{-n} .
\end{equation}
Together with the trivial upper bound $|\Gamma_1\cup\Gamma_2\cup\Gamma_3|<2^n$, this implies that $\frac{P_{\sigma|G}(\sigma\notin\Gamma)}{P_{\sigma|G}(\sigma=X)}<e^{-n}$.

We first prove \eqref{eq:xmb} for $\bar{\sigma}\in\Gamma_1$.
Observe that $\sigma=X$ corresponds to $u=v=n/2$. By \eqref{eq:sMs}, we have
\begin{equation} \label{eq:X}
\begin{aligned}
P_{\sigma|G}(\sigma=X) & =\frac{1}{Z_G(\alpha,\beta)}
\exp\Big(\frac{1}{2} X M X^T + O(n\sqrt{\log(n)}) \Big)  \\
& =\frac{1}{Z_G(\alpha,\beta)}
\exp\Big(\frac{a\beta-b\beta}{4}n\log(n) + O(n\sqrt{\log(n)}) \Big) .
\end{aligned}
\end{equation}
On the other hand, if $\bar{\sigma}\in\Gamma_1$, then 
$$
(\frac{n}{2}-2u(\bar{\sigma}))^2\le (\frac{n}{2}-2n/\log^{1/3}(n))^2
=\frac{n^2}{4}-2n^2/\log^{1/3}(n)
+O(n^2/\log^{2/3}(n)).
$$
Since $b\beta-\alpha<0$ and $(\frac{n}{2}-2v)^2\le \frac{n^2}{4}$ for all $0\le v\le n/2$, by \eqref{eq:sMs} we have
$$
\frac{1}{2}\bar{\sigma} M \bar{\sigma}^T  
\le \frac{a\beta-b\beta}{4}n\log(n)
-(a\beta-b\beta)n\log^{2/3}(n)
+O(n\log^{1/3}(n)) ,
$$
and so
\begin{align*}
P_{\sigma|G}(\sigma=\bar{\sigma}) \le \frac{1}{Z_G(\alpha,\beta)}
\exp\Big(\frac{a\beta-b\beta}{4}n\log(n) 
-(a\beta-b\beta)n\log^{2/3}(n)
+ O(n\sqrt{\log(n)}) \Big) .
\end{align*}
Combining this with \eqref{eq:X}, we have
$$
\frac{P_{\sigma|G}(\sigma=\bar{\sigma})}{P_{\sigma|G}(\sigma=X)}
\le \exp\Big(-(a\beta-b\beta)n\log^{2/3}(n)
+ O(n\sqrt{\log(n)}) \Big)
<2^{-n} e^{-n}
$$
for all $\bar{\sigma}\in\Gamma_1$ and all $G$ satisfying \eqref{eq:cnA}. The case of $\bar{\sigma}\in\Gamma_2$ can be proved in the same way.

For $\bar{\sigma}\in\Gamma_3$, we have 
$$
|u(\bar{\sigma})-v(\bar{\sigma})| > n/4
$$
for large $n$. Since $b\beta-\alpha<0$, this implies that
$$
\frac{1}{2} (2u(\bar{\sigma})-2v(\bar{\sigma}))^2
\frac{(b\beta-\alpha)\log(n)}{n}
< - \frac{\alpha-b\beta}{8} n\log(n) ,
$$
so
$$
\frac{1}{2}\bar{\sigma} M \bar{\sigma}^T  
\le \frac{a\beta-b\beta}{4}n\log(n)
- \frac{\alpha-b\beta}{8} n\log(n) .
$$
Therefore,
\begin{align*}
P_{\sigma|G}(\sigma=\bar{\sigma}) \le \frac{1}{Z_G(\alpha,\beta)}
\exp\Big(\frac{a\beta-b\beta}{4}n\log(n) 
-\frac{\alpha-b\beta}{8} n\log(n)
+ O(n\sqrt{\log(n)}) \Big) .
\end{align*}
Combining this with \eqref{eq:X}, we have
$$
\frac{P_{\sigma|G}(\sigma=\bar{\sigma})}{P_{\sigma|G}(\sigma=X)}
\le \exp\Big(-\frac{\alpha-b\beta}{8} n\log(n)
+ O(n\sqrt{\log(n)}) \Big)
<2^{-n} e^{-n}
$$
for all $\bar{\sigma}\in\Gamma_3$ and all $G$ satisfying \eqref{eq:cnA}.

Now we have shown that for a single sample $\sigma$ produced by the SIBM, $P_{\sigma|G}(\sigma\in\Gamma)\ge 1- e^{-n}$ provided that $G$ satisfies \eqref{eq:cnA}. By the union bound, for $m$ independent samples $\sigma^{(1)},\dots,\sigma^{(m)}$ produced by the SIBM, $P_{\sigma|G}(\sigma^{(1)},\dots,\sigma^{(m)}\in\Gamma)\ge 1- m e^{-n}$ provided that $G$ satisfies \eqref{eq:cnA}.
We also know that $G$ satisfies \eqref{eq:cnA} with probability at least $1-n^{-2r}$, and by assumption $m$ is upper bounded by some polynomial of $n$. Therefore, the overall probability of $\sigma^{(1)},\dots,\sigma^{(m)}\in\Gamma$ is at least $1-n^{-r}$ when $n$ is large enough. This completes the proof of the proposition.
\end{proof}

\begin{proposition}  \label{prop:ab}
Let $a>b>0$ and $\alpha,\beta>0$ be constants. Let $m$ be a positive integer that is upper bounded by some polynomial of $n$.
Let 
$$
(X,G,\{\sigma^{(1)},\dots,\sigma^{(m)}\})\sim \SIBM(n,a\log(n)/n, b\log(n)/n,\alpha,\beta, m) .
$$
If $\alpha<b\beta$, then it is not possible to recover $X$ from the samples when $m=O(\log^{1/4}(n))$.
\end{proposition}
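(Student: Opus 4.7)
The plan is to combine Proposition~\ref{prop:1} with a Fano-style counting argument. Since $\alpha<b\beta$, Proposition~\ref{prop:1} implies that, for any arbitrarily large constant $r$, with probability at least $1-n^{-r}$ every sample $\sigma^{(i)}$ lies in the set
\[
\Gamma:=\bigl\{\bar\sigma\in\{\pm 1\}^n:\dist(\bar\sigma,\pm\mathbf{1}_n)<2n/\log^{1/3}(n)\bigr\}.
\]
Thus, conditional on the good event $E=\{\sigma^{(i)}\in\Gamma~\forall i\in[m]\}$, the full sample vector $T=(\sigma^{(1)},\dots,\sigma^{(m)})$ takes at most $|\Gamma|^m$ distinct values, so it conveys at most $m\log_2|\Gamma|$ bits of information about $X$ on $E$.

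The first concrete step is to bound $|\Gamma|$. Using $\binom{n}{k}\le(en/k)^k$ together with $\sum_{j=0}^{k}\binom{n}{j}\le(k+1)\binom{n}{k}$ evaluated at $k=\lfloor 2n/\log^{1/3}(n)\rfloor$, a routine computation gives
\[
\log_2|\Gamma|\;=\;O\!\left(\frac{n\log\log(n)}{\log^{1/3}(n)}\right).
\]
On the other hand, the target of the recovery problem is the unordered partition $\pm X$, which is uniformly distributed over $\binom{n}{n/2}/2=\Theta(2^n/\sqrt n)$ possibilities and hence has Shannon entropy $n-O(\log n)$.

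The second step is to plug these estimates into Fano's inequality. Conditioning the entropy on the indicator $\mathbf{1}_E$ and using the trivial bound $H(T\mid\mathbf{1}_E=0)\le mn$,
\[
H(T)\;\le\;1+H(T\mid\mathbf{1}_E=1)+P(E^c)\cdot mn\;\le\;m\log_2|\Gamma|+mn\cdot n^{-r}+1.
\]
Since $m$ is polynomially bounded and $r$ may be chosen arbitrarily large, the middle term is $o(1)$, so $I(\pm X;T)\le H(T)=m\cdot O(n\log\log(n)/\log^{1/3}(n))+o(n)$. Fano's inequality applied to any candidate decoder $\hat X$ then yields
\[
P_{\SIBM}(\hat X\ne\pm X)\;\ge\;1-\frac{I(\pm X;T)+1}{\log_2\!\bigl(\binom{n}{n/2}/2\bigr)}\;=\;1-\frac{m\cdot O(n\log\log(n)/\log^{1/3}(n))+o(n)}{n-O(\log n)},
\]
so driving the error to $0$ forces $m=\Omega(\log^{1/3}(n)/\log\log(n))=\omega(\log^{1/4}(n))$, which rules out $m=O(\log^{1/4}(n))$.

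The only mildly delicate step is the conditioning on $E$ in the entropy bound, but this is handled automatically by the freedom in Proposition~\ref{prop:1} to take $r$ as large as we like while $mn$ stays polynomial. Everything else reduces to the two standard ingredients above: the binomial-sum estimate for $|\Gamma|$ and the entropy $n-O(\log n)$ of a balanced $\pm 1$ partition. I do not foresee any serious obstacle beyond this bookkeeping.
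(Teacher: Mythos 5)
Your proof is correct and follows essentially the same route as the paper: Proposition~\ref{prop:1} confines all samples to the ball of radius $2n/\log^{1/3}(n)$ around $\pm\mathbf{1}_n$, whose log-cardinality $O(n\log\log(n)/\log^{1/3}(n))$ per sample is then compared against the $\Theta(n)$ bits needed to specify a balanced partition, forcing $m=\Omega(\log^{1/3}(n)/\log\log(n))$. The only differences are cosmetic: you make the paper's informal ``bits of information'' count rigorous via Fano's inequality with explicit conditioning on the good event, and you bound the binomial sum by $\binom{n}{k}\le (en/k)^k$ where the paper uses a Chernoff bound on $\Binom(n,1/2)$.
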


\begin{proof}
First observe that there are $\binom{n}{n/2}$ balanced partitions, so one needs at least $\log_2 \binom{n}{n/2}=\Theta(n)$ bits to recover $X$.
By Proposition~\ref{prop:1}, with probability $1-o(n^{-4})$, $\dist(\sigma^{(i)},\pm \mathbf{1}_n)< 2n/\log^{1/3}(n)$
for all $i\in[m]$. Therefore, each $\sigma^{(i)}$ takes at most
$$
T:=\sum_{j=0}^{2n/\log^{1/3}(n)} \binom{n}{j}
$$
values, so each $\sigma^{(i)}$ contains at most $\log_2 T$ bits of information. Next we prove that $\log_2 T=O(\frac{\log\log(n)}{\log^{1/3}(n)} n)$, so we need at least $\Omega(\frac{\log^{1/3}(n)}{\log\log(n)})$ samples to recover $X$, which proves the proposition.

In order to upper bound $T$, we define a binomial random variable $Y\sim\Binom(n,1/2)$. Then
$$
T=2^n P(Y\le 2n/\log^{1/3}(n))
= 2^n P(Y\ge n- 2n/\log^{1/3}(n)).
$$
The moment generating function of $Y$ is $(\frac{1}{2}+\frac{1}{2}e^s)^n$. By Chernoff bound, for any $s>0$,
$$
P(Y\ge n- 2n/\log^{1/3}(n)) \le
(\frac{1}{2}+\frac{1}{2}e^s)^n e^{-sn}
e^{2sn/\log^{1/3}(n)}
= 2^{-n} (1+e^{-s})^n e^{2sn/\log^{1/3}(n)} .
$$
As a consequence, for any $s>0$,
$$
\log_2 T\le n\Big(\log(1+e^{-s})
+\frac{2s}{\log^{1/3}(n)} \Big) .
$$
Taking $s=\log\log(n)$ into this bound, we obtain that $\log_2 T=O(\frac{\log\log(n)}{\log^{1/3}(n)} n)$.
\end{proof}

\section{$\sigma=\pm X$ with probability $1-o(1)$ when $\beta>\beta^\ast$} \label{sect:equal}

Recall the definition of $\beta^\ast$ in \eqref{eq:defstar}.
\begin{proposition} \label{prop:tt}
Let $a,b,\alpha,\beta> 0$ be constants satisfying that $\sqrt{a}-\sqrt{b} > \sqrt{2}$, $\beta>\beta^\ast$ and $\alpha>b\beta$. 
Let 
$
(X,G,\sigma) \sim \SIBM(n,a\log(n)/n, b\log(n)/n,\alpha,\beta, 1) .
$
Then
$$
P_{\SIBM}(\sigma=X \text{~or~} \sigma=-X)=1-o(1) .
$$
\end{proposition}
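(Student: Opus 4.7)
The plan is to show that the ratio
\[
R := \frac{P_{\SIBM}(1 \le \dist(\sigma, X) < 2n/\log^{1/3}(n))}{P_{\SIBM}(\sigma = X)}
\]
is $o(1)$. Combined with Proposition~\ref{prop:1} (which guarantees $\dist(\sigma,\pm X) < 2n/\log^{1/3}(n)$ with probability $1-o(1)$) and the symmetry $P_{\SIBM}(\sigma = \bar{\sigma}) = P_{\SIBM}(\sigma = -\bar{\sigma})$, this will immediately yield $P_{\SIBM}(\sigma = \pm X) = 1 - o(1)$. To bound $R$, I would expand by distance $k$ and by subsets $\cI$ of flipped coordinates, writing
\[
\frac{P_{\sigma|G}(\sigma = X^{(\sim\cI)})}{P_{\sigma|G}(\sigma = X)} \le \exp\Bigl(2\bigl(\beta + \tfrac{\alpha\log n}{n}\bigr)(B_\cI - A_\cI)\Bigr),
\]
where $A_\cI, B_\cI$ are the same-side and opposite-side boundary edge counts introduced in Section~\ref{sect:sketch}. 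Under SSBM with $|\cI| = k < 2n/\log^{1/3}(n)$, $A_\cI \sim \Binom((n/2 - o(n))k, a\log(n)/n)$ and $B_\cI \sim \Binom((n/2 - o(n))k, b\log(n)/n)$ independently.

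I split into two regimes. \textbf{Regime (i): $\beta^\ast < \beta \le \tfrac{1}{4}\log(a/b)$.} Here $g(\beta) < 0$, and the computation in \eqref{eq:nn} gives
\[
E_G\Bigl[\tfrac{P_{\sigma|G}(\dist(\sigma,X) = k)}{P_{\sigma|G}(\sigma = X)}\Bigr] \le \binom{n}{k} n^{k(g(\beta) - 1 + o(1))} < n^{k(g(\beta) + o(1))}.
\]
Applying Markov's inequality levelwise, there is a set $\cG^{(k)}$ with $P(G \in \cG^{(k)}) \ge 1 - n^{kg(\beta)/4}$ on which $\tfrac{P_{\sigma|G}(\dist(\sigma,X) = k)}{P_{\sigma|G}(\sigma = X)} \le n^{kg(\beta)/2}$. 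Taking $\cG = \bigcap_{k=1}^{2n/\log^{1/3}(n)} \cG^{(k)}$ and using a geometric-series union bound gives $P(G \in \cG) = 1 - o(1)$ and $R \le \sum_{k \ge 1} n^{kg(\beta)/2} = o(1)$ on $\cG$.

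\textbf{Regime (ii): $\beta > \tfrac{1}{4}\log(a/b)$.} The naive Markov bound fails because $t^\ast = \tfrac{be^{2\beta}-ae^{-2\beta}}{2} > 0$ and the expectation of $\sum_i \exp(2\beta(B_i - A_i))$ is dominated by rare large-deviation events with $B_i > A_i$. The key is to exploit the cut-off: define $\cG_1 = \{G : B_i - A_i < 0 \text{ for all } i \in [n]\}$. By Chernoff, $P(B_i - A_i \ge 0) \le n^{-(\sqrt{a}-\sqrt{b})^2/2 + o(1)}$, so $P(G \notin \cG_1) \le n^{1 - (\sqrt{a}-\sqrt{b})^2/2 + o(1)} = o(1)$ by the assumption $\sqrt{a} - \sqrt{b} > \sqrt{2}$. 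For $G \in \cG_1$ the sum \eqref{eq:gour} truncates to $t < 0$, and using the concavity of $f_\beta$ together with the tight Chernoff bound $E[D(G,t) e^{2\beta t \log n}] \le n^{f_\beta(t)}$, the restricted expectation over $t \le -1/\log n$ is dominated by its value at $t \to 0^-$, giving $E_G[\mathbbm{1}_{\cG_1} \cdot \tfrac{P_{\sigma|G}(\dist(\sigma,X) = 1)}{P_{\sigma|G}(\sigma = X)}] = O(\log n) \cdot n^{f_\beta(0)} = o(1)$, since $f_\beta(0) = 1 - (\sqrt{a}-\sqrt{b})^2/2 < 0$. The same truncation argument extends to general $k$: restricting to $\cG_1$ forces $B_\cI - A_\cI$ to remain controlled, and a concavity-of-$f_\beta$ argument analogous to the $k=1$ case bounds each term by $n^{k(f_\beta(0) + o(1))}$, after which a geometric union bound over $k$ closes the argument.

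The main obstacle will be Regime (ii) for $k \ge 2$: the single-vertex cut-off event $\cG_1$ does not directly control $B_\cI - A_\cI$ for arbitrary subsets $\cI$, so I will need to work either with a strengthened cut-off event (e.g.\ controlling the maximum excess of boundary edges across all small subsets) or with a direct moment-generating-function estimate for $B_\cI - A_\cI$ that mirrors the $k=1$ analysis term-by-term. The concavity of $f_\beta$ is what ultimately saves the day, because even though the $t^\ast > 0$ maximum is unreachable under $\cG_1$, the best attainable value of $f_\beta(t)$ on $t \le 0$ is $f_\beta(0) < 0$, which produces exponential decay in $k$ after aggregating and yields $R = o(1)$.
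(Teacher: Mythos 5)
Your proposal is correct and follows essentially the same route as the paper: reduce to the ratio $R$ via Proposition~\ref{prop:1} and the sign symmetry, bound each distance level $k$ through the moment generating function of $B_{\cI}-A_{\cI}$, apply Markov's inequality when $g(\beta)<0$, and invoke the cut-off event together with concavity of $f_{\beta}$ (so that the best exponent attainable on $t\le 0$ is $f_{\beta}(0)=\tilde{g}(\beta)<0$) when $\beta>\frac{1}{4}\log\frac{a}{b}$. The one step you leave open---controlling $B_{\cI}-A_{\cI}$ for all small subsets in regime (ii)---is resolved in the paper by exactly your first option: the subset-level cut-off event $\cG_1^{(k)}=\{G: B_{\cI}-A_{\cI}<0 \text{~for all~} |\cI|=k\}$, whose failure probability is at most $\binom{n}{k}\, n^{-k((\sqrt{a}-\sqrt{b})^2/2+o(1))}\le n^{k(1-(\sqrt{a}-\sqrt{b})^2/2+o(1))}$ and hence decays geometrically in $k$ because $\sqrt{a}-\sqrt{b}>\sqrt{2}$, which is what lets the union over $k$ close (a per-subset Markov/MGF estimate alone, your second option, would not defeat the rare-event domination).
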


We have proved in Proposition~\ref{prop:1} that if $\alpha>b\beta$, then $\dist(\sigma,\pm X) \le 2n/\log^{1/3}(n)$
 with probability $1-o(1)$.
For $\cI\subseteq[n]$, define $X^{(\sim\cI)}$ as the vector obtained by flipping the coordinates in $\cI$ while keeping all the other coordinates to be the same as $X$, i.e., $X_i^{(\sim\cI)}=-X_i$ for all $i\in\cI$ and $X_i^{(\sim\cI)}=X_i$ for all $i\notin\cI$. 
Then Proposition~\ref{prop:1} tells us that
$$
\sum_{\cI\subseteq[n],~
2n/\log^{1/3}(n)<|\cI|<n-2n/\log^{1/3}(n)} P_{\SIBM}(\sigma=X^{(\sim\cI)})  = o(1) .
$$
By definition, $P_{\SIBM}(\sigma=\bar{\sigma})=P_{\SIBM}(\sigma=-\bar{\sigma})$ for all $\bar{\sigma}\in\{\pm 1\}^n$. Therefore, 
$$
\sum_{\cI\subseteq[n],1\le |\cI|\le 2n/\log^{1/3}(n)} P_{\SIBM}(\sigma=-X^{(\sim\cI)}) =\sum_{\cI\subseteq[n],1\le |\cI|\le 2n/\log^{1/3}(n)} P_{\SIBM}(\sigma=X^{(\sim\cI)}).
$$
As a consequence, to prove Proposition~\ref{prop:tt}, we only need to show that
$$
\sum_{\cI\subseteq[n],1\le |\cI|\le 2n/\log^{1/3}(n)} P_{\SIBM}(\sigma=X^{(\sim\cI)}) 
= o(1) .
$$
This is further equivalent to proving that there exists a set $\cG$ such that

\noindent (i)
$P(G\in\cG)=1-o(1)$, where the probability is calculated according to the $\SSBM(n, a\log(n)/n, \linebreak[4] b\log(n)/n)$.

\noindent (ii)
For every $G\in\cG$,
$$
\sum_{\cI\subseteq[n],1\le |\cI|\le 2n/\log^{1/3}(n)}
\frac{P_{\sigma|G}(\sigma=X^{(\sim\cI)})}{P_{\sigma|G}(\sigma=X)} =o(1) .
$$
In order to prove the existence of such a set $\cG$, we define two functions
\begin{equation}  \label{eq:gbt}
\begin{aligned}
g(\beta) & := \frac{b e^{2\beta}+a e^{-2\beta}}{2}-\frac{a+b}{2}+1 , \\
\tilde{g}(\beta) & :=\left\{
\begin{array}{cc}
  g(\beta)   & \text{~if~} \beta< \frac{1}{4}\log\frac{a}{b} \\
  g(\frac{1}{4}\log\frac{a}{b})= \sqrt{ab}-\frac{a+b}{2}+1  & \text{~if~} \beta\ge \frac{1}{4}\log\frac{a}{b}
\end{array}
\right. ,
\end{aligned}
\end{equation}
and we will prove in Lemma~\ref{lm:ele} below (see the end of Section~\ref{sect:k=1}) that $\tilde{g}(\beta)<0$ under the conditions of Proposition~\ref{prop:tt} (i.e., $\sqrt{a}-\sqrt{b} > \sqrt{2}$ and $\beta>\beta^\ast$).
The existence of $\cG$ is guaranteed by the following proposition:
\begin{proposition} \label{prop:big}
Let $a,b,\alpha,\beta> 0$ be constants satisfying that $\sqrt{a}-\sqrt{b} > \sqrt{2}$, $\beta>\beta^\ast$ and $\alpha>b\beta$. 
Let 
$
(X,G,\sigma) \sim \SIBM(n,a\log(n)/n, b\log(n)/n,\alpha,\beta, 1) .
$
There is an integer $n_0$ such that for every even integer $n>n_0$ and  every integer $1\le k\le 2n/\log^{1/3}(n)$, there is a set $\cG^{(k)}$ for which

\noindent (i)
$P(G\in\cG^{(k)}) \ge 1- 2 n^{k\tilde{g}(\beta)/8}$ ,

\noindent (ii) For every $G\in\cG^{(k)}$,
$$
\sum_{\cI\subseteq[n],|\cI|=k}
\frac{P_{\sigma|G}(\sigma=X^{(\sim \cI)} )}
{P_{\sigma|G}(\sigma=X)} <
n^{k \tilde{g}(\beta) /2} .
$$
\end{proposition}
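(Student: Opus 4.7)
The plan is to bound
$$S_k(G) := \sum_{\cI\subseteq[n],\,|\cI|=k}\frac{P_{\sigma|G}(\sigma=X^{(\sim\cI)})}{P_{\sigma|G}(\sigma=X)}$$
by combining a first-moment calculation, an exponential-tilt (``shift'') argument, and Markov's inequality. The first step is to compute the likelihood ratio directly from \eqref{eq:isingma} by tracking which cross-pair contributions flip sign when the coordinates in $\cI$ are flipped; this yields
$$\frac{P_{\sigma|G}(\sigma=X^{(\sim\cI)})}{P_{\sigma|G}(\sigma=X)}=\exp\!\Big(2\big(\beta+\tfrac{\alpha\log n}{n}\big)(B_\cI-A_\cI)-\tfrac{2\alpha\log n}{n}(k_+-k_-)^2\Big),$$
where $k_\pm:=|\{i\in\cI:X_i=\pm1\}|$ and $A_\cI,B_\cI$ count the edges from $\cI$ into $[n]\setminus\cI$ with equal/opposite $X$-labels. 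Because the quadratic term is non-positive, it suffices to bound $\sum_\cI\exp(2\beta'(B_\cI-A_\cI))$ with $\beta':=\beta+\alpha\log n/n=\beta+o(1)$.

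Next I would establish, for any tilt $\beta_0>0$, the moment generating function bound
$$E_G[\exp(2\beta_0(B_\cI-A_\cI))]=n^{k(g(\beta_0)-1+o(1))}\qquad\text{uniformly in }|\cI|=k\le 2n/\log^{1/3}n,$$
by decomposing $A_\cI$ and $B_\cI$ into four independent Binomial summands indexed by the label pattern of the endpoints, taking logarithms, and using $k\log n/n=o(1)$ to discard the lower-order Taylor terms and the $O(k^2)$ correction in the binomial exponents. Multiplying by $\binom{n}{k}\le n^k$ then gives the first-moment bound $n^{k(g(\beta_0)+o(1))}$ on the tilted sum.

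The central idea is the exponential shift. Set $\beta_1:=\min(\beta',\tfrac14\log(a/b))$, so that $g(\beta_1)=\tilde g(\beta)$ by \eqref{eq:gbt}, and introduce the cut-off event
$$\cG^{(k)}_{\mathrm{cut}}:=\{G:B_\cI-A_\cI\le 0\text{ for every }\cI\subseteq[n]\text{ with }|\cI|=k\}.$$
Applying Chernoff with tilt $2\beta_1$ to one fixed $\cI$, together with a union bound over $\binom{n}{k}\le n^k$ subsets, yields $P(G\notin\cG^{(k)}_{\mathrm{cut}})\le n^{k(\tilde g(\beta)+o(1))}$; this is precisely where the spectral-threshold hypothesis $\sqrt a-\sqrt b>\sqrt 2$ enters, since it forces $g(\tfrac14\log(a/b))=1-(\sqrt a-\sqrt b)^2/2<0$, and combined with Lemma~\ref{lm:ele} guarantees $\tilde g(\beta)<0$. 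On $\cG^{(k)}_{\mathrm{cut}}$, the inequality $B_\cI-A_\cI\le 0$ together with $\beta'\ge\beta_1$ gives the pointwise bound $\exp(2\beta'(B_\cI-A_\cI))\le\exp(2\beta_1(B_\cI-A_\cI))$, so the previous paragraph gives
$$E_G\!\left[S_k(G)\,\mathbbm{1}_{\cG^{(k)}_{\mathrm{cut}}}\right]\le n^{k(\tilde g(\beta)+o(1))}.$$
A Markov inequality with threshold $n^{k\tilde g(\beta)/2}$ then contributes a further failure probability of $n^{k(\tilde g(\beta)/2+o(1))}$. Setting $\cG^{(k)}:=\cG^{(k)}_{\mathrm{cut}}\cap\{S_k(G)\le n^{k\tilde g(\beta)/2}\}$ and summing the two failure bounds, the strict negativity of $\tilde g(\beta)$ lets us absorb the $o(1)$ in the exponent and conclude $P(G\notin\cG^{(k)})\le 2n^{k\tilde g(\beta)/8}$ for $n$ large, which is (i); conclusion (ii) is immediate from the definition of $\cG^{(k)}$.

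I expect the main obstacle to be the regime $\beta>\tfrac14\log(a/b)$: there the untwisted first moment $E_G[\exp(2\beta(B_\cI-A_\cI))]$ blows up as $n^{k(g(\beta)-1)}$ with $g(\beta)$ possibly much larger than $1$, so Markov at the natural tilt is useless --- the expectation is dominated by atypical graphs with many ``rebellious'' vertices where $B_i-A_i>0$. The shift trick sidesteps this by doing the Markov step at the minimizer $\beta_1=\tfrac14\log(a/b)$ of $g$, but this pointwise replacement is legitimate only on $\cG^{(k)}_{\mathrm{cut}}$, and uniformly controlling this event over all $\binom{n}{k}$ subsets is exactly where the hypothesis $\sqrt a-\sqrt b>\sqrt 2$ is indispensable. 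Everything else --- the likelihood-ratio identity and the Binomial moment computation --- is mechanical.
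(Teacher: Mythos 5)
Your proposal is correct, and it follows the same skeleton as the paper's proof -- the likelihood-ratio identity via $A_\cI,B_\cI$, a ``cut-off'' event on which $B_\cI-A_\cI\le 0$ for every $|\cI|=k$ (whose probability is controlled by Chernoff plus a union bound over $\binom{n}{k}\le n^k$ subsets, which is exactly where $\sqrt{a}-\sqrt{b}>\sqrt{2}$ and $\tilde g(\beta)<0$ enter), and then a first-moment/Markov step at threshold $n^{k\tilde g(\beta)/2}$ with the two failure probabilities absorbed into $2n^{k\tilde g(\beta)/8}$. Where you genuinely diverge is in how the exponent $\tilde g(\beta)$ is extracted on the cut-off event. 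The paper bins the values of $B_\cI-A_\cI$ as $tk\log(n)$ for negative $t$, splits the range at $t=(b-a)/2$, applies Markov to each binned count $D^{(k)}(G,t)\exp(2\beta tk\log n)$, and invokes the concavity analysis of the auxiliary function $f_\beta(t)$ (Lemma~\ref{lm:tus}: $\max_{t\le 0}f_\beta(t)=\tilde g(\beta)$). You instead observe that on the cut-off event each exponent $B_\cI-A_\cI$ is nonpositive, so the tilt $\beta'=\beta+\alpha\log(n)/n$ can be replaced pointwise by $\beta_1=\min(\beta',\tfrac14\log\tfrac{a}{b})$, and then a single application of the moment generating function bound at $\beta_1$ (with $g(\beta_1)=\tilde g(\beta)+o(1)$ in both regimes, since $\tfrac14\log\tfrac{a}{b}$ minimizes $g$) gives $E[S_k\mathbbm{1}_{\cG^{(k)}_{\mathrm{cut}}}]\le n^{k(\tilde g(\beta)+o(1))}$ directly. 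This tilt-substitution replaces the paper's binning, the function $f_\beta$, and Lemma~\ref{lm:tus} by one monotonicity inequality, and is arguably cleaner for this proposition; the paper's finer $f_\beta(t)$ machinery is, however, reused later for the matching lower bounds and concentration (Appendix~\ref{ap:um}, Proposition~\ref{prop:con}), so it is not wasted there. Two cosmetic points: you only need (and only prove) the upper-bound direction of the MGF estimate, so state it as ``$\le$'' rather than ``$=$''; and the Markov threshold should be taken marginally below $n^{k\tilde g(\beta)/2}$ (or the $o(1)$ slack used) to get the strict inequality in conclusion (ii), exactly as the paper does with its $\epsilon=-\tilde g(\beta)/4$ slack.
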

With the $\cG^{(k)}$'s given by Proposition~\ref{prop:big}, we
define 
$$
\cG:=\bigcap_{k=1}^{2n/\log^{1/3}(n)} \cG^{(k)} .
$$
By the union bound,
$$
P(G\in\cG)\ge 1-2 \sum_{k=1}^{2n/\log^{1/3}(n)} n^{k\tilde{g}(\beta)/8}
> 1- \frac{2 n^{\tilde{g}(\beta)/8}}{1-n^{\tilde{g}(\beta)/8}}
=1-o(1),
$$
where the last equality follows from $\tilde{g}(\beta)<0$. Moreover, for every $G\in\cG$,
\begin{align*}
& \sum_{\cI\subseteq[n],1\le |\cI|\le 2n/\log^{1/3}(n)}
\frac{P_{\sigma|G}(\sigma=X^{(\sim\cI)})}{P_{\sigma|G}(\sigma=X)} =
\sum_{k=1}^{2n/\log^{1/3}(n)}
\hspace*{0.05in}
\sum_{\cI\subseteq[n],|\cI|=k}
\frac{P_{\sigma|G}(\sigma=X^{(\sim \cI)} )}
{P_{\sigma|G}(\sigma=X)}  \\
& < \sum_{k=1}^{2n/\log^{1/3}(n)}
n^{k \tilde{g}(\beta) /2}
< \frac{n^{\tilde{g}(\beta)/2}}{1-n^{\tilde{g}(\beta)/2}} =o(1) .
\end{align*}
Thus we have shown that Proposition~\ref{prop:tt} is implied by Proposition~\ref{prop:big}. In the rest of this section, we will prove the latter proposition. In Section~\ref{sect:k=1}, we will prove Proposition~\ref{prop:big} for the special case of $k=1$ to illustrate the basic idea of the proof. Then we prove Proposition~\ref{prop:big} for general $k$ in Section~\ref{sect:gen}.

\subsection{Proof of Proposition~\ref{prop:big} for $k=1$} \label{sect:k=1}

Given the ground truth $X$, a graph $G$ and a vertex $i\in[n]$, define
\begin{equation} \label{eq:defAB}
\begin{aligned}
& A_i=A_i(G):=|\{j\in[n]\setminus\{i\}:\{i,j\}\in E(G), X_j=X_i\}| , \\
& B_i=B_i(G):=|\{j\in[n]\setminus\{i\}:\{i,j\}\in E(G), X_j=-X_i\}| .
\end{aligned}
\end{equation}
Next we give an upper bound on $P(B_i-A_i\ge t\log(n))$ for $t\in [\frac{1}{2}(b-a), 0]$. We take the left boundary to be $\frac{1}{2}(b-a)$ because $\frac{E[B_i-A_i]}{\log(n)}\to\frac{1}{2}(b-a)$ as $n\to\infty$.
\begin{proposition}  \label{prop:cher}
For $t\in [\frac{1}{2}(b-a), 0]$,
\begin{equation} \label{eq:upba}
\begin{aligned}
& P(B_i-A_i\ge t\log(n))  \\
\le &  \exp\Big(\log(n)
\Big(\sqrt{t^2+ab} -t\big(\log(\sqrt{t^2+ab}+t)-\log(b) \big) -\frac{a+b}{2} + O\big(\frac{\log(n)}{n}\big) \Big)\Big) .
\end{aligned}
\end{equation}
\end{proposition}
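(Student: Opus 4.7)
The plan is a direct Chernoff bound argument, optimized over the tilt parameter. Since $A_i$ and $B_i$ are independent with $A_i \sim \Binom(n/2 - 1, a\log(n)/n)$ and $B_i \sim \Binom(n/2, b\log(n)/n)$, for any $s \ge 0$ we have
\begin{equation*}
P(B_i - A_i \ge t\log(n)) \le e^{-st\log(n)} \, E[e^{sB_i}] \, E[e^{-sA_i}].
\end{equation*}
The moment generating functions are
\begin{equation*}
E[e^{sB_i}] = \Bigl(1 + \tfrac{b\log(n)}{n}(e^s - 1)\Bigr)^{n/2}, \qquad E[e^{-sA_i}] = \Bigl(1 + \tfrac{a\log(n)}{n}(e^{-s} - 1)\Bigr)^{n/2 - 1}.
\end{equation*}
Taking logarithms and expanding $\log(1 + x) = x - x^2/2 + O(x^3)$ with $x = O(\log(n)/n)$, both contributions become
\begin{equation*}
\tfrac{\log(n)}{2} b(e^s - 1) + O(\log^2(n)/n) \quad\text{and}\quad \tfrac{\log(n)}{2} a(e^{-s} - 1) + O(\log^2(n)/n),
\end{equation*}
so the Chernoff bound reads
\begin{equation*}
P(B_i - A_i \ge t\log(n)) \le \exp\Bigl(\log(n)\bigl[h_t(s) + O(\log(n)/n)\bigr]\Bigr),
\end{equation*}
where $h_t(s) := -st + \tfrac{1}{2}(be^s + ae^{-s} - a - b)$.

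Next I optimize over $s \ge 0$. Differentiating gives $h_t'(s) = -t + \tfrac{1}{2}(be^s - ae^{-s})$, so the first-order condition $be^s - ae^{-s} = 2t$ is a quadratic in $u = e^s$: $bu^2 - 2tu - a = 0$, with positive root
\begin{equation*}
e^{s^\ast} = \frac{t + \sqrt{t^2 + ab}}{b}, \qquad s^\ast = \log(t + \sqrt{t^2 + ab}) - \log(b).
\end{equation*}
A short computation then yields $b e^{s^\ast} = t + \sqrt{t^2 + ab}$ and $a e^{-s^\ast} = \sqrt{t^2 + ab} - t$, so $be^{s^\ast} + ae^{-s^\ast} = 2\sqrt{t^2 + ab}$, and
\begin{equation*}
h_t(s^\ast) = \sqrt{t^2 + ab} - t\bigl(\log(\sqrt{t^2+ab} + t) - \log(b)\bigr) - \tfrac{a+b}{2},
\end{equation*}
which matches the exponent in the claimed bound.

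The one thing I would want to verify carefully---and it is the main (though still minor) obstacle---is that $s^\ast \ge 0$ throughout the stated range $t \in [\tfrac{1}{2}(b-a), 0]$, so that the Chernoff bound is actually valid. The inequality $s^\ast \ge 0$ is $\sqrt{t^2 + ab} \ge b - t$; since $t \le 0$ makes the right side positive, squaring gives the equivalent condition $2t \ge b - a$, which is exactly the left endpoint of the interval. At the boundary $t = (b-a)/2$ one has $s^\ast = 0$ and the bound trivially reduces to $1$; for $t \in (\tfrac{1}{2}(b-a), 0]$ the optimal $s^\ast$ is strictly positive and the Chernoff inequality applies. Combining with the $O(\log^2(n)/n)$ correction from the logarithm expansion (which after multiplying by $\log(n)$ becomes the $O(\log(n)/n)$ term inside the exponent) yields \eqref{eq:upba}.
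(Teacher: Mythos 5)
Your proof is correct and is essentially the paper's own argument: the same Chernoff bound on $B_i-A_i$ via the product of binomial moment generating functions, the same Taylor expansion giving the $O(\log(n)/n)$ correction, the same optimizer $s^\ast=\log(\sqrt{t^2+ab}+t)-\log(b)$, and the same verification that $s^\ast\ge 0$ precisely on $t\in[\tfrac{1}{2}(b-a),0]$ (the paper checks this via $f'(0)=b-a-2t\le 0$ and monotonicity of $f'$, which is equivalent to your squaring argument). No gaps.
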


\begin{proof}
By definition,
$A_i\sim \Binom(\frac{n}{2}-1,\frac{a\log(n)}{n})$ and $B_i\sim \Binom(\frac{n}{2}, \frac{b\log(n)}{n})$, and they are independent.
The moment generating function of $B_i-A_i$ is
\begin{align*}
E[e^{s(B_i-A_i)}]
& =\Big(1-\frac{b\log(n)}{n}+\frac{b\log(n)}{n} e^s \Big)^{n/2}
\Big(1-\frac{a\log(n)}{n}+\frac{a\log(n)}{n} e^{-s} \Big)^{n/2-1}  \\
& = 
\exp\Big(\frac{\log(n)}{2} \Big( b e^s-b +O\big(\frac{\log(n)}{n}\big) \Big)\Big)
\exp\Big(\frac{\log(n)}{2} \Big( a e^{-s}-a + O\big(\frac{\log(n)}{n}\big) \Big) \Big)
 \\
& = 
\exp\Big(\frac{\log(n)}{2} \Big( a e^{-s}+b e^s-a-b + O\big(\frac{\log(n)}{n}\big) \Big)\Big) ,
\end{align*}
where we use the Taylor expansion $\log(1+x)=x+O(x^2)$ to obtain the second equality.
By Chernoff bound, for any $s\ge 0$, we have
\begin{equation} \label{eq:mmd}
\begin{aligned}
& P(B_i-A_i\ge t\log(n))\le
\frac{E[e^{s(B_i-A_i)}]}{e^{st\log(n)}}  \\
 \le & \exp\Big(\frac{\log(n)}{2} \Big( a e^{-s}+b e^s -2st -a-b + O\big(\frac{\log(n)}{n}\big) \Big)\Big)  .
 \end{aligned}
\end{equation}
Let $f(s):=a e^{-s}+be^s-2st$. We want to find $\min_{s\ge 0}f(s)$ to plug into the above upper bound. Since 
$f'(s)=-ae^{-s}+be^s-2t$ and 
$f''(s)=ae^{-s}+be^s>0$, $f(s)$ is a convex function and takes global minimum at $s^\ast$ such that $f'(s^\ast)=0$. Next we show that $s^\ast\ge 0$ for all $t\ge \frac{1}{2}(b-a)$, so $\min_{s\ge 0}f(s)=f(s^\ast)$. Indeed, this follows directly from the facts that $f'(0)=b-a-2t\le 0=f'(s^\ast)$ and that $f'(s)$ is an increasing function. Taking $s^\ast=\log(\sqrt{t^2+ab}+t)-\log(b)$ into \eqref{eq:mmd}, we obtain \eqref{eq:upba} for all $t\in[\frac{1}{2}(b-a), 0]$ and large enough $n$.
\end{proof}

Note that $A_i$ and $B_i$ are functions of the underlying graph $G$.
Given a graph $G$, define 
$$
\tilde{D}(G):=|\{i\in[n]:B_i- A_i\ge 0\}|
\text{~~and~~}
\tilde{D}_i(G):=\mathbbm{1}[B_i-A_i\ge 0] ,
$$
where $\mathbbm{1}[\cdot]$ is the indicator function. Then
$\tilde{D}(G)=\sum_{i=1}^n \tilde{D}_i(G)$ and
$$
E[\tilde{D}(G)]=\sum_{i=1}^n E[\tilde{D}_i(G)]
=\sum_{i=1}^n P(B_i- A_i\ge 0).
$$
Taking $t=0$ into \eqref{eq:upba}, we have
$P(B_i- A_i\ge 0)\le \exp\big(\log(n)(-\frac{(\sqrt{a}-\sqrt{b})^2}{2} +o(1)) \big)$. Therefore,
$$
E[\tilde{D}(G)]\le n \exp\Big(\log(n)\big(-\frac{(\sqrt{a}-\sqrt{b})^2}{2} +o(1) \big) \Big)
= n^{1-\frac{(\sqrt{a}-\sqrt{b})^2}{2} +o(1)}.
$$
By Markov inequality,
\begin{equation} \label{eq:tD}
P\big(\tilde{D}(G)=0 \big) = 1-
P\big(\tilde{D}(G)\ge 1\big) \ge 1- E[\tilde{D}(G)]
\ge 1- n^{1-\frac{(\sqrt{a}-\sqrt{b})^2}{2} +o(1)} .
\end{equation}
Since $\sqrt{a}-\sqrt{b} > \sqrt{2}$, we have
$P\big(\tilde{D}(G)=0 \big)= 1-o(1)$.

Let $X^{(\sim i)}$ be the vector obtained by flipping the $i$th coordinate of $X$ while keeping all the other coordinates to be the same, i.e., $X_i^{(\sim i)}=-X_i$ and $X_j^{(\sim i)}=X_j$ for all $j\neq i$.
Next we calculate the ratio
$$
\frac{\sum_{i=1}^n P_{\sigma|G}(\sigma=X^{(\sim i)} )}
{P_{\sigma|G}(\sigma=X)} .
$$
By \eqref{eq:isingma}, we have
\begin{align*}
\frac{P_{\sigma|G}(\sigma=X^{(\sim i)} )}
{P_{\sigma|G}(\sigma=X)}
& = \exp\Big(2\big(\beta+\frac{\alpha\log(n)}{n} \big) (B_i-A_i)
-\frac{2\alpha\log(n)}{n} \Big) \\
& \le \exp\Big(2\big(\beta+\frac{\alpha\log(n)}{n} \big) (B_i-A_i) \Big)  .
\end{align*}
Since $B_i-A_i$ takes integer value between $-n/2$ and $n/2$,
we can use indicator functions to write
\begin{align*}
& \exp\Big(2\big(\beta+\frac{\alpha\log(n)}{n} \big) (B_i-A_i) \Big) \\
= & \sum_{t\log(n)=-n/2}^{n/2}
\mathbbm{1}[B_i-A_i=t \log(n)] \exp\Big(2\big(\beta+\frac{\alpha\log(n)}{n} \big) t \log(n) \Big) ,
\end{align*}
where the quantity $t\log(n)$ ranges over all integer values from $-n/2$ to $n/2$ in the summation on the second line.
Define $D(G,t):=|\{i\in[n]:B_i-A_i= t\log(n)\}|$ and notice that $D(G,t)=\sum_{i=1}^n \mathbbm{1}[B_i-A_i=t \log(n)]$.
Therefore,
\begin{equation}  \label{eq:fd}
\begin{aligned}
 & \frac{\sum_{i=1}^n P_{\sigma|G}(\sigma=X^{(\sim i)} )}
{P_{\sigma|G}(\sigma=X)}
\le \sum_{i=1}^n \exp\Big(2\big(\beta+\frac{\alpha\log(n)}{n} \big) (B_i-A_i) \Big) \\
= & \sum_{i=1}^n
\hspace*{0.05in}
\sum_{t\log(n)=-n/2}^{n/2}
\mathbbm{1}[B_i-A_i=t \log(n)] \exp\Big(2\big(\beta+\frac{\alpha\log(n)}{n} \big) t \log(n) \Big) \\
= & \sum_{t\log(n)=-n/2}^{n/2}
D(G,t) \exp\Big(2\big(\beta+\frac{\alpha\log(n)}{n} \big) t \log(n) \Big)
\end{aligned}
\end{equation}
Define a set 
$$
\cG_1:=\{G:\tilde{D}(G)=0\}.
$$
By \eqref{eq:tD}, $P(G\in\cG_1)= 1-o(1)$. By definition of $\tilde{D}(G)$, $G\in\cG_1$ implies that $D(G,t)=0$ for all $t\ge 0$. Therefore, for $G\in\cG_1$, we have
\begin{equation} \label{eq:lq}
\begin{aligned}
& \frac{\sum_{i=1}^n P_{\sigma|G}(\sigma=X^{(\sim i)} )}
{P_{\sigma|G}(\sigma=X)}
\le \sum_{t\log(n)=-n/2}^{-1}
D(G,t) \exp\Big(2\big(\beta+\frac{\alpha\log(n)}{n} \big) t \log(n) \Big)   \\
\overset{(a)}{\le} & \sum_{t\log(n)=-n/2}^{-1}
D(G,t) \exp\big(2\beta t \log(n) \big) \\
= & \sum_{t\log(n)=-n/2}^{\lfloor\frac{b-a}{2}\log(n) \rfloor} D(G,t)
\exp\big(2\beta t \log(n) \big)  + \sum_{t\log(n)=\lceil\frac{b-a}{2}\log(n) \rceil}^{-1} D(G,t)
\exp\big(2\beta t \log(n) \big) \\
\le & \sum_{t\log(n)=-n/2}^{\lfloor\frac{b-a}{2}\log(n) \rfloor} D(G,t)
\exp\big(\beta(b-a)\log(n) \big)  + \sum_{t\log(n)=\lceil\frac{b-a}{2}\log(n) \rceil}^{-1} D(G,t)
\exp\big(2\beta t \log(n) \big)  \\
\overset{(b)}{\le} & n
\exp\big(\beta(b-a)\log(n) \big)  + \sum_{t\log(n)=\lceil\frac{b-a}{2}\log(n) \rceil}^{-1} D(G,t)
\exp\big(2\beta t \log(n) \big) ,
\end{aligned}
\end{equation}
where inequality $(a)$ holds because $t\log(n)$ only takes negative values in the summation, and inequality $(b)$ follows from the trivial upper bound
$\sum_{t\log(n)=-n/2}^{\lfloor\frac{b-a}{2}\log(n) \rfloor} D(G,t)\le n$.
Define a function
\begin{equation} \label{eq:gt}
f_{\beta}(t):=\sqrt{t^2+ab} -t\big(\log(\sqrt{t^2+ab}+t)-\log(b) \big) -\frac{a+b}{2} +1 +2\beta t.
\end{equation}
Then for $t\in [\frac{1}{2}(b-a), 0]$, we have 
\begin{align*}
& E[D(G,t)
\exp\big(2\beta t \log(n) \big)] \\
= & \sum_{i=1}^n E[\mathbbm{1}[B_i-A_i=t \log(n)]] \exp\big(2\beta t \log(n) \big) \\
= & \sum_{i=1}^n P\big(B_i-A_i=t \log(n) \big) \exp\big(2\beta t \log(n) \big) \\
\le & \sum_{i=1}^n P\big(B_i-A_i \ge t \log(n) \big) \exp\big(2\beta t \log(n) \big) \\
\le &  \sum_{i=1}^n \exp\Big(\log(n)
\Big(\sqrt{t^2+ab} -t\big(\log(\sqrt{t^2+ab}+t)-\log(b) \big) -\frac{a+b}{2} +2\beta t +o(1)\Big)\Big) \\
= &  n^{f_{\beta}(t) +o(1)} ,
\end{align*}
where the second inequality follows from \eqref{eq:upba}.
For $\epsilon>0$, define a set
$$
\cG(\epsilon):=\left\{
\sum_{t\log(n)=\lceil\frac{b-a}{2}\log(n) \rceil}^{-1} D(G,t)
\exp\big(2\beta t \log(n) \big)
\le \sum_{t\log(n)=\lceil\frac{b-a}{2}\log(n) \rceil}^{-1}
 n^{f_{\beta}(t) + \epsilon}
\right\} .
$$
Then by Markov inequality,
\begin{equation} \label{eq:mk}
P(G\in \cG(\epsilon))\ge 1-n^{-(\epsilon-o(1))} >
1-n^{-\epsilon/2}
\end{equation}
for large $n$ and positive $\epsilon$.
Using \eqref{eq:lq}, we obtain that for $G\in\cG_1\cap\cG(\epsilon)$,
\begin{equation}  \label{eq:zz}
\begin{aligned}
\frac{\sum_{i=1}^n P_{\sigma|G}(\sigma=X^{(\sim i)} )}
{P_{\sigma|G}(\sigma=X)} 
& \le n
\exp\big(\beta(b-a)\log(n) \big)  + \sum_{t\log(n)=\lceil\frac{b-a}{2}\log(n) \rceil}^{-1}  n^{f_{\beta}(t) + \epsilon} \\
& = n^{f_{\beta}((b-a)/2)} + \sum_{t\log(n)=\lceil\frac{b-a}{2}\log(n) \rceil}^{-1}  n^{f_{\beta}(t) + \epsilon} ,
\end{aligned}
\end{equation}
where the equality follows from the fact that $f_{\beta}(\frac{1}{2}(b-a))=\beta(b-a)+1$.
Recall the definitions of the functions $g(\beta)$ and $\tilde{g}(\beta)$ in \eqref{eq:gbt}.
By Lemma~\ref{lm:tus} below, we have
$f_{\beta}(t)\le \tilde{g}(\beta)<0$ for all $t\le 0$.
Combining this with \eqref{eq:zz}, we obtain that for
$G\in\cG_1\cap\cG(\epsilon)$,
\begin{equation} \label{eq:lh}
\begin{aligned}
& \frac{\sum_{i=1}^n P_{\sigma|G}(\sigma=X^{(\sim i)} )}
{P_{\sigma|G}(\sigma=X)} 
\le n^{\tilde{g}(\beta)}
+ \sum_{t\log(n)=\lceil\frac{b-a}{2}\log(n) \rceil}^{-1}  n^{\tilde{g}(\beta) + \epsilon}  \\
& \le n^{\tilde{g}(\beta) + \epsilon}
\big(\frac{a-b}{2}\log(n)+1 \big)
< n^{\tilde{g}(\beta) + 2\epsilon} 
\end{aligned}
\end{equation}
for large $n$ and positive $\epsilon$.
Let $\epsilon=-\tilde{g}(\beta)/4>0$ and define
$$
\cG^{(1)}:=\cG_1\cap\cG(-\tilde{g}(\beta)/4) .
$$
By \eqref{eq:lh}, for $G\in\cG^{(1)}$ we have
$$
\frac{\sum_{i=1}^n P_{\sigma|G}(\sigma=X^{(\sim i)} )}
{P_{\sigma|G}(\sigma=X)} <
n^{\tilde{g}(\beta) /2} .
$$
By \eqref{eq:tD} and \eqref{eq:mk}, 
$$
P(G\in\cG^{(1)})\ge 1-n^{\tilde{g}(\beta)/8}- n^{1-\frac{(\sqrt{a}-\sqrt{b})^2}{2}+o(1)}> 1- 2n^{\tilde{g}(\beta)/8},
$$
where the last inequality follows from the fact that $1-\frac{(\sqrt{a}-\sqrt{b})^2}{2}\le
\tilde{g}(\beta)<\tilde{g}(\beta)/8< 0$.
This completes the proof of Proposition~\ref{prop:big} for the special case of $k=1$. 
Next we prove the two auxiliary lemmas used above.
\begin{lemma}[Elementary properties of $\beta^\ast$ defined in \eqref{eq:defstar}] \label{lm:ele}
Let $g(\beta)$ and $\tilde{g}(\beta)$ be the functions defined in \eqref{eq:gbt}. Assume that $\sqrt{a}-\sqrt{b}>\sqrt{2}$.
Then,

\begin{enumerate}[label=(\roman*)]
\item The equation $g(\beta) = 0$ has two roots, and the smaller one of them is $\beta^\ast$.

\item Denote the other root as $\beta'$. Then
$\beta^\ast< \frac{1}{4}\log\frac{a}{b} <\beta'$.

\item $g(\beta)<0$ for all $\beta^\ast< \beta \le \frac{1}{4}\log\frac{a}{b}$.

\item $\tilde{g}(\beta)<0$ for all $\beta>\beta^\ast$.

\item $\tilde{g}(\beta)$ is a decreasing function in $[0,+\infty)$. 

\item $\tilde{g}(\beta)<1$ for all $\beta>0$.
\end{enumerate}
\end{lemma}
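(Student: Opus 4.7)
The natural move is to substitute $y = e^{2\beta}$, which turns $g$ into a rational function: multiplying through by $2y$ gives
\[
2y\, g(\beta) \;=\; b y^2 - (a+b-2)\,y + a \;=:\; h(y).
\]
Since $y>0$, the sign of $g(\beta)$ is the sign of $h(y)$, and the roots of $g$ correspond to the roots of the quadratic $h$. The hypothesis $\sqrt{a}-\sqrt{b}>\sqrt{2}$ is equivalent to $a+b-2>2\sqrt{ab}$ (with $a+b-2>0$), so the discriminant $(a+b-2)^2-4ab$ is strictly positive and the two roots
\[
y_{\pm} \;=\; \frac{(a+b-2)\pm\sqrt{(a+b-2)^2-4ab}}{2b}
\]
are real. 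Vieta gives $y_{-}y_{+} = a/b > 1$ and $y_{-}+y_{+} = (a+b-2)/b$, and a short computation shows $(y_{-}-1)(y_{+}-1) = 2/b > 0$, so both roots exceed $1$. This yields (i): $g(\beta)=0$ has exactly two roots, the smaller one being $\beta^\ast = \tfrac12\log y_{-}>0$ (matching the definition in \eqref{eq:defstar}).

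For (ii), I want to show that $\sqrt{a/b}$ lies strictly between $y_{-}$ and $y_{+}$, which (since $h$ opens upward) is equivalent to $h(\sqrt{a/b})<0$. A direct evaluation gives
\[
h\bigl(\sqrt{a/b}\bigr) \;=\; 2a-(a+b-2)\sqrt{a/b} \;=\; \sqrt{a/b}\,\bigl(2\sqrt{ab}-(a+b-2)\bigr),
\]
which is negative precisely because $a+b-2>2\sqrt{ab}$. Hence $\beta^\ast<\tfrac14\log(a/b)<\beta'$. Statement (iii) is then immediate: for $\beta^\ast<\beta\le\tfrac14\log(a/b)$ we have $y_{-}<y\le\sqrt{a/b}<y_{+}$, so $h(y)<0$ and therefore $g(\beta)<0$.

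For (iv) I split at $\tfrac14\log(a/b)$: on $(\beta^\ast,\tfrac14\log(a/b)]$ this is (iii), while on $[\tfrac14\log(a/b),\infty)$ we have $\tilde g(\beta)=\sqrt{ab}-(a+b)/2+1$, which is negative by the hypothesis $a+b>2\sqrt{ab}+2$. For (v), differentiating gives $g'(\beta)=be^{2\beta}-ae^{-2\beta}$, which is $\le 0$ exactly on $[0,\tfrac14\log(a/b)]$, so $g$ is strictly decreasing there; since $\tilde g$ is constant on $[\tfrac14\log(a/b),\infty)$, $\tilde g$ is non-increasing on $[0,\infty)$ (strictly decreasing on the initial interval). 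Finally (vi) follows from (v) together with $\tilde g(0)=g(0)=(a+b)/2-(a+b)/2+1=1$, so $\tilde g(\beta)<\tilde g(0)=1$ for every $\beta>0$.

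No real obstacle arises; the whole lemma is a case analysis of the quadratic $h(y)$, and the only input from the assumption $\sqrt{a}-\sqrt{b}>\sqrt{2}$ is the single algebraic inequality $a+b-2>2\sqrt{ab}$, which enters in the discriminant, in showing $h(\sqrt{a/b})<0$, and in bounding $\tilde g(\tfrac14\log(a/b))<0$. The mildest point to be careful about is the wording ``decreasing'' in (v): I would state it as non-increasing on $[0,\infty)$ and strictly decreasing on $[0,\tfrac14\log(a/b)]$, which is what the later arguments actually use.
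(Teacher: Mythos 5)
Your proof is correct and follows essentially the same route as the paper's: substitute $x=e^{2\beta}$, reduce everything to the sign of the quadratic $bx^2-(a+b-2)x+a$, locate $\sqrt{a/b}$ strictly between the two roots (the paper uses Vieta's product $x^\ast x'=a/b$, you evaluate $h(\sqrt{a/b})<0$ directly — an immaterial difference), and then deduce (iii)--(vi) from monotonicity of $g$ and the definition of $\tilde g$. The only quibble is your side claim that both roots exceed $1$: the computation $(y_--1)(y_+-1)=2/b>0$ alone shows only that they lie on the same side of $1$ (you would also need, e.g., $y_-+y_+=(a+b-2)/b>2$, which does follow from $a-b>2$), but this claim is not needed for any part of the lemma, so it does not affect correctness.
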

\begin{proof}
{\bf Proof of (i)}:
We write $x=e^{2\beta}$. Then $g(\beta)=0$ can be written as $bx^2-(a+b-2)x+a=0$. This quadratic equation has two roots if and only if $(a+b-2)^2-4ab>0$, which is guaranteed by the assumption $\sqrt{a}-\sqrt{b}>\sqrt{2}$.
The two roots of $bx^2-(a+b-2)x+a=0$ are
$x^\ast=\frac{a+b-2 - \sqrt{(a+b-2)^2-4ab}}{2b}$ and $x'=\frac{a+b-2 + \sqrt{(a+b-2)^2-4ab}}{2b}$.
Therefore, $\beta^\ast=\frac{1}{2}\log(x^\ast)$ and $\beta'=\frac{1}{2}\log(x')$.
{\bf Proof of (ii)}:
Since $x^\ast x'=\frac{a}{b}=(\sqrt{\frac{a}{b}})^2$, we have $x^\ast<\sqrt{\frac{a}{b}}<x'$, so $\beta^\ast< \frac{1}{4}\log\frac{a}{b} <\beta'$.
{\bf Proof of (iii)}:
Since $b>0$, $bx^2-(a+b-2)x+a<0$ if and only if $x^\ast<x<x'$. Therefore, $g(\beta)<0$ if and only if $\beta^\ast< \beta <\beta'$. This implies (iii). 
{\bf Proof of (iv)}: (iv) follows directly from (iii).
{\bf Proof of (v)}: $g'(\beta)=b e^{2\beta} - a e^{-2\beta}$, so $g'(\beta)<0$ for $0\le \beta<\frac{1}{4}\log\frac{a}{b}$, and $g(\beta)$ takes minimum value at $\beta=\frac{1}{4}\log\frac{a}{b}$. This implies (v).
{\bf Proof of (vi)}: (vi) follows directly from (v) and the fact that $\tilde{g}(0)=1$.
\end{proof}

\begin{lemma} \label{lm:tus}
Let $f_{\beta}(t)$ be the function defined in \eqref{eq:gt}. If $a>b>0$, then $f_{\beta}(t)\le \tilde{g}(\beta)$ for all $t\le 0$.
If $\sqrt{a}-\sqrt{b}>\sqrt{2}$
and $\beta>\beta^\ast$, then we further have $f_{\beta}(t)\le \tilde{g}(\beta)<0$ for all $t\le 0$.
\end{lemma}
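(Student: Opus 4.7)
The plan is to analyze $f_\beta$ as a function of $t$ by its first and second derivatives, locate its global maximum on $\mathbb{R}$, and then compare with the boundary value $f_\beta(0)$ to handle the restriction $t \le 0$. Write $u = u(t) := \sqrt{t^2+ab}$. Since $u' = t/u$ and $\frac{d}{dt}\log(u+t) = (t/u+1)/(u+t) = 1/u$, the two ``$t/u$'' contributions cancel and one finds the clean expression
\begin{equation*}
f_\beta'(t) \;=\; -\log(u+t) + \log b + 2\beta, \qquad f_\beta''(t) \;=\; -\frac{1}{u} < 0.
\end{equation*}
Thus $f_\beta$ is strictly concave on $\mathbb{R}$, and its unique critical point is the solution of $u+t = b\, e^{2\beta}$. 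Squaring $\sqrt{t^2+ab} = b e^{2\beta}-t$ yields $t^\ast = (b e^{2\beta} - a e^{-2\beta})/2$, and at this point $u(t^\ast) = (b e^{2\beta} + a e^{-2\beta})/2$. Substituting back, the $t\log(u+t)$ and $2\beta t$ terms telescope, giving $f_\beta(t^\ast) = u(t^\ast) - \tfrac{a+b}{2} + 1 = g(\beta)$.

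Now split into the two cases determined by the sign of $t^\ast$. Note $t^\ast \le 0$ iff $b e^{2\beta} \le a e^{-2\beta}$ iff $\beta \le \tfrac14\log(a/b)$. In this first case $t^\ast \in (-\infty,0]$, so by concavity the maximum of $f_\beta$ on $(-\infty,0]$ is attained at $t^\ast$ and equals $g(\beta) = \tilde g(\beta)$. In the second case, $\beta > \tfrac14\log(a/b)$ so $t^\ast > 0$; then $f_\beta$ is strictly increasing on $(-\infty, t^\ast]$, hence on $(-\infty,0]$ its maximum is attained at $t=0$, and a direct computation gives $f_\beta(0) = \sqrt{ab} - \tfrac{a+b}{2} + 1 = g(\tfrac14\log(a/b)) = \tilde g(\beta)$. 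Either way, $\max_{t\le 0} f_\beta(t) = \tilde g(\beta)$, which proves the first assertion.

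The second assertion is immediate from the first: under the hypotheses $\sqrt a - \sqrt b > \sqrt 2$ and $\beta > \beta^\ast$, part~(iv) of Lemma~\ref{lm:ele} gives $\tilde g(\beta) < 0$, so $f_\beta(t) \le \tilde g(\beta) < 0$ for all $t \le 0$. No step here is truly delicate; the only mild obstacle is keeping track of the case $\beta > \tfrac14\log(a/b)$, where the unconstrained maximum $g(\beta)$ of $f_\beta$ is not attained in the region of interest $t\le 0$ and is in fact strictly larger than the relevant boundary value $f_\beta(0) = \tilde g(\beta)$. This is precisely the analytic reflection of the ``cut-off effect'' discussed in the proof sketch: past $\tfrac14\log(a/b)$, the relevant exponent saturates at the constant $\sqrt{ab} - (a+b)/2 + 1$ instead of continuing to follow $g(\beta)$.
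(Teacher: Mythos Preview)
Your proof is correct and follows essentially the same approach as the paper: compute $f_\beta'$ and $f_\beta''$, use concavity to locate the unique maximizer $t^\ast=(be^{2\beta}-ae^{-2\beta})/2$ with value $g(\beta)$, then split on the sign of $t^\ast$ (equivalently on $\beta$ versus $\tfrac14\log(a/b)$) to identify the maximum over $t\le 0$ as $\tilde g(\beta)$, and invoke Lemma~\ref{lm:ele}(iv) for the strict negativity. The only cosmetic difference is that you state the sharper conclusion $\max_{t\le 0}f_\beta(t)=\tilde g(\beta)$ rather than just $\le$, and you add a brief remark connecting the boundary case to the cut-off effect.
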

\begin{proof}
The first and second derivatives are
$f_{\beta}'(t)=
-\log(\sqrt{t^2+ab}+t)+\log(b) +2\beta$ and $f_{\beta}''(t)=-\frac{1}{\sqrt{t^2+ab}}<0$.
Therefore $f_{\beta}(t)$ is a concave function and takes global maximum at $t^\ast$ such that $f_{\beta}'(t^\ast)=0$.
Simple calculation shows that 
$$
t^\ast=\frac{b e^{2\beta}-a e^{-2\beta}}{2} \quad \text{~and~} \quad
f_{\beta}(t^\ast)=\frac{b e^{2\beta}+a e^{-2\beta}}{2}-\frac{a+b}{2}+1
=g(\beta) .
$$
We divide the proof into two cases. {\bf Case 1}: If $\beta\ge \frac{1}{4}\log\frac{a}{b}$, then $t^\ast\ge 0$. Since $f_{\beta}(t)$ is an increasing function for $t\le t^\ast$, we have $f_{\beta}(t)\le f_{\beta}(0)=\sqrt{ab}-\frac{a+b}{2}+1$ for all $t\le 0$.
{\bf Case 2}: If $\beta< \frac{1}{4}\log\frac{a}{b}$, then we simply use the global maximum $f_{\beta}(t^\ast)$ to upper bound $f_{\beta}(t)$, i.e., $f_{\beta}(t)\le f_{\beta}(t^\ast)=g(\beta)$ for all $t$.
Combining these two cases, we have $f_{\beta}(t)\le \tilde{g}(\beta)$ for all $t\le 0$ as long as $a>b>0$.
If $\sqrt{a}-\sqrt{b}>\sqrt{2}$ and $\beta>\beta^\ast$, then by property (iv) of Lemma~\ref{lm:ele} we further have $f_{\beta}(t)\le \tilde{g}(\beta)<0$ for all $t\le 0$.
\end{proof}

\subsection{Proof of Proposition~\ref{prop:big} for general $k$} \label{sect:gen}
Recall that for $\cI\subseteq[n]$, we define $X^{(\sim\cI)}$ as the vector obtained by flipping the coordinates in $\cI$ while keeping all the other coordinates to be the same as $X$, i.e., $X_i^{(\sim\cI)}=-X_i$ for all $i\in\cI$ and $X_i^{(\sim\cI)}=X_i$ for all $i\notin\cI$. 
We want to bound the ratio
$$
\sum_{\cI\subseteq[n]:|\cI|=k}
\frac{P_{\sigma|G}(\sigma=X^{(\sim\cI)})}{P_{\sigma|G}(\sigma=X)}
$$
for all $k\le 2n/\log^{1/3}(n)$.
To that end,
for $\cI\subseteq[n]$,
define the positive and negative parts of $\cI$ as
$$
\cI_+:=\{i\in\cI:X_i=+1\}
\quad \text{~and~} \quad
\cI_-:=\{i\in\cI:X_i=-1\} ,
$$
and define 
$$
\nabla \cI:=\{\{i,j\}:i\in\cI,j\in[n]\setminus\cI\} .
$$
We further define
$$
\nabla\cI_+:=\{\{i,j\}\in\nabla\cI:X_i=X_j\}
\quad \text{and} \quad
\nabla\cI_-:=\{\{i,j\}\in\nabla\cI:X_i=-X_j\}.
$$
Then 
\begin{equation} \label{eq:partial}
\begin{aligned}
& |\nabla\cI_+|=|\cI_+|(\frac{n}{2}-|\cI_+|) + |\cI_-|(\frac{n}{2}-|\cI_-|)
=\frac{n}{2}|\cI|-|\cI_+|^2-|\cI_-|^2  , \\
& |\nabla\cI_-|=|\cI_+|(\frac{n}{2}-|\cI_-|) + |\cI_-|(\frac{n}{2}-|\cI_+|) =\frac{n}{2}|\cI|
-2 |\cI_+| |\cI_-| .
\end{aligned}
\end{equation}
Given a graph $G$,
define
\begin{align*}
& A_{\cI}=A_{\cI}(G):=|\{\{i,j\}\in\nabla\cI\cap E(G):X_i=X_j\}| ,  \\
& B_{\cI}=B_{\cI}(G):=|\{\{i,j\}\in\nabla\cI\cap E(G):X_i=-X_j\}| .
\end{align*}

\begin{proposition}
For $t\in [\frac{1}{2}(b-a), 0]$
and $|\cI|\le 2n/\log^{1/3}(n)$,
\begin{equation} \label{eq:upmpt}
\begin{aligned}
& P(B_{\cI}-A_{\cI}\ge t|\cI|\log(n))  \\
\le & \exp\Big(|\cI|\log(n)
\Big(\sqrt{t^2+ab} -t\big(\log(\sqrt{t^2+ab}+t)-\log(b) \big) -\frac{a+b}{2} 
+ O(\log^{-1/3}(n)) \Big)\Big) .
\end{aligned}
\end{equation}
\end{proposition}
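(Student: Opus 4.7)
The plan is to mirror the single-vertex Chernoff argument of Proposition~\ref{prop:cher} at the level of a subset $\cI$. By the definition of SSBM, $A_{\cI}\sim\Binom(|\nabla\cI_+|,p)$ and $B_{\cI}\sim\Binom(|\nabla\cI_-|,q)$ are independent with $p=a\log(n)/n$ and $q=b\log(n)/n$. The hypothesis $|\cI|\le 2n/\log^{1/3}(n)$ combined with \eqref{eq:partial} yields
\[
|\nabla\cI_\pm| \;=\; \frac{n|\cI|}{2}\bigl(1+O(\log^{-1/3}(n))\bigr),
\]
which is the only place where the size restriction on $\cI$ enters the argument.

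First I would write the moment generating function of $B_\cI-A_\cI$ as a product of two independent binomial MGFs, and apply $\log(1+x)=x+O(x^2)$ together with $p,q=\Theta(\log(n)/n)$ to obtain
\begin{align*}
\log E\bigl[e^{s(B_\cI-A_\cI)}\bigr]
\;=\; \frac{|\cI|\log(n)}{2}\bigl(a e^{-s}+b e^{s}-a-b\bigr)+E(s,\cI,n),
\end{align*}
where $E(s,\cI,n)$ collects both the $O(\log^{-1/3}(n))$ slack in $|\nabla\cI_\pm|$ and the quadratic Taylor remainder of order $O((p+q)^2)$. Taking $s$ in a bounded interval, both contributions satisfy $|E(s,\cI,n)|=O(|\cI|\log(n)\cdot\log^{-1/3}(n))$.

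Next, Chernoff's inequality gives, for every $s\ge 0$,
\[
P\bigl(B_\cI-A_\cI\ge t|\cI|\log(n)\bigr)\;\le\;\exp\!\Bigl(\tfrac{|\cI|\log(n)}{2}\bigl(a e^{-s}+b e^{s}-a-b-2st\bigr)+E(s,\cI,n)\Bigr),
\]
and I would minimize the deterministic part over $s\ge 0$ exactly as in Proposition~\ref{prop:cher}. Strict convexity of $f(s)=a e^{-s}+b e^{s}-2st$ places the global minimizer at $s^\ast=\log(\sqrt{t^2+ab}+t)-\log(b)$, and the condition $t\in[\tfrac{1}{2}(b-a),0]$ is precisely what makes $s^\ast\ge 0$ so that the unconstrained minimum is attained in the feasible region. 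Substituting $s^\ast$ produces the exponent $\sqrt{t^2+ab}-t(\log(\sqrt{t^2+ab}+t)-\log(b))-\tfrac{a+b}{2}$ claimed in \eqref{eq:upmpt}.

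The main obstacle is bookkeeping for the error $E(s,\cI,n)$: one must argue that because $s^\ast$ stays bounded uniformly in $t\in[\tfrac{1}{2}(b-a),0]$, the multiplicative perturbation $1+O(\log^{-1/3}(n))$ in $|\nabla\cI_\pm|$ multiplies only a \emph{bounded} leading-order expression $a e^{-s^\ast}+b e^{s^\ast}-a-b-2s^\ast t$, so it contributes only additive $O(\log^{-1/3}(n))$ per unit $|\cI|\log(n)$ to the exponent. This is exactly why the size restriction $|\cI|\le 2n/\log^{1/3}(n)$ appears in the statement, and aside from this accounting the proof is a direct lift of the single-vertex calculation.
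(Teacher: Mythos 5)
Your proposal is correct and follows essentially the same route as the paper: Chernoff bound on the MGF of the independent binomials $B_{\cI}-A_{\cI}$, Taylor expansion of the log-MGF with the size restriction $|\cI|\le 2n/\log^{1/3}(n)$ absorbing the deficit in $|\nabla\cI_\pm|$ into the $O(\log^{-1/3}(n))$ error, and minimization at the same $s^\ast=\log(\sqrt{t^2+ab}+t)-\log(b)$, with $t\ge\frac{1}{2}(b-a)$ guaranteeing $s^\ast\ge 0$. The only difference is cosmetic bookkeeping (you track both exponents via a multiplicative $1+O(\log^{-1/3}(n))$ correction, while the paper drops the deficit in the factor exceeding $1$ and bounds it only in the other), which does not change the argument.
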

\begin{proof}
By definition, $A_{\cI}\sim\Binom(|\nabla\cI_+|,\frac{a\log(n)}{n})$ and $B_{\cI}\sim\Binom(|\nabla\cI_-|,\frac{b\log(n)}{n})$, and they are independent. For $s>0$, the moment generating function of $B_{\cI}-A_{\cI}$ for $|\cI|\le 2n/\log^{1/3}(n)$ can be bounded from above as follows:
\begin{align*}
& E[e^{s(B_{\cI}-A_{\cI})}] \\
& =\Big(1-\frac{b\log(n)}{n}+\frac{b\log(n)}{n} e^s \Big)^{n|\cI|/2
-2 |\cI_+| |\cI_-|}
\Big(1-\frac{a\log(n)}{n}+\frac{a\log(n)}{n} e^{-s} \Big)^{n |\cI|/2-|\cI_+|^2-|\cI_-|^2}  \\
& \le 
\Big(1-\frac{b\log(n)}{n}+\frac{b\log(n)}{n} e^s \Big)^{n|\cI|/2}
\Big(1-\frac{a\log(n)}{n}+\frac{a\log(n)}{n} e^{-s} \Big)^{n |\cI|/2-|\cI|^2}
 \\
& \le
\exp\Big(\frac{|\cI|\log(n)}{2}(a e^{-s}+b e^s-a-b +\frac{2a|\cI|}{n})
+|\cI|O(\frac{\log^2(n)}{n})\Big) \\
& =
\exp\Big(\frac{|\cI|\log(n)}{2}(a e^{-s}+b e^s-a-b +
O(\log^{-1/3}(n))) \Big),
\end{align*}
where the first inequality follows from $1-\frac{b\log(n)}{n}+\frac{b\log(n)}{n} e^s>1$ and $1-\frac{a\log(n)}{n}+\frac{a\log(n)}{n} e^{-s}<1$; in the second inequality we use the Taylor expansion $\log(1+x)=x+O(x^2)$; the last equality follows from the assumption that $|\cI|\le 2n/\log^{1/3}(n)$.
By Chernoff bound, for $s>0$, we have
\begin{align*} 
& P(B_{\cI}-A_{\cI}\ge t|{\cI}|\log(n))\le
\frac{E[e^{s(B_{\cI}-A_{\cI})}]}{e^{st|{\cI}|\log(n)}}  \\
\le & \exp\Big(\frac{{\cI}\log(n)}{2} \big(a e^{-s}+b e^s -2st -a-b
 + O(\log^{-1/3}(n)) \big)\Big)  .
\end{align*}
The rest of the proof is to find $s^\ast$ to minimize $a e^{-s}+b e^s -2st$ and take $s^\ast$ into the above bound. This is exactly the same as the proof of \eqref{eq:upba}, and we do not repeat it here.
\end{proof}

Given a graph $G$, define 
$$
\tilde{D}^{(k)}(G):=|\{\cI\subseteq[n],|\cI|=k:B_{\cI}- A_{\cI}\ge 0\}|
\text{~~and~~}
\tilde{D}_{\cI}(G):=\mathbbm{1}[B_{\cI}-A_{\cI}\ge 0] .
$$
 Then
$\tilde{D}^{(k)}(G)=\sum_{\cI\subseteq[n],|\cI|=k} \tilde{D}_{\cI}(G)$ and
$$
E[\tilde{D}^{(k)}(G)]=\sum_{\cI\subseteq[n],|\cI|=k} E[\tilde{D}_{\cI}(G)]
=\sum_{\cI\subseteq[n],|\cI|=k} P(B_{\cI}- A_{\cI}\ge 0).
$$
Taking $t=0$ into \eqref{eq:upmpt}, we have
$P(B_{\cI}- A_{\cI}\ge 0)\le \exp\big(|\cI|\log(n)(-\frac{(\sqrt{a}-\sqrt{b})^2}{2} + o(1) ) \big)$. Therefore,
\begin{align*}
& E[\tilde{D}^{(k)}(G)]\le \binom{n}{k} \exp\Big(k \log(n)\big(-\frac{(\sqrt{a}-\sqrt{b})^2}{2} + o(1) \big) \Big) \\
\le & n^k \exp\Big(k \log(n)\big(-\frac{(\sqrt{a}-\sqrt{b})^2}{2} + o(1) \big) \Big)
= n^{k( 1-\frac{(\sqrt{a}-\sqrt{b})^2}{2} + o(1) )}.
\end{align*}
By Markov inequality,
\begin{equation} \label{eq:Dk}
P\big(\tilde{D}^{(k)}(G)=0 \big) = 1-
P\big(\tilde{D}^{(k)}(G)\ge 1\big) \ge 1- E[\tilde{D}^{(k)}(G)]
\ge 1-  n^{k (1-\frac{(\sqrt{a}-\sqrt{b})^2}{2} + o(1) )} .
\end{equation}
Since $\sqrt{a}-\sqrt{b} > \sqrt{2}$, we have
$P\big(\tilde{D}^{(k)}(G)=0 \big)= 1-o(1)$.

By \eqref{eq:isingma}, we have
\begin{equation} \label{eq:ts}
\begin{aligned}
& \frac{P_{\sigma|G}(\sigma=X^{(\sim\cI)})}{P_{\sigma|G}(\sigma=X)} \\
= & \exp\Big(\beta\sum_{\{i,j\}\in E(G)} (X_i^{(\sim\cI)} X_j^{(\sim\cI)}
-X_i X_j)
-\frac{\alpha\log(n)}{n} \sum_{\{i,j\}\notin E(G)} (X_i^{(\sim\cI)} X_j^{(\sim\cI)}
-X_i X_j) \Big) \\
\overset{(a)}{=} & \exp\Big(-2 \beta\sum_{\{i,j\}\in E(G)} X_i X_j \mathbbm{1}[\{i,j\}\in\nabla\cI]
+\frac{2\alpha\log(n)}{n} \sum_{\{i,j\}\notin E(G)} X_i X_j \mathbbm{1}[\{i,j\}\in\nabla\cI] \Big) \\
= & \exp\Big(-2 \beta(A_{\cI} - B_{\cI})
+\frac{2\alpha\log(n)}{n} \big( (|\nabla\cI_+|-A_{\cI}) -(|\nabla\cI_-|-B_{\cI}) \big)\Big) \\
\overset{(b)}{=} & \exp\Big( 2\big(\beta+\frac{\alpha\log(n)}{n} \big) (B_{\cI}-A_{\cI})
- \frac{2\alpha\log(n)}{n} (|\cI_+|-|\cI_-|)^2
\Big) \\
\le & \exp\Big( 2\big(\beta+\frac{\alpha\log(n)}{n} \big) (B_{\cI}-A_{\cI})
\Big) ,
\end{aligned}
\end{equation}
where $(a)$ follows from the fact that $X_i^{(\sim\cI)} X_j^{(\sim\cI)}=-X_i X_j$ if $\{i,j\}\in\nabla\cI$ and $X_i^{(\sim\cI)} X_j^{(\sim\cI)}=X_i X_j$ if $\{i,j\}\notin\nabla\cI$;
and $(b)$ follows from \eqref{eq:partial}.
Since $B_{\cI}-A_{\cI}$ takes integer value between $-|\cI|n/2$ and $|\cI|n/2$,
we can use indicator functions to write
\begin{align*}
& \exp\Big(2\big(\beta+\frac{\alpha\log(n)}{n} \big) (B_{\cI}-A_{\cI}) \Big) \\
= & \sum_{t|\cI|\log(n)=-|\cI|n/2}^{|\cI|n/2}
\mathbbm{1}[B_{\cI}-A_{\cI}=t|\cI| \log(n)] \exp\Big(2\big(\beta+\frac{\alpha\log(n)}{n} \big) t|\cI| \log(n) \Big) ,
\end{align*}
where the quantity $t|\cI|\log(n)$ ranges over all integer values from $-|\cI|n/2$ to $|\cI|n/2$ in the summation on the second line.
Define $D^{(k)}(G,t):=|\{\cI\subseteq[n],|\cI|=k:B_{\cI}-A_{\cI}= tk\log(n)\}|$ and notice that $D^{(k)}(G,t)=\sum_{\cI\subseteq[n],|\cI|=k} \mathbbm{1}[B_{\cI}-A_{\cI}= tk\log(n)]$.
Therefore,
\begin{align*}
 & \sum_{\cI\subseteq[n],|\cI|=k}
 \frac{P_{\sigma|G}(\sigma=X^{(\sim \cI)} )}
{P_{\sigma|G}(\sigma=X)}
\le \sum_{\cI\subseteq[n],|\cI|=k} \exp\Big(2\big(\beta+\frac{\alpha\log(n)}{n} \big) (B_{\cI}-A_{\cI}) \Big) \\
= & \sum_{\cI\subseteq[n],|\cI|=k}
\hspace*{0.05in}
\sum_{tk\log(n)=-kn/2}^{kn/2}
\mathbbm{1}[B_{\cI}-A_{\cI}=tk \log(n)] \exp\Big(2\big(\beta+\frac{\alpha\log(n)}{n} \big) tk \log(n) \Big) \\
= & \sum_{tk\log(n)=-kn/2}^{kn/2}
D^{(k)}(G,t) \exp\Big(2\big(\beta+\frac{\alpha\log(n)}{n} \big) t k \log(n) \Big)
\end{align*}
Define a set 
\begin{equation} \label{eq:g1k}
\cG_1^{(k)}:=\{G:\tilde{D}^{(k)}(G)=0\}.
\end{equation}
By \eqref{eq:Dk}, $P(G\in\cG_1^{(k)})= 1-o(1)$. By definition of $\tilde{D}^{(k)}(G)$, $G\in\cG_1^{(k)}$ implies that $D^{(k)}(G,t)=0$ for all $t\ge 0$. Therefore, for $G\in\cG_1^{(k)}$, we have
\begin{equation} \label{eq:bk}
\begin{aligned}
& \sum_{\cI\subseteq[n],|\cI|=k}
\frac{P_{\sigma|G}(\sigma=X^{(\sim \cI)} )}
{P_{\sigma|G}(\sigma=X)} \\
\le & \sum_{tk\log(n)=-kn/2}^{-1}
D^{(k)}(G,t) \exp\Big(2\big(\beta+\frac{\alpha\log(n)}{n} \big) t k \log(n) \Big)   \\
\overset{(a)}{\le} & \sum_{tk\log(n)=-kn/2}^{-1}
D^{(k)}(G,t) \exp\big(2\beta t k \log(n) \big) \\
= & \sum_{tk\log(n)=-kn/2}^{\lfloor\frac{b-a}{2}k\log(n) \rfloor} D^{(k)}(G,t) \exp\big(2\beta t k \log(n) \big) \\
& \hspace*{1.5in} + \sum_{tk\log(n)=\lceil\frac{b-a}{2}k\log(n) \rceil}^{-1} D^{(k)}(G,t) \exp\big(2\beta t k \log(n) \big) \\
\le & \sum_{tk\log(n)=-kn/2}^{\lfloor\frac{b-a}{2}k\log(n) \rfloor} D^{(k)}(G,t)
\exp\big(\beta(b-a)k\log(n) \big) \\
& \hspace*{1.5in} + \sum_{tk\log(n)=\lceil\frac{b-a}{2}k\log(n) \rceil}^{-1} D^{(k)}(G,t) \exp\big(2\beta t k \log(n) \big) \\
\overset{(b)}{\le} & \binom{n}{k}
\exp\big(\beta(b-a) k \log(n) \big)  + \sum_{tk\log(n)=\lceil\frac{b-a}{2}k\log(n) \rceil}^{-1} D^{(k)}(G,t) \exp\big(2\beta t k \log(n) \big) ,
\end{aligned}
\end{equation}
where inequality $(a)$ holds because $tk\log(n)$ only takes negative values in the summation, and inequality $(b)$ follows from the trivial upper bound
$\sum_{tk\log(n)=-kn/2}^{\lfloor\frac{b-a}{2}k\log(n) \rfloor} D^{(k)}(G,t)\le \binom{n}{k}$.

Recall the function $f_{\beta}(t)$ defined in \eqref{eq:gt}.
Then for $t\in [\frac{1}{2}(b-a), 0]$, we have 
\begin{align*}
& E[D^{(k)}(G,t) \exp\big(2\beta t k \log(n) \big)] \\
= & \sum_{\cI\subseteq[n],|\cI|=k} E[\mathbbm{1}[B_{\cI}-A_{\cI}=t k \log(n)]] \exp\big(2\beta t k \log(n) \big) \\
= & \sum_{\cI\subseteq[n],|\cI|=k} P\big(B_{\cI}-A_{\cI}=t k \log(n) \big) \exp\big(2\beta t k \log(n) \big) \\
\le & \sum_{\cI\subseteq[n],|\cI|=k} P\big(B_{\cI}-A_{\cI} \ge t k \log(n) \big) \exp\big(2\beta t k \log(n) \big) \\
\le &  \sum_{\cI\subseteq[n],|\cI|=k} \exp\Big( k \log(n)
\Big(\sqrt{t^2+ab} -t\big(\log(\sqrt{t^2+ab}+t)-\log(b) \big) -\frac{a+b}{2} +2\beta t +o(1) \Big)\Big) \\
= & \binom{n}{k} n^{k(f_{\beta}(t)-1+o(1))} ,
\end{align*}
where the second inequality follows from \eqref{eq:upmpt}.
For $\epsilon>0$, define a set
\begin{equation} \label{eq:gep}
\begin{aligned}
\cG^{(k)}(\epsilon):=\left\{
\sum_{tk\log(n)=\lceil\frac{b-a}{2}k\log(n) \rceil}^{-1} D^{(k)}(G,t) \exp  \big(2\beta t k \log(n) \big) \hspace*{1.5in} \right. \\
\left.
 \le  \sum_{tk\log(n)=\lceil\frac{b-a}{2}k\log(n) \rceil}^{-1}
\binom{n}{k} n^{k(f_{\beta}(t)-1+\epsilon)}
\right\} .
\end{aligned}
\end{equation}
Then by Markov inequality,
\begin{equation} \label{eq:Gk}
P(G\in \cG^{(k)}(\epsilon))\ge 1-n^{-k(\epsilon-o(1))} >1-n^{-k\epsilon/2}
\end{equation}
for large $n$ and positive $\epsilon$.
Using \eqref{eq:bk}, we obtain that for $G\in\cG_1^{(k)}\cap\cG^{(k)}(\epsilon)$,
\begin{equation}  \label{eq:3l}
\begin{aligned}
& \sum_{\cI\subseteq[n],|\cI|=k}
\frac{P_{\sigma|G}(\sigma=X^{(\sim \cI)} )}
{P_{\sigma|G}(\sigma=X)}  \\
\le & \binom{n}{k}
\exp\big(\beta(b-a) k \log(n) \big)  + \sum_{tk\log(n)=\lceil\frac{b-a}{2}k\log(n) \rceil}^{-1}
\binom{n}{k} n^{k(f_{\beta}(t)-1+\epsilon)} \\
= & \binom{n}{k} n^{k(f_{\beta}((b-a)/2)-1)} + \sum_{tk\log(n)=\lceil\frac{b-a}{2}k\log(n) \rceil}^{-1}
\binom{n}{k} n^{k(f_{\beta}(t)-1+\epsilon)} ,
\end{aligned}
\end{equation}
where the equality follows from the fact that $f_{\beta}(\frac{1}{2}(b-a))=\beta(b-a)+1$.
Recall the function $\tilde{g}(\beta)$ defined in \eqref{eq:gbt}, and recall from  Lemma~\ref{lm:tus} that
$f_{\beta}(t)\le \tilde{g}(\beta)<0$ for all $t\le 0$.
Using this in \eqref{eq:3l} together with the fact $\binom{n}{k}<n^k$, we obtain that for
$G\in\cG_1^{(k)}\cap\cG^{(k)}(\epsilon)$,
\begin{equation} \label{eq:wuh}
\begin{aligned}
& \sum_{\cI\subseteq[n],|\cI|=k}
\frac{P_{\sigma|G}(\sigma=X^{(\sim \cI)} )}
{P_{\sigma|G}(\sigma=X)} 
\le n^{k\tilde{g}(\beta)}
+ \sum_{tk\log(n)=\lceil\frac{b-a}{2}k\log(n) \rceil}^{-1}  n^{k(\tilde{g}(\beta) + \epsilon)}  \\
& \le n^{k(\tilde{g}(\beta) + \epsilon)}
\Big(\frac{a-b}{2}\log(n^k)+1 \Big)
< n^{k(\tilde{g}(\beta) + 2\epsilon)} 
\end{aligned}
\end{equation}
for large $n$ and positive $\epsilon$.
Let $\epsilon=-\tilde{g}(\beta)/4>0$ and define
$$
\cG^{(k)}:=\cG_1^{(k)}\cap\cG^{(k)}(-\tilde{g}(\beta)/4) .
$$
By \eqref{eq:wuh}, for $G\in\cG^{(k)}$ we have
$$
\sum_{\cI\subseteq[n],|\cI|=k}
\frac{P_{\sigma|G}(\sigma=X^{(\sim \cI)} )}
{P_{\sigma|G}(\sigma=X)} <
n^{k \tilde{g}(\beta) /2} .
$$
By \eqref{eq:Dk}, \eqref{eq:g1k} and \eqref{eq:Gk}, 
$$
P(G\in\cG^{(k)})\ge 1-n^{k\tilde{g}(\beta)/8}- n^{k (1-\frac{(\sqrt{a}-\sqrt{b})^2}{2} + o(1) )}> 1- 2 n^{k\tilde{g}(\beta)/8},
$$
where the last inequality follows from the fact that $1-\frac{(\sqrt{a}-\sqrt{b})^2}{2}\le
\tilde{g}(\beta)<\tilde{g}(\beta)/8< 0$.

\section{Samples differ from $\pm X$ in $O(n^{\theta})$ coordinates for some $\theta<1$ when $\beta\le\beta^\ast$}
\label{sect:theta}

\begin{proposition}[Refinement of Proposition~\ref{prop:1}] \label{prop:43}
Let $a,b,\alpha,\beta> 0$ be constants satisfying that $\sqrt{a}-\sqrt{b} > \sqrt{2}$, $\alpha>b\beta$ and $\beta\le\beta^\ast$. Let $m$ be a constant integer that is independent of $n$.
Let 
$$
(X,G,\{\sigma^{(1)},\dots,\sigma^{(m)}\})\sim \SIBM(n,a\log(n)/n, b\log(n)/n,\alpha,\beta, m) .
$$
Then for any (arbitrarily small) $\delta>0$ and any (arbitrarily large) $r>0$, there exists $n_0(\delta, r)$ such that for all even integers $n>n_0(\delta, r)$,
$$
P_{\SIBM} \Big(\dist(\sigma^{(i)},\pm X)<n^{g(\beta)+\delta}
\text{~for all~} i\in[m] \Big) \ge 1- n^{-r} .
$$
\end{proposition}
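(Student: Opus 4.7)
The plan is to refine the expectation-based analysis underlying the proof of Proposition~\ref{prop:big} by replacing the coarse estimate $\binom{n}{k}<n^k$ with a Stirling-based bound, which becomes effective precisely in the regime $\beta\le\beta^\ast$ where $g(\beta)\ge 0$. First I would apply Proposition~\ref{prop:1} to reduce to the event $\cE_1=\{\dist(\sigma^{(i)},\pm X)<2n/\log^{1/3}(n)\text{ for all }i\in[m]\}$, which has probability at least $1-n^{-r}$. On $\cE_1$, each sample $\sigma^{(i)}$ lies in the Hamming ball of radius $2n/\log^{1/3}(n)$ around either $X$ or $-X$; by the symmetry $P_{\SIBM}(\sigma=\bar\sigma)=P_{\SIBM}(\sigma=-\bar\sigma)$ it suffices to bound $P_{\SIBM}(n^{g(\beta)+\delta}\le \dist(\sigma,X)<2n/\log^{1/3}(n))$ for a single sample, handling the symmetric $-X$ case and the union bound over the constant number of samples $m$ at the end.

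Reusing the moment generating function computation in the proof of Proposition~\ref{prop:big} (see the derivation of \eqref{eq:upmpt}) and the observation from Lemma~\ref{lm:tus} that $\max_{t\le 0} f_\beta(t)=g(\beta)$ for $\beta\le\beta^\ast\le\tfrac14\log(a/b)$, a direct expectation bound yields, for every $k\le 2n/\log^{1/3}(n)$,
$$
E_G\left[\sum_{\cI\subseteq[n]:|\cI|=k}\frac{P_{\sigma|G}(\sigma=X^{(\sim\cI)})}{P_{\sigma|G}(\sigma=X)}\right]
\;\le\;\binom{n}{k}\,n^{k(g(\beta)-1+o(1))},
$$
where the $o(1)$ term arises uniformly from the $O(\log^{-1/3}(n))$ error in \eqref{eq:upmpt}. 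The key refinement is then to bound $\binom{n}{k}\le n^k/k!$ and apply Stirling's estimate $k!\ge(k/e)^k$: for $k\ge n^{g(\beta)+\delta}$ we have $k\log k\ge k(g(\beta)+\delta)\log n$, whence $k!\ge n^{k(g(\beta)+\delta-o(1))}$, and substituting gives
$$
E_G\left[\sum_{\cI:|\cI|=k}\frac{P_{\sigma|G}(\sigma=X^{(\sim\cI)})}{P_{\sigma|G}(\sigma=X)}\right]\;\le\; n^{-k(\delta-o(1))}.
$$
Since $P_{\sigma|G}(\sigma=X)\le 1$ trivially, the same quantity upper-bounds $P_{\SIBM}(\dist(\sigma,X)=k)=E_G\!\left[\sum_{\cI:|\cI|=k} P_{\sigma|G}(\sigma=X^{(\sim\cI)})\right]$, so no typical-graph set analogous to $\cG^{(k)}$ is needed here.

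Summing the resulting bound over $k$ from $\lceil n^{g(\beta)+\delta}\rceil$ to $\lceil 2n/\log^{1/3}(n)\rceil$ produces a geometric-like tail that is $o(n^{-s})$ for every fixed $s$, in particular $o(n^{-r})$. Combining with the symmetric estimate for $\dist(\sigma,-X)$, with Proposition~\ref{prop:1}, and with a union bound over the $m$ samples completes the proof; the boundary case $g(\beta)+\delta\ge 1$ is trivial since then $n^{g(\beta)+\delta}\ge 2n/\log^{1/3}(n)$ for large $n$ and the conclusion reduces to Proposition~\ref{prop:1}. The main technical obstacle is ensuring uniformity of the $o(1)$ term across the entire range of $k$, which is guaranteed because the $O(\log^{-1/3}(n))$ error in the moment generating function of $B_\cI-A_\cI$ depends only on $n$, so $n$ can be chosen large enough that $o(1)<\delta/4$ simultaneously for all admissible $k$.
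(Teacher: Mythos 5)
Your proposal is correct, and its decisive step -- replacing $\binom{n}{k}\le n^k$ by $\binom{n}{k}\le n^k/k!$ together with $k!\ge (k/e)^k\ge n^{k(g(\beta)+\delta-o(1))}$ for $k\ge n^{g(\beta)+\delta}$ -- is exactly the refinement the paper itself uses (it is the inequality driving \eqref{eq:zm}, and it already appears in the sketch following \eqref{eq:nn}). Where you genuinely diverge is in the probabilistic scaffolding. The paper's formal proof routes through Proposition~\ref{prop:xz}: for each $k$ it constructs a typical-graph set $\cG_\delta^{(k)}=\cG_1^{(k)}\cap\cG^{(k)}(\delta/4)$ (the cut-off event that $B_\cI<A_\cI$ for all $|\cI|=k$, plus a Markov-inequality event), intersects over $k$, and bounds the conditional ratio sum $\sum_{|\cI|=k}P_{\sigma|G}(\sigma=X^{(\sim\cI)})/P_{\sigma|G}(\sigma=X)$ for every $G$ in that set, in the same style as Proposition~\ref{prop:big}. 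You instead note that $P_{\sigma|G}(\sigma=X^{(\sim\cI)})$ is itself at most this ratio (since $P_{\sigma|G}(\sigma=X)\le 1$), bound the unconditional expectation over $G$ via the deterministic inequality \eqref{eq:ts} and the moment-generating-function estimate inside the proof of \eqref{eq:upmpt}, and sum over $k\ge n^{g(\beta)+\delta}$; because $g(\beta)\ge 0$ when $\beta\le\beta^\ast$, the starting index is at least $n^\delta$, so the tail is super-polynomially small and beats any prescribed $n^{-r}$ after the symmetry argument, Proposition~\ref{prop:1}, and the union bound over the constant number of samples. This is simpler -- no graph sets, no conditioning, no cut-off event -- and it suffices because the proposition only asserts a $P_{\SIBM}$ bound; what the paper's longer route buys is per-graph conditional control (a ``for almost every $G$'' statement) matching the architecture of Section~\ref{sect:equal}, which is not needed for the statement as posed. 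Two small points of care: what you actually need from the proof of \eqref{eq:upmpt} is the intermediate MGF bound evaluated at $s=2(\beta+\alpha\log(n)/n)$ rather than the optimized tail bound itself (or, if you decompose over the values $t$ of $(B_\cI-A_\cI)/(k\log n)$ and apply \eqref{eq:upmpt} termwise, the extra factor of order $kn$ from the number of $t$-values is harmless since it is $n^{o(k)}$ for $k\ge n^\delta$); and the uniformity in $k$ of the $o(1)$ error, which you correctly flag, is indeed the only uniformity required.
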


By Lemma~\ref{lm:ele} (vi), we know that $g(\beta)<1$ for all $0<\beta\le\beta^\ast$, so we can always choose a $\delta>0$ such that $g(\beta)+\delta<1$.
Then Proposition~\ref{prop:43} implies that when $\beta\le\beta^\ast$, with probability $1-o(1)$ all samples differ from $\pm X$ in $O(n^\theta)$ coordinates for some $\theta<1$.

Since we assume that $m$ is a constant that is independent of $n$,
we only need to prove Proposition~\ref{prop:43} for the special case of $m=1$, and the case of general values of $m$ follows immediately from this special case.
Proposition~\ref{prop:1} tells us that if $\alpha>b\beta$, then $\dist(\sigma,\pm X) \le 2n/\log^{1/3}(n)$
 with probability $1-n^{-2r}$ for any given $r>0$ and large enough $n$.
Also note that
\begin{align*}
 \sum_{\cI\subseteq[n],~
n^{g(\beta)+\delta}
\le |\cI|\le 2n/\log^{1/3}(n)}
\frac{P_{\sigma|G}(\sigma=-X^{(\sim\cI)})}{P_{\sigma|G}(\sigma=-X)} 
=  \sum_{\cI\subseteq[n],~
n^{g(\beta)+\delta}
\le |\cI|\le 2n/\log^{1/3}(n)}
\frac{P_{\sigma|G}(\sigma=X^{(\sim\cI)})}{P_{\sigma|G}(\sigma=X)} .
\end{align*}
Therefore, to prove Proposition~\ref{prop:43}, we only need to show that given $r>0$, there exists a set $\cG_{\delta}$ such that the following two conditions hold for large enough $n$:  (i)
$P(G\in\cG_{\delta})\ge 1-n^{-2r}$, and (ii)
For every $G\in\cG_{\delta}$,
$$
\sum_{\cI\subseteq[n],~
n^{g(\beta)+\delta}
\le |\cI|\le 2n/\log^{1/3}(n)}
\frac{P_{\sigma|G}(\sigma=X^{(\sim\cI)})}{P_{\sigma|G}(\sigma=X)} \le n^{-2r} .
$$
The existence of $\cG_{\delta}$ is guaranteed by the following proposition:
\begin{proposition} \label{prop:xz}
Let $a,b,\alpha,\beta> 0$ be constants satisfying that $\sqrt{a}-\sqrt{b} > \sqrt{2}$ and $\alpha>b\beta$.
Let 
$$
(X,G,\sigma) \sim \SIBM(n,a\log(n)/n, b\log(n)/n,\alpha,\beta, 1) .
$$
For any $\delta>0$, there exists $n_0(\delta)$ such that for every even integer $n>n_0(\delta)$ and  every integer $n^{g(\beta)+\delta} \le k\le 2n/\log^{1/3}(n)$, there is a set $\cG_{\delta}^{(k)}$ for which

\noindent (i)
$P(G\in\cG_\delta^{(k)}) \ge 1- 2 n^{-k\delta'}$,
where $\delta':=\min(\frac{\delta}{8},\frac{(\sqrt{a}-\sqrt{b})^2}{4} - \frac{1}{2}) >0$.

\noindent (ii) For every $G\in\cG_\delta^{(k)}$,
$$
\sum_{\cI\subseteq[n],|\cI|=k}
\frac{P_{\sigma|G}(\sigma=X^{(\sim \cI)} )}
{P_{\sigma|G}(\sigma=X)} <
n^{-k \delta /4} .
$$
\end{proposition}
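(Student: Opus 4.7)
The plan is to reuse essentially the entire machinery developed in Section~\ref{sect:gen} for Proposition~\ref{prop:big}, but to replace the crude binomial estimate $\binom{n}{k}\le n^k$ by the sharper Stirling-type bound $\binom{n}{k}\le n^k/k!$, which becomes powerful precisely in the regime $k\ge n^{g(\beta)+\delta}$ that appears in the hypothesis. I will take the same two ``good'' events introduced in Section~\ref{sect:gen}, namely the event $\cG_1^{(k)}$ from \eqref{eq:g1k} (on which all the $D^{(k)}(G,t)$ with $t\ge 0$ vanish), and the Markov-bound event $\cG^{(k)}(\epsilon)$ from \eqref{eq:gep}, and then set
$$
\cG_\delta^{(k)} := \cG_1^{(k)} \cap \cG^{(k)}(\delta/8).
$$

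For the probability bound (part (i)), a union bound combined with \eqref{eq:Dk} and \eqref{eq:Gk} gives
$$
P(G\in\cG_\delta^{(k)}) \ge 1 - n^{k(1-(\sqrt{a}-\sqrt{b})^2/2 + o(1))} - n^{-k\delta/16}.
$$
The hypothesis $\sqrt{a}-\sqrt{b}>\sqrt{2}$ implies $1-(\sqrt{a}-\sqrt{b})^2/2 < -((\sqrt{a}-\sqrt{b})^2/4-1/2)$, and so both error terms are bounded by $n^{-k\delta'}$ with $\delta'=\min(\delta/8,(\sqrt{a}-\sqrt{b})^2/4-1/2)$; this gives the claimed $1-2n^{-k\delta'}$.

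For part (ii), I will restart from inequality \eqref{eq:3l}, which already gives, on $\cG_1^{(k)}\cap\cG^{(k)}(\epsilon)$,
$$
\sum_{|\cI|=k}\frac{P_{\sigma|G}(\sigma=X^{(\sim\cI)})}{P_{\sigma|G}(\sigma=X)}
\;\le\; O(\log n)\binom{n}{k}\,n^{k(\tilde g(\beta)-1+\epsilon)},
$$
using Lemma~\ref{lm:tus} to upper bound $f_\beta(t)$ by $\tilde g(\beta)$ on $t\le 0$. Now, for $k\ge n^{g(\beta)+\delta}$, Stirling's formula gives $k!\ge (k/e)^k \ge \exp(k(g(\beta)+\delta)\log n - k) = n^{k(g(\beta)+\delta-o(1))}$, hence $\binom{n}{k}\le n^{k(1-g(\beta)-\delta+o(1))}$. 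Substituting and choosing $\epsilon=\delta/8$ yields
$$
\sum_{|\cI|=k}\frac{P_{\sigma|G}(\sigma=X^{(\sim\cI)})}{P_{\sigma|G}(\sigma=X)}
\;\le\; n^{k(\tilde g(\beta)-g(\beta)-\delta+\delta/4+o(1))}.
$$
Since $\tilde g(\beta)\le g(\beta)$ always (equal for $\beta<\frac14\log(a/b)$, strictly smaller otherwise, as $g$ attains its minimum at $\frac14\log(a/b)$), the exponent is at most $-3\delta/4+o(1)$, which is less than $-\delta/4$ for all sufficiently large $n$, giving the required bound $n^{-k\delta/4}$.

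The only subtle point, and the main source of bookkeeping, will be verifying that the $o(1)$ terms hidden inside the Stirling estimate and inside \eqref{eq:upmpt} (which carry a $O(\log^{-1/3}n)$ error coming from the constraint $k\le 2n/\log^{1/3}(n)$) remain uniform in $k$ over the whole range $n^{g(\beta)+\delta}\le k\le 2n/\log^{1/3}(n)$, so that a single threshold $n_0(\delta)$ suffices for the conclusion. This is a straightforward but careful check: the $O(\log^{-1/3}n)$ factor multiplying $k\log n$ inside the Chernoff bound contributes $O(k\log^{2/3}n)$ to the logarithm of the sum, which is of lower order than $k\log n\cdot\delta/4$, so it can indeed be absorbed into the $o(1)$ slack for $n$ large enough, independent of $k$.
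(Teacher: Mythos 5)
Your proposal follows essentially the same route as the paper's own proof: the same events $\cG_1^{(k)}$ and $\cG^{(k)}(\epsilon)$ from Section~\ref{sect:gen}, the same use of Lemma~\ref{lm:tus} to bound $f_\beta(t)$ by $\tilde g(\beta)\le g(\beta)$, and the same key observation that $\binom{n}{k}\le n^k/k!$ with $k!\ge (k/e)^k\ge n^{k(g(\beta)+\delta-o(1))}$ in the range $k\ge n^{g(\beta)+\delta}$; the uniformity check you flag at the end (the $O(\log^{-1/3}n)$ error in \eqref{eq:upmpt} being uniform over $k\le 2n/\log^{1/3}(n)$) is exactly what makes a single $n_0(\delta)$ work, as in the paper. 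The one flaw is the parameter choice $\epsilon=\delta/8$: by \eqref{eq:Gk} the event $\cG^{(k)}(\delta/8)$ only has failure probability at most $n^{-k\delta/16}$, and $n^{-k\delta/16}$ is \emph{not} bounded by $n^{-k\delta'}$ when $\delta'=\min(\delta/8,(\sqrt a-\sqrt b)^2/4-1/2)$ equals $\delta/8$, so part (i) as stated does not follow from your estimate (your displayed exponent $-\delta+\delta/4+o(1)$ also silently uses $\epsilon=\delta/4$ rather than $\delta/8$). The fix is immediate and is what the paper does: take $\epsilon=\delta/4$, so the Markov event fails with probability at most $n^{-k\delta/8}$, while in part (ii) the exponent becomes $\tilde g(\beta)-g(\beta)-\delta+\delta/4+o(1)\le -3\delta/4+o(1)<-\delta/4$ for large $n$, still yielding the claimed bound $n^{-k\delta/4}$.
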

With the $\cG_\delta^{(k)}$'s given by Proposition~\ref{prop:xz}, we
define 
$$
\cG_\delta:=\bigcap_{k=n^{g(\beta)+\delta}}^{2n/\log^{1/3}(n)} \cG_\delta^{(k)} .
$$
By the union bound,
$$
P(G\in\cG_\delta)\ge 1-2 \sum_{k=n^{g(\beta)+\delta}}^{2n/\log^{1/3}(n)} n^{-k\delta'}
> 1-2 \sum_{k=\lceil 4r/\delta' \rceil}^{+\infty} n^{-k\delta'}
\ge 1- \frac{2 n^{-4r}}{1-n^{-\delta'}}
>1-n^{-2r}
$$
for large enough $n$.
 Moreover, for every $G\in\cG_\delta$ and large enough $n$,
\begin{align*}
& \sum_{\cI\subseteq[n],~~
n^{g(\beta)+\delta}\le |\cI|\le 2n/\log^{1/3}(n)}
\frac{P_{\sigma|G}(\sigma=X^{(\sim\cI)})}{P_{\sigma|G}(\sigma=X)} =
\sum_{k=n^{g(\beta)+\delta}}^{2n/\log^{1/3}(n)}
\hspace*{0.05in}
\sum_{\cI\subseteq[n],|\cI|=k}
\frac{P_{\sigma|G}(\sigma=X^{(\sim \cI)} )}
{P_{\sigma|G}(\sigma=X)}  \\
& < \sum_{k=n^{g(\beta)+\delta}}^{2n/\log^{1/3}(n)}
n^{-k \delta /4}
< \sum_{k=\lceil 16r/\delta \rceil}^{+\infty}
n^{-k \delta /4}
\le \frac{n^{-4r}}{1-n^{-\delta /4}} < n^{-2r} .
\end{align*}
Thus we have shown that Proposition~\ref{prop:43} is implied by Proposition~\ref{prop:xz}. Now we are left to prove the latter proposition.
It turns out that all we need for the proof of Proposition~\ref{prop:xz} is a tighter inequality than \eqref{eq:wuh}.
Recall that we obtain \eqref{eq:wuh} from \eqref{eq:3l} by using a coarse upper bound $\binom{n}{k}<n^k$. Here we use a tighter upper bound $\binom{n}{k}<n^k/(k!)$ in \eqref{eq:3l} and obtain that for
$G\in\cG_1^{(k)}\cap\cG^{(k)}(\epsilon)$, (see the definitions of $\cG_1^{(k)}$ and $\cG^{(k)}(\epsilon)$ in \eqref{eq:g1k} and \eqref{eq:gep})
\begin{equation} \label{eq:zm}
\begin{aligned}
& \sum_{\cI\subseteq[n],|\cI|=k}
\frac{P_{\sigma|G}(\sigma=X^{(\sim \cI)} )}
{P_{\sigma|G}(\sigma=X)} 
< (k!)^{-1} n^{kf_{\beta}((b-a)/2)} + \sum_{tk\log(n)=\lceil\frac{b-a}{2}k\log(n) \rceil}^{-1}
(k!)^{-1} n^{k(f_{\beta}(t)+\epsilon)} \\
& \le (k!)^{-1} n^{kg(\beta)}
+ \sum_{tk\log(n)=\lceil\frac{b-a}{2}k\log(n) \rceil}^{-1}  (k!)^{-1} n^{k(g(\beta) + \epsilon)}  \\
& \le (k!)^{-1} n^{k(g(\beta) + \epsilon)}
\Big(\frac{a-b}{2}\log(n^k)+1 \Big) \\
& < (k!)^{-1} n^{k(g(\beta) + 2\epsilon)}  ,
\end{aligned}
\end{equation}
where the second inequality holds because
$f_{\beta}(t)\le \tilde{g}(\beta) \le g(\beta)$ for all $t\le 0$
(see  Lemma~\ref{lm:tus}), and the last inequality holds for positive $\epsilon$ and large $n$.
It is well known that\footnote{We know from the Taylor expansion that $e^x>x^k/(k!)$ for any $x>0$. Taking $x=k$ gives us $k!>(k/e)^k$.} $k!>(k/e)^k$ for all positive integer $k$. Therefore, for $k>n^{g(\beta)+\delta}$, we have
$$
k!>(k/e)^k
=\exp(k\log(k)-k)
>\exp(k(g(\beta)+\delta)\log(n)-k)
=n^{k(g(\beta)+\delta-o(1))}
$$
Taking this into \eqref{eq:zm}, we obtain that $G\in\cG_1^{(k)}\cap\cG^{(k)}(\epsilon)$,
$$
\sum_{\cI\subseteq[n],|\cI|=k}
\frac{P_{\sigma|G}(\sigma=X^{(\sim \cI)} )}
{P_{\sigma|G}(\sigma=X)} 
< n^{k(2\epsilon-\delta+o(1))}
< n^{k(3\epsilon-\delta)}
$$
for positive $\epsilon$ and large $n$.
Let $\epsilon=\delta/4$ and define 
$$
\cG_{\delta}^{(k)}
:=\cG_1^{(k)}\cap\cG^{(k)}(\delta/4) .
$$
Then for $G\in\cG_{\delta}^{(k)}$ we have
$$
\sum_{\cI\subseteq[n],|\cI|=k}
\frac{P_{\sigma|G}(\sigma=X^{(\sim \cI)} )}
{P_{\sigma|G}(\sigma=X)} <
n^{-k \delta /4} .
$$
By \eqref{eq:Dk}, \eqref{eq:g1k} and \eqref{eq:Gk}, 
\begin{align*}
P(G\in\cG_{\delta}^{(k)})
& \ge 1-n^{-k\delta/8}- n^{k (1-\frac{(\sqrt{a}-\sqrt{b})^2}{2} + o(1) )} \\
& \ge 1-n^{-k\delta/8}- n^{k (\frac{1}{2}-\frac{(\sqrt{a}-\sqrt{b})^2}{4} )}
> 1- 2 n^{-k\delta'},
\end{align*}
where the second inequality follows from  $1-\frac{(\sqrt{a}-\sqrt{b})^2}{2}< 0$, and the last inequality follows from the definition $\delta'=\min(\frac{\delta}{8},\frac{(\sqrt{a}-\sqrt{b})^2}{4} - \frac{1}{2})$.

\section{Exact recovery in $O(n)$ time when $\lfloor \frac{m+1}{2} \rfloor \beta>\beta^\ast$}
\label{sect:direct}

In this section, we prove that Algorithm~\ref{alg:ez} in Section~\ref{sect:multi} is able to learn $\SIBM(n,a\log(n)/n, \linebreak[4] b\log(n)/n,\alpha,\beta, m)$ as long as $\sqrt{a}-\sqrt{b} > \sqrt{2}$,  $\alpha>b\beta$ and $\lfloor \frac{m+1}{2} \rfloor \beta>\beta^\ast$, where $\beta^\ast$ is defined in \eqref{eq:defstar}.

\begin{proposition} \label{prop:nr}
Let $a,b,\alpha,\beta> 0$ be constants satisfying that $\sqrt{a}-\sqrt{b} > \sqrt{2}$ and $\alpha>b\beta$. Let $m$ be an integer such that $\lfloor \frac{m+1}{2} \rfloor \beta>\beta^\ast$.
Let 
$$
(X,G,\{\sigma^{(1)},\dots,\sigma^{(m)}\})\sim \SIBM(n,a\log(n)/n, b\log(n)/n,\alpha,\beta, m) .
$$
Let $\hat{X}=\texttt{LearnSIBM}(\sigma^{(1)},\dots,\sigma^{(m)})$ be the output of Algorithm~\ref{alg:ez}. Then
$$
P(\hat{X}=X \text{~or~} \hat{X}=-X) = 1-o(1) .
$$
\end{proposition}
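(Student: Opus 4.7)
My plan is to carry out in full detail the argument sketched in Section~\ref{sect:multi}. First, I would handle the alignment step. By Proposition~\ref{prop:1}, with probability $1-o(1)$ every sample $\sigma^{(j)}$ satisfies $\dist(\sigma^{(j)},\pm X)< 2n/\log^{1/3}(n)$, so each sample has an unambiguous sign relative to $X$. Consequently the inner product $\sum_i \sigma_i^{(1)}\sigma_i^{(j)}$ is $(1-o(1))n$ in absolute value and its sign records whether $\sigma^{(j)}$ is aligned with $\sigma^{(1)}$ or with $-\sigma^{(1)}$. Hence after Step~1 of Algorithm~\ref{alg:ez} all samples are simultaneously close to $X$ or simultaneously close to $-X$; by the symmetry $P_{\SIBM}(\sigma=\bar{\sigma})=P_{\SIBM}(\sigma=-\bar{\sigma})$ it suffices to analyze the former case and show $\hat{X}=X$.

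Next I would reduce the majority-vote analysis to a per-coordinate bad-event count. Set $u:=\lfloor (m+1)/2\rfloor$; by hypothesis $u\beta>\beta^\ast$. Since $m-2(u-1)\ge 1$, if for every $i\in[n]$ the number of ``flipped'' samples $N_i:=\sum_{j=1}^m \mathbbm{1}[\sigma_i^{(j)}\ne X_i]$ is at most $u-1$, then Step~2 outputs $\hat X=X$. So the task is to prove
\begin{equation*}
P_{\SIBM}\Big(\,\exists\,i\in[n]:\;N_i\ge u\,\Big)=o(1).
\end{equation*}
Conditioning on $G$, the samples are independent, so $P_{\sigma|G}(N_i\ge u)\le \binom{m}{u}\prod_{\ell=1}^u P_{\sigma|G}(\sigma_i^{(j_\ell)}\ne X_i)$ for any choice of $u$ indices, and by a union bound over the $\binom{m}{u}$ subsets it is enough to bound $P_{\sigma|G}(\sigma_i^{(1)}\ne X_i,\dots,\sigma_i^{(u)}\ne X_i)$ for independent samples. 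The central structural input is the single-sample estimate from Sections~\ref{sect:equal} and \ref{sect:sketch}: for almost all $G$,
\begin{equation*}
P_{\sigma|G}(\sigma_i\ne X_i)\le C\,\exp\big(2\beta(B_i-A_i)\big)
\end{equation*}
up to a constant $C=C(\alpha,\beta,z)$ coming from the ``$z$ bad neighbors'' event discussed in the sketch. Multiplying $u$ such bounds, the contribution of coordinate $i$ to $N_i\ge u$ is controlled by $\exp(2(u\beta)(B_i-A_i))$, i.e. the same quantity that drives the single-sample case with $\beta$ replaced by $u\beta$.

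From this point on I would rerun the argument of Section~\ref{sect:equal}/Section~\ref{sect:k=1} verbatim with $\beta$ replaced by $u\beta$. Define $D(G,t):=|\{i:B_i-A_i=t\log n\}|$ and, in analogy with \eqref{eq:fd}--\eqref{eq:lq}, bound
\begin{equation*}
\sum_{i=1}^n P_{\sigma|G}(N_i\ge u)\le C^u\binom{m}{u}\sum_{t\log n=-n/2}^{-1} D(G,t)\exp(2u\beta t\log n)
\end{equation*}
on the event $\cG_1=\{\tilde D(G)=0\}$, which has probability $1-o(1)$ because $\sqrt a-\sqrt b>\sqrt 2$. The same Chernoff/Markov calculation as in \eqref{eq:upba}--\eqref{eq:zz} shows, for almost all $G$, that the right-hand side is $O(n^{\tilde g(u\beta)/2})$, and since $u\beta>\beta^\ast$ Lemma~\ref{lm:ele}(iv) gives $\tilde g(u\beta)<0$, so the bound is $o(1)$. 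A union bound over $i\in[n]$ and over the (finitely many) subsets of samples then produces $P_{\SIBM}(\exists i:N_i\ge u)=o(1)$, completing the proof.

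The main obstacle is justifying the per-coordinate inequality $P_{\sigma|G}(\sigma_i\ne X_i)\le C\exp(2\beta(B_i-A_i))$ for almost all $G$; this requires the structural result announced in the sketch that every vertex has at most $z$ ``bad'' neighbors with probability $1-o(1)$, which in turn rests on Proposition~\ref{prop:43}. Once this structural step is in place, the rest of the argument is a clean repetition of the single-sample machinery with the substitution $\beta\mapsto u\beta$, and the runtime of Algorithm~\ref{alg:ez} is manifestly $O(mn)=O(n)$ since $m$ is a constant. The algorithm uses only the samples and not the parameters $(a,b,\alpha,\beta)$, matching the claim.
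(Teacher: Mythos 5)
Your proposal is correct and follows essentially the same route as the paper's own proof: alignment via Proposition~\ref{prop:1}, reduction to the per-coordinate flip counts $\Phi_i$, the conditional bound $P_{\sigma|G}(\sigma_i\neq X_i\mid\cdot)\le C\exp(2\beta(B_i-A_i))$ obtained from the ``at most $z$ bad neighbors'' structural result (Lemma~\ref{lm:us}, Lemma~\ref{lm:et}, Corollary~\ref{cr:1}, resting on Proposition~\ref{prop:43}), and then the Section~\ref{sect:k=1} machinery rerun with $\beta$ replaced by $u\beta=\lfloor\frac{m+1}{2}\rfloor\beta$ together with $\tilde g(u\beta)<0$. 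The only cosmetic difference is that the paper dispatches the case $\beta>\beta^\ast$ separately via Proposition~\ref{prop:tt} (since the structural results are stated for $\beta\le\beta^\ast$), which your write-up should note but which requires no new ideas.
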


By Proposition~\ref{prop:tt},
if $\beta>\beta^\ast$, then $\sigma=\pm X$ with probability $1-o(1)$, so $m=1$ sample suffices for recovery. In the rest of this section, we will focus on the case $\beta\le \beta^\ast$.

In Proposition~\ref{prop:43} (or Proposition~\ref{prop:1}), we have shown that $\dist(\sigma^{(i)}, \pm X) =o(n)$ for all $i\in[m]$ with probability $1-O(n^{-r})$ for any constant $r>0$, but it is possible that $\sigma^{(1)}$ is close to $X$ while $\sigma^{(2)}$ is close to $-X$. Step 1 (the alignment step) in the above algorithm eliminates such possibility: After the alignment step, with probability $1-O(n^{-r})$ only the following two scenarios will happen: Either
$\dist(\sigma^{(i)}, X)=o(n)$ for all $i\in[m]$ or $\dist(\sigma^{(i)}, -X)=o(n)$ for all $i\in[m]$. Without loss of generality, we assume the former case, i.e., we assume that 
$\dist(\sigma^{(i)}, X) \le n/2$ for all $i\in[m]$,
and in the rest of this section we will prove that Algorithm~\ref{alg:ez} outputs $\hat{X}=X$ with probability $1-o(1)$.

Given the ground truth $X$, the graph $G$ and a vertex $i\in[n]$, define the neighbors of $i$ in $G$ as 
\begin{equation} \label{eq:ngbi}
\cN_i(G) := \{j\in[n]\setminus\{i\}:
\{i,j\}\in E(G) \} .
\end{equation}
Given a sample $\sigma\in\{\pm 1\}^n$ and a graph $G$,  we define the set of ``bad" neighbors of the vertex $i$ in $G$ as
$$
\Omega_i(\sigma,G):=\{j\in \cN_i(G): 
\sigma_j \neq X_j \} .
$$
Given a vertex $i\in[n]$, a graph $G$ and an integer $z>0$, we also define
$$
\Lambda_i(G, z):=\{ \sigma\in\{\pm 1\}^n: |\Omega_i(\sigma,G)| < z \} ,
$$
i.e., $\Lambda_i(G, z)$ consists of all the samples for which the number of ``bad" neighbors of the vertex $i$ in $G$ is less than $z$.

\begin{lemma} \label{lm:us}
Let $a,b,\alpha,\beta> 0$ be constants satisfying that $\sqrt{a}-\sqrt{b} > \sqrt{2}$, $\alpha>b\beta$ and $\beta\le \beta^\ast$.
Given any $r>0$, there exists an integer $z>0$ such that for large enough $n$ and every $i\in[n]$, 
\begin{equation} \label{eq:zr}
P_{\SIBM} (\sigma\in \Lambda_i(G, z)
| \dist(\sigma, X) \le n/2)
> 1 - n^{-r} .
\end{equation}
\end{lemma}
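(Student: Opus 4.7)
\textbf{My plan} is to apply a $z$-th factorial moment argument on top of the concentration provided by Proposition~\ref{prop:43}. Fix $\delta>0$ with $g(\beta)+\delta<1$ (possible since $g(\beta)<1$ on $(0,\beta^\ast]$ by Lemma~\ref{lm:ele}(vi)) and set $K:=n^{g(\beta)+\delta}$; the integer $z$ will be fixed at the end so that $z(1-g(\beta)-\delta)>r+1$. The symmetry $P_{\sigma|G}(\sigma=\bar\sigma)=P_{\sigma|G}(\sigma=-\bar\sigma)$ of~\eqref{eq:isingma} implies $P_{\sigma|G}(\dist(\sigma,X)\le n/2)\ge 1/2$, and Proposition~\ref{prop:43} (applied with $r+1$ in place of $r$) strengthens the conditioning event to $\{\dist(\sigma,X)<K\}$ up to an additive error $n^{-(r+1)}$, so it suffices to prove $P_{\SIBM}(|\Omega_i(\sigma,G)|\ge z,\,\dist(\sigma,X)<K)\le n^{-r}/3$.

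\emph{Markov with factorial moment.} The bound $\mathbbm{1}[|\Omega_i|\ge z]\le\binom{|\Omega_i|}{z}$, combined with expansion over unordered $z$-subsets $J\subseteq[n]\setminus\{i\}$, reduces the problem to bounding, for each such $J$, the joint probability $P_{\SIBM}(J\subseteq\mathcal{N}_i(G),\,J\subseteq S(\sigma,X),\,\dist(\sigma,X)<K)$, where $S(\sigma,X):=\{j:\sigma_j\neq X_j\}$. I control this by combining two independent ingredients. \emph{(a) Cluster-exchangeability:} since the law of $(G,\sigma)$ under $\SIBM$ is invariant under intra-cluster permutations of $\{j:X_j=+1\}$ and $\{j:X_j=-1\}$, conditional on $s_\pm:=|S\cap\{j:X_j=\pm 1\}|$ the set $S$ is uniformly distributed over balanced subsets, giving
\[
P(J\subseteq S\mid s_+,s_-)=\frac{\binom{s_+}{|J_+|}\binom{s_-}{|J_-|}}{\binom{n/2}{|J_+|}\binom{n/2}{|J_-|}}\le\Big(\frac{2\max(s_+,s_-)}{n}\Big)^z\le\Big(\frac{2K}{n}\Big)^z
\]
whenever $s_++s_-<K$, so $P(J\subseteq S,\,\dist(\sigma,X)<K)\le(2K/n)^z$. \emph{(b) Edge-tilt:} for any $\sigma\in\{\pm 1\}^n$ and any graph $G_{-ij}$ on $[n]$ not containing the edge $\{i,j\}$, the posterior odds ratio $P(\{i,j\}\in E(G)\mid\sigma,G_{-ij})/P(\{i,j\}\notin E(G)\mid\sigma,G_{-ij})$ equals
\[
\frac{p_{ij}}{1-p_{ij}}\cdot e^{(\beta+\alpha\log(n)/n)\sigma_i\sigma_j}\cdot\frac{Z_{G_{-ij}}}{Z_{G_{-ij}+\{i,j\}}},
\]
and since $Z_{G_{-ij}+\{i,j\}}/Z_{G_{-ij}}=E_{\sigma|G_{-ij}}[e^{(\beta+\alpha\log(n)/n)\sigma_i\sigma_j}]$ lies in $[e^{-\beta-o(1)},e^{\beta+o(1)}]$ because $|\sigma_i\sigma_j|=1$, this odds ratio is $\le C_\beta p_{ij}$ with $C_\beta:=e^{2\beta}(1+o(1))$. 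Iterating one edge at a time yields $P(J\subseteq\mathcal{N}_i(G)\mid\sigma)\le(C_\beta p)^z$ uniformly in $\sigma$, where $p:=a\log(n)/n$.

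\emph{Combining and concluding.} By $(a)$ and $(b)$,
\[
P(J\subseteq\mathcal{N}_i(G),\,J\subseteq S,\,\dist(\sigma,X)<K)=E_\sigma\!\left[\mathbbm{1}[J\subseteq S,\dist<K]\,P(J\subseteq\mathcal{N}_i(G)\mid\sigma)\right]\le(C_\beta p)^z\Big(\frac{2K}{n}\Big)^z,
\]
and summing over the $\binom{n-1}{z}\le n^z/z!$ choices of $J$ gives an overall bound $(2C_\beta a\log(n))^z\cdot n^{z(g(\beta)+\delta-1)}/z!$. For $z:=\lceil(r+1)/(1-g(\beta)-\delta)\rceil+1$ this is $\le n^{-(r+1)}$ for all large $n$, which together with the reduction step and the factor $1/2$ from symmetry proves the lemma.

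\textbf{The main obstacle} is ingredient~(b), the edge-tilt bound. The Ising partition function $Z_G$ couples all edges of $G$, so $G$ is not conditionally edge-independent given $\sigma$; the decisive observation is that $Z_{G_{-ij}+\{i,j\}}/Z_{G_{-ij}}$ equals a single Ising expectation of $e^{(\beta+\alpha\log(n)/n)\sigma_i\sigma_j}$ whose value is universally bounded in $[e^{-\beta-o(1)},e^{\beta+o(1)}]$ because $|\sigma_i\sigma_j|=1$, and this bound is uniform over the underlying $G_{-ij}$—which is exactly what lets me iterate one-edge-at-a-time without $C_\beta$ acquiring any $n$-dependence. Ingredient~(a) is more elementary but also relies on uniformity: the bound $(2K/n)^z$ must hold across all splits of $s<K$ into cluster-sizes $(s_+,s_-)$, which is immediate from the crude estimate $\max(s_+,s_-)\le s<K$.
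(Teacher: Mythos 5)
Your proof is correct, and while it shares the paper's overall skeleton---reduce via Proposition~\ref{prop:43} to the event $\dist(\sigma,X)<n^{g(\beta)+\delta}$, expand $\{|\Omega_i(\sigma,G)|\ge z\}$ over $z$-subsets, bound the spin-flip part by intra-cluster exchangeability (your ingredient (a) is exactly the content of Lemmas~\ref{lm:bq} and~\ref{lm:cc}), and finish with a union bound and a choice of $z$ with $z(1-g(\beta)-\delta)>r+1$---it treats the edge part by a genuinely different argument. The paper conditions on the flip event $\{\sigma_j\neq X_j,\ j\in\tilde{\cI}\}$ and argues, via Lemma~\ref{lm:bmd} and the claim that conditioned on its degree the neighbors of each $j\in\tilde{\cI}$ are uniform within each cluster, that $P(i\in\cN_j(G)\ \forall j\in\tilde{\cI})=O(\log^z(n)/n^z)$; this requires reasoning about the law of $G$ conditioned on a $\sigma$-event, where edge-independence is no longer automatic. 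Your ingredient (b) replaces this with a uniform-in-$\sigma$ bound on the conditional law of the edges given the sample: the single-edge odds ratio given $\sigma$ and all other edges is $\frac{p_{ij}}{1-p_{ij}}\,e^{(\beta+\alpha\log(n)/n)\sigma_i\sigma_j}\,Z_{G_{-ij}}/Z_{G_{-ij}+\{i,j\}}$, and since $Z_{G_{-ij}+\{i,j\}}/Z_{G_{-ij}}=E_{\sigma|G_{-ij}}\big[e^{(\beta+\alpha\log(n)/n)\sigma_i\sigma_j}\big]\in[e^{-\beta-o(1)},e^{\beta+o(1)}]$, every edge is present with conditional probability at most $e^{2\beta}(1+o(1))\,a\log(n)/n$ no matter what $\sigma$ and the remaining edges are, which legitimizes the one-edge-at-a-time iteration and the factorization $E_\sigma\big[\mathbbm{1}[J\subseteq S,\dist<K]\,P(J\subseteq\cN_i(G)\mid\sigma)\big]$. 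This yields the same quantitative per-subset bound $O\big((\log n)^z n^{-(2-g(\beta)-\delta)z}\big)$ as the paper, but your route is arguably cleaner to make fully rigorous, since it avoids the conditional-uniformity-of-neighbors step entirely; the only price is the constant factor $e^{2\beta z}$, which is harmless because $z$ is a constant depending only on $r$, $\beta$, $a$, $b$, $\delta$.
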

\begin{proof}
The proof of this lemma is similar to the proof of Proposition 4.6 in \cite{MNS16}.
The event $\{\sigma\notin \Lambda_i(G, z) \}$ can be written as
\begin{equation} \label{eq:lei}
\{\sigma\notin \Lambda_i(G, z) \}
=\{|\Omega_i(\sigma,G)| \ge z \}
=\bigcup_{\tilde{\cI}\subseteq[n]\setminus\{i\}, |\tilde{\cI}| = z} \{\tilde{\cI} \subseteq \Omega_i(\sigma,G)\} .
\end{equation}
Given a subset $\tilde{\cI}\subseteq[n]\setminus\{i\}$,
the event $\{\tilde{\cI} \subseteq \Omega_i(\sigma,G) \}$ is the intersection of two events $\{\sigma_j \neq X_j \text{~for all~} j\in \tilde{\cI}\}$ and $\{i\in\cN_j(G) \text{~for all~} j\in \tilde{\cI}\}$.

We start with the analysis of the first event.
In Proposition~\ref{prop:43}, we have shown that
for any $\delta>0$ and any $r'>0$, there exists $n_0(\delta, r')$ such that for all even integers $n>n_0(\delta, r')$,
$$ 
P_{\SIBM} \Big(\dist(\sigma,\pm X) \le n^{g(\beta)+\delta}
 \Big) \ge 1- n^{-r'} .
$$
By Lemma~\ref{lm:ele} (vi), we know that $g(\beta)<1$ for all $0<\beta\le\beta^\ast$, so we can always choose a $\delta>0$ such that $g(\beta)+\delta<1$.
Let $\theta:=g(\beta)+\delta<1$. Then for large enough $n$,
$$
P_{\SIBM} \Big(\dist(\sigma,\pm X) \le n^{\theta} \Big) \ge 1- n^{-r'} .
$$
This in particular implies that
$$
P_{\SIBM} \Big(\dist(\sigma, X) \le n^{\theta} | \dist(\sigma, X) \le n/2
\Big) \ge 1- n^{-r'} .
$$
Moreover, Lemma~\ref{lm:bq} in Appendix~\ref{ap:6} (see inequality \eqref{eq:l2}) tells us that
$$
 P_{\SIBM}(\sigma_j=-X_j \text{~for all~}  j\in\tilde{\cI}
~\big| \dist(\sigma,X) \le n^\theta) \le \Big(\frac{n^\theta}{n/2-|\tilde{\cI}|}
\Big)^{|\tilde{\cI}|}
\quad \text{for all~} \tilde{\cI}\subseteq [n] .
$$
Therefore, for any subset $\tilde{\cI}\subseteq[n]\setminus\{i\}$ with size $|\tilde{\cI}|=z$, we have
$$
P_{\SIBM}(\sigma_j \neq X_j \text{~for all~} j\in \tilde{\cI}
| \dist(\sigma, X) \le n/2 )
\le O(n^{-(1-\theta) z}) + O(n^{-r'}) . 
$$
Taking $r'>(1-\theta) z$ gives us
\begin{equation} \label{eq:gr}
P_{\SIBM}(\sigma_j \neq X_j \text{~for all~} j\in \tilde{\cI}
| \dist(\sigma, X) \le n/2 )
\le O(n^{-(1-\theta) z}) . 
\end{equation}

Given $r>0$, we will prove \eqref{eq:zr} for any integer $z\ge \frac{r+1}{1-\theta}$. 
Now condition on the event $\{\sigma_j \neq X_j \text{~for all~} j\in \tilde{\cI}\}$.
By Lemma~\ref{lm:bmd} in Appendix~\ref{ap:6}, with probability $1-o(1)$ each vertex $j\in \tilde{\cI}$ has at most $O(\log(n))$ neighbors in $[n]_+:=\{v\in[n]:X_v=+1\}$ and at most $O(\log(n))$ neighbors in $[n]_-:=\{v\in[n]:X_v= -1\}$. Conditioned on the number of neighbors in $[n]_+$ (respectively, $[n]_-$), the neighbors of each vertex $j\in \tilde{\cI}$ are  uniformly distributed in $[n]_+$ (respectively, $[n]_-$).
Therefore, 
$$
P(i\in\cN_j(G))=O\Big( \frac{\log(n)}{n} \Big)
\quad \text{and} \quad
P(i\in\cN_j(G) \text{~for all~} j\in \tilde{\cI}) = O\Big( \frac{\log^z(n)}{n^z} \Big) .
$$
Combining this with \eqref{eq:gr}, we have
$$
P_{\SIBM} (\tilde{\cI} \subseteq \Omega_i(\sigma,G))
= O (n^{-(2-\theta)z}\log^z(n)) .
$$
Finally, combining this with \eqref{eq:lei} and the union bound, we have
\begin{align*}
P_{\SIBM} \big(\sigma\notin \Lambda_i(G, z)
| \dist(\sigma, X) \le n/2 \big)
& \le n^z O (n^{-(2-\theta)z}\log^z(n))
= O (n^{-(1-\theta)z}\log^z(n))  \\
& \le O (n^{-r-1}\log^z(n)) <n^{-r}
\end{align*}
for large enough $n$. This completes the proof of the lemma.
\end{proof}

We define two subsets of $\cN_i(G)$ with the same labeling and the opposite labeling as
$$
\cN_{i,+}(G):=\{j\in \cN_i(G): X_j=X_i \}
\quad \text{and} \quad
\cN_{i,-}(G):=\{j\in \cN_i(G): X_j=-X_i \} ,
$$
respectively.
Recall that in \eqref{eq:defAB}, we defined $A_i=A_i(G):=|\cN_{i,+}(G)|$ and $B_i=B_i(G):=|\cN_{i,-}(G)|$.
Also recall that at the beginning of Section~\ref{sect:gen}, we defined $X^{(\sim\cI)}$ as the vector obtained by flipping the coordinates in $\cI\subseteq[n]$ while keeping all the other coordinates the same as $X$.
By definition, $X^{(\sim\cI)}\in \Lambda_i(G,z)$ if and only if $|\cI\cap \cN_i(G)| < z$.

\begin{lemma} \label{lm:et}
Let $0<\theta<1$ be some constant. Then for large enough $n$ we have
\begin{align*}
\exp\Big(2\big(\beta+\frac{\alpha\log(n)}{n} \big) (B_i-A_i - 2z) \Big) & \le 
\frac{P_{\sigma|G}(\sigma_i= -X_i, \sigma\in \Lambda_i(G,z) ,\dist(\sigma, X) \le n^{\theta} ) } { P_{\sigma|G}(\sigma_i=X_i, \sigma\in \Lambda_i(G,z) ,\dist(\sigma, X) \le n^{\theta} ) } \\
& \le \exp\Big(2\big(\beta+\frac{\alpha\log(n)}{n} \big) (B_i-A_i + 2z) \Big)
\end{align*}
\end{lemma}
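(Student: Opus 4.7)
The plan is to prove both inequalities by coupling via the involution $\sigma \mapsto \sigma^{(\sim i)}$ that flips the $i$th coordinate: first establish a pointwise bound on the local ratio $P_{\sigma|G}(\sigma^{(\sim i)})/P_{\sigma|G}(\sigma)$ valid for every $\sigma$ satisfying the hypotheses of the denominator, and then sum over configurations.

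For the pointwise step, the Ising formula \eqref{eq:isingma} gives
$$\log\frac{P_{\sigma|G}(\sigma^{(\sim i)})}{P_{\sigma|G}(\sigma)} \;=\; -2\beta\sum_{j\in\cN_i(G)}\sigma_i\sigma_j \;+\; \frac{2\alpha\log n}{n}\sum_{j\in[n]\setminus(\cN_i(G)\cup\{i\})}\sigma_i\sigma_j.$$
Under the hypotheses $\sigma_i = X_i$, $\sigma \in \Lambda_i(G,z)$, and $\dist(\sigma, X) \le n^\theta$, I would split each inner product according to whether $\sigma_j = X_j$ or $\sigma_j = -X_j$ and observe that flipping signs at ``bad'' coordinates modifies the reference value. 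This shows that the first sum equals $A_i - B_i$ plus an integer error of absolute value at most $2|\Omega_i(\sigma,G)| < 2z$, while the second sum equals $B_i - A_i - 1$ plus an error of magnitude at most $2\dist(\sigma, X) \le 2n^\theta$. The $n^\theta$ correction in the second sum is multiplied by the small prefactor $\tfrac{2\alpha\log n}{n}$ and thus contributes only $O(n^{\theta-1}\log n) = o(1)$. Collecting terms, the log-ratio lies in the interval $2(\beta + \tfrac{\alpha\log n}{n})(B_i - A_i) + [-4\beta z + o(1),\; 4\beta z + o(1)]$, which for $n$ large enough is contained in the target interval $[2(\beta + \tfrac{\alpha\log n}{n})(B_i - A_i - 2z),\; 2(\beta + \tfrac{\alpha\log n}{n})(B_i - A_i + 2z)]$, since the additional slack $4z\tfrac{\alpha\log n}{n}$ on each side absorbs the $o(1)$ correction.

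For the summing step, the involution preserves membership in $\Lambda_i(G,z)$ (which is defined in terms of coordinates $j \neq i$) and shifts $\dist(\cdot, X)$ by $\pm 1$ at the $i$th coordinate. Re-indexing the numerator accordingly yields
$$\sum_{\sigma\in S_-} P_{\sigma|G}(\sigma) \;=\; \sum_{\tau\in T'} P_{\sigma|G}(\tau^{(\sim i)}),$$
where $S_-$ denotes the numerator's index set and $T' := \{\tau : \tau_i = X_i,\; \tau \in \Lambda_i(G,z),\; \dist(\tau, X) \le n^\theta - 1\}$ sits inside the denominator's index set $S_+$. Applying the pointwise upper bound together with $T' \subseteq S_+$ gives the claimed upper bound on the ratio immediately.

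The main obstacle is the lower bound, for which the pointwise estimate only delivers $\sum_{S_-}P_{\sigma|G} \ge e^L \sum_{T'}P_{\sigma|G}$; to match the lemma I must argue that the boundary layer $S_+ \setminus T' = \{\tau \in S_+ : \dist(\tau, X) = \lfloor n^\theta \rfloor\}$ contributes only a $(1 - o(1))$ fraction of $\sum_{S_+}P_{\sigma|G}$. This follows from Proposition~\ref{prop:43}: for almost every $G$ under $\SSBM$, the conditional distribution $P_{\sigma|G}$ concentrates on $\dist(\sigma, X) \le n^{g(\beta) + \delta}$ for any fixed $\delta > 0$, so configurations at distance $\lfloor n^\theta\rfloor$ carry weight negligible compared to those near $X$ whenever $\theta > g(\beta)$, which is precisely the regime in which the lemma is invoked in the proof of Proposition~\ref{prop:nr}. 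The resulting $(1-o(1))$ factor is absorbed into the slack already present in the pointwise lower bound, completing the argument for $n$ sufficiently large.
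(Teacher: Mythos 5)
Your pointwise step is the same as the paper's: the paper proves exactly this single-flip bound as \eqref{eq:qk}, writing the log-ratio as $2\beta(B_i'-A_i')+\frac{2\alpha\log(n)}{n}(B_i-A_i)+o(1)$, where $A_i',B_i'$ are the neighbor counts taken relative to the current configuration and satisfy $|B_i'-A_i'-(B_i-A_i)|\le 2(z-1)$; your good/bad-coordinate split is this computation in different notation, and your re-indexing via the flip involution is the paper's implicit summation over $\cI$. One bookkeeping slip: the $o(1)$ error, dominated by the non-edge contribution $O(n^{\theta-1}\log n)$, is \emph{not} absorbed by the extra slack $4z\alpha\log(n)/n$, which is only $O(\log(n)/n)$; it is absorbed by the constant margin $4\beta$ between $4\beta(z-1)$ and $4\beta z$, which your own strict bound $2|\Omega_i(\sigma,G)|<2z$ already supplies, so the containment still holds once the accounting is stated correctly.

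Where you genuinely depart from the paper is the summation step for the lower bound. The paper's proof consists solely of the reduction ``it suffices to prove \eqref{eq:qk}'' and never mentions the boundary shell $S_+\setminus T'$ at distance $\lfloor n^\theta\rfloor$; as you observe, the pointwise bound alone does not yield the summed lower bound, and for an arbitrary graph the stated lower bound can in fact fail (e.g.\ for the empty graph, where $B_i-A_i=0$ but the conditional measure on $\{\dist(\sigma,X)\le n^\theta\}$ concentrates on the outermost shell and the ratio is $\Theta(n^{\theta-1})$, below any constant). So your concern is real and the paper glosses over it. Your repair---discarding the shell via concentration of $P_{\sigma|G}$ at distances $\le n^{g(\beta)+\delta}$---is sound in substance, though the cleaner citation is the graph-wise ingredient behind Proposition~\ref{prop:43}, namely Proposition~\ref{prop:xz}: for $G\in\cG_\delta$ the shell's mass is at most $n^{-2r}P_{\sigma|G}(\sigma=X)$, and $X$ itself lies in the denominator event, so the shell is a negligible fraction of the denominator; the unconditional statement of Proposition~\ref{prop:43} by itself does not directly compare the shell to the conditioned event. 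The price of this route is that you prove the lemma only for almost every $G$, under the standing parameter assumptions and for $\theta>g(\beta)$, which is weaker than the lemma's literal ``every $G$, every $\theta\in(0,1)$'' phrasing, but it is exactly the form in which the lemma is invoked in Proposition~\ref{prop:nr}, Theorem~\ref{thm:dist} and Lemma~\ref{lm:cvs}; alternatively, the applications really only need the two-configuration version \eqref{eq:soon}, which follows from your pointwise step alone with no boundary issue.
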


\begin{proof}
We only need to show that for every $\cI\in [n]\setminus\{i\}$ with size $|\cI|\le n^\theta$ such that $X^{(\sim\cI)}\in \Lambda_i(G,z)$,
\begin{equation} \label{eq:qk}
\begin{aligned}
\exp\Big(2\big(\beta+\frac{\alpha\log(n)}{n} \big) (B_i-A_i - 2z) \Big) & \le 
\frac{P_{\sigma|G}(\sigma= X^{(\sim(\cI\cup\{i\}))} ) } { P_{\sigma|G}(\sigma= X^{(\sim\cI)} ) } \\
& \le \exp\Big(2\big(\beta+\frac{\alpha\log(n)}{n} \big) (B_i-A_i + 2z) \Big) .
\end{aligned}
\end{equation}
Define
$$
A_i':=A_i-|\cI\cap \cN_{i,+}(G)|+|\cI\cap \cN_{i,-}(G)| \text{~~and~~}
B_i':=B_i-|\cI\cap \cN_{i,-}(G)| + |\cI\cap \cN_{i,+}(G)| .
$$
Since $X^{(\sim\cI)}\in \Lambda_i(G,z)$, we have $|\cI\cap \cN_i(G)| \le z-1$. Therefore,
\begin{equation} \label{eq:oo}
B_i-A_i-(2z-2) \le B_i'-A_i'\le B_i-A_i+(2z-2) .
\end{equation}
By \eqref{eq:isingma}, we have
\begin{align*}
& \frac{P_{\sigma|G}(\sigma= X^{(\sim(\cI\cup\{i\}))} ) } { P_{\sigma|G}(\sigma= X^{(\sim\cI)} ) } \\
= & \exp\Big(2\beta(B_i'-A_i')
-\frac{2\alpha\log(n)}{n}
\Big( (\frac{n}{2}-B_i+O(n^{\theta}) )
- (\frac{n}{2}-A_i+O(n^{\theta}) )
\Big) \Big)   \\
= & \exp\Big(2\beta(B_i'-A_i')
+\frac{2\alpha\log(n)}{n}
(B_i-A_i) +o(1) \Big)
\end{align*}
Taking \eqref{eq:oo} into this equation gives us \eqref{eq:qk} and completes the proof.
\end{proof}

We further define 
\begin{align*}
\Lambda(G, z) :=
\bigcap_{i=1}^n
\Lambda_i(G, z) .
\end{align*}
The following corollary follows immediately from Lemma~\ref{lm:us} and the union bound.
\begin{corollary} \label{cr:1}
Let $a,b,\alpha,\beta> 0$ be constants satisfying that $\sqrt{a}-\sqrt{b} > \sqrt{2}$ and $\alpha>b\beta$.
Given any $r>0$, there exists an integer $z>0$ such that for large enough $n$, 
$$
P_{\SIBM} (\sigma\in \Lambda(G, z)
| \dist(\sigma, X) \le n/2)
> 1 - n^{-2r} .
$$
Equivalently, for any $r>0$, there is an integer $z>0$ and a set $\cG_{\good}$ such that

\noindent (i)
$P(G\in\cG_{\good}) \ge 1- O(n^{-r})$.

\noindent (ii) For every $G\in\cG_{\good}$,
$$
P_{\sigma|G} \big( \sigma \in  \Lambda(G, z)  \big| \dist(\sigma, X) \le n/2 \big) =1- O(n^{-r}).
$$
\end{corollary}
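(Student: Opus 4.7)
The plan is to derive the corollary from Lemma~\ref{lm:us} in two steps: a union bound over $i\in[n]$ to upgrade the per-vertex statement to a statement about $\Lambda(G,z)=\bigcap_{i=1}^n \Lambda_i(G,z)$, and then a Markov argument over the randomness of $G$ to extract the set $\cG_{\good}$.

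First, fix the target exponent $r>0$ and invoke Lemma~\ref{lm:us} with the parameter $2r+1$ in place of $r$. This produces an integer $z>0$ (depending on $r$) and a threshold $n_0$ such that for $n>n_0$ and every $i\in[n]$,
$$
P_{\SIBM}\bigl(\sigma\notin\Lambda_i(G,z)\,\big|\,\dist(\sigma,X)\le n/2\bigr)<n^{-(2r+1)}.
$$
Since $\{\sigma\notin\Lambda(G,z)\}=\bigcup_{i=1}^n\{\sigma\notin\Lambda_i(G,z)\}$, the ordinary union bound immediately yields
$$
P_{\SIBM}\bigl(\sigma\notin\Lambda(G,z)\,\big|\,\dist(\sigma,X)\le n/2\bigr)\le n\cdot n^{-(2r+1)}=n^{-2r},
$$
which is the first form of the corollary.

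For the equivalent form, I decondition on $G$ as follows. Define
$$
\phi(G):=P_{\sigma|G}\bigl(\sigma\notin\Lambda(G,z)\,\big|\,\dist(\sigma,X)\le n/2\bigr).
$$
Symmetry of the Ising model under $\sigma\to -\sigma$ (together with $P_{\sigma|G}(\dist(\sigma,X)=n/2)\ge 0$) gives $P_{\sigma|G}(\dist(\sigma,X)\le n/2)\ge 1/2$ for every $G$, so
$$
\phi(G)\le 2\,P_{\sigma|G}\bigl(\sigma\notin\Lambda(G,z),\,\dist(\sigma,X)\le n/2\bigr).
$$
Taking the expectation over $G$ (drawn under SSBM) and using the first form of the corollary,
$$
E_G[\phi(G)]\le 2\,P_{\SIBM}\bigl(\sigma\notin\Lambda(G,z),\,\dist(\sigma,X)\le n/2\bigr)\le 2 n^{-2r}.
$$
Markov's inequality then yields $P\bigl(\phi(G)>n^{-r}\bigr)\le 2 n^{-r}$. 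Setting
$$
\cG_{\good}:=\bigl\{G:\phi(G)\le n^{-r}\bigr\},
$$
we have $P(G\in\cG_{\good})\ge 1-2 n^{-r}=1-O(n^{-r})$ (property~(i)), and for every $G\in\cG_{\good}$, $P_{\sigma|G}(\sigma\in\Lambda(G,z)\mid\dist(\sigma,X)\le n/2)\ge 1-n^{-r}=1-O(n^{-r})$ (property~(ii)).

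There is no real obstacle here; the only subtle point is the symmetry observation that the conditioning event $\{\dist(\sigma,X)\le n/2\}$ has $P_{\sigma|G}$-probability at least $1/2$ for \emph{every} realization $G$, which is what allows the Markov step to transfer control from the $P_{\SIBM}$-conditional bound to a bound that holds uniformly over a typical $\cG_{\good}$. The choice of constants (Lemma~\ref{lm:us} applied at parameter $2r+1$) is the only place where one has to be careful about bookkeeping.
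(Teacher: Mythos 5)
Your proof is correct and takes essentially the same route as the paper: the first statement is obtained from Lemma~\ref{lm:us} together with a union bound over the $n$ vertices, and the equivalent form is extracted by a Markov argument over $G$ (the paper's $\cG_{\bad}$ is exactly your $\{G:\phi(G)>n^{-r}\}$). The only difference is that you make explicit the symmetry observation $P_{\sigma|G}(\dist(\sigma,X)\le n/2)\ge 1/2$, which justifies bounding $E_G[\phi(G)]$ by twice the $P_{\SIBM}$-probability — a point the paper leaves implicit and which costs only a harmless factor of $2$ inside the $O(n^{-r})$.
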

\begin{proof}
We only explain why the first statement implies the second one.
Define a set 
$$
\cG_{\bad}:= \big\{ G: P_{\sigma|G} \big( \sigma \notin  \Lambda(G, z)  \big| \dist(\sigma, X) \le n/2 \big)
>n^{-r} \big\} .
$$
Then by the first statement and the Markov inequality, $P(G\in\cG_{\bad})\le n^{-r}$. Therefore, the second statement follows by taking $\cG_{\good}$ to be the complement of $\cG_{\bad}$.
\end{proof}

Now we are ready to prove Proposition~\ref{prop:nr}.

\noindent
{\em Proof of Proposition~\ref{prop:nr}.}
Recall that $\sigma^{(1)},\dots,\sigma^{(m)}$ are the samples of SIBM.
At the beginning of this section, we have shown that after the alignment step in Algorithm~\ref{alg:ez}, with probability $1-O(n^{-r})$ for any constant $r>0$ only the following two scenarios will happen: Either $\dist(\sigma^{(j)}, X)=o(n)$ for all $j\in[m]$ or $\dist(\sigma^{(j)}, -X)=o(n)$ for all $j\in[m]$. Without loss of generality, we assume the former case, i.e., we assume that 
$\dist(\sigma^{(j)}, X) \le n/2$ for all $j\in[m]$. 

Corollary~\ref{cr:1} together with Proposition~\ref{prop:43} implies that there is an integer $z>0$ and a set $\cG_{\good}$ such that

\noindent (i)
$P(G\in\cG_{\good}) \ge 1- O(n^{-4})$.

\noindent (ii) For every $G\in\cG_{\good}$, conditioning on the event $\dist(\sigma^{(j)}, X) \le n/2$ for all $j\in[m]$,
\begin{equation}  \label{eq:chui}
P_{\sigma|G} \big( \sigma^{(j)}\in  \Lambda(G, z)
\text{~and~} \dist(\sigma^{(j)}, X) \le n^\theta
\text{~for all~} j\in[m]  \big) 
=1- O(n^{-4}) ,
\end{equation}
where we can choose $\theta$ to be any constant in the open interval $(g(\beta), 1)$.
By Lemma~\ref{lm:et},
$$
P_{\sigma|G}(\sigma_i^{(j)} \neq X_i \big| \sigma^{(j)}\in \Lambda(G,z) ,\dist(\sigma^{(j)}, X) \le n^{\theta} ) 
 \le \exp\Big(2\big(\beta+\frac{\alpha\log(n)}{n} \big) (B_i-A_i + 2z) \Big)
$$
for all $i\in[n]$ and all $j\in[m]$.
For $i\in[n]$, define 
$$
\Phi_i := |\{j\in[m]: \sigma_i^{(j)} \neq X_i\}|
$$
as the number of samples for which $\sigma_i^{(j)} \neq X_i$.
For an integer $u\in[m]$, we have
$$
\{\Phi_i \ge u\} =
\bigcup_{\cJ\subseteq[m], |\cJ|=u}
\{\sigma_i^{(j)} \neq X_i \text{~for all~} j\in \cJ\} .
$$
Therefore, by the union bound,
\begin{align*}
& P_{\sigma|G}( \Phi_i \ge u  ~\big|~  \sigma^{(j)}\in \Lambda(G,z) ,\dist(\sigma^{(j)}, X) \le n^{\theta} \text{~for all~} j\in[m] ) \\
\le & \binom{m}{u} \exp\Big(2u\big(\beta+\frac{\alpha\log(n)}{n} \big) (B_i-A_i + 2z) \Big)
\quad\quad\quad\quad\text{~for all~} i\in[n] .
\end{align*}
Combining this with \eqref{eq:chui}, we obtain that for every $G\in\cG_{\good}$, conditioning on the event $\dist(\sigma^{(j)}, X) \le n/2$ for all $j\in[m]$,
$$
P_{\sigma|G} \big( \Phi_i \ge u \big) \le \binom{m}{u} \exp\Big(2u \big(\beta+\frac{\alpha\log(n)}{n} \big) (B_i-A_i + 2z) \Big) + O(n^{-4}).
$$
Using the union bound, we have
$$
P_{\sigma|G} \big(\exists i \text{~s.t.~} \Phi_i \ge u \big) \le 
C \sum_{i=1}^n \exp\Big(2u \big(\beta+\frac{\alpha\log(n)}{n} \big) (B_i-A_i ) \Big) + o(1) ,
$$
where $C:=\binom{m}{u}\exp(4u(\beta+\alpha)z)$ is a constant.
In Section~\ref{sect:k=1}, we have shown that if $\beta>\beta^*$, then there is a set $\cG^{(1)}$ such that (i) $P(G\in \cG^{(1)})=1-o(1)$; (ii) for every $G\in\cG^{(1)}$,
$$
\sum_{i=1}^n \exp\Big(2 \big(\beta+\frac{\alpha\log(n)}{n} \big) (B_i-A_i ) \Big) = o(1) .
$$
More precisely, we proved that for every $G\in\cG^{(1)}$, $\frac{\sum_{i=1}^n P_{\sigma|G}(\sigma=X^{(\sim i)} )}
{P_{\sigma|G}(\sigma=X)}=o(1)$ by first using the upper bound
$\frac{\sum_{i=1}^n P_{\sigma|G}(\sigma=X^{(\sim i)} )}
{P_{\sigma|G}(\sigma=X)}
\le \sum_{i=1}^n \exp\Big(2\big(\beta+\frac{\alpha\log(n)}{n} \big) (B_i-A_i) \Big)$
in \eqref{eq:fd} and then proving the right-hand side is $o(1)$.
This immediately implies that if $u\beta>\beta^*$, then for every $G\in\cG^{(1)}\cap\cG_{\good}$, conditioning on the event $\dist(\sigma^{(j)}, X) \le n/2$ for all $j\in[m]$,
$$
P_{\sigma|G} \big(\exists i \text{~s.t.~} \Phi_i \ge u \big) \le 
C \sum_{i=1}^n \exp\Big(2u \big(\beta+\frac{\alpha\log(n)}{n} \big) (B_i-A_i ) \Big) + o(1) =o(1).
$$
Since $P(G\in\cG^{(1)}\cap\cG_{\good})=1-o(1)$, we conclude that $P_{\SIBM} \big(\Phi_i \le u-1 \text{~for all~} i\in[n] \big)=1-o(1)$ conditioning on the event $\dist(\sigma^{(j)}, X) \le n/2$ for all $j\in[m]$.

By assumption we have $\lfloor \frac{m+1}{2} \rfloor \beta>\beta^\ast$. Therefore, conditioning on the event $\dist(\sigma^{(j)}, X) \le n/2$ for all $j\in[m]$, we have $\Phi_i\le \lfloor \frac{m-1}{2} \rfloor$ for all $i\in[n]$ with probability $1-o(1)$. As a consequence, after the majority voting step in Algorithm~\ref{alg:ez}, $\hat{X}_i=X_i$ for all $i\in[n]$ with probability $1-o(1)$. This completes the proof of Proposition~\ref{prop:nr}.
\hfill\qed

\section{Samples differ from $\pm X$ in $\Theta(n^{g(\beta)})$ coordinates when $\beta\le\beta^\ast$}  \label{sect:struct}
Recall the definitions of $A_i$ and $B_i$ in previous sections; see the beginning of Section~\ref{sect:k=1}. 
By definition,
$A_i\sim \Binom(\frac{n}{2}-1,\frac{a\log(n)}{n})$ and $B_i\sim \Binom(\frac{n}{2}, \frac{b\log(n)}{n})$, and they are independent. 
Also note that $A_i$ and $B_i$ are functions of the underlying graph $G$.


\begin{proposition}  \label{prop:con}
Let $(X,G)\sim \SSBM(n,a\log(n)/n, b\log(n)/n)$, where $\sqrt{a}-\sqrt{b} > \sqrt{2}$.
Suppose that $0< \beta\le \beta^\ast$.
Then there is a set $\cG_{\con}$ such that (i) $P(G \in \cG_{\con}) = 1-o(1)$ and (ii) for every $G\in \cG_{\con}$, 
$$
\sum_{i=1}^n \exp\big(2\beta (B_i-A_i) \big)
=(1+o(1)) n^{g(\beta)} .
$$
\end{proposition}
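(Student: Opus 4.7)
Set $Y_i := e^{2\beta(B_i-A_i)}$ and $S := \sum_{i=1}^n Y_i$. The MGF computation in Section~\ref{sect:why} (cf.\ \eqref{eq:mew}) already gives $E[S] = (1+o(1))\,n^{g(\beta)}$, so it suffices to establish a matching variance bound $\Var(S) = o(n^{2g(\beta)})$ on a set of graphs of probability $1-o(1)$; Chebyshev's inequality then closes the proof and that set becomes $\cG_{\con}$.

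I would decompose $\Var(S) = \sum_i \Var(Y_i) + \sum_{i\ne j}\Cov(Y_i,Y_j)$ and treat each piece. For the covariance, the key structural fact is that for $i\ne j$ the pairs $(A_i,B_i)$ and $(A_j,B_j)$ depend on disjoint sets of random edges except for the single edge $\{i,j\}$. Factoring that edge out as $Y_i=\tilde Y_i^{(j)}Z_{ij}^{(i)}$, where $Z_{ij}^{(\cdot)}\in\{1,e^{\pm 2\beta}\}$ is its contribution and $\tilde Y_i^{(j)},\tilde Y_j^{(i)}$ are independent, a direct computation yields
\[
\frac{E[Y_iY_j]}{E[Y_i]\,E[Y_j]} \;=\; \frac{E[Z_{ij}^{(i)}Z_{ij}^{(j)}]}{E[Z_{ij}^{(i)}]\,E[Z_{ij}^{(j)}]} \;=\; 1+O(\log n/n),
\]
so $\sum_{i\ne j}\Cov(Y_i,Y_j) = O(\log n/n)\cdot(E[S])^2 = o(n^{2g(\beta)})$. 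The self-variance is more delicate since $\Var(Y_i)\le E[Y_i^2]=(1+o(1))n^{g(2\beta)-1}$ need not be small enough, so I would truncate: set $\bar Y_i := Y_i\,\mathbbm{1}[B_i\le A_i]\in[0,1]$ and $\bar S := \sum_i\bar Y_i$. The event $\cE:=\{B_i\le A_i\text{ for all }i\in[n]\}$ has probability $1-o(1)$ by the same Chernoff argument used for $\cG_1$ in Section~\ref{sect:k=1} (which crucially uses $\sqrt{a}-\sqrt{b}>\sqrt{2}$), and on $\cE$, $S=\bar S$. A short Chernoff estimate bounds $E[Y_i\,\mathbbm{1}[B_i>A_i]]$ by $n^{f_\beta(0)-1+o(1)}=o(n^{g(\beta)-1})$ for $\beta\le\beta^\ast<\frac{1}{4}\log(a/b)$, so $E[\bar S]=(1+o(1))n^{g(\beta)}$.

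For the second moment of $\bar S$, the bound $\bar Y_i^2\le\bar Y_i$ gives $\sum_i\Var(\bar Y_i)\le E[\bar S]=O(n^{g(\beta)})$, which is already $o(n^{2g(\beta)})$ for every $0<\beta<\beta^\ast$. At the critical value $\beta=\beta^\ast$ (where $g(\beta)=0$ and this bound only yields $O(1)$) I would instead invoke the sharper estimate $E[\bar Y_i^2]\le\sum_{t\le 0}n^{f_{2\beta}(t)-1+o(1)}$ from Proposition~\ref{prop:cher} and the analogue of Lemma~\ref{lm:tus} at parameter $2\beta^\ast$: if $2\beta^\ast>\frac{1}{4}\log(a/b)$ then the maximum on $t\le 0$ is $f_{2\beta^\ast}(0)=1-\tfrac{1}{2}(\sqrt{a}-\sqrt{b})^2<0$, and if $2\beta^\ast\le\frac{1}{4}\log(a/b)$ then $\beta^\ast\le\frac{1}{8}\log(a/b)<\frac{1}{6}\log(a/b)$, which via $\beta^\ast+\beta'=\frac{1}{2}\log(a/b)$ (Lemma~\ref{lm:ele}) forces $2\beta^\ast<\beta'$ and hence $g(2\beta^\ast)<0$; either way $\sum_iE[\bar Y_i^2]=o(1)$. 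The same edge-factoring argument applied to $\bar Y_i$ (conditioning on $\xi_{ij}$) gives $\sum_{i\ne j}\Cov(\bar Y_i,\bar Y_j)=O(\log n/n)(E[\bar S])^2$, so $\Var(\bar S)=o((E[\bar S])^2)$; Chebyshev yields $\bar S=(1+o(1))n^{g(\beta)}$ on a set of probability $1-o(1)$, whose intersection with $\cE$ is $\cG_{\con}$. The main obstacle is this boundary case $\beta=\beta^\ast$, where the mean is of constant order and the naive $L^2$ estimate is no longer negligible; the truncation at $\{B_i\le A_i\}$ combined with the refined Chernoff/concavity analysis at parameter $2\beta^\ast$ and the structural facts about $\beta^\ast$ from Lemma~\ref{lm:ele} is what closes this gap.
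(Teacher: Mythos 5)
Your proof is correct in substance and reaches the same conclusion by the same overall strategy (first/second moment plus Chebyshev, with the shared edge $\xi_{ij}$ factored out for the covariance), but it replaces the paper's device --- conditioning on the global event $\cG_1=\{B_i<A_i \text{ for all } i\}$ and comparing conditional with unconditional moments (Lemma~\ref{lm:5t} and Proposition~\ref{prop:df}, which rest on the Stirling-type local estimates of Proposition~\ref{prop:99}) --- by a per-coordinate truncation $\bar Y_i=Y_i\mathbbm{1}[B_i\le A_i]$, which preserves the independence structure and lets you work with unconditional moments throughout. This buys a genuinely lighter argument: for $\beta<\beta^\ast$ the diagonal term is killed by the elementary bound $\bar Y_i^2\le\bar Y_i$; the critical case $\beta=\beta^\ast$ is handled by the Chernoff/concavity bound at parameter $2\beta^\ast$ together with $\tilde g(2\beta^\ast)<0$ (your case analysis via $\beta^\ast+\beta'=\tfrac12\log\tfrac ab$ is correct, though strict monotonicity of $g$ on $[0,\tfrac14\log\tfrac ab]$ gives it in one line); and you never need Appendix~\ref{ap:um}, which the paper needs precisely because conditioning perturbs the expectations. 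Two steps should be written out. First, in the truncated covariance, conditioning on $\xi_{ij}$ gives $\Cov(\bar Y_i,\bar Y_j)=p(1-p)\Delta_i\Delta_j$ with $\Delta_i=E[\bar Y_i\mid\xi_{ij}=1]-E[\bar Y_i\mid\xi_{ij}=0]$, and when $X_i=X_j$ the indicator can switch on when the edge is added, so you must check $E[\bar Y_i\mid\xi_{ij}=1]\le e^{-2\beta}E[\bar Y_i\mid\xi_{ij}=0]+P(B_i'-A_i'=1)$ and bound the extra term by $n^{f_\beta(0)-1+o(1)}=o(n^{g(\beta)-1})$, which the Chernoff bound and $f_\beta(0)<0\le g(\beta)$ supply. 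Second, in the bound $E[\bar Y_i^2]\le\sum_{t\le0}n^{f_{2\beta}(t)-1+o(1)}$, the maximum exponent being negative is not by itself enough, since $t\log n$ ranges over $\Theta(n)$ integers; restrict to $t\in[\tfrac{b-a}{2},0]$ (only $O(\log n)$ terms) and dispose of $t<\tfrac{b-a}{2}$ with the trivial geometric tail, exactly as in \eqref{eq:lq}. With these routine additions, your set $\cG_{\con}$ (the Chebyshev event intersected with $\cE$) does the job.
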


\begin{proof}
Let $\cG_1:=\{G:B_i-A_i<0\text{~for all~}i\in[n]\}$. By \eqref{eq:tD}, we have $P(G\in\cG_1)=1-o(1)$. We will prove that
\begin{align}
& E \Big[ \sum_{i=1}^n  \exp\big(2\beta (B_i-A_i) \big) ~\Big|~ G\in\cG_1 \Big]
= (1+o(1)) n^{g(\beta)}  , \label{eq:cg} \\
& \Var \Big[ \sum_{i=1}^n  \exp\big(2\beta (B_i-A_i) \big) ~\Big|~ G\in\cG_1 \Big]
= o (n^{g(\beta)} ) .  \label{eq:sw}
\end{align}
Then the proposition follows immediately from  Chebyshev's inequality and the fact that $g(\beta)\ge 0$ when $0<\beta\le\beta^\ast$ (see Lemma~\ref{lm:ele}).

In Proposition~\ref{prop:df} (see Appendix~\ref{ap:um}), we prove \eqref{eq:cg} for $0<\beta<\frac{1}{4}\log\frac{a}{b}$. By Lemma~\ref{lm:ele}, $\beta^\ast<\frac{1}{4}\log\frac{a}{b}$, so \eqref{eq:cg} holds for $0< \beta\le \beta^\ast$.
Now we are left to prove \eqref{eq:sw}. Observe that
\begin{equation} \label{eq:mh1}
\begin{aligned}
& \Var \Big[ \sum_{i=1}^n  \exp\big(2\beta (B_i-A_i) \big) ~\Big|~ G\in\cG_1 \Big] \\
= & \sum_{i=1}^n  \Var \big[\exp\big(2\beta (B_i-A_i) \big) ~\big|~ G\in\cG_1 \big] \\
& \hspace*{1.2in} + \sum_{i,j\in[n],i\neq j}
\Cov(\exp\big(2\beta (B_i-A_i) \big), \exp\big(2\beta (B_j-A_j) \big) ~\big|~ G\in\cG_1 ) \\
\le & \sum_{i=1}^n E \big[ \exp\big(4\beta (B_i-A_i) \big) ~\big|~ G\in\cG_1 \big] \\
& \hspace*{1.2in} + \sum_{i,j\in[n],i\neq j}
\Cov(\exp\big(2\beta (B_i-A_i) \big), \exp\big(2\beta (B_j-A_j) \big) ~\big|~ G\in\cG_1) .
\end{aligned}
\end{equation}
By Corollary~\ref{cr:yy} in Appendix~\ref{ap:um}, for all $\beta>0$ we have
$$
\sum_{i=1}^n E[ \exp\big(4\beta (B_i-A_i) \big) \big| G\in\cG_1]
= O ( n^{\tilde{g}(2\beta)} ) .
$$
By Lemma~\ref{lm:ele}, $\tilde{g}(\beta)$ is a decreasing function, and it is strictly decreasing when $\beta\le\beta^\ast<\frac{1}{4}\log\frac{a}{b}$. Therefore, $g(\beta)=\tilde{g}(\beta)>\tilde{g}(2\beta)$ whenever $\beta\le\beta^\ast$. As a consequence, 
\begin{equation} \label{eq:mh2}
\sum_{i=1}^n E[ \exp\big(4\beta (B_i-A_i) \big) \big| G\in\cG_1]
< o ( n^{g(\beta)} ) .
\end{equation}

Now we are left to bound the covariance of $\exp\big(2\beta (B_i-A_i) \big)$ and $\exp\big(2\beta (B_j-A_j) \big)$ for $i\neq j$.
Define $\xi_{ij}=\xi_{ij}(G):=\mathbbm{1}[\{i,j\}\in E(G)]$ as the indicator function of the edge $\{i,j\}$ connected in graph $G$.
Now suppose that\footnote{The case of $X_i=X_j$ can be handled in the same way.} $X_i\neq X_j$. Then we can decompose $B_i$ and $B_j$ as $B_i=B_i'+\xi_{ij}$ and $B_j=B_j'+\xi_{ij}$, where both $B_i'$ and $B_j'$ have distribution $\Binom(\frac{n}{2} - 1 , \frac{b\log(n)}{n})$, and the five random variables $A_i, A_j, B_i',B_j'$ and $\xi_{ij}$ are independent.
Therefore,
\begin{align*}
& \Cov(\exp\big(2\beta (B_i-A_i) \big), \exp\big(2\beta (B_j-A_j) \big) ) \\
= & E[\exp\big(2\beta (B_i-A_i+B_j-A_j) \big)] 
-E[\exp\big(2\beta (B_i-A_i) \big)] E[\exp\big(2\beta (B_j-A_j) \big)]   \\
= & E[\exp\big(2\beta (B_i'-A_i) \big)] E[\exp\big(2\beta (B_j'-A_j) \big)]  \Big( E[\exp(4\beta \xi_{ij})] -
\big(E[\exp(2\beta \xi_{ij})]\big)^2 \Big) \\
= & E[\exp\big(2\beta (B_i'-A_i) \big)] E[\exp\big(2\beta (B_j'-A_j) \big)] \\
& \hspace*{1.2in}
\Big( 1-\frac{b\log(n)}{n} + \frac{b\log(n)}{n} e^{4\beta} -
\Big(1-\frac{b\log(n)}{n} + \frac{b\log(n)}{n} e^{2\beta} \Big)^2 \Big) \\
= & \Theta\Big( \frac{\log(n)}{n} \Big) E[\exp\big(2\beta (B_i'-A_i) \big)] E[\exp\big(2\beta (B_j'-A_j) \big)] \\
= & \Theta\Big( \frac{\log(n)}{n} \Big) E[\exp\big(2\beta (B_i-A_i) \big)] E[\exp\big(2\beta (B_j-A_j) \big)]  ,
\end{align*} 
where the last equality holds because $\exp\big(2\beta (B_i'-A_i) \big)$ differs from $\exp\big(2\beta (B_i-A_i) \big)$  by a factor of at most $e^{2\beta}$. 
By \eqref{eq:pl} in Appendix~\ref{ap:um}, $E \big[  \exp\big(2\beta (B_i-A_i) \big) ~\big|~ G\in\cG_1 \big] 
= (1+o(1)) E \big[  \exp\big(2\beta (B_i-A_i) \big) \big]$ when $0<\beta\le\beta^\ast$. Similarly, one can also show that $E[\exp\big(2\beta (B_i-A_i+B_j-A_j) \big) ~\big|~ G\in\cG_1 ] = (1+o(1)) E[\exp\big(2\beta (B_i-A_i+B_j-A_j) \big)]$ when $0<\beta\le\beta^\ast$.
Therefore,
\begin{align*}
& \Cov \big(\exp\big(2\beta (B_i-A_i) \big), \exp\big(2\beta (B_j-A_j)  \big) ~\big|~ G\in\cG_1 \big) \\
= & (1+o(1)) \Cov \big(\exp\big(2\beta (B_i-A_i) \big), \exp\big(2\beta (B_j-A_j)  \big) \big) \\
= & \Theta\Big( \frac{\log(n)}{n} \Big) E[\exp\big(2\beta (B_i-A_i) \big)] E[\exp\big(2\beta (B_j-A_j) \big)] .
\end{align*}
As a consequence,
\begin{align*}
& \sum_{i,j\in[n],i\neq j}
\Cov \big(\exp\big(2\beta (B_i-A_i) \big), \exp\big(2\beta (B_j-A_j) \big) ~\big|~ G\in\cG_1 \big) \\
= & \Theta\Big( \frac{\log(n)}{n} \Big) \sum_{i,j\in[n],i\neq j} \Big( E[\exp\big(2\beta (B_i-A_i) \big)] E[\exp\big(2\beta (B_j-A_j) \big)] \Big) \\
\le & \Theta\Big( \frac{\log(n)}{n} \Big)
\Big(\sum_{i=1}^n  E[\exp\big(2\beta (B_i-A_i) \big)] \Big)^2 \\
\overset{(a)}{=} & \Theta(n^{2g(\beta)-1} \log(n)) \\
\overset{(b)}{=} & o(n^{g(\beta)})
\end{align*}
where equality $(a)$ follows from \eqref{eq:lb}, and $(b)$ follows from $g(\beta)<1$ when $0<\beta\le\beta^\ast$; see Lemma~\ref{lm:ele} (vi).
Finally, \eqref{eq:sw} follows immediately from this bound and \eqref{eq:mh1}--\eqref{eq:mh2}.
\end{proof}

\begin{remark}
One might wonder why we use the conditional expectation and variance to prove Proposition~\ref{prop:con} instead of using the unconditional ones. By \eqref{eq:lb} in Appendix~\ref{ap:um},
$$
E \Big[ \sum_{i=1}^n  \exp\big(2\beta (B_i-A_i) \big) \Big]
= (1+o(1)) n^{g(\beta)}  
$$
for all $\beta$. This gives us the same estimate as the conditional expectation in \eqref{eq:cg}. However, the unconditional variance can be much larger than the conditional one in \eqref{eq:sw}. Recall from \eqref{eq:mh1} that we use $\sum_{i=1}^n E[ \exp\big(4\beta (B_i-A_i) \big)]$ to bound the variance. 
By Corollary~\ref{cr:yy} in Appendix~\ref{ap:um},
for the unconditional case this sum is of order $\Theta(n^{g(2\beta)})$ while for the conditional case it is of order $O(n^{\tilde{g}(2\beta)})$. One can show that if $\beta\le\beta^\ast$, then we always have $g(\beta)>\tilde{g}(2\beta)$. However, it is possible that $g(2\beta)>g(\beta)$. In particular, when $\beta=\beta^\ast$, we have $g(\beta^\ast)=0$, i.e., the  expectation (both conditional and unconditional) is of order $\Theta(1)$, but it is possible that $g(2\beta^\ast)>0$, i.e., the unconditional variance is $\omega(1)$. 
\end{remark}

\begin{theorem}  \label{thm:dist}
Let $a,b,\alpha,\beta> 0$ be constants satisfying that $\sqrt{a}-\sqrt{b} > \sqrt{2}$, $\alpha>b\beta$ and $0<\beta\le \beta^\ast$. Let 
$
(X,G,\sigma) \sim \SIBM(n,a\log(n)/n, b\log(n)/n,\alpha,\beta, 1) .
$
Then
$$
P_{\SIBM}(\dist(\sigma,\pm X)= \Theta(n^{g(\beta)}) ) = 1-o(1) .
$$
\end{theorem}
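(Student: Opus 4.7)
The plan is to prove the theorem by establishing tight two-sided bounds on $\dist(\sigma, \pm X)$ through a second moment analysis, leveraging the structural results developed earlier. By symmetry we may condition on the event $\cE_1 := \{\dist(\sigma, X) \le n/2\}$ and prove $\dist(\sigma, X) = \Theta(n^{g(\beta)})$ with probability $1-o(1)$ under this conditioning. First, pick $\theta \in (g(\beta), 1)$, which is possible since Lemma~\ref{lm:ele}(vi) gives $g(\beta) < 1$ for $\beta \le \beta^\ast$. Combining Proposition~\ref{prop:43} with Corollary~\ref{cr:1}, there exists a constant integer $z$ and a set $\cG_{\good}$ with $P(G \in \cG_{\good}) = 1-o(1)$ such that for every $G \in \cG_{\good}$, the conditional event $\cE_2 := \{\sigma \in \Lambda(G,z),\ \dist(\sigma, X) \le n^\theta\}$ has probability $1-o(1)$ under $P_{\sigma|G}(\cdot \mid \cE_1)$.

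The next step is to compute the conditional expectation of $\dist(\sigma, X)$. Applying Lemma~\ref{lm:et} to each coordinate yields, for every $G \in \cG_{\good}$ and every $i \in [n]$,
\begin{equation*}
c_1 \exp\bigl(2\beta (B_i - A_i)\bigr) \le P_{\sigma|G}\bigl(\sigma_i = -X_i \,\big|\, \cE_2\bigr) \le c_2 \exp\bigl(2\beta (B_i - A_i)\bigr)
\end{equation*}
for constants $c_1, c_2 > 0$ depending only on $\beta, \alpha, z$. Summing over $i$ and invoking Proposition~\ref{prop:con}, we obtain, for every $G \in \cG_{\good} \cap \cG_{\con}$,
\begin{equation*}
E_{\sigma|G}[\dist(\sigma, X) \mid \cE_2] = \sum_{i=1}^n P_{\sigma|G}(\sigma_i = -X_i \mid \cE_2) = \Theta(n^{g(\beta)}).
\end{equation*}

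The main obstacle is then to upgrade this expectation estimate to a concentration statement, since $n^{g(\beta)}$ may be as small as $\Theta(1)$ near $\beta = \beta^\ast$. The plan is to carry out a second moment argument by controlling the pairwise correlations $P_{\sigma|G}(\sigma_i = -X_i,\ \sigma_j = -X_j \mid \cE_2)$. The key observation is that an extension of Lemma~\ref{lm:et} to two coordinates shows
\begin{equation*}
P_{\sigma|G}(\sigma_i = -X_i,\ \sigma_j = -X_j \mid \cE_2) \le c_3 \exp\bigl(2\beta (B_i - A_i + B_j - A_j)\bigr) \cdot (1 + O(\xi_{ij}))
\end{equation*}
for a constant $c_3$, where $\xi_{ij}$ corrects for the possible edge between $i$ and $j$ (whose contribution alters $A_i, B_i, A_j, B_j$ by at most $1$). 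Summing over pairs and comparing with $(\sum_i \exp(2\beta(B_i-A_i)))^2$, the edge-correction contributes only a factor $O(\log(n)/n)$ per pair, which is negligible when aggregated. This yields $\Var_{\sigma|G}[\dist(\sigma, X) \mid \cE_2] = O(n^{g(\beta)}) + o(n^{2g(\beta)})$, so Chebyshev's inequality gives $\dist(\sigma, X) = \Theta(n^{g(\beta)})$ with conditional probability $1-o(1)$.

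Finally, removing the conditioning: since $P_{\sigma|G}(\cE_2 \mid \cE_1) = 1-o(1)$ for $G \in \cG_{\good}$ and the complementary event contributes $o(1)$, the theorem follows by taking expectation over $G \in \cG_{\good} \cap \cG_{\con}$ and using $P_{\SIBM}(\cE_1) = P_{\SIBM}(\cE_1^c) = 1/2$ by the symmetry $P_{\SIBM}(\sigma = \bar\sigma) = P_{\SIBM}(\sigma = -\bar\sigma)$. The delicate point is the case $\beta = \beta^\ast$, where $g(\beta^\ast) = 0$ so the target is $\dist(\sigma, \pm X) = \Theta(1)$; here the second moment bound must be tight enough that the variance is truly $O(1)$ rather than merely $o(n^{2g(\beta)})$, which is precisely what the pairwise correlation bound above delivers.
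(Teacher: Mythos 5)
Your setup (conditioning on $\dist(\sigma,X)\le n/2$, invoking Proposition~\ref{prop:43} and Corollary~\ref{cr:1} to restrict to the event $\cE_2=\{\sigma\in\Lambda(G,z),\dist(\sigma,X)\le n^\theta\}$, using Lemma~\ref{lm:et} coordinate-wise and Proposition~\ref{prop:con} to get $E_{\sigma|G}[\dist(\sigma,X)\mid\cE_2]=\Theta(n^{g(\beta)})$ on $\cG_{\good}\cap\cG_{\con}$) matches the paper. The gap is in your variance step. Lemma~\ref{lm:et} (and any two-coordinate extension of it) pins the conditional probabilities only up to multiplicative constants of size $e^{\Theta(\beta z)}$: you get $c_1 e^{2\beta(B_i-A_i)}\le P(\phi_i=1\mid\cE_2)\le c_2 e^{2\beta(B_i-A_i)}$ and $P(\phi_i=\phi_j=1\mid\cE_2)\le c_3 e^{2\beta(B_i-A_i+B_j-A_j)}(1+O(\xi_{ij}))$ with $c_3>c_1^2$ by a fixed constant factor. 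Consequently the best covariance bound available from these estimates is $\Cov(\phi_i,\phi_j\mid\cE_2)\le (c_3e^{4\beta}-c_1^2)\,e^{2\beta(B_i-A_i)}e^{2\beta(B_j-A_j)}$, and summing over pairs gives $\Theta\big((\sum_i e^{2\beta(B_i-A_i)})^2\big)=\Theta(n^{2g(\beta)})$, not $o(n^{2g(\beta)})$. The edge correction $\xi_{ij}$ is indeed negligible, but it is not the dominant term: the mismatch of constants is, and it does not vanish. With $\Var[\dist\mid\cE_2]$ only of order $E[\dist\mid\cE_2]^2$, Chebyshev (or Paley--Zygmund) gives at best constant, not $1-o(1)$, probability for the lower bound $\dist\ge c\,n^{g(\beta)}$ when $g(\beta)>0$, so your concentration claim does not follow as written.

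This is exactly why the paper does not run a second moment argument on $\dist(\sigma,X)$ directly under the conditional Ising measure. Instead it uses the one-coordinate-at-a-time bound \eqref{eq:soon} (conditional on \emph{all} other coordinates) together with the stochastic comparison Lemma~\ref{lm:corre}: the sum $\sum_i\phi_i$ is sandwiched in distribution between sums of \emph{independent} Bernoulli variables $\underline{S}_i,\overline{S}_i$ with means $\underline{C}e^{2\beta(B_i-A_i)}$ and $\overline{C}e^{2\beta(B_i-A_i)}$. Chebyshev is then applied to these independent sums, whose variance is at most their mean, i.e.\ $O(n^{g(\beta)})=o(n^{2g(\beta)})$ when $g(\beta)>0$; the unavoidable constant-factor slack only affects the constants hidden in $\Theta(n^{g(\beta)})$, not the concentration itself. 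To repair your argument you would either need to prove the pairwise factorization with constants matching to $1+o(1)$ (which the $\Lambda(G,z)$-based estimates do not give), or replace the direct second moment by such a stochastic domination step.
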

\begin{proof}
We prove the following equivalent form:
$$
P_{\SIBM}(\dist(\sigma, X) = \Theta(n^{g(\beta)}) ~\big|~ \dist(\sigma, X) \le n/2) = 1-o(1) .
$$
Corollary~\ref{cr:1} together with Proposition~\ref{prop:43} implies that there is an integer $z>0$ and a set $\cG_{\good}$ such that

\noindent (i)
$P(G\in\cG_{\good}) \ge 1- O(n^{-4})$.

\noindent (ii) For every $G\in\cG_{\good}$, 
\begin{equation}  \label{eq:hs}
P_{\sigma|G} \big( \sigma \in  \Lambda(G, z)
\text{~and~} \dist(\sigma, X) \le n^\theta ~\big|~ \dist(\sigma, X) \le n/2 \big) 
=1- O(n^{-4}) ,
\end{equation}
where we can choose $\theta$ to be any constant in the open interval $(\tilde{g}(\beta), 1)$.
By Lemma~\ref{lm:et} we know that for all $i\in[n]$,
$P_{\sigma|G}(\sigma_i \neq X_i \big| \sigma\in \Lambda(G,z) ,\dist(\sigma, X) \le n^{\theta} )$ differ from
$\exp\Big(2\big(\beta+\frac{\alpha\log(n)}{n} \big) (B_i-A_i) \Big)$
by at most a constant factor. Since $|B_i-A_i|=O(\log(n))$ with probability $1-o(1)$, the term $\frac{\alpha\log(n)}{n}(B_i-A_i)=o(1)$ and is negligible. Thus we conclude that
$$
\underline{C}
\exp\big(2\beta (B_i-A_i) \big) \le
P_{\sigma|G}(\sigma_i \neq X_i \big| \sigma\in \Lambda(G,z) ,\dist(\sigma, X) \le n^{\theta} ) \le \overline{C}
\exp\big(2\beta (B_i-A_i) \big)
$$
for all $i\in[n]$, where $\underline{C}$ and $\overline{C}$ are constants that are independent of $n$.
In fact, we obtained a much stronger inequality \eqref{eq:qk} in the proof of Lemma~\ref{lm:et}. More precisely, inequality \eqref{eq:qk} can be reformulated as follows: For every $\bar{\sigma}\in\{\pm 1\}^n$ such that $\bar{\sigma}\in \Lambda(G,z)$ and $\dist(\bar{\sigma}, X) \le n^{\theta}$,
\begin{equation} \label{eq:soon}
\underline{C}
\exp\big(2\beta (B_i-A_i) \big) \le
P_{\sigma|G}(\sigma_i \neq X_i \big| \sigma_j=\bar{\sigma}_j \text{~for all~} j\neq i ) \le \overline{C}
\exp\big(2\beta (B_i-A_i) \big).
\end{equation}
In some sense, it tells us that conditioning on the event $\{\sigma\in \Lambda(G,z) ,\dist(\sigma, X) \le n^{\theta}\}$, the random variables $\sigma_1,\dots,\sigma_n$ are ``almost" independent.
Now define 
$\phi_i := \mathbbm{1}[\sigma_i \neq X_i]$ for $i\in[n]$, and
we want to estimate $\dist(\sigma,X)=\sum_{i=1}^n \phi_i$. 
Given a fixed graph $G$ and a random sample $\sigma$, we define {\bf Bernoulli} random variables $\underline{S}_1,\dots, \underline{S}_n$ and $\overline{S}_1,\dots,\overline{S}_n$ with the following properties:
\begin{enumerate}
\item $\underline{S}_1,\dots, \underline{S}_n$ are conditionally independent given the event $\{\sigma\in \Lambda(G,z) ,\dist(\sigma, X) \le n^{\theta}\}$. $\overline{S}_1,\dots,\overline{S}_n$ are also conditionally independent given the event $\{\sigma\in \Lambda(G,z) ,\dist(\sigma, X) \le n^{\theta}\}$.
\item Conditioning on the event $\{\sigma\in \Lambda(G,z) ,\dist(\sigma, X) \le n^{\theta}\}$, $P(\underline{S}_i=1)=\underline{C}
\exp\big(2\beta (B_i-A_i) \big)$ and $P(\overline{S}_i=1)=\overline{C}
\exp\big(2\beta (B_i-A_i) \big)$ for all $i\in[n]$.
\end{enumerate}
By these two properties, conditioning on the event $\{\sigma\in \Lambda(G,z) ,\dist(\sigma, X) \le n^{\theta}\}$, we have
\begin{align*}
& E[\underline{S}_1 + \dots + \underline{S}_n] = \underline{C}
\sum_{i=1}^n \exp\big(2\beta (B_i-A_i) \big) , \quad
\Var(\underline{S}_1 + \dots + \underline{S}_n) \le \underline{C}
\sum_{i=1}^n \exp\big(2\beta (B_i-A_i) \big), \\
& E[\overline{S}_1 + \dots + \overline{S}_n] = \overline{C}
\sum_{i=1}^n \exp\big(2\beta (B_i-A_i) \big) , \quad
\Var(\overline{S}_1 + \dots + \overline{S}_n) \le \overline{C}
\sum_{i=1}^n \exp\big(2\beta (B_i-A_i) \big) ,
\end{align*}
where we use the fact that the variance of a Bernoulli random variable is always upper bounded by its expectation.
By Proposition~\ref{prop:con}, for all $G\in\cG_{\good}\cap\cG_{\con}$, we have 
\begin{align*}
& E[\underline{S}_1 + \dots + \underline{S}_n] = \Theta(n^{g(\beta)}) , \quad
\Var(\underline{S}_1 + \dots + \underline{S}_n) = O(n^{g(\beta)}) , \\
& E[\overline{S}_1 + \dots + \overline{S}_n] = \Theta(n^{g(\beta)}) , \quad
\Var(\overline{S}_1 + \dots + \overline{S}_n) = O(n^{g(\beta)}) 
\end{align*}
conditioning on the event $\{\sigma\in \Lambda(G,z) ,\dist(\sigma, X) \le n^{\theta}\}$. Since $g(\beta)\ge 0$ for all $0<\beta\le \beta^\ast$, by Chebyshev's inequality we know that both $\underline{S}_1 + \dots + \underline{S}_n=\Theta(n^{g(\beta)})$ and $\overline{S}_1 + \dots + \overline{S}_n=\Theta(n^{g(\beta)})$ with probability $1-o(1)$ conditioning on the event $\{\sigma\in \Lambda(G,z) ,\dist(\sigma, X) \le n^{\theta}\}$.
This together with Lemma~\ref{lm:corre} at the end of this section\footnote{$\phi_1,\dots,\phi_n$ play the role of $R_1,\dots,R_n$ in Lemma~\ref{lm:corre}. Recall that $\dist(\sigma,X)=\sum_{i=1}^n \phi_i$. Therefore, $P(\dist(\sigma,X)\ge \Theta(n^{g(\beta)})) \ge P(\underline{S}_1 + \dots + \underline{S}_n\ge\Theta(n^{g(\beta)}))=1-o(1)$ and $P(\dist(\sigma,X)\le \Theta(n^{g(\beta)})) \ge P(\overline{S}_1 + \dots + \overline{S}_n\le\Theta(n^{g(\beta)}))=1-o(1)$. Thus we conclude that $\dist(\sigma,X)=\Theta(n^{g(\beta)})$ with probability $1-o(1)$ conditioning on the event $\{\sigma\in \Lambda(G,z) ,\dist(\sigma, X) \le n^{\theta}\}$.} implies that $\dist(\sigma,X)=\Theta(n^{g(\beta)})$ with probability $1-o(1)$ conditioning on the event $\{\sigma\in \Lambda(G,z) ,\dist(\sigma, X) \le n^{\theta}\}$. Combining this with \eqref{eq:hs}, we obtain that for every $G\in\cG_{\good}\cap\cG_{\con}$, $\dist(\sigma,X)=\Theta(n^{g(\beta)})$ with probability $1-o(1)$ conditioning on $\{\dist(\sigma, X) \le n/2\}$. Finally, the theorem follows from $P(G\in\cG_{\good}\cap\cG_{\con})=1-o(1)$.
\end{proof}

For $0<\beta<\beta^\ast$, we have $g(\beta)>0$, so Theorem~\ref{thm:dist} immediately implies that $P_{\SIBM}(\sigma= \pm X)=o(1)$. However, when $\beta=\beta^\ast$, we have $g(\beta^\ast)=0$. In this case, Theorem~\ref{thm:dist} tells us that
$
P_{\SIBM}(\dist(\sigma,\pm X)= \Theta(1) ) = 1-o(1) ,
$
but this is not sufficient for us to draw any conclusion on $P_{\SIBM}(\sigma= \pm X)$.
Below we use Proposition~\ref{prop:con} to prove that when $\beta=\beta^\ast$, $P_{\SIBM}(\sigma= \pm X)$ is bounded away from $1$.

\begin{proposition}  \label{prop:zj}
Let $a,b,\alpha> 0$ be constants satisfying that $\sqrt{a}-\sqrt{b} > \sqrt{2}$ and $\alpha>b\beta^\ast$. Let 
$
(X,G,\sigma) \sim \SIBM(n,a\log(n)/n, b\log(n)/n,\alpha,\beta^\ast, 1) .
$
Then 
$$
P_{\SIBM}(\sigma= \pm X) \le \frac{1}{2}(1+o(1)) .
$$
\end{proposition}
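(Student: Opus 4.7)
The plan is to exploit the identity $g(\beta^{\ast}) = 0$ together with Proposition~\ref{prop:con} to show that, at the critical inverse temperature, the probability of every single-coordinate flip of $X$ essentially matches $P_{\sigma|G}(\sigma = X)$. Combined with the Ising symmetry $P_{\sigma|G}(\sigma = \bar{\sigma}) = P_{\sigma|G}(\sigma = -\bar{\sigma})$ and the disjointness of the radius-$1$ Hamming balls around $X$ and around $-X$, this will force $P_{\sigma|G}(\sigma = \pm X) \le \frac{1}{2}(1+o(1))$ for a typical graph $G$, and averaging over $G$ yields the claim.

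Concretely, specialising the ratio formula \eqref{eq:ts} to $\cI = \{i\}$ gives
\[
\frac{P_{\sigma|G}(\sigma = X^{(\sim i)})}{P_{\sigma|G}(\sigma = X)} = \exp\!\Big(2\big(\beta^{\ast} + \tfrac{\alpha\log n}{n}\big)(B_i - A_i) - \tfrac{2\alpha\log n}{n}\Big).
\]
A Chernoff bound together with a union bound over $i\in[n]$ shows that $\max_{i \in [n]} |B_i - A_i| = O(\log n)$ with probability $1 - o(1)$; on this event the correction factor $\exp(\frac{2\alpha\log n}{n}(B_i - A_i - 1))$ equals $1 + o(1)$ uniformly in $i$. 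Summing over $i$ and applying Proposition~\ref{prop:con} at $\beta = \beta^{\ast}$ (which gives $\sum_{i=1}^n \exp(2\beta^{\ast}(B_i - A_i)) = (1+o(1))\,n^{g(\beta^{\ast})} = 1 + o(1)$), I obtain a set $\cG_0$ with $P(G \in \cG_0) = 1 - o(1)$ on which
\[
P_{\sigma|G}(\dist(\sigma, X) = 1) = (1 + o(1))\, P_{\sigma|G}(\sigma = X).
\]
The identity $P_{\sigma|G}(\sigma = \bar{\sigma}) = P_{\sigma|G}(\sigma = -\bar{\sigma})$ (built into \eqref{eq:isingma}) then gives $P_{\sigma|G}(\sigma = -X) = P_{\sigma|G}(\sigma = X)$ and $P_{\sigma|G}(\dist(\sigma, -X) = 1) = P_{\sigma|G}(\dist(\sigma, X) = 1)$, so both $P_{\sigma|G}(\dist(\sigma, X) \le 1)$ and $P_{\sigma|G}(\dist(\sigma, -X) \le 1)$ equal $(2 + o(1))\, P_{\sigma|G}(\sigma = X)$ on $\cG_0$. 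For $n \ge 3$ the events $\{\dist(\sigma, X) \le 1\}$ and $\{\dist(\sigma, -X) \le 1\}$ are disjoint, so their probabilities sum to at most $1$, which yields $(4 + o(1))\, P_{\sigma|G}(\sigma = X) \le 1$. Hence $P_{\sigma|G}(\sigma = \pm X) = 2\,P_{\sigma|G}(\sigma = X) \le \frac{1}{2}(1 + o(1))$ on $\cG_0$, and taking expectation over $G$ (using the trivial bound $P_{\sigma|G}(\sigma = \pm X) \le 1$ on the $o(1)$-probability complement) completes the proof.

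The main hurdle is transferring the concentration estimate from Proposition~\ref{prop:con}, which is phrased in terms of $\sum_i \exp(2\beta^{\ast}(B_i - A_i))$, into an estimate on the Ising-probability ratios $P_{\sigma|G}(\sigma = X^{(\sim i)})/P_{\sigma|G}(\sigma = X)$ with only a uniform $(1+o(1))$ multiplicative error; this reduces to the uniform $O(\log n)$ bound on $|B_i - A_i|$, which is routine but is the one piece not already packaged in an earlier proposition. Once the single-flip comparison is in place, the remainder is a two-line combinatorial argument relying solely on the global $\pm$ symmetry of the Ising law and the disjointness of the two Hamming balls.
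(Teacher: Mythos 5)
Your proof is correct and follows essentially the same route as the paper's: the single-flip ratio formula, the uniform $|B_i-A_i|=O(\log n)$ bound making the $\frac{\alpha\log n}{n}$ corrections negligible, and Proposition~\ref{prop:con} at $\beta=\beta^\ast$ with $g(\beta^\ast)=0$, combined with the global $\pm$ symmetry of the Ising law. The only immaterial difference is the final bookkeeping: the paper bounds the conditional probability $P_{\sigma|G}(\sigma=X\mid \dist(\sigma,X)\le n/2)$ by $P_{\sigma|G}(\sigma=X)\big/\big(P_{\sigma|G}(\sigma=X)+\sum_{i}P_{\sigma|G}(\sigma=X^{(\sim i)})\big)$, whereas you use the disjointness of the radius-$1$ Hamming balls around $X$ and $-X$.
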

\begin{proof}
We prove an equivalent form
$$
P_{\SIBM}(\sigma= X | \dist(\sigma, X) \le n/2 ) \le \frac{1}{2}(1+o(1)) .
$$
By \eqref{eq:isingma}, we have
\begin{align*}
\frac{P_{\sigma|G}(\sigma=X^{(\sim i)} )}
{P_{\sigma|G}(\sigma=X)}
& = \exp\Big(2\big(\beta^\ast+\frac{\alpha\log(n)}{n} \big) (B_i-A_i)
-\frac{2\alpha\log(n)}{n} \Big) \\
& = (1+o(1)) \exp\big(2\beta^\ast (B_i-A_i) \big)  ,
\end{align*}
where the last equality holds for almost all $G$ since $|B_i-A_i|=O(\log(n))$ with probability $1-o(1)$; see Lemma~\ref{lm:bmd} in Appendix~\ref{ap:6}.
Take $\cG_{\con}$ from Proposition~\ref{prop:con}. Then for every $G\in\cG_{\con}$,
\begin{align*}
\frac{\sum_{i=1}^n P_{\sigma|G}(\sigma=X^{(\sim i)} )}
{P_{\sigma|G}(\sigma=X)}  = (1+o(1)) \sum_{i=1}^n \exp\big(2\beta^\ast (B_i-A_i) \big)  =1+o(1) .
\end{align*}
As a consequence, for every $G\in\cG_{\con}$,
\begin{align*}
P_{\sigma|G} (\sigma= X | \dist(\sigma, X) \le n/2 ) <
\frac {P_{\sigma|G}(\sigma=X)}
{P_{\sigma|G}(\sigma=X)+\sum_{i=1}^n P_{\sigma|G}(\sigma=X^{(\sim i)} )}
=\frac{1}{2} (1+o(1)) .
\end{align*}
By Proposition~\ref{prop:con}, $P(G\in\cG_{\con})=1-o(1)$, so
$$
P_{\SIBM}(\sigma= X | \dist(\sigma, X) \le n/2 ) \le \frac{1}{2}(1+o(1)) .
$$
\end{proof}

\begin{lemma} \label{lm:corre}
Let $R_1,\dots,R_n, \underline{S}_1,\dots, \underline{S}_n, \overline{S}_1,\dots,\overline{S}_n$ be $3n$ Bernoulli random variables. Suppose that $\underline{S}_1,\dots, \underline{S}_n$ are independent, and that $\overline{S}_1,\dots,\overline{S}_n$ are independent. Further assume that 
\begin{equation} \label{eq:srcd}
P(\underline{S}_i=1) \le
P(R_i=1|R_j=r_j \text{~for all~} j\neq i)
\le P(\overline{S}_i=1)
\end{equation}
for all $i\in[n]$ and all $(r_1,\dots,r_n)\in\{0,1\}^n$.
Let $R=R_1+\dots+R_n, \underline{S}=\underline{S}_1+\dots+ \underline{S}_n, \overline{S}=\overline{S}_1+\dots+\overline{S}_n$. Then
$$
P(R\ge k)\ge P(\underline{S}\ge k)
\text{~and~}
P(R\le k)\ge P(\overline{S}\le k)
\quad \quad
\text{for all~} k\in[n] .
$$
\end{lemma}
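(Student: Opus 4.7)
My plan is to prove both inequalities simultaneously by constructing an explicit monotone coupling of $(R_1,\dots,R_n)$, $(\underline{S}_1,\dots,\underline{S}_n)$, and $(\overline{S}_1,\dots,\overline{S}_n)$ on a common probability space such that $\underline{S}_i \le R_i \le \overline{S}_i$ holds pointwise. Summing yields $\underline{S} \le R \le \overline{S}$ pointwise, which immediately implies the two stochastic-dominance statements $P(R\ge k) \ge P(\underline{S}\ge k)$ and $P(R\le k) \ge P(\overline{S}\le k)$.

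For the construction, let $U_1,\dots,U_n$ be i.i.d. Uniform$[0,1]$ random variables. Write $\underline{p}_i := P(\underline{S}_i = 1)$ and $\overline{p}_i := P(\overline{S}_i = 1)$, and set
\[
\underline{S}_i := \mathbbm{1}[U_i \le \underline{p}_i], \qquad \overline{S}_i := \mathbbm{1}[U_i \le \overline{p}_i].
\]
Since the $U_i$ are independent, $\underline{S}_1,\dots,\underline{S}_n$ are independent with the right marginals, and likewise for $\overline{S}_1,\dots,\overline{S}_n$. For the $R_i$'s I will use a sequential construction: define
\[
q_i(r_1,\dots,r_{i-1}) := P(R_i = 1 \mid R_1 = r_1, \dots, R_{i-1} = r_{i-1}),
\]
and set $R_i := \mathbbm{1}[U_i \le q_i(R_1,\dots,R_{i-1})]$ for $i = 1,2,\dots,n$. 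A standard induction shows that $(R_1,\dots,R_n)$ constructed this way has exactly the prescribed joint distribution.

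The key step, and the only point that requires any real thought, is verifying that $q_i(r_1,\dots,r_{i-1}) \in [\underline{p}_i, \overline{p}_i]$ for every conditioning sequence. I will write
\[
q_i(r_1,\dots,r_{i-1}) = \sum_{r_{i+1},\dots,r_n} P(R_i = 1 \mid R_j = r_j~\forall j\neq i)\, P(R_{i+1}=r_{i+1},\dots,R_n=r_n \mid R_1=r_1,\dots,R_{i-1}=r_{i-1}, R_i = 1),
\]
which exhibits $q_i$ as a convex combination of quantities of the form $P(R_i = 1 \mid R_j = r_j~\forall j\neq i)$. By hypothesis \eqref{eq:srcd} each such quantity lies in $[\underline{p}_i,\overline{p}_i]$, so the convex combination does as well.

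Once this is in hand, the coupling conclusion is essentially a case check on where $U_i$ falls relative to $\underline{p}_i \le q_i(R_1,\dots,R_{i-1}) \le \overline{p}_i$: in every case one sees $\underline{S}_i \le R_i \le \overline{S}_i$ almost surely. I do not anticipate any real obstacle; the only thing to be careful about is the convex-combination argument above, since the hypothesis is stated for conditioning on all other coordinates rather than just the past, and the reformulation needs to be written out cleanly.
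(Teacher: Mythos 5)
Your coupling strategy is sound and genuinely different from the paper's argument, but the one identity you yourself flag as the crux is wrong as written. By the tower property,
\[
q_i(r_1,\dots,r_{i-1}) \;=\; \sum_{r_{i+1},\dots,r_n} P\big(R_i=1 \mid R_j=r_j~\forall j\neq i\big)\, P\big(R_{i+1}=r_{i+1},\dots,R_n=r_n \mid R_1=r_1,\dots,R_{i-1}=r_{i-1}\big),
\]
i.e.\ the weights are the conditional law of the future coordinates given only the past, \emph{not} additionally conditioned on $R_i=1$. With your weights the identity already fails for $n=2$, $i=1$: take $P(R_1=R_2=1)=P(R_1=R_2=0)=0.4$ and the two off-diagonal probabilities $0.1$; then $P(R_1=1)=0.5$ while your right-hand side equals $0.68$. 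The fix costs nothing: the corrected expression is still a convex combination of full conditionals (only configurations of positive probability carry weight), so $q_i\in[\underline{p}_i,\overline{p}_i]$, and the rest of your argument — the sequential construction $R_i=\mathbbm{1}[U_i\le q_i(R_1,\dots,R_{i-1})]$ reproduces the joint law of $(R_1,\dots,R_n)$, and $\underline{p}_i\le q_i\le\overline{p}_i$ forces $\underline{S}_i\le R_i\le\overline{S}_i$ pointwise, hence both tail comparisons — goes through.

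For comparison: the paper does not construct a coupling. It proves $P(R\ge k)\ge P(\underline{S}\ge k)$ by a hybrid (one-coordinate-at-a-time) argument, replacing the coordinates of $R$ successively by independent Bernoulli variables with success probabilities $P(\underline{S}_j=1)$ and showing the upper tail probability can only decrease at each step; this uses exactly the same marginalization of \eqref{eq:srcd} from full conditioning to conditioning on the not-yet-replaced coordinates. The second bound $P(R\le k)\ge P(\overline{S}\le k)$ is then deduced by applying the first to $1-R_i$ and $1-\overline{S}_i$. Your single monotone coupling delivers both inequalities at once and is arguably cleaner; both proofs ultimately hinge on the same observation that a hypothesis stated for conditioning on all other coordinates averages to the same bounds under conditioning on any subset of them.
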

\begin{proof}
We first prove $P(R\ge k)\ge P(\underline{S}\ge k)$ for all $k\in[n]$.
We will recursively construct Bernoulli random variables $(R_1^{(j)}, \dots, R_n^{(j)})$ for $j=1,2,\dots,n$ such that $(R_1^{(n)}, \dots, R_n^{(n)})$ has the same joint distribution as $(\underline{S}_1,\dots, \underline{S}_n)$. For each $j\in[n]$, define $R^{(j)}=R_1^{(j)} + \dots + R_n^{(j)}$. We will prove that 
\begin{equation} \label{eq:tg1}
P(R\ge k)\ge P(R^{(1)}\ge k)\ge P(R^{(2)}\ge k) \ge \dots \ge P(R^{(n)}\ge k)= P(\underline{S}\ge k)
\text{~for all~} k\in[n] .
\end{equation}

We start with the construction of $(R_1^{(1)}, \dots, R_n^{(1)})$: Let the joint distribution of $R_2^{(1)}, \dots, R_n^{(1)}$ be exactly the same as that of $R_2, \dots, R_n$. Let $R_1^{(1)}$ be independent of $R_2^{(1)}, \dots, R_n^{(1)}$ with marginal probability $P(R_1^{(1)}=1)=P(\underline{S}_1=1)$.
Then we have
\begin{align*}
& P(R\ge k) \\
= & P(R_2+\dots+R_n\ge k) + P(R_2+\dots+R_n = k-1) ~ P(R_1=1 | R_2+\dots+R_n = k-1) \\
= & P(R_2^{(1)}+ \dots+ R_n^{(1)} \ge k)
+ P(R_2^{(1)}+ \dots+ R_n^{(1)} = k-1) ~ P(R_1=1 | R_2+\dots+R_n = k-1) \\
\ge & P(R_2^{(1)}+ \dots+ R_n^{(1)} \ge k)
+ P(R_2^{(1)}+ \dots+ R_n^{(1)} = k-1) ~ P(R_1^{(1)}=1) \\
= & P(R^{(1)}\ge k) ,
\end{align*}
where the second equality follows from the assumption that $(R_2^{(1)}, \dots, R_n^{(1)})$ and $(R_2, \dots, R_n)$ have the same joint distribution, and the inequality follows from \eqref{eq:srcd}.

Now suppose that we have constructed $(R_1^{(j-1)}, \dots, R_n^{(j-1)})$. Then we construct $(R_1^{(j)}, \dots, R_n^{(j)})$ as follows: Let $R_1^{(j)}, \dots, R_{j-1}^{(j)}, R_{j+1}^{(j)}, \dots R_n^{(j)}$ have exactly the same joint distribution as $R_1^{(j-1)}, \dots, R_{j-1}^{(j-1)}, \linebreak[4] R_{j+1}^{(j-1)}, \dots, R_n^{(j-1)}$. Let $R_j^{(j)}$ be independent of $R_1^{(j)}, \dots, R_{j-1}^{(j)}, R_{j+1}^{(j)}, \dots R_n^{(j)}$ with marginal probability  $P(R_j^{(j)}=1)=P(\underline{S}_j=1)$. By induction one can easily see that the joint distribution of $R_j^{(j-1)}, R_{j+1}^{(j-1)}, \dots, R_n^{(j-1)}$ is the same as the joint distribution of $R_j, R_{j+1}, \dots R_n$. Moreover, $R_1^{(j-1)}, \dots, \linebreak[4] R_{j-1}^{(j-1)}$ are independent of $R_j^{(j-1)}, R_{j+1}^{(j-1)}, \dots, R_n^{(j-1)}$. Therefore, for all $(r_1,\dots,r_n)\in\{0,1\}^n$,
\begin{equation} \label{eq:rotj}
\begin{aligned}
& P(R_j^{(j-1)}=1 | R_i^{(j-1)}=r_i \text{~for all~} i\neq j)
= P(R_j^{(j-1)}=1 | R_i^{(j-1)}=r_i \text{~for all~} i> j) \\
= & P(R_j=1 | R_i=r_i \text{~for all~} i> j)
\ge P(\underline{S}_j=1)
= P(R_j^{(j)}=1) ,
\end{aligned}
\end{equation}
where the inequality follows from \eqref{eq:srcd}. This further implies that
\begin{align*}
& P(R^{(j-1)}\ge k) \\
= & P(\sum_{i\in[n]\setminus\{j\}} R_i^{(j-1)} \ge k) + P( \sum_{i\in[n]\setminus\{j\}} R_i^{(j-1)} = k-1) ~ P(R_j^{(j-1)}=1 | \sum_{i\in[n]\setminus\{j\}} R_i^{(j-1)} = k-1) \\
= & P(\sum_{i\in[n]\setminus\{j\}} R_i^{(j)} \ge k) + P( \sum_{i\in[n]\setminus\{j\}} R_i^{(j)} = k-1) ~ P(R_j^{(j-1)}=1 | \sum_{i\in[n]\setminus\{j\}} R_i^{(j-1)} = k-1) \\
\ge & P(\sum_{i\in[n]\setminus\{j\}} R_i^{(j)} \ge k) + P( \sum_{i\in[n]\setminus\{j\}} R_i^{(j)} = k-1) ~ P(R_j^{(j)}=1) \\
= & P(R^{(j)}\ge k) ,
\end{align*}
where the inequality follows from \eqref{eq:rotj}. By noticing that $(R_1^{(n)}, \dots, R_n^{(n)})$ has the same joint distribution as $(\underline{S}_1,\dots, \underline{S}_n)$, we conclude the proof of \eqref{eq:tg1}.

In order to prove $P(R\le k)\ge P(\overline{S}\le k)$ for all $k\in[n]$, we observe that 
$$
P(1-R_i=1|R_j=r_j \text{~for all~} j\neq i)
\ge P(1-\overline{S}_i=1)
$$
for all $i\in[n]$ and all $(r_1,\dots,r_n)\in\{0,1\}^n$. Applying the conclusion above to the Bernoulli random variables $1-R_1,\dots,1-R_n$ and $1-\overline{S}_1,\dots,1-\overline{S}_n$, we obtain that $P(n-R\ge k)\ge P(n-\overline{S}\ge k)$ for all $k\in[n]$, so $P(R\le k)\ge P(\overline{S}\le k)$ for all $k\in[n]$.
\end{proof}

\section{Exact recovery is not solvable when $\lfloor \frac{m+1}{2} \rfloor \beta < \beta^\ast$}
\label{sect:converse}

\begin{lemma} \label{lm:qq}
Let 
$$
(X,G,\{\sigma^{(1)},\dots,\sigma^{(m)}\})\sim \SIBM(n,a\log(n)/n, b\log(n)/n,\alpha,\beta, m) .
$$
If there is a pair $i,i'\in[n]$ satisfying the following two conditions: (1) $\sigma_i^{(j)}=\sigma_{i'}^{(j)}$ for all $j\in[m]$ and (2) $X_i=-X_{i'}$, then it is not possible to distinguish the case $X_i=-X_{i'}=1$ from the case $X_i=-X_{i'}=-1$. In other words, conditioning on the samples, the posterior probability of the ground truth being $X$ is the same as that of the ground truth being $X^{(\sim\{i,i'\})}$.
\end{lemma}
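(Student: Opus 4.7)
The plan is to prove this by a symmetry (bijection) argument based on the transposition $\pi\colon[n]\to[n]$ that swaps $i$ and $i'$ and fixes every other coordinate. For any vector $y\in\{\pm 1\}^n$ write $\pi(y)$ for the vector with $\pi(y)_i=y_{i'}$, $\pi(y)_{i'}=y_i$, and $\pi(y)_k=y_k$ for $k\notin\{i,i'\}$; for a graph $H$ on $[n]$ write $\pi(H)$ for the graph with edge set $\{\{\pi(u),\pi(v)\}:\{u,v\}\in E(H)\}$. The goal will be to show, for every graph $g$ and every tuple of samples $(s^{(1)},\dots,s^{(m)})$ satisfying $s_i^{(j)}=s_{i'}^{(j)}$ for all $j\in[m]$,
\[
P\bigl(X=X,\,G=g,\,\sigma^{(1:m)}=s^{(1:m)}\bigr)
= P\bigl(X=X^{(\sim\{i,i'\})},\,G=\pi(g),\,\sigma^{(1:m)}=s^{(1:m)}\bigr),
\]
and then to sum over $g$ (which is a bijection $g\mapsto\pi(g)$) and apply Bayes' rule.

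The argument factors into three pieces, matching the Markov chain $X\to G\to\{\sigma^{(1)},\dots,\sigma^{(m)}\}$. First, $X$ is uniform over balanced configurations, and $X^{(\sim\{i,i'\})}$ is still balanced (since $X_i=-X_{i'}$ means we flipped one $+1$ and one $-1$), so $P(X=X)=P(X=X^{(\sim\{i,i'\})})$. Second, for the SSBM conditional $P(G\mid X)$, I will check edge by edge that relabeling $X\mapsto X^{(\sim\{i,i'\})}$ has exactly the same effect on same/cross-cluster status as relabeling the vertices by $\pi$: for any $v\notin\{i,i'\}$, the edge $\{i,v\}$ is within-cluster under $X^{(\sim\{i,i'\})}$ iff $\{i',v\}$ is within-cluster under $X$, and symmetrically with $i$ and $i'$ swapped; and the edge $\{i,i'\}$ is cross-cluster under both. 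Hence the product in the SSBM edge distribution gives $P(G=g\mid X=X)=P(G=\pi(g)\mid X=X^{(\sim\{i,i'\})})$. Third, for the Ising conditional \eqref{eq:isingma}, I will use that the Hamiltonian depends only on the set of edges of $G$ and on the products $\bar\sigma_u\bar\sigma_v$; consequently $P_{\sigma\mid G}(\sigma=\bar\sigma\mid G=g)=P_{\sigma\mid G}(\sigma=\pi(\bar\sigma)\mid G=\pi(g))$, since applying $\pi$ to both the graph and the configuration leaves every term of the Hamiltonian unchanged (and the partition function only depends on the isomorphism class of $g$).

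The hypothesis $s_i^{(j)}=s_{i'}^{(j)}$ is what bridges these identities into the desired equality: it says exactly that $\pi(s^{(j)})=s^{(j)}$, so $P(\sigma^{(j)}=s^{(j)}\mid G=g)=P(\sigma^{(j)}=\pi(s^{(j)})\mid G=\pi(g))=P(\sigma^{(j)}=s^{(j)}\mid G=\pi(g))$ for each $j\in[m]$. Multiplying the three equalities yields the displayed joint identity; summing over $g$ (using the bijection $g\mapsto\pi(g)$ on graphs with vertex set $[n]$) gives $P(X=X,\,\sigma^{(1:m)}=s^{(1:m)})=P(X=X^{(\sim\{i,i'\})},\,\sigma^{(1:m)}=s^{(1:m)})$, and dividing by $P(\sigma^{(1:m)}=s^{(1:m)})$ gives the equality of posteriors.

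I do not expect a serious obstacle: everything is a routine verification of invariance under a single transposition. The only point one has to be careful with is the third step, which uses two facts simultaneously, namely the coordinate invariance of the Hamiltonian (so that only $\pi(s^{(j)})$ appears on the right) and the invariance of the partition function $Z_G(\alpha,\beta)$ under relabeling vertices of $G$ (which follows because $Z_G(\alpha,\beta)$ depends only on $|E(G)|$ and the cross/within structure, both preserved by $\pi$); the hypothesis $s_i^{(j)}=s_{i'}^{(j)}$ then collapses $\pi(s^{(j)})$ back to $s^{(j)}$. Once these three invariances are spelled out, the rest of the proof is a one-line Bayes computation.
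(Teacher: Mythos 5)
Your proof is correct and takes essentially the approach the paper intends: the paper's proof of this lemma simply defers to the relabeling-symmetry argument of Lemma~\ref{lm:cc} (invariance of the prior, the SSBM conditional, the Ising weights and $Z_G(\alpha,\beta)$ under a vertex permutation — here the transposition of $i$ and $i'$, which sends $X$ to $X^{(\sim\{i,i'\})}$ and fixes the samples because $\sigma_i^{(j)}=\sigma_{i'}^{(j)}$), and that is exactly what you carry out, followed by summing over $G$ and Bayes. One minor wording correction: the identity $Z_{\pi(g)}(\alpha,\beta)=Z_g(\alpha,\beta)$ holds because the sum over configurations can be reindexed by $\pi$ (isomorphism invariance), not because $Z_G$ depends only on $|E(G)|$ or any cross/within structure (which involves $X$ and does not enter $Z_G$ at all).
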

\begin{proof}
Intuitively, this proposition clearly holds. For a rigorous proof, one can follow the same steps as the proof of Lemma~\ref{lm:cc} in Appendix~\ref{ap:6}, and we do not repeat it here.
\end{proof}

Clearly, the original samples satisfy the above conditions if and only if the aligned samples satisfy the above conditions in Lemma~\ref{lm:qq}.

\begin{lemma}  \label{lm:cvs}
Let $v=(v^{(1)},v^{(2)},\dots,v^{(m)})\in\{\pm 1\}^m$ be a vector of length $m$ and let $u:=|\{i\in[m]:v^{(i)}=-1\}|$ be the number of $-1$'s in $v$.
Let $\sigma^{(1)},\dots,\sigma^{(m)}$ be the aligned samples of SIBM (see the alignment step in Algorithm~\ref{alg:ez}).
Without loss of generality we assume that the aligned samples satisfy  $\dist(\sigma^{(j)}, X) \le n/2$ for all $j\in[m]$.
Define 
\begin{equation}  \label{eq:rl}
\begin{aligned}
& T_+:= | \{i\in[n]:(\sigma_i^{(1)},\dots,\sigma_i^{(m)})=v, X_i=1\} | ,  \\
& T_-:= | \{i\in[n]:(\sigma_i^{(1)},\dots,\sigma_i^{(m)})=v, X_i=-1\} | .
\end{aligned}
\end{equation}
If $u\beta<\beta^\ast$, then $P_{\SIBM}\big(T_+ = \Theta(n^{g(u\beta)}) \big) = 1-o(1)$. 
Similarly, if $(m-u)\beta<\beta^\ast$,
then $P_{\SIBM}\big(T_- = \Theta(n^{g((m-u)\beta)}) \big) = 1-o(1)$. 
\end{lemma}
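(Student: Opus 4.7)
The plan is to mimic the proof of Theorem~\ref{thm:dist}, replacing the single-sample disagreement indicator $\mathbbm{1}[\sigma_i\ne X_i]$ with the multi-sample pattern indicator $R_i:=\mathbbm{1}[(\sigma^{(1)}_i,\dots,\sigma^{(m)}_i)=v]$, and summing over $i\in[n]_+:=\{i:X_i=+1\}$ to obtain $T_+$; the analysis of $T_-$ is identical after interchanging $u\leftrightarrow m-u$ and $[n]_+\leftrightarrow[n]_-$. The main idea is that for $i\in[n]_+$, the event $R_i=1$ requires $\sigma^{(j)}_i=-X_i$ at exactly the $u$ indices $j$ with $v^{(j)}=-1$ and $\sigma^{(j)}_i=X_i$ elsewhere. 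Because the $m$ samples are conditionally independent given $G$, the single-sample local estimate~\eqref{eq:soon} turns this into a product of $u$ factors of order $\exp(2\beta(B_i-A_i))$ and $m-u$ factors close to $1$, giving a per-vertex probability of order $\exp(2u\beta(B_i-A_i))$ and reducing the problem to the single-sample analysis run at inverse temperature $u\beta$.

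Concretely, I will first invoke Corollary~\ref{cr:1} and Proposition~\ref{prop:43} together with a union bound over $j\in[m]$ to restrict attention to the event $\sigma^{(j)}\in\Lambda(G,z)$ and $\dist(\sigma^{(j)},X)\le n^\theta$ for all $j\in[m]$, for an integer $z$ and exponent $\theta\in(g(\beta),1)$. On this event, \eqref{eq:soon} bounds each per-sample disagreement probability between $\underline{C}\exp(2\beta(B_i-A_i))$ and $\overline{C}\exp(2\beta(B_i-A_i))$, uniformly in $i$ and in the extension off coordinate $i$. Multiplying across the $m$ independent samples produces constants $\underline{D},\overline{D}>0$ depending only on $m,\beta,\alpha,\underline{C},\overline{C}$ with
\[
\underline{D}\exp(2u\beta(B_i-A_i))\le P_{\sigma|G}\bigl(R_i=1\bigm|\sigma^{(j)}_k,\,k\ne i,\,j\in[m]\bigr)\le\overline{D}\exp(2u\beta(B_i-A_i))
\]
for every $i\in[n]_+$. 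I will then apply Lemma~\ref{lm:corre} to sandwich $T_+=\sum_{i\in[n]_+}R_i$ in distribution between two sums of independent Bernoullis with parameters $\underline{D}\exp(2u\beta(B_i-A_i))$ and $\overline{D}\exp(2u\beta(B_i-A_i))$ respectively.

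For the final step, the proof of Proposition~\ref{prop:con} carries over verbatim to the restricted sum $\sum_{i\in[n]_+}\exp(2u\beta(B_i-A_i))$, since its expectation and variance computations split by the parity of $X_i$ (the pair $(A_i,B_i)$ has the same distribution for $X_i=\pm1$); this yields $\sum_{i\in[n]_+}\exp(2u\beta(B_i-A_i))=\Theta(n^{g(u\beta)})$ with probability $1-o(1)$ whenever $0<u\beta\le\beta^\ast$. Since each sandwiching Bernoulli sum has variance bounded by its mean and $g(u\beta)>0$ when $u\beta<\beta^\ast$, Chebyshev's inequality concentrates both sums on $\Theta(n^{g(u\beta)})$, forcing $T_+=\Theta(n^{g(u\beta)})$. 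The boundary case $u=0$ is handled directly: the pattern forces $\sigma^{(j)}_i=X_i$ for all $j$, and $\dist(\sigma^{(j)},X)\le n^\theta$ gives $n/2-mn^\theta\le T_+\le n/2$, i.e.\ $T_+=\Theta(n)=\Theta(n^{g(0)})$. The hard part will be the multi-sample density estimate in the middle step: one must check that $\underline{D},\overline{D}$ depend only on $m,\beta,\alpha$ and the constants from~\eqref{eq:soon}, not on $n$, the particular pattern $v$, or the extension off coordinate $i$, so that the per-vertex bounds are genuinely uniform before one sums over $i\in[n]_+$ and invokes the sandwich lemma.
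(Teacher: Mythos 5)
Your proposal follows essentially the same route as the paper's proof: restrict to the good event via Corollary~\ref{cr:1} and Proposition~\ref{prop:43}, use Lemma~\ref{lm:et}/\eqref{eq:soon} plus conditional independence of the samples to get per-vertex bounds of order $\exp(2u\beta(B_i-A_i))$, sandwich $T_+$ with independent Bernoulli sums via Lemma~\ref{lm:corre}, and rerun the Proposition~\ref{prop:con} concentration argument for the sum restricted to $[n]_+$ at inverse temperature $u\beta$. The uniformity of the constants and the restricted-sum adaptation you flag are exactly the points the paper handles the same way (its set $\cG_{\con}'$), and your explicit treatment of the $u=0$ boundary case is a harmless extra.
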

\begin{proof}


We only prove the claims about $T_+$ since the proof for $T_-$ is virtually identical.
The proof follows the same steps as the proof of Theorem~\ref{thm:dist}. All we need to do is to replace $\beta$ with $u\beta$ in the proof of Theorem~\ref{thm:dist}. For the sake of completeness, we provide the proof here.

Corollary~\ref{cr:1} together with Proposition~\ref{prop:43} implies that there is an integer $z>0$ and a set $\cG_{\good}$ such that

\noindent (i)
$P(G\in\cG_{\good}) \ge 1- O(n^{-4})$.

\noindent (ii) For every $G\in\cG_{\good}$, 
\begin{equation}  \label{eq:ljrc}
\begin{aligned}
& P_{\sigma|G} \big( \sigma^{(j)}\in  \Lambda(G, z)
\text{~and~} \dist(\sigma^{(j)}, X) \le n^\theta
\text{~for all~} j\in[m] ~\big| \dist(\sigma^{(j)}, X) \le n/2
\text{~for all~} j\in[m]  \big) \\
& =1- O(n^{-4}) ,
\end{aligned}
\end{equation}
where we can choose $\theta$ to be any constant in the open interval $(g(\beta), 1)$.
By Lemma~\ref{lm:et} we know that for all $i\in[n]$ and all $j\in[m]$,
$P_{\sigma|G}(\sigma_i^{(j)} \neq X_i \big| \sigma^{(j)}\in \Lambda(G,z) ,\dist(\sigma^{(j)}, X) \le n^{\theta} )$ differ from
$\exp\Big(2\big(\beta+\frac{\alpha\log(n)}{n} \big) (B_i-A_i) \Big)$
by at most a constant factor. Since $|B_i-A_i|=O(\log(n))$ with probability $1-o(1)$, the term $\frac{\alpha\log(n)}{n}(B_1-A_i)=o(1)$ and is negligible. Moreover, since the $m$ samples are independent given the graph $G$, we conclude that
\begin{align*}
& \underline{C}
\exp\big(2u\beta (B_i-A_i) \big) \\
\le
& P_{\sigma|G} \big( (\sigma_i^{(1)},\dots,\sigma_i^{(m)})=v ~\big|~ \sigma^{(j)}\in  \Lambda(G, z)
\text{~and~} \dist(\sigma^{(j)}, X) \le n^\theta
\text{~for all~} j\in[m] \big) \\
\le & \overline{C}
\exp\big(2u\beta (B_i-A_i) \big)
\end{align*}
for all $i\in[n]$, where $\underline{C}$ and $\overline{C}$ are constants that are independent of $n$.
In fact, we obtained a much stronger inequality \eqref{eq:qk} in the proof of Lemma~\ref{lm:et}. More precisely, inequality \eqref{eq:qk} can be reformulated as follows: For every $\bar{\sigma}^{(1)},\dots,\bar{\sigma}^{(m)}\in\{\pm 1\}^n$ such that $\bar{\sigma}^{(j)}\in  \Lambda(G, z)$
and $\dist(\bar{\sigma}^{(j)}, X) \le n^\theta$
for all $j\in[m]$,
\begin{equation} \label{eq:s56}
\begin{aligned}
& \underline{C}
\exp\big(2u\beta (B_i-A_i) \big) \\
\le
& P_{\sigma|G} \big( (\sigma_i^{(1)},\dots,\sigma_i^{(m)})=v ~\big|~ (\sigma_{i'}^{(1)},\dots,\sigma_{i'}^{(m)})= (\bar{\sigma}_{i'}^{(1)},\dots,\bar{\sigma}_{i'}^{(m)}) \text{~for all~} i'\neq i \big) \\
\le & \overline{C}
\exp\big(2u\beta (B_i-A_i) \big).
\end{aligned}
\end{equation}
Define 
$\phi_i := \mathbbm{1}[(\sigma_i^{(1)},\dots,\sigma_i^{(m)})=v]$ for $i\in[n]$.
In some sense, \eqref{eq:s56} tells us that conditioning on the event $\{\sigma^{(j)}\in  \Lambda(G, z)
\text{~and~} \dist(\sigma^{(j)}, X) \le n^\theta
\text{~for all~} j\in[m]\}$, the random variables $\phi_1,\dots,\phi_n$ are ``almost" independent.
Given a fixed graph $G$ and random samples $\sigma^{(1)},\dots,\sigma^{(m)}$, we define {\bf Bernoulli} random variables $\underline{S}_1,\dots, \underline{S}_n$ and $\overline{S}_1,\dots,\overline{S}_n$ with the following properties:
\begin{enumerate}
\item $\underline{S}_1,\dots, \underline{S}_n$ are conditionally independent given the event $\{\sigma^{(j)}\in  \Lambda(G, z)
\text{~and~} \dist(\sigma^{(j)}, X) \le n^\theta
\text{~for all~} j\in[m]\}$. $\overline{S}_1,\dots,\overline{S}_n$ are also conditionally independent given the event $\{\sigma^{(j)}\in  \Lambda(G, z)
\text{~and~} \dist(\sigma^{(j)}, X) \le n^\theta
\text{~for all~} j\in[m]\}$.
\item Conditioning on the event $\{\sigma^{(j)}\in  \Lambda(G, z)
\text{~and~} \dist(\sigma^{(j)}, X) \le n^\theta
\text{~for all~} j\in[m]\}$, $P(\underline{S}_i=1)=\underline{C}
\exp\big(2u\beta (B_i-A_i) \big)$ and $P(\overline{S}_i=1)=\overline{C}
\exp\big(2u\beta (B_i-A_i) \big)$ for all $i\in[n]$.
\end{enumerate}
Define a set $[n]_+:=\{i\in[n]:X_i=1\}$.
Then $T_+=\sum_{i\in[n]_+} \phi_i$.
By the two properties above, conditioning on the event $\{\sigma^{(j)}\in  \Lambda(G, z)
\text{~and~} \dist(\sigma^{(j)}, X) \le n^\theta
\text{~for all~} j\in[m]\}$, we have
\begin{align*}
& E\Big[ \sum_{i\in[n]_+}\underline{S}_i \Big] = \underline{C}
\sum_{i\in[n]_+} \exp\big(2u\beta (B_i-A_i) \big) , \quad
\Var \Big(\sum_{i\in[n]_+}\underline{S}_i \Big) \le \underline{C}
\sum_{i\in[n]_+} \exp\big(2u\beta (B_i-A_i) \big) , \\
& E \Big[\sum_{i\in[n]_+} \overline{S}_i \Big] = \overline{C}
\sum_{i\in[n]_+} \exp\big(2u\beta (B_i-A_i) \big) , \quad
\Var \Big(\sum_{i\in[n]_+} \overline{S}_i \Big) \le \overline{C}
\sum_{i\in[n]_+} \exp\big(2u\beta (B_i-A_i) \big) ,
\end{align*}
where we use the fact that the variance of a Bernoulli random variable is always upper bounded by its expectation.
In Proposition~\ref{prop:con}, we have shown that for $u\beta\le \beta^\ast$, there is a set $\cG_{\con}$ such that (i) $P(G \in \cG_{\con}) = 1-o(1)$ and (ii) for every $G\in \cG_{\con}$, 
$
\sum_{i=1}^n \exp\big(2u\beta (B_i-A_i) \big)
=(1+o(1)) n^{g(u\beta)} .
$
Using exactly the same method, one can show that there is another set $\cG_{\con}'$ such that (i) $P(G \in \cG_{\con}') = 1-o(1)$ and (ii) for every $G\in \cG_{\con}'$, 
$
\sum_{i\in[n]_+} \exp\big(2u\beta (B_i-A_i) \big)
=\Theta (n^{g(u\beta)}) .
$
Therefore, for every $G\in\cG_{\good}\cap\cG_{\con}'$, 
\begin{align*}
& E \Big[ \sum_{i\in[n]_+}\underline{S}_i \Big] = \Theta(n^{g(u\beta)}) , \quad
\Var \Big( \sum_{i\in[n]_+}\underline{S}_i \Big) = O(n^{g(u\beta)}) , \\
& E \Big[ \sum_{i\in[n]_+}\overline{S}_i \Big] = \Theta(n^{g(u\beta)}) , \quad
\Var \Big( \sum_{i\in[n]_+}\overline{S}_i \Big) = O(n^{g(u\beta)}) 
\end{align*}
conditioning on the event $\{\sigma^{(j)}\in  \Lambda(G, z)
\text{~and~} \dist(\sigma^{(j)}, X) \le n^\theta
\text{~for all~} j\in[m]\}$. Since $g(u\beta)> 0$ for all $u\beta< \beta^\ast$, by Chebyshev's inequality we know that both $\sum_{i\in[n]_+}\underline{S}_i=\Theta(n^{g(u\beta)})$ and $\sum_{i\in[n]_+}\overline{S}_i=\Theta(n^{g(u\beta)})$ with probability $1-o(1)$ conditioning on the event $\{\sigma^{(j)}\in  \Lambda(G, z)
\text{~and~} \dist(\sigma^{(j)}, X) \le n^\theta
\text{~for all~} j\in[m]\}$.
This together with Lemma~\ref{lm:corre} in the previous section implies that $T_+=\Theta(n^{g(u\beta)})$ with probability $1-o(1)$ conditioning on the event $\{\sigma^{(j)}\in  \Lambda(G, z)
\text{~and~} \dist(\sigma^{(j)}, X) \le n^\theta
\text{~for all~} j\in[m]\}$. Combining this with \eqref{eq:ljrc}, we obtain that for every $G\in\cG_{\good}\cap\cG_{\con}'$, $T_+=\Theta(n^{g(u\beta)})$ with probability $1-o(1)$ conditioning on $\dist(\sigma^{(j)}, X) \le n/2$ for all $j\in[m]$. Finally, the lemma follows from $P(G\in\cG_{\good}\cap\cG_{\con}')=1-o(1)$.
\end{proof}

Using the above two lemmas, we obtain the following proposition
\begin{proposition}
Let $a,b,\alpha,\beta> 0$ be constants satisfying that $\sqrt{a}-\sqrt{b} > \sqrt{2}$ and $\alpha>b\beta$. 
Let 
$$
(X,G,\{\sigma^{(1)},\dots,\sigma^{(m)}\})\sim \SIBM(n,a\log(n)/n, b\log(n)/n,\alpha,\beta, m) .
$$
If $\lfloor \frac{m+1}{2} \rfloor \beta<\beta^\ast$, then no algorithm can recover $X$ from the samples with constant success probability, i.e., the success probability of any algorithm is $o(1)$.
\end{proposition}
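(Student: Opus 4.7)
The strategy is to exhibit, with probability $1-o(1)$, a family of $2^k$ balanced partitions (with $k$ polynomially large in $n$) that all share the same posterior distribution given the samples; by a union bound this forces any decoder to succeed with probability at most $2^{1-k}+o(1)=o(1)$.

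Using the hypothesis $\lfloor \tfrac{m+1}{2}\rfloor\beta<\beta^\ast$, I first pick an integer $u$ satisfying $u\beta<\beta^\ast$ and $(m-u)\beta<\beta^\ast$ simultaneously: take $u=m/2$ when $m$ is even, and $u=\tfrac{m+1}{2}$ (so $m-u=\tfrac{m-1}{2}$) when $m$ is odd. Fix any $v\in\{\pm 1\}^m$ having exactly $u$ coordinates equal to $-1$, and (after the alignment step, exactly as in Section~\ref{sect:direct}) assume WLOG that $\dist(\sigma^{(j)},X)\le n/2$ for all $j\in[m]$. Lemma~\ref{lm:cvs} applied with this $v$ yields, with probability $1-o(1)$, that the counts $T_+, T_-$ defined in \eqref{eq:rl} satisfy $T_+=\Theta(n^{g(u\beta)})$ and $T_-=\Theta(n^{g((m-u)\beta)})$; by Lemma~\ref{lm:ele} combined with $g(0)=1$, both exponents are strictly positive, so $T_+$ and $T_-$ are each polynomially large.

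On this good event, set $k:=\min(T_+,T_-)$ and choose disjoint pairs $\{(i_l,i_l')\}_{l=1}^{k}$ with $i_l$ drawn from the $T_+$-index set and $i_l'$ from the $T_-$-index set. By construction each pair satisfies $X_{i_l}=+1=-X_{i_l'}$ and $\sigma_{i_l}^{(j)}=\sigma_{i_l'}^{(j)}=v^{(j)}$ for every $j\in[m]$. For any $S\subseteq[k]$ let $\cI_S:=\bigcup_{l\in S}\{i_l,i_l'\}$; each $X^{(\sim \cI_S)}$ is balanced because every pair flip swaps one $+1$ with one $-1$. I then iterate Lemma~\ref{lm:qq} along any chain $\emptyset\subset\{1\}\subset\cdots\subset S$: at each step, the next pair to be flipped is disjoint from the previously flipped pairs, so its coordinates in the intermediate ``ground truth'' still have opposite signs, meaning conditions (1)--(2) of Lemma~\ref{lm:qq} remain valid. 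Transitivity gives $P(\mathrm{gt}=X^{(\sim \cI_S)}\mid\sigma^{(1)},\ldots,\sigma^{(m)})=P(\mathrm{gt}=X\mid\sigma^{(1)},\ldots,\sigma^{(m)})$ for every $S\subseteq[k]$.

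Because $2k=o(n)$, $\cI_S$ can never equal $[n]$, so no two of the $2^k$ partitions $\{X^{(\sim \cI_S)}\}_{S\subseteq[k]}$ are negatives of each other, giving $2^k$ distinct $\{\pm\}$-equivalence classes of equal posterior mass. Summing, $P(\mathrm{gt}\in\{\pm X\}\mid\sigma)\le 2^{1-k}$, whence every decoder $\hat X(\sigma)$ obeys $P(\hat X\in\{\pm X\})\le 2^{1-k}+o(1)=o(1)$. The main technical point is verifying that Lemma~\ref{lm:qq} genuinely iterates---namely, that disjointness of the selected pairs preserves conditions (1)--(2) across every intermediate relabeling, and that the $2^k$ partitions do not collapse modulo global sign. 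Both follow immediately from $k\ll n$, so the substantive work has already been done in the structural Lemma~\ref{lm:cvs}.
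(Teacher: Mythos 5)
Your proposal is correct and rests on exactly the same two pillars as the paper's proof: the same choice of pattern $v$ with $\lfloor\frac{m+1}{2}\rfloor$ coordinates equal to $-1$ (your explicit choice of $u$ is just this), Lemma~\ref{lm:cvs} to make $T_+$ and $T_-$ large with probability $1-o(1)$, and the swap symmetry of Lemma~\ref{lm:qq} to produce equal-posterior alternatives to $X$. Where you diverge is the final counting step: the paper stops at single-pair flips of the realized ground truth, which already yield $\omega(1)$ distinct $\pm$-equivalence classes with the same posterior as $\{X,-X\}$ and hence force the success probability of any decoder to $o(1)$; you instead iterate the swap over all subsets of $k=\min(T_+,T_-)$ disjoint pairs to get $2^k$ equal-posterior classes and an exponentially small posterior for the true class. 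Your amplification is sound — flipping disjoint $(+,-)$ pairs keeps every intermediate candidate balanced and keeps the pair-signs opposite, and $4k=o(n)$ rules out collisions modulo global sign — but note that it applies Lemma~\ref{lm:qq} to candidate ground truths other than the realized $X$, which is beyond the lemma's literal wording; it is justified because the permutation-invariance argument behind Lemma~\ref{lm:cc} gives $P(X=x\mid\sigma)=P(X=x^{(\sim\{i,i'\})}\mid\sigma)$ for \emph{every} balanced candidate $x$ with $x_i=-x_{i'}$ whenever the samples agree at $i,i'$, so you should state and invoke that slightly stronger form explicitly. In exchange for this extra care your route buys a quantitatively sharper failure bound ($2^{-\Theta(n^{c})}$ rather than an unspecified $o(1)$), which the theorem does not require; the paper's single-flip argument is the more economical way to the stated conclusion.
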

\begin{proof}
First observe that $m-\lfloor \frac{m+1}{2} \rfloor \le \lfloor \frac{m+1}{2} \rfloor$. If $\lfloor \frac{m+1}{2} \rfloor \beta<\beta^\ast$, then $(m-\lfloor \frac{m+1}{2} \rfloor) \beta<\beta^\ast$.
Now pick a vector $v\in\{\pm 1\}^m$ such that it has $\lfloor \frac{m+1}{2} \rfloor$ coordinates being $-1$ and $(m-\lfloor \frac{m+1}{2} \rfloor)$ coordinates being $1$. Then by Lemma~\ref{lm:cvs}, with probability $1-o(1)$, both $T_+$ and $T_-$ are $\omega(1)$; see definition of $T_+$ and $T_-$ in \eqref{eq:rl}. Therefore, we can find $\omega(1)$ pairs of $i,i'\in[n]$ satisfying the following two conditions: (1) $\sigma_i^{(j)}=\sigma_{i'}^{(j)}$ for all $j\in[m]$ and (2) $X_i=-X_{i'}$. Then by Lemma~\ref{lm:qq}, it is not possible to distinguish $X$ from $X^{(\sim\{i,i'\})}$ for $\omega(1)$ pairs of $i,i'$. Therefore, the success probability of any recovery algorithm is $o(1)$.
\end{proof}

\section{Future directions}
\label{sect:future}

We conclude this paper with two future directions. First, in this paper we mainly focus on the regime $\alpha>b\beta$, where we establish a sharp threshold on the sample complexity. When $\alpha<b\beta$, we only give a lower bound $\Omega(\log^{1/4}(n))$ on the sample complexity, which is almost surely not tight. An interesting future direction is to find the optimal sample complexity when $\alpha<b\beta$.
Second, in this paper we assume that there are only two communities/clusters in the SSBM. A natural future direction is to extend the result in this paper to $k$-community SSBM. In this case, we also need to extend the Ising model from binary alphabet to general alphabet. Such an extension of Ising model has been considered, for example, in \cite{Klivans17}.

\appendices

\section{Auxiliary lemmas used in Section~\ref{sect:direct}}
\label{ap:6}

\begin{lemma} \label{lm:bq}
For $0<\theta<1$,
\begin{align}
& P_{\SIBM}(\sigma_i=-X_i
\big| \dist(\sigma,X) \le n^\theta) \le 2 n^{\theta -1}
\quad \text{for all~} i\in[n] , \label{eq:l1}\\
& P_{\SIBM}(\sigma_i=-X_i \text{~for all~}  i\in\tilde{\cI}
~\big| \dist(\sigma,X) \le n^\theta) \le \Big(\frac{n^\theta}{n/2-|\tilde{\cI}|}
\Big)^{|\tilde{\cI}|}
\quad \text{for all~} \tilde{\cI}\subseteq [n] .   \label{eq:l2}
\end{align}
\end{lemma}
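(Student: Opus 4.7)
The proof hinges on the permutation exchangeability of the error vector $X\odot\sigma:=(X_1\sigma_1,\dots,X_n\sigma_n)$. For any permutation $\pi$ of $[n]$: the uniform law on balanced partitions is $\pi$-invariant; the SSBM conditional $P(G\mid X)$ depends only on the cluster-membership pattern of $X$ and is therefore invariant under applying $\pi$ jointly to $X$ and to the vertex labels of $G$; and the Ising law $P_{\sigma|G}$ in \eqref{eq:isingma} is determined by the graph alone and is likewise invariant. Hence $(\pi(X),\pi(G),\pi(\sigma))$ has the same joint law as $(X,G,\sigma)$, so $\pi(X\odot\sigma)$ and $X\odot\sigma$ are identically distributed. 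Writing $p_k:=P_{\SIBM}(\sigma=X^{(\sim\cJ)})$ for any $\cJ\subseteq[n]$ with $|\cJ|=k$, this common value is therefore well-defined (independent of the particular $\cJ$).

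For \eqref{eq:l1}, exchangeability implies that $P_{\SIBM}(\sigma_i=-X_i,\ \dist(\sigma,X)\le n^\theta)$ is the same across all $i\in[n]$. Summing over $i$ and using $\dist(\sigma,X)=\sum_{i}\mathbbm{1}[\sigma_i\neq X_i]$ yields
\begin{equation*}
n\cdot P_{\SIBM}(\sigma_i=-X_i,\ \dist(\sigma,X)\le n^\theta)=E\!\left[\dist(\sigma,X)\,\mathbbm{1}[\dist(\sigma,X)\le n^\theta]\right]\le n^\theta\cdot P_{\SIBM}(\dist(\sigma,X)\le n^\theta),
\end{equation*}
and dividing gives the conditional bound $n^{\theta-1}\le 2n^{\theta-1}$.

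For \eqref{eq:l2}, set $t:=|\tilde\cI|$. The event in the numerator is the disjoint union of $\{\sigma=X^{(\sim\cJ)}\}$ over those $\cJ\supseteq\tilde\cI$ with $|\cJ|\le n^\theta$, and there are $\binom{n-t}{k-t}$ such $\cJ$ of size $k$, so by exchangeability
\begin{equation*}
P_{\SIBM}(\sigma_i=-X_i\ \forall i\in\tilde\cI,\ \dist(\sigma,X)\le n^\theta)=\sum_{k=t}^{\lfloor n^\theta\rfloor}\binom{n-t}{k-t}p_k,
\end{equation*}
whereas $P_{\SIBM}(\dist(\sigma,X)\le n^\theta)=\sum_{k=0}^{\lfloor n^\theta\rfloor}\binom{n}{k}p_k\ge\sum_{k=t}^{\lfloor n^\theta\rfloor}\binom{n}{k}p_k$. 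The term-wise ratio obeys, uniformly for $k\in[t,\lfloor n^\theta\rfloor]$ (and assuming $t<n/2$, the only case in which the claim is nontrivial),
\begin{equation*}
\frac{\binom{n-t}{k-t}}{\binom{n}{k}}=\frac{k(k-1)\cdots(k-t+1)}{n(n-1)\cdots(n-t+1)}\le\Big(\frac{k}{n-t+1}\Big)^t\le\Big(\frac{n^\theta}{n/2-t}\Big)^t,
\end{equation*}
so factoring this uniform bound out of the numerator delivers \eqref{eq:l2}.

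The only nontrivial step is verifying the joint permutation symmetry of $(X,G,\sigma)$, and once that is in place the remaining estimates are elementary binomial-ratio manipulations; I therefore do not anticipate a genuine obstacle.
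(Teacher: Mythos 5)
Your proof is correct, but it takes a genuinely different route from the paper's. The paper argues (in effect) conditionally on the ground truth: it conditions on the within-community error counts $(\dist_+(\sigma,X),\dist_-(\sigma,X))=(u_+,u_-)$, invokes a \emph{label-preserving} permutation symmetry (its Lemma~\ref{lm:cc}, proved via permutations $\pi$ with $X_{\pi(i)}=X_i$ acting jointly on the SSBM and Ising laws) to conclude that all error sets with prescribed $(|\cI_+|,|\cI_-|)$ are equally likely, and then bounds ratios of products of binomials $\binom{n/2}{\cdot}$, getting exactly $2u_+/n$ for \eqref{eq:l1} and the hypergeometric-type bound for \eqref{eq:l2}. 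You instead exploit the full diagonal $S_n$-symmetry of the triple $(X,G,\sigma)$ --- valid precisely because $X$ is itself uniform over balanced partitions, so the error vector is fully exchangeable and $P_{\SIBM}(\text{error set}=\cJ)$ depends only on $|\cJ|$; then, writing $N:=\dist(\sigma,X)$, \eqref{eq:l1} follows from the averaging identity $n\,P_{\SIBM}(\sigma_i=-X_i,\,N\le n^\theta)=E\big[N\,\mathbbm{1}[N\le n^\theta]\big]$ (which even yields the slightly better constant $n^{\theta-1}$), and \eqref{eq:l2} from the single ratio $\binom{n-t}{k-t}/\binom{n}{k}\le\big(k/(n-t+1)\big)^t$. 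The trade-off: the paper's finer within-community argument never uses the randomness of $X$, so its bounds also hold conditionally on $X$; your symmetry argument proves exactly the unconditional statement under $P_{\SIBM}$, which is what the lemma asserts and what is invoked later, so this is a stylistic difference rather than a gap. Like the paper's own proof, your bound \eqref{eq:l2} is only meaningful for $|\tilde\cI|<n/2$ (and the event is vacuous once $|\tilde\cI|>n^\theta$), which is the regime in which the lemma is applied ($|\tilde\cI|=z$ constant), so your parenthetical restriction is harmless.
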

\begin{proof}
Define $\dist_+(\sigma,X):=|\{i\in[n]:X_i=1, \sigma_i=-1\}|$ and $\dist_-(\sigma,X):=|\{i\in[n]:X_i=-1, \sigma_i=1\}|$. Clearly, $\dist(\sigma,X)=\dist_+(\sigma,X)+\dist_-(\sigma,X)$. Inequality \eqref{eq:l1} follows immediately from the following equality:
$$
P_{\SIBM}(\sigma_i=-X_i|
\dist_+(\sigma,X)=u_+,
\dist_-(\sigma,X)=u_-) 
=\left\{
\begin{array}{cc}
  2u_+/n   & \mbox{if~} X_i=1 \\
  2u_-/n   & \mbox{if~} X_i=-1
\end{array}
\right.
$$
Without loss of generality,
we only prove the case of $X_i=1$, and
we need the following definition for the proof of this equality:
For $\cI\subseteq[n]$, define $\cI_+:=\{i\in\cI:X_i=+1\}$ and $\cI_-:=\{i\in\cI:X_i=-1\}$.
Then
\begin{align*}
& P_{\SIBM}(\sigma_i=-X_i|
\dist_+(\sigma,X)=u_+,
\dist_-(\sigma,X)=u_-)  \\
= & \frac{P_{\SIBM}(\sigma_i=-X_i ,
\dist_+(\sigma,X)=u_+,
\dist_-(\sigma,X)=u_-)}{P_{\SIBM}(
\dist_+(\sigma,X)=u_+,
\dist_-(\sigma,X)=u_-)} \\
= & \frac{\sum_{\cI:i\in\cI_+,|\cI_+|=u_+,|\cI_-|=u_-}P_{\SIBM}(\sigma=X^{(\sim\cI)}) }
{\sum_{\cI:|\cI_+|=u_+,|\cI_-|=u_-}P_{\SIBM}(\sigma=X^{(\sim\cI)}) } \\
\overset{(a)}{=} & \frac{\binom{n/2-1}{u_+ -1} \binom{n/2}{u_-}}
{\binom{n/2}{u_+} \binom{n/2}{u_-}}
= 2u_+/n ,
\end{align*}
where equality (a) follows from Lemma~\ref{lm:cc} below.
Similarly, inequality \eqref{eq:l2} follows from the following inequality:
For $u_+\ge |\tilde{\cI}_+|$ and $u_-\ge |\tilde{\cI}_-|$,
\begin{align*}
& P_{\SIBM}(\sigma_i=-X_i \text{~for all~}  i\in\tilde{\cI} ~ \big|
\dist_+(\sigma,X)=u_+,
\dist_-(\sigma,X)=u_-)  \\
= & \frac{P_{\SIBM}(\sigma_i=-X_i \text{~for all~}  i\in\tilde{\cI} ,
\dist_+(\sigma,X)=u_+,
\dist_-(\sigma,X)=u_-)}{P_{\SIBM}(
\dist_+(\sigma,X)=u_+,
\dist_-(\sigma,X)=u_-)} \\
= & \frac{\sum_{\cI:\tilde{\cI}_+\subseteq\cI_+,
\tilde{\cI}_-\subseteq\cI_-,
|\cI_+|=u_+,|\cI_-|=u_-}P_{\SIBM}(\sigma=X^{(\sim\cI)}) }
{\sum_{\cI:|\cI_+|=u_+,|\cI_-|=u_-}P_{\SIBM}(\sigma=X^{(\sim\cI)}) }  \\
\overset{(a)}{=} & \frac{\binom{n/2-|\tilde{\cI}_+|}{u_+ -|\tilde{\cI}_+|} \binom{n/2 - |\tilde{\cI}_-|}{u_- - |\tilde{\cI}_-|}}
{\binom{n/2}{u_+} \binom{n/2}{u_-}}
< \Big(\frac{u_+}{n/2- |\tilde{\cI}_+|} \Big)^{|\tilde{\cI}_+|}
\Big(\frac{u_-}{n/2- |\tilde{\cI}_-|} \Big)^{|\tilde{\cI}_-|} \\
< & \Big(\frac{u_+ + u_-}{n/2- |\tilde{\cI}|} \Big)^{|\tilde{\cI}|}  ,
\end{align*}
where equality (a) again follows from Lemma~\ref{lm:cc} below.
\end{proof}

\begin{lemma} \label{lm:cc}
Let $\cI,\cI'\subseteq[n]\setminus\{i\}$ be two subsets such that $|\cI_+|=|\cI_+'|$ and $|\cI_-|=|\cI_-'|$. 
Then $P_{\SIBM}(\sigma=X^{(\sim\cI)}) = P_{\SIBM}(\sigma=X^{(\sim\cI')})$.
\end{lemma}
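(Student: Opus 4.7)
The plan is to reduce the claim to a permutation symmetry of $\SIBM$ conditional on the ground truth $X$. Since the hypotheses $|\cI_+| = |\cI'_+|$ and $|\cI_-| = |\cI'_-|$ reference the decomposition $\cI_\pm := \{j \in \cI : X_j = \pm 1\}$, they are implicitly conditional on $X$, and I will treat $X$ as a fixed configuration throughout. First I would construct a permutation $\pi : [n] \to [n]$ such that $X_{\pi(j)} = X_j$ for every $j$ and $\pi(\cI) = \cI'$. Such a $\pi$ exists by defining it piecewise as the disjoint union of any four bijections $\cI_+ \to \cI'_+$, $[n]_+ \setminus \cI_+ \to [n]_+ \setminus \cI'_+$, $\cI_- \to \cI'_-$, and $[n]_- \setminus \cI_- \to [n]_- \setminus \cI'_-$, where $[n]_\pm := \{j : X_j = \pm 1\}$; all four source/target pairs have equal cardinalities by the hypotheses. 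Letting $\pi$ act on vectors by $(\pi\bar\sigma)_i := \bar\sigma_{\pi^{-1}(i)}$, a direct check yields $\pi X = X$ and $\pi X^{(\sim \cI)} = X^{(\sim \cI')}$.

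Next I would verify the two invariances the argument needs. Writing $\pi G$ for the graph with edge set $\{\{i,j\} : \{\pi^{-1}(i), \pi^{-1}(j)\} \in E(G)\}$, the conditional SSBM distribution satisfies $P(G \mid X) = P(\pi G \mid X)$: because $\pi$ permutes same-community pairs among themselves and cross-community pairs among themselves, the product of per-edge Bernoulli probabilities defining $P(G \mid X)$ under Definition~\ref{def:SSBM} is invariant. For the Ising component, substituting $k = \pi^{-1}(i),\ l = \pi^{-1}(j)$ into the exponents of \eqref{eq:isingma} gives
$$\sum_{\{i,j\} \in E(\pi G)} (\pi\bar\sigma)_i (\pi\bar\sigma)_j \;=\; \sum_{\{k,l\} \in E(G)} \bar\sigma_k \bar\sigma_l,$$
and likewise for the non-edge sum; since $Z_{\pi G}(\alpha,\beta) = Z_G(\alpha,\beta)$ by a dummy-variable relabeling in \eqref{eq:zg}, this yields $P_{\sigma \mid \pi G}(\sigma = \pi\bar\sigma) = P_{\sigma \mid G}(\sigma = \bar\sigma)$ for every $\bar\sigma \in \{\pm 1\}^n$.

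Combining these two facts and using $X^{(\sim \cI')} = \pi X^{(\sim \cI)}$, I would compute
$$P_{\SIBM}(\sigma = X^{(\sim \cI')} \mid X) \;=\; \sum_{G} P(G \mid X)\, P_{\sigma \mid G}\bigl(\sigma = \pi X^{(\sim \cI)}\bigr) \;=\; \sum_{G'} P(\pi G' \mid X)\, P_{\sigma \mid \pi G'}\bigl(\sigma = \pi X^{(\sim \cI)}\bigr) \;=\; P_{\SIBM}(\sigma = X^{(\sim \cI)} \mid X),$$
where the middle step is the change of variables $G = \pi G'$ and the last step applies the two invariances just established. Since the equality holds for every realization of $X$, it also holds in the form $P_{\SIBM}(\sigma = X^{(\sim \cI)}) = P_{\SIBM}(\sigma = X^{(\sim \cI')})$ in which the lemma is invoked inside Lemma~\ref{lm:bq}. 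The proof is a clean symmetry argument and I do not anticipate any substantive obstacle; the only bookkeeping step is the one-line change-of-variables verification of the Ising invariance.
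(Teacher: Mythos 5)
Your proposal is correct and follows essentially the same route as the paper's own proof: construct a label-preserving permutation $\pi$ with $\pi(\cI)=\cI'$, use the invariance of the (conditional) SSBM graph law and of the Ising weights (including $Z_{\pi G}=Z_G$) under $\pi$, and conclude by a change of variables in the sum over graphs. Your explicit conditioning on $X$ and the piecewise construction of $\pi$ are just slightly more detailed write-ups of the same steps the paper takes.
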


\begin{proof}
Let $\cG_{[n]}$ be the set consisting of all the graphs with vertex set $[n]$.
A permutation $\pi\in S_n$ on the vertex set $[n]$ also induces a permutation on $\cG_{[n]}$: For $G\in\cG_{[n]}$, define the graph $\pi(G)\in\cG_{[n]}$ as the graph with the edge set $E(\pi(G))$ satisfying that $\{\pi(i),\pi(j)\}\in E(\pi(G))$ if and only if $\{i,j\}\in E(G)$.
It is easy to see that for any $\pi\in S_n$ and any $G\in\cG_{[n]}$,
$$
Z_G(\alpha,\beta)
=Z_{\pi(G)}(\alpha,\beta),
$$
where $Z_G(\alpha,\beta)$ is defined in \eqref{eq:zg}.
Furthermore, 
if $X_i=X_{\pi(i)}$ for all $i\in[n]$, then for any graph $G\in\cG_{[n]}$, we have
$$
P_{\SSBM}(G)=P_{\SSBM}(\pi(G))  ,
$$
where $P_{\SSBM}$ is the distribution given in Definition~\ref{def:SSBM}.

Under the assumptions $|\cI_+|=|\cI_+'|$ and $|\cI_-|=|\cI_-'|$,
it is easy to see that there exists a permutation $\pi$ on the vertex set $[n]$ satisfying the following two conditions: 
(i) $X_i=X_{\pi(i)}$ for all $i\in[n]$; 
(ii) $\pi(\cI)=\cI'$, i.e., $\pi(i)\in\cI'$ for all $i\in\cI$.
For such a permutation $\pi$, one can verify that
$$
X_i^{(\sim\cI)} = X_{\pi(i)}^{(\sim\pi(\cI))}
= X_{\pi(i)}^{(\sim \cI')}
$$
for all $i\in[n]$.
Therefore,
\begin{align*}
& P_{\SIBM}(\sigma=X^{(\sim\cI)})
=\sum_{G\in\cG_{[n]}}
P_{\SSBM}(G) P_{\sigma|G}(\sigma=X^{(\sim\cI)}) \\
= & \sum_{G\in\cG_{[n]}} P_{\SSBM}(G) \frac{1}{Z_G(\alpha,\beta)}
\exp\Big(\beta\sum_{\{i,j\}\in E(G)} X_i^{(\sim\cI)} X_j^{(\sim\cI)}
-\frac{\alpha\log(n)}{n} \sum_{\{i,j\}\notin E(G)} X_i^{(\sim\cI)} X_j^{(\sim\cI)} \Big)  \\
= & \sum_{G\in\cG_{[n]}} P_{\SSBM}(G) \frac{1}{Z_G(\alpha,\beta)}  \\
& \hspace*{1in}
\exp\Big(\sum_{\{i,j\}} X_i^{(\sim\cI)} X_j^{(\sim\cI)}
\Big( \big( \beta+\frac{\alpha\log(n)}{n} \big) \mathbbm{1}[\{i,j\}\in E(G)]
-\frac{\alpha\log(n)}{n} \Big)
 \Big)  \\
= & \sum_{G\in\cG_{[n]}} P_{\SSBM}(G) \frac{1}{Z_G(\alpha,\beta)}  \\
& \hspace*{0.6in}
\exp\Big(\sum_{\{i,j\}} X_{\pi(i)}^{(\sim \cI')} X_{\pi(j)}^{(\sim \cI')}
\Big( \big( \beta+\frac{\alpha\log(n)}{n} \big) \mathbbm{1}[\{\pi(i),\pi(j)\}\in E(\pi(G))]
-\frac{\alpha\log(n)}{n} \Big)
 \Big)   \\
 = & \sum_{G\in\cG_{[n]}} P_{\SSBM}(G) \frac{1}{Z_G(\alpha,\beta)}  \\
& \hspace*{0.8in}
\exp\Big(\sum_{\{i,j\}} X_i^{(\sim \cI')} X_j^{(\sim \cI')}
\Big( \big( \beta+\frac{\alpha\log(n)}{n} \big) \mathbbm{1}[\{i, j\}\in E(\pi(G))]
-\frac{\alpha\log(n)}{n} \Big)
 \Big)   \\
  = & \sum_{G\in\cG_{[n]}} P_{\SSBM}(\pi(G)) \frac{1}{Z_{\pi(G)}(\alpha,\beta)}  \\
& \hspace*{0.8in}
\exp\Big(\sum_{\{i,j\}} X_i^{(\sim \cI')} X_j^{(\sim \cI')}
\Big( \big( \beta+\frac{\alpha\log(n)}{n} \big) \mathbbm{1}[\{i, j\}\in E(\pi(G))]
-\frac{\alpha\log(n)}{n} \Big)
 \Big)  \\
= & \sum_{G\in\cG_{[n]}}
P_{\SSBM}(\pi(G)) P_{\pi(G)}(\sigma=X^{(\sim\cI')}) \\
= & P_{\SIBM}(\sigma=X^{(\sim\cI')}) .
\end{align*}
This completes the proof of the lemma.
\end{proof}

\begin{lemma} \label{lm:bmd}
Let $Y\sim \Binom(n + o(n), a\log(n)/n)$. Then for $r>8$ and large enough $n$, we have
$$
P(Y\ge r a \log(n)) < n^{-r} .
$$
\end{lemma}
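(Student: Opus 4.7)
The plan is to apply a direct Chernoff bound to the binomial tail of $Y$. Writing $N := n + o(n)$ and $p := a\log(n)/n$, the moment generating function satisfies $E[e^{sY}] = (1-p+pe^s)^N \le \exp(Np(e^s-1))$ for every $s > 0$, so Markov's inequality applied to $e^{sY}$ yields
\begin{equation*}
P(Y \ge ra\log(n)) \le \exp\bigl(Np(e^s - 1) - sra\log(n)\bigr).
\end{equation*}

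I would then choose $s = \log r$, which is (up to $o(1)$) the optimizer, so that $e^s - 1 = r-1$. Combined with $Np = a\log(n)(1+o(1))$, this collapses the exponent to $a\log(n)\bigl[(r-1) - r\log r\bigr] + o(\log n)$, giving
\begin{equation*}
P(Y \ge ra\log(n)) \le n^{-a(r\log r - r + 1) + o(1)}.
\end{equation*}

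To finish, I would verify that $a(r\log r - r + 1) > r$ whenever $r > 8$, in the regime relevant to the paper (where the standing assumption $\sqrt{a}-\sqrt{b}>\sqrt{2}$ forces $a > 2$). The function $\phi(r) := r\log r - r + 1$ is increasing with $\phi(8) > 9$, so $a\phi(r) > 2\cdot 9 > r$ for $r$ just above $8$, and the margin only widens for larger $r$. Since the inequality is strict, the $o(1)$ correction in the exponent is absorbed for all $n$ sufficiently large, yielding the stated bound $n^{-r}$.

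The only delicate point, which is really just bookkeeping, is tracking the $o(1)$ and $o(\log n)$ corrections that arise because the first parameter of the binomial is only $n+o(n)$ rather than exactly $n$; one must ensure that the strict inequality $a(r\log r - r + 1) > r$ really translates into $P(Y \ge ra\log(n)) < n^{-r}$ for all sufficiently large $n$, not merely asymptotically.
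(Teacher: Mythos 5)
Your proposal is correct and follows essentially the same route as the paper: a Chernoff bound on the moment generating function of $Y$ with the (near-optimal) choice $s=\log r$, yielding an exponent $a\log(n)\bigl(r-1-r\log r+o(1)\bigr)$ and then checking that $a(r\log r-r+1)>r$ for $r>8$. Your final verification is in fact slightly more careful than the paper's, since you make explicit that the step needs $a$ bounded below (here $a>2$ from $\sqrt{a}-\sqrt{b}>\sqrt{2}$), a dependence the paper leaves implicit when it attributes the last inequality solely to $r>8>e^2$.
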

\begin{proof}
The moment generating function of $Y$ is
$$
E[e^{sY}]
= (1-a\log(n)/n+e^s a\log(n)/n)^{n+o(n)}
= \exp\big(\log(n) \big(ae^s-a+ o(1) \big) \big) ,
$$
where we use Taylor expansion $\log(1+x)=x+o(x)$ for $x\to 0$ in the last equality.
Then by Chernoff bound, for $s>0$,
$$
P(Y\ge r a \log(n))
\le \exp((e^s-1-rs + o(1))a\log(n) ) .
$$
Taking $s=\log(r)>0$ into this bound, we obtain that
$$
P(Y\ge r a \log(n))
\le \exp((r-1-r\log(r) + o(1))a\log(n) )
< n^{-r} ,
$$
where the last inequality holds for large enough $n$ and follows from the assumption that $r>8>e^2$.
\end{proof}

\section{Auxiliary propositions used in Section~\ref{sect:struct}} \label{ap:um}

We first prove a tight estimate of $P(B_i-A_i = t\log(n))$ for $t=\Theta(1)$. Note that \eqref{eq:upba} gives an upper bound on $P(B_i-A_i \ge t\log(n))$ for $t\in [\frac{1}{2}(b-a), 0]$, so it is also an upper bound of $P(B_i-A_i = t\log(n))$. Below we prove that the upper bound in \eqref{eq:upba} is in fact tight up to a $\Theta(1/ \sqrt{\log(n)})$ factor for all $t=\Theta(1)$ such that $t\log(n)$ is an integer.

We use $f(n)=\Theta(g(n))$ and $f(n)\asymp g(n)$ interchangeably if there is a constant $C>0$ such that $C^{-1}g(n)\le f(n)\le C g(n)$ for large enough $n$.

\begin{proposition}  \label{prop:99}
For any $t$ such that $t\log(n)$ is an integer and $|t|<100a$,
\begin{equation} \label{eq:ly}
\begin{aligned}
& P(B_i-A_i = t\log(n))  \\
\asymp & \frac{1} {\sqrt{\log(n)}} \exp\Big(\log(n)
\Big(\sqrt{t^2+ab} -t\big(\log(\sqrt{t^2+ab}+t)-\log(b) \big) -\frac{a+b}{2}  \Big)\Big) .
\end{aligned}
\end{equation}
\end{proposition}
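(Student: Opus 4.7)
The plan is to derive this local estimate by exponential tilting combined with a local central limit theorem at the saddle point. Fix $t$ with $|t| < 100a$ and set $s^\ast = \log(\sqrt{t^2+ab}+t) - \log(b)$, the same saddle point that already appeared in the proof of Proposition~\ref{prop:cher}. Define the tilted law $P_{s^\ast}$ by $\frac{dP_{s^\ast}}{dP} \propto e^{s^\ast(B_i-A_i)}$. Under $P_{s^\ast}$ the variables $A_i$ and $B_i$ remain independent binomials with modified success probabilities, and a short computation yields $E_{s^\ast}[B_i-A_i] = t\log(n)$ and $\Var_{s^\ast}(B_i-A_i) = (1+o(1))\sqrt{t^2+ab}\,\log(n)$. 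The tilting identity
$$P(B_i-A_i=t\log(n)) = E[e^{s^\ast(B_i-A_i)}]\cdot e^{-s^\ast t\log(n)}\cdot P_{s^\ast}(B_i-A_i=t\log(n))$$
then reduces the problem to proving the local CLT statement $P_{s^\ast}(B_i-A_i=t\log(n)) \asymp 1/\sqrt{\log(n)}$ at the tilted mean, because substituting $s=s^\ast$ into the MGF computed in the proof of Proposition~\ref{prop:cher} shows that the first two factors multiply to exactly the exponential appearing in \eqref{eq:ly}, up to a $(1+o(1))$ multiplicative error.

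For the local CLT step I would first pass to a Poisson approximation: $(n/2-O(1))$-trial binomials with per-trial success probability $\Theta(\log(n)/n)$ admit a pointwise Poisson approximation of the form $P(\mathrm{Bin}(N,p)=j)=(1+o(1))P(\mathrm{Pois}(Np)=j)$, uniformly in $j=O(\log(n))$ (Stirling applied to the binomial coefficient together with $(1-p)^{N-j} = e^{-Np}(1+o(1))$ for $p = \Theta(\log(n)/n)$). Applied to both $A_i$ and $B_i$ under $P_{s^\ast}$, this shows that $B_i-A_i$ has, up to a $(1+o(1))$ factor in the PMF, the law of a Skellam$(\lambda_A,\lambda_B)$ random variable with $\lambda_A = (\log(n)/2)(\sqrt{t^2+ab}-t)+o(1)$ and $\lambda_B=(\log(n)/2)(\sqrt{t^2+ab}+t)+o(1)$, whose value at $t\log(n)$ is $e^{-(\lambda_A+\lambda_B)}(\lambda_B/\lambda_A)^{t\log(n)/2}I_{|t|\log(n)}(2\sqrt{\lambda_A\lambda_B})$. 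The uniform Debye asymptotic
$$I_\nu(x) \sim \frac{e^{\sqrt{\nu^2+x^2}}}{\sqrt{2\pi}\,(\nu^2+x^2)^{1/4}}\Big(\frac{x}{\nu+\sqrt{\nu^2+x^2}}\Big)^\nu,$$
valid when $\nu$ and $x$ both go to infinity at comparable rates, then produces a prefactor $1/\sqrt{2\pi\log(n)\sqrt{t^2+ab}}\asymp 1/\sqrt{\log(n)}$ (uniformly in $|t|<100a$, since $\sqrt{t^2+ab}$ is bounded above and below), while its exponential content $e^{\sqrt{\nu^2+x^2}}(x/(\nu+\sqrt{\nu^2+x^2}))^\nu$ cancels precisely against $(\lambda_B/\lambda_A)^{t\log(n)/2}e^{-(\lambda_A+\lambda_B)}$ in a way that leaves $\Theta(1)$, exactly as required. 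An equivalent route, which avoids Bessel functions altogether, is direct Fourier inversion on $[-\pi,\pi]$ of the tilted characteristic function: it looks Gaussian on the $O(1/\sqrt{\log(n)})$ scale near $\theta=0$ and is exponentially small away from $0$, yielding the same $\Theta(1/\sqrt{\log(n)})$ bound.

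The main obstacle is the lower bound hidden inside the $\asymp$ symbol: the upper bound is essentially the Chernoff argument already used in Proposition~\ref{prop:cher}, but the matching lower bound genuinely requires the sharper pointwise control above. A secondary technical point is uniformity in $t$: the Poisson approximation must hold uniformly for $j$ in the relevant range, and the Debye asymptotic must remain valid as $t\to 0$ (where $\nu=|t|\log(n)$ becomes small compared with $x=\log(n)\sqrt{ab}$). The former reduces to a careful Stirling bound with explicit error terms, while the latter can either be handled by invoking the $t=0$ asymptotic $I_0(x)\sim e^x/\sqrt{2\pi x}$ as a separate case, or by observing that the uniform Debye expansion degenerates gracefully in the regime $\nu\ll x$ into precisely this formula.
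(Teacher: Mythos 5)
Your proposal is correct, and it reaches \eqref{eq:ly} by a genuinely different route than the paper. The paper works directly with the convolution sum $P(B_i-A_i=t\log(n))=\sum_s P(B_i=s\log(n))P(A_i=(s-t)\log(n))$, estimates each binomial point mass by Stirling's formula, and then runs a Laplace-type analysis of the resulting concave exponent $h_t(s)$: the summand is maximized at $s^\ast=(t+\sqrt{t^2+ab})/2$, stays within a constant factor of its peak over a window of width $\Theta(1/\sqrt{\log(n)})$ (which contains $\Theta(\sqrt{\log(n)})$ integer values of $s\log(n)$), and decays geometrically outside it; the $1/\sqrt{\log(n)}$ in \eqref{eq:ly} emerges from the $1/\log(n)$ Stirling prefactor times the $\Theta(\sqrt{\log(n)})$ window size. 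You instead tilt by the Chernoff-optimal parameter $s^\ast=\log(\sqrt{t^2+ab}+t)-\log(b)$, reuse the MGF computation from Proposition~\ref{prop:cher} to peel off exactly the exponential factor in \eqref{eq:ly}, and reduce the statement to a local CLT at the tilted mean, which you establish via Poisson/Skellam approximation and uniform Bessel asymptotics (or Fourier inversion); your algebra checks out — the exponential content of $I_{|t|\log(n)}(\sqrt{ab}\log(n))$ cancels against $e^{-(\lambda_A+\lambda_B)}(\lambda_B/\lambda_A)^{t\log(n)/2}$, leaving $\Theta(1/\sqrt{\log(n)\sqrt{t^2+ab}})$, uniformly over $|t|<100a$ since $\sqrt{t^2+ab}$ is bounded above and below. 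Your route buys conceptual clarity (the $1/\sqrt{\log(n)}$ is literally the LCLT value at the tilted mean) and recycles the earlier Chernoff computation, at the cost of invoking external machinery (Debye asymptotics or characteristic-function estimates, with the small-$\nu$ regime needing the separate treatment you describe); the paper's argument is more elementary and self-contained, needing only Stirling plus concavity of $h_t$. Two minor points: your side remark that the upper bound is ``essentially the Chernoff argument'' is slightly off, since Chernoff alone does not produce the $1/\sqrt{\log(n)}$ factor in the upper bound — but this is harmless because your LCLT gives both directions; and in the Skellam step you should note (as the paper does via Lemma~\ref{lm:bmd}) that contributions from decompositions with $B_i$ or $A_i$ exceeding $C\log(n)$ are polynomially negligible, so the pointwise Poisson approximation only needs to hold uniformly for values $O(\log(n))$, exactly the range you state.
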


\begin{proof}
Since
$$
P(B_i-A_i = t\log(n))
= \sum_{s\log(n)=0}^{n/2}
P(B_i = s\log(n)) P(A_i=(s-t)\log(n)) ,
$$
we first calculate tight estimates of $P(B_i = s\log(n))$ and $P(A_i=(s-t)\log(n))$. (The summation $\sum_{s\log(n)=0}^{n/2}$ in the above equation means that the quantity $s\log(n)$ ranges over all integer values from $0$ to $n/2$.)
By Lemma~\ref{lm:bmd} in Appendix~\ref{ap:6}, we only need to focus on the regime where both $|s|$ and $|t|$ are bounded from above by some (large) constants, e.g., $100a$.
Therefore,
\begin{align*}
& P(B_i = s\log(n)) \\
= & \binom{n/2}{s\log(n)}
\Big( \frac{b\log(n)}{n} \Big)^{s\log(n)}
\Big( 1- \frac{b\log(n)}{n} \Big)^{n/2-s\log(n)}  \\
= & (1+o(1))
\frac{(n/2)^{s\log(n)}}{(s\log(n))!} \Big( \frac{b\log(n)}{n} \Big)^{s\log(n)}
\exp \Big(-\frac{b\log(n)}{2} \Big)  \\
= & (1+o(1))
\frac{1} {(s\log(n))!} \Big( \frac{b\log(n)}{2} \Big)^{s\log(n)}
\exp \Big(-\frac{b\log(n)}{2} \Big)  \\
\overset{(a)}{=} & (1+o(1))
\frac{1} {\sqrt{2\pi s\log(n)}}
\Big(\frac{e}{s\log(n)} \Big)^{s\log(n)}
\Big( \frac{b\log(n)}{2} \Big)^{s\log(n)}
\exp \Big(-\frac{b\log(n)}{2} \Big)  \\
= & (1+o(1))
\frac{1} {\sqrt{2\pi s\log(n)}}
\Big( \frac{ e b }{2s} \Big)^{s\log(n)}
\exp \Big(-\frac{b\log(n)}{2} \Big) \\
= & (1+o(1))
\frac{1} {\sqrt{2\pi s\log(n)}}
\exp\Big( \log(n) \Big( 
s+ s\log(b)-s\log(2)-s\log(s)-\frac{b}{2}
\Big)\Big) ,
\end{align*}
where $(a)$ follows from Stirling's formula.
Similarly, when $s > t$,
\begin{align*}
& P(A_i=(s-t)\log(n)) \\
= & (1+o(1))
\frac{1} {\sqrt{2\pi (s-t)\log(n)}}
\exp\Big( \log(n) \Big( 
 (s-t)(1+\log(a)-\log(2)-\log(s-t))-\frac{a}{2}
\Big)\Big)
\end{align*}
Define a function
$$
h_t(s):=(2s-t)(1-\log(2))
+s\log\frac{ab}{s(s-t)}
+t\log\frac{s-t}{a} -\frac{a+b}{2} .
$$
Then for $s>\max(0,t)$,
$$
P(B_i = s\log(n)) P(A_i=(s-t)\log(n))
= (1+o(1)) \frac{1} {2\pi \log(n)\sqrt{s (s-t)}}
\exp(h_t(s) \log(n)) .
$$
Therefore, for $t$ such that $t\log(n)$ is an integer, we have
\begin{equation} \label{eq:gj}
\begin{aligned}
& P(B_i-A_i = t\log(n))  \\
= & (1+o(1)) \sum_{s\log(n)=\max(0,t\log(n))}^{n/2}
\frac{1} {2\pi \log(n)\sqrt{s (s-t)}}
\exp(h_t(s) \log(n)) .
\end{aligned}
\end{equation}
In order to estimate this sum, we need to analyze the function $h_t(s)$. Its first and second derivatives are
$h_t'(s)=\log\frac{ab}{4s(s-t)}$
and $h_t''(s)=-\frac{1}{s}-\frac{1}{s-t}<0$, so $h_t(s)$ is a concave function and takes maximum at $s^\ast$ such that $h_t'(s^\ast)=0$.
Simple calculations show that
$s^\ast=(t+\sqrt{t^2+ab})/2>\max(0,t)$ and
$$
h_t(s^\ast)
= \sqrt{t^2+ab} -t\big(\log(\sqrt{t^2+ab}+t)-\log(b) \big) -\frac{a+b}{2} .
$$
By Lemma~\ref{lm:bmd} in Appendix~\ref{ap:6}, both $|s|$ and $|t|$ are upper bounded by some (large) constants
with probability $1-o(n^{-10})$. Therefore, the sum on the right-hand side of \eqref{eq:gj} is concentrated around a small neighborhood of $s^\ast$. In this neighborhood, we have
$$
\frac{1} {2\pi \log(n)\sqrt{s (s-t)}} = \Theta(\frac{1} {\log(n)}) .
$$
Therefore, in order to prove this proposition, we only need to show that
\begin{equation} \label{eq:jh}
\sum \exp(h_t(s) \log(n))
= \Theta\Big(\sqrt{\log(n)} \exp(h_t(s^\ast) \log(n))\Big) ,
\end{equation}
where the summation is taken over this small neighborhood.
We will show that $\exp(h_t(s) \log(n))$ varies by a constant factor within a window of length $\Theta(1/\sqrt{\log(n)})$ around $s^\ast$, and
then drops off geometrically fast beyond that window. First observe that $h_t(s)\approx h_t(s^\ast) - h_t''(s^\ast) (s-s^\ast)^2$ in the neighborhood of $s^\ast$, so when $|s-s^\ast|=\Theta(1/\sqrt{\log(n)})$, we have $h_t(s^\ast) \log(n) - h_t(s) \log(n) = \Theta(1)$.
Also note that when $s$ is in the range $(s^\ast-\Theta(1/\sqrt{\log(n)}) , s^\ast+\Theta(1/\sqrt{\log(n)}))$, the quantity $s\log(n)$ takes $\Theta(\sqrt{\log(n)})$ integer values.
Now pick some constant $c>0$. By the above analysis we have
$$
\sum_{s^\ast\log(n) - c\sqrt{\log(n)} \le s\log(n) \le s^\ast\log(n) + c\sqrt{\log(n)}}
\exp(h_t(s) \log(n))
= \Theta\Big(\sqrt{\log(n)} \exp(h_t(s^\ast) \log(n))\Big) .
$$
For $s>s^\ast+c/\sqrt{\log(n)}$, we use the fact that concave functions are always bounded from above by its tangent lines. Therefore,
\begin{align*}
h_t(s) & \le h_t(s^\ast+c/\sqrt{\log(n)})
+ h_t'(s^\ast+c/\sqrt{\log(n)}) 
(s- s^\ast - c/\sqrt{\log(n)}) \\
& \le h_t(s^\ast) - \frac{c'}{\sqrt{\log(n)}}
(s- s^\ast - c/\sqrt{\log(n)}) ,
\end{align*}
where we use the fact that $h_t'(s^\ast+c/\sqrt{\log(n)}) = \Theta(1/\sqrt{\log(n)})$, and $c'$ is another constant that depends on $c$.
Therefore,
\begin{align*}
& \sum_{s\log(n) > s^\ast\log(n) + c\sqrt{\log(n)}}
\exp(h_t(s) \log(n))  \\
\le & \sum_{s\log(n) > s^\ast\log(n) + c\sqrt{\log(n)}}
\exp \Big( h_t(s^\ast) \log(n) 
-\frac{c'}{\sqrt{\log(n)}}
(s\log(n)- s^\ast \log(n)- c \sqrt{\log(n)}) \Big)   \\
= & \exp(h_t(s^\ast) \log(n))
\sum_{j>0} \exp \Big( 
-\frac{c'}{\sqrt{\log(n)}} j \Big) \\
\le & \exp(h_t(s^\ast) \log(n))
\frac{1}{1-\exp(-c'/ \sqrt{\log(n)})} \\
= & O\Big(\sqrt{\log(n)} \exp(h_t(s^\ast) \log(n))\Big) .
\end{align*}
The sum over $s\log(n) < s^\ast\log(n) - c\sqrt{\log(n)}$ can be bounded in the same way. Thus we have shown \eqref{eq:jh}, and this completes the proof of the proposition.
\end{proof}

Let $\cG_1:=\{G:B_i-A_i<0\text{~for all~}i\in[n]\}$. By \eqref{eq:tD}, we have $P(G\in\cG_1)=1-o(1)$. In the proposition below, we will prove that if $0<\beta<\frac{1}{4}\log\frac{a}{b}$, then the conditional expectation $E \Big[ \sum_{i=1}^n  \exp\big(2\beta (B_i-A_i) \big) ~\Big|~ G\in\cG_1 \Big]$ is very close to the unconditional expectation $E \Big[ \sum_{i=1}^n  \exp\big(2\beta (B_i-A_i) \big) \Big]$. On the other hand, if $\beta\ge\frac{1}{4}\log\frac{a}{b}$, then the conditional expectation is $O(n^{\tilde{g}(\beta)})$ while the unconditional one is $\Theta(n^{g(\beta)})$. Since $\tilde{g}(\beta)<g(\beta)$ when $\beta>\frac{1}{4}\log\frac{a}{b}$, the conditional expectation is much smaller than the unconditional one in this case.
\begin{proposition}  \label{prop:df}
Assume that $\sqrt{a}-\sqrt{b}>\sqrt{2}$.
If $0<\beta<\frac{1}{4}\log\frac{a}{b}$, then
$$
E \Big[ \sum_{i=1}^n  \exp\big(2\beta (B_i-A_i) \big) ~\Big|~ G\in\cG_1 \Big] 
= (1+o(1)) E \Big[ \sum_{i=1}^n  \exp\big(2\beta (B_i-A_i) \big) \Big]
= (1+o(1)) n^{g(\beta)}  .
$$
If $\beta\ge\frac{1}{4}\log\frac{a}{b}$, then
\begin{align*}
E \Big[ \sum_{i=1}^n  \exp\big(2\beta (B_i-A_i) \big) ~\Big|~ G\in\cG_1 \Big] 
= O(n^{\tilde{g}(\beta)})  .
\end{align*}
\end{proposition}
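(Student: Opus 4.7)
The plan is to analyze the two cases separately via Chernoff-type estimates on the moment generating function of $B_i - A_i$, computed in the proof of Proposition~\ref{prop:cher}. Let $s^\ast := \tfrac{1}{2}\log(a/b)$; then $a e^{-s^\ast} + b e^{s^\ast} = 2\sqrt{ab}$, so $E[e^{s^\ast(B_i - A_i)}] = (1+o(1)) n^{c_0 - 1}$, where $c_0 := 1 - (\sqrt{a}-\sqrt{b})^2 / 2$. Note that $c_0 < 0$ under the hypothesis $\sqrt{a} - \sqrt{b} > \sqrt{2}$, and $c_0 = \tilde g(\tfrac{1}{4}\log(a/b))$. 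The unconditional identity $E\big[\sum_{i=1}^n \exp(2\beta(B_i - A_i))\big] = (1+o(1)) n^{g(\beta)}$ for all $\beta > 0$ is obtained by plugging $s = 2\beta$ into the same MGF computation.

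For $0 < \beta < \tfrac{1}{4}\log(a/b)$, I show that the event $\cG_1^c$ makes a negligible contribution to the unconditional expectation. Using $\mathbbm{1}_{\cG_1^c} \le \sum_{j=1}^n \mathbbm{1}[B_j - A_j \ge 0]$ and expanding, the problem reduces to bounding $\sum_{i,j} E[\exp(2\beta(B_i - A_i)) \mathbbm{1}[B_j - A_j \ge 0]]$. Since $2\beta < s^\ast$, on $\{B_i - A_i \ge 0\}$ one has $e^{2\beta (B_i - A_i)} \le e^{s^\ast (B_i - A_i)}$, so the diagonal contribution ($i = j$) is at most $n \cdot E[e^{s^\ast(B_i - A_i)}] = (1+o(1)) n^{c_0}$, which is $o(n^{g(\beta)})$ because $g$ is strictly decreasing on $[0, \tfrac{1}{4}\log(a/b)]$ with $g(\tfrac{1}{4}\log(a/b)) = c_0$. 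For $i \ne j$, I decompose according to whether $X_i = X_j$ or not, using the fact that the pair $(A_i, A_j)$ (in the same-label case) or $(B_i, B_j)$ (in the opposite-label case) shares exactly one Bernoulli summand $\xi_{ij} := \mathbbm{1}[\{i,j\} \in E(G)]$. Conditioning on $\xi_{ij}$ and using the independence of the remaining summands yields
\[
E\big[e^{2\beta(B_i - A_i)} \mathbbm{1}[B_j - A_j \ge 0]\big] \le (1+o(1)) \, E\big[e^{2\beta(B_i - A_i)}\big] \cdot P(B_j - A_j \ge 0),
\]
where the $(1+o(1))$ absorbs the negligible effect of $\xi_{ij}$ (whose mean is $O(\log n / n)$). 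Applying Chernoff at $s^\ast$ gives $P(B_j - A_j \ge 0) = (1+o(1)) n^{c_0 - 1}$, so the off-diagonal contribution is at most $n^2 \cdot O(n^{g(\beta) + c_0 - 2}) = O(n^{g(\beta) + c_0})$, again $o(n^{g(\beta)})$ since $c_0 < 0$. Writing $E[\sum_i \exp(2\beta(B_i-A_i)) \mathbbm{1}_{\cG_1}] = E[\sum_i \exp(2\beta(B_i-A_i))] - E[\sum_i \exp(2\beta(B_i-A_i)) \mathbbm{1}_{\cG_1^c}]$ and dividing by $P(\cG_1) = 1 - o(1)$ then yields the claim.

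For $\beta \ge \tfrac{1}{4}\log(a/b)$ the argument is more direct because now $s^\ast \le 2\beta$. Using $\mathbbm{1}_{\cG_1} \le \mathbbm{1}[B_i - A_i < 0]$ for each $i$ and the reverse domination $e^{2\beta(B_i - A_i)} \le e^{s^\ast (B_i - A_i)}$ valid on $\{B_i - A_i < 0\}$, I obtain $E[\exp(2\beta(B_i - A_i)) \mathbbm{1}[B_i - A_i < 0]] \le E[e^{s^\ast(B_i - A_i)}] = (1+o(1)) n^{c_0 - 1} = (1+o(1)) n^{\tilde g(\beta) - 1}$. Summing over $i$ and dividing by $P(\cG_1) = 1 - o(1)$ gives $O(n^{\tilde g(\beta)})$, as desired.

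The main technical obstacle lies in the covariance decomposition of the first case: one must verify that the single shared Bernoulli $\xi_{ij}$ linking $(A_i, A_j)$ or $(B_i, B_j)$ affects the joint expectation only by a factor $1 + o(1)$, so that summing over the $\binom{n}{2}$ off-diagonal pairs gives a bound of order $n^{g(\beta) + c_0}$ rather than being inflated by the raw edge probability $p = \Theta(\log n / n)$, which would swamp the gain $n^{c_0}$ that makes the cross-terms negligible.
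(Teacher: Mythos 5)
Your argument is correct, but it takes a genuinely different route from the paper. The paper's proof of Proposition~\ref{prop:df} runs through the sharp local estimate of Proposition~\ref{prop:99} (Stirling-based point probabilities $P(B_i-A_i=t\log n)$), rewrites the sum via the counts $D(G,t)$, analyzes the concave exponent $f_\beta(t)$ with its maximizer $t^\ast=\tfrac{be^{2\beta}-ae^{-2\beta}}{2}$ and a concentration window of width $\Theta(1/\sqrt{\log n})$, and invokes Lemma~\ref{lm:5t} to show that conditioning on $\cG_1$ leaves the $t<0$ point probabilities unchanged up to $1+o(1)$; the conditioning then merely truncates the sum to $t<0$, which costs nothing when $t^\ast<0$ (i.e.\ $\beta<\tfrac14\log\tfrac{a}{b}$) and is dominated by $t$ near $0$, giving $O(n^{\tilde g(\beta)})$, otherwise. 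You instead work with $E[Y\mathbbm{1}_{\cG_1}]=E[Y]-E[Y\mathbbm{1}_{\cG_1^c}]$, bound the bad-event contribution by a union bound over $j$ combined with Chernoff at the optimal tilt $s^\ast=\tfrac12\log\tfrac{a}{b}$, decouple the single shared edge $\xi_{ij}$ for $i\ne j$ by conditioning on it, and for $\beta\ge\tfrac14\log\tfrac{a}{b}$ use the pointwise domination $e^{2\beta x}\le e^{s^\ast x}$ on $\{x<0\}$ after relaxing $\mathbbm{1}_{\cG_1}\le\mathbbm{1}[B_i-A_i<0]$; together with \eqref{eq:lb} and $P(\cG_1)=1-o(1)$ from \eqref{eq:tD} this gives both cases. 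Your route is more elementary for the proposition as stated (no Stirling/local estimates, no analogue of Lemma~\ref{lm:5t}), while the paper's machinery additionally delivers the per-index refinement \eqref{eq:pl} used in Proposition~\ref{prop:con} (your argument also localizes to a single $i$ with minor changes) and the matching $\Theta$-type information. One small imprecision: in the same-label case $X_i=X_j$ the shared edge sits in $A_i,A_j$, so your intermediate bound with the exact factor $P(B_j-A_j\ge 0)$ should be read as an order bound (replace it by, say, $P(B_j-A_j\ge -1)=O(n^{c_0-1})$ via Chernoff at $s^\ast$); since only $O(n^{g(\beta)+c_0})$ is needed for the off-diagonal total, this does not affect the conclusion.
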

\begin{proof}
We first calculate the unconditional expectation:
By writing $A_i$ and $B_i$ as sums of independent Bernoulli random variables, we obtain that
\begin{align*}
E[e^{2\beta(B_i-A_i)}]
& =\Big(1-\frac{b\log(n)}{n}+\frac{b\log(n)}{n} e^{2\beta} \Big)^{n/2}
\Big(1-\frac{a\log(n)}{n}+\frac{a\log(n)}{n} e^{-2\beta} \Big)^{n/2-1}  \\
& = 
\exp\Big(\frac{\log(n)}{2} ( a e^{-2\beta}+b e^{2\beta} -a-b )
+o(1) \Big) \\
& = (1+o(1)) n^{g(\beta)-1} .
\end{align*}
Therefore, for all $\beta$ we have
\begin{equation} \label{eq:lb}
E \Big[ \sum_{i=1}^n  \exp\big(2\beta (B_i-A_i) \big) \Big]
= (1+o(1)) n^{g(\beta)}  .
\end{equation}
Now let us switch to conditional expectation. In light of \eqref{eq:lb}, for the case of $0<\beta<\frac{1}{4}\log\frac{a}{b}$ we only need to prove that the conditional expectation is very close to the unconditional expectation.
To that end, we first reprove a weaker version of \eqref{eq:lb} using Proposition~\ref{prop:99}. This will help us estimate the difference between the conditional and unconditional expectations.

Define $D(G,t):=|\{i\in[n]:B_i-A_i= t\log(n)\}|$.
Similarly to \eqref{eq:fd}, we have
\begin{equation} \label{eq:s1}
\sum_{i=1}^n \exp\big(2\beta (B_i-A_i) \big) 
=  \sum_{t\log(n)=-n/2}^{n/2}
D(G,t) \exp\big(2\beta t \log(n) \big) .
\end{equation}
By definition, $D(G,t)=\sum_{i=1}^n \mathbbm{1}[B_i-A_i= t\log(n)]$, so
$E[D(G,t)]=n P(B_i-A_i= t\log(n))$. Therefore, by Proposition~\ref{prop:99} and the definition of function $f_{\beta}(t)$ in \eqref{eq:gt},
$$
E[D(G,t)
\exp\big(2\beta t \log(n) \big)]
\asymp \frac{1} {\sqrt{\log(n)}} \exp( f_{\beta}(t) \log(n) ) .
$$
As a consequence,
\begin{equation}  \label{eq:wf1}
\begin{aligned}
E \Big[ \sum_{i=1}^n  \exp\big(2\beta (B_i-A_i) \big) \Big]     
& =  \sum_{t\log(n)=-n/2}^{n/2}
E \big[ D(G,t) \exp\big(2\beta t \log(n) \big) \big]  \\
& \asymp  \frac{1} {\sqrt{\log(n)}} \sum_{t\log(n)=-n/2}^{n/2}
 \exp( f_{\beta}(t) \log(n) ) .
\end{aligned}
\end{equation}
By the proof of Lemma~\ref{lm:tus}, $f_{\beta}(t)$ is a concave function and takes maximum at
$t^\ast=\frac{b e^{2\beta}-a e^{-2\beta}}{2}$. 
Similarly to the analysis of \eqref{eq:jh}, $\exp( f_{\beta}(t) \log(n))$ varies by a constant factor within a window of length $\Theta(1/\sqrt{\log(n)})$ around $t^\ast$, and
then drops off geometrically fast beyond that window. Since $t\log(n)$ takes $\Theta(\sqrt{\log(n)})$ integer values when $t$ takes values in such a window, we have
\begin{equation} \label{eq:wf2}
\sum_{t\log(n)=-n/2}^{n/2}
 \exp( f_{\beta}(t) \log(n) )
 \asymp \sqrt{\log(n)}
 \exp( f_{\beta}(t^\ast) \log(n) ) .
\end{equation}
By the proof of Lemma~\ref{lm:tus}, we have $f_{\beta}(t^\ast)=g(\beta)$. Taking this into \eqref{eq:wf1} and \eqref{eq:wf2}, we obtain that
$$
E \Big[ \sum_{i=1}^n  \exp\big(2\beta (B_i-A_i) \big) \Big]
= \Theta (n^{g(\beta)}) .
$$
Now let us consider the conditional expectation. When conditioning on the event $G\in\cG_1$, we have $D(G,t)=0$ for all $t\ge 0$. In this case, the range of sum in both \eqref{eq:s1} and \eqref{eq:wf1} reduces from $[-n/2,n/2]$ to $[-n/2,0)$. By Lemma~\ref{lm:5t} below, we have $P(B_i-A_i= t\log(n)~|~G\in\cG_1)= (1+o(1))P(B_i-A_i= t\log(n))$ for $t<0$, and so
$E[D(G,t)|G\in\cG_1]=(1+o(1))E[D(G,t)]$ for $t<0$. Therefore,
$$
E\Big[ \sum_{i=1}^n \exp\big(2\beta (B_i-A_i) \big) \Big| G\in\cG_1 \Big]
= (1+o(1)) \sum_{t\log(n)=-n/2}^{-1}
E\big[ D(G,t) \exp\big(2\beta t \log(n) \big) \big] .
$$
From the analysis of \eqref{eq:wf1}, we know that 
\begin{align*}
& \sum_{t\log(n)=-n/2}^{n/2}
E\big[ D(G,t) \exp\big(2\beta t \log(n) \big) \big]  \\
= & (1+o(1)) \sum_{t\log(n)=t^\ast\log(n)-\Theta(\sqrt{\log(n)})}^{t^\ast\log(n)+\Theta(\sqrt{\log(n)})}
E\big[ D(G,t) \exp\big(2\beta t \log(n) \big) \big] .
\end{align*}
Therefore, if $t^\ast=\frac{b e^{2\beta}-a e^{-2\beta}}{2}<0$, or equivalently $0<\beta<\frac{1}{4}\log\frac{a}{b}$, then
$$
\sum_{t\log(n)=-n/2}^{n/2}
E\big[ D(G,t) \exp\big(2\beta t \log(n) \big) \big] = (1+o(1))
\sum_{t\log(n)=-n/2}^{-1}
E\big[ D(G,t) \exp\big(2\beta t \log(n) \big) \big] ,
$$
i.e.,
\begin{equation} \label{eq:bz}
E \Big[ \sum_{i=1}^n  \exp\big(2\beta (B_i-A_i) \big) ~\Big|~ G\in\cG_1 \Big] 
= (1+o(1)) E \Big[ \sum_{i=1}^n  \exp\big(2\beta (B_i-A_i) \big) \Big] .
\end{equation}
On the other hand,
if $t^\ast=\frac{b e^{2\beta}-a e^{-2\beta}}{2}\ge 0$, or equivalently $\beta\ge\frac{1}{4}\log\frac{a}{b}$, then
\begin{equation}  \label{eq:wq}
E \Big[ \sum_{i=1}^n  \exp\big(2\beta (B_i-A_i) \big) ~\Big|~ G\in\cG_1 \Big]   
\asymp  \frac{1} {\sqrt{\log(n)}} \sum_{t\log(n)=-n/2}^{-1}
 \exp( f_{\beta}(t) \log(n) ) .
\end{equation}
Since $f_{\beta}(t)$ is concave, it is an increasing function when $t<t^\ast$. Therefore, $f_{\beta}(t)<f_{\beta}(0)=g(\frac{1}{4}\log\frac{a}{b})=\tilde{g}(\beta)$ for $\beta\ge\frac{1}{4}\log\frac{a}{b}$.
Similarly to the analysis of \eqref{eq:jh} and \eqref{eq:wf1},
$\exp( f_{\beta}(t) \log(n))$ varies by a constant factor within a window of length $O(1/\sqrt{\log(n)})$ around $t=0$, and
then drops off geometrically fast beyond that window. As a consequence,
\begin{align*}
\sum_{t\log(n)=-n/2}^{-1}
 \exp( f_{\beta}(t) \log(n) )
=   O(\sqrt{\log(n)}) \exp(\tilde{g}(\beta) \log(n)) .
\end{align*}
Taking this into \eqref{eq:wq} completes the proof of the proposition.
\end{proof}

The following corollary follows immediately from Proposition~\ref{prop:df} and \eqref{eq:lb}:
\begin{corollary} \label{cr:yy}
For all $\beta>0$,
\begin{align*}
& E \Big[ \sum_{i=1}^n  \exp\big(2\beta (B_i-A_i) \big)  \Big] 
= \Theta(n^{g(\beta)}),  \\
& E \Big[ \sum_{i=1}^n  \exp\big(2\beta (B_i-A_i) \big) ~\Big|~ G\in\cG_1 \Big] 
= O(n^{\tilde{g}(\beta)})  .
\end{align*}
\end{corollary}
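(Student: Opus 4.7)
The plan is to derive both claims as immediate consequences of Proposition~\ref{prop:df} and equation \eqref{eq:lb}, splitting the argument on whether $\beta$ is below or at least $\frac{1}{4}\log\frac{a}{b}$ (the location of the minimum of $g$, which is where $g$ and $\tilde{g}$ start to differ).

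For the first claim, I will simply cite equation \eqref{eq:lb}, which already states $E\big[\sum_{i=1}^n \exp(2\beta(B_i-A_i))\big] = (1+o(1))n^{g(\beta)}$ for every $\beta>0$, and this is in particular $\Theta(n^{g(\beta)})$.

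For the second claim, I would argue by cases. When $0<\beta<\frac{1}{4}\log\frac{a}{b}$, the definition \eqref{eq:gbt} of $\tilde{g}$ gives $\tilde{g}(\beta)=g(\beta)$; meanwhile, the first half of Proposition~\ref{prop:df} yields
\[
E\Big[\sum_{i=1}^n \exp(2\beta(B_i-A_i))\,\Big|\,G\in\cG_1\Big] = (1+o(1))n^{g(\beta)} = (1+o(1))n^{\tilde{g}(\beta)},
\]
which is $O(n^{\tilde{g}(\beta)})$. When $\beta\ge\frac{1}{4}\log\frac{a}{b}$, the second half of Proposition~\ref{prop:df} gives the bound $O(n^{\tilde{g}(\beta)})$ directly. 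Combining the two ranges yields $O(n^{\tilde{g}(\beta)})$ for all $\beta>0$.

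There is no substantive obstacle here; all the analytic work (the tight Chernoff-type estimate of $P(B_i-A_i=t\log n)$ in Proposition~\ref{prop:99}, the concavity analysis of $f_\beta$, the ``cut-off'' argument distinguishing $t^\ast<0$ from $t^\ast\ge 0$, and the observation that conditioning on $\cG_1$ removes the contributions from $t\ge 0$) has already been carried out in the proof of Proposition~\ref{prop:df}. The corollary is essentially a bookkeeping statement that repackages those two conclusions uniformly in $\beta$ using the function $\tilde{g}$, which was introduced in \eqref{eq:gbt} precisely to absorb the case split at $\beta=\frac{1}{4}\log\frac{a}{b}$.
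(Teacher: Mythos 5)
Your proof is correct and matches the paper's own reasoning: the paper derives Corollary~\ref{cr:yy} immediately from \eqref{eq:lb} (for the unconditional estimate) and the two cases of Proposition~\ref{prop:df} (for the conditional one), with the same observation that $\tilde{g}(\beta)=g(\beta)$ when $0<\beta<\frac{1}{4}\log\frac{a}{b}$. Nothing further is needed.
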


\begin{remark}
From the proof of \eqref{eq:bz}, we can see that if $0<\beta<\frac{1}{4}\log\frac{a}{b}$, then
\begin{equation} \label{eq:pl}
E \big[  \exp\big(2\beta (B_i-A_i) \big) ~\big|~ G\in\cG_1 \big] 
= (1+o(1)) E \big[  \exp\big(2\beta (B_i-A_i) \big) \big] ,
\end{equation}
i.e., we can remove the summation in \eqref{eq:bz}. We will use this in the proof of Proposition~\ref{prop:con}.
\end{remark}

\begin{lemma}  \label{lm:5t}
Let $\cG_1:=\{G:B_i-A_i<0\text{~for all~}i\in[n]\}$. Then $P(B_i-A_i= t\log(n)~|~G\in\cG_1)= (1+o(1))P(B_i-A_i= t\log(n))$ for all $t<0$ such that $t\log(n)$ is an integer.
\end{lemma}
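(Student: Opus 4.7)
The plan is to show that conditioning on $\cG_1$ introduces only a multiplicative $(1+o(1))$ error for each value of $B_i-A_i$ in the negative range. The natural route is via Bayes' rule: since $P(G\in\cG_1)=1-o(1)$ by \eqref{eq:tD}, it suffices to prove
\[
P\big(G\notin\cG_1,\, B_i-A_i=t\log(n)\big) = o(1)\cdot P\big(B_i-A_i=t\log(n)\big),
\]
because then dividing by $P(G\in\cG_1)$ yields the claimed asymptotic equality. I would bound the left-hand side by a union bound over the vertices $j\in[n]$ that could violate the $\cG_1$ condition.

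The key decoupling step is to separate, for each $j\neq i$, the contributions to $B_j-A_j$ coming from edges incident to $i$ from those coming from edges not incident to $i$. Concretely, write $B_j=B_j^{(-i)}+\mathbbm{1}[\{i,j\}\in E(G),\,X_j=-X_i]$ and $A_j=A_j^{(-i)}+\mathbbm{1}[\{i,j\}\in E(G),\,X_j=X_i]$, so that $|(B_j-A_j)-(B_j^{(-i)}-A_j^{(-i)})|\le 1$. Then the event $\{B_j-A_j\ge 0\}$ is contained in $\{B_j^{(-i)}-A_j^{(-i)}\ge -1\}$, and crucially the latter depends only on edges not incident to $i$, hence is independent of $B_i-A_i$ (which is a function only of edges incident to $i$). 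For $j=i$ the event $\{B_i-A_i\ge 0\}$ is incompatible with $\{B_i-A_i=t\log(n)\}$ since $t<0$, so it contributes zero.

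Combining these observations with the union bound,
\[
P\big(G\notin\cG_1 \,\big|\, B_i-A_i=t\log(n)\big)
\le \sum_{j\neq i} P\big(B_j^{(-i)}-A_j^{(-i)}\ge -1\big).
\]
I would then reuse the Chernoff estimate that drove Proposition~\ref{prop:cher}: the moment generating function of $B_j^{(-i)}-A_j^{(-i)}$ differs from that of $B_j-A_j$ only by subleading $1\pm\Theta(\log(n)/n)$ factors, so the same optimization (at $s^\ast=\tfrac{1}{2}\log(a/b)$, corresponding to $t=0$) gives
\[
P\big(B_j^{(-i)}-A_j^{(-i)}\ge -1\big) \le n^{-\frac{(\sqrt{a}-\sqrt{b})^2}{2}+o(1)}.
\]
Summing over the at most $n$ vertices yields $n^{1-\frac{(\sqrt{a}-\sqrt{b})^2}{2}+o(1)}=o(1)$, where the assumption $\sqrt{a}-\sqrt{b}>\sqrt{2}$ is precisely what makes the exponent strictly negative. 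This finishes the argument.

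The main obstacle, though essentially bookkeeping rather than deep, is verifying that the $-1$ slack (replacing $\ge 0$ by $\ge -1$) and the shift from $B_j-A_j$ to $B_j^{(-i)}-A_j^{(-i)}$ do not degrade the Chernoff exponent; this is handled by noting that any constant shift in the threshold or a change of the binomial parameter by $O(1)$ only multiplies the bound by $e^{O(1)}$ and hence is absorbed in the $n^{o(1)}$ factor. Note in particular that the argument uses $t<0$ only to kill the $j=i$ term in the union bound, and makes no quantitative use of the value of $t$; this is why the $(1+o(1))$ factor is uniform in $t<0$ of the form required by the lemma.
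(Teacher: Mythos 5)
Your proposal is correct and follows essentially the same route as the paper's proof: the same decoupling of the edge $\{i,j\}$ (the paper's $B_j',A_j'$ are your $B_j^{(-i)},A_j^{(-i)}$), the same $\pm 1$ slack to pass to an event independent of $B_i-A_i$, and the same Chernoff-plus-union-bound estimate $n^{1-\frac{(\sqrt{a}-\sqrt{b})^2}{2}+o(1)}=o(1)$ reusing $\sqrt{a}-\sqrt{b}>\sqrt{2}$. The only difference is presentational (you bound $P(G\notin\cG_1\mid B_i-A_i=t\log n)$ directly rather than the complementary conditional probability), which is immaterial.
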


\begin{proof}
Note that 
$$
P(B_i-A_i= t\log(n)~|~G\in\cG_1)
=\frac{P \big(B_j-A_j<0\text{~for all~} j\in[n]\setminus\{i\}, B_i-A_i= t\log(n) \big)}{P(G\in\cG_1)} .
$$
By \eqref{eq:tD}, we have $P(G\in\cG_1)=1-o(1)$, so
\begin{align*}
& P(B_i-A_i= t\log(n)~|~G\in\cG_1) \\
= & (1+o(1)) P \big(B_j-A_j<0\text{~for all~} j\in[n]\setminus\{i\}, B_i-A_i= t\log(n) \big) \\
= & (1+o(1)) P \big(B_j-A_j<0\text{~for all~} j\in[n]\setminus\{i\} ~\big|~ B_i-A_i= t\log(n) \big) P(B_i-A_i= t\log(n)) .
\end{align*}
Therefore, to prove the lemma we only need to show that $P \big(B_j-A_j<0\text{~for all~} j\in[n]\setminus\{i\} ~\big|~ B_i-A_i= t\log(n) \big)=1-o(1)$. 

For $j\in[n]\setminus\{i\}$, define $\xi_{ij}=\xi_{ij}(G):=\mathbbm{1}[\{i,j\}\in E(G)]$ as the indicator function of the edge $\{i,j\}$ connected in graph $G$. We also define 
$$
B_j'=\left\{
\begin{array}{cl}
  B_j-\xi_{ij}   & \mbox{if~} X_i\neq X_j \\
  B_j   &   \mbox{if~} X_i= X_j
\end{array}
\right.
\quad \text{and} \quad
A_j'=\left\{
\begin{array}{cl}
  A_j   & \mbox{if~} X_i\neq X_j \\
  A_j-\xi_{ij}   &   \mbox{if~} X_i= X_j
\end{array}
\right. .
$$
Then $B_j'-A_j'$ differs from $B_j-A_j$ by at most $1$. Therefore, $B_j'-A_j'<-1$ implies that $B_j-A_j<0$, and so $P \big(B_j-A_j<0\text{~for all~} j\in[n]\setminus\{i\} ~\big|~ B_i-A_i= t\log(n) \big) \ge P \big(B_j'-A_j'<-1 \text{~for all~} j\in[n]\setminus\{i\} ~\big|~ B_i-A_i= t\log(n) \big)$. Now we only need to prove that the right-hand side is $1-o(1)$. Also note that the two sets of random variables $\{B_j',A_j':j\in[n]\setminus\{i\}\}$ and $\{B_i,A_i\}$ are independent, so $P \big(B_j'-A_j'<-1 \text{~for all~} j\in[n]\setminus\{i\} ~\big|~ B_i-A_i= t\log(n) \big)=P \big(B_j'-A_j'<-1 \text{~for all~} j\in[n]\setminus\{i\}  \big)$. By definition, we have $B_j'\sim\Binom(n/2-\Theta(1),b\log(n)/n)$ and $A_j'\sim\Binom(n/2-\Theta(1),a\log(n)/n)$ for all $j\in[n]\setminus\{i\}$. Then following exactly the same proof\footnote{First use Chernoff bound as we did in Proposition~\ref{prop:cher} and then use the union bound.} as that of \eqref{eq:tD}, we have
$$
P \big(B_j'-A_j'<-1 \text{~for all~} j\in[n]\setminus\{i\}  \big)
\ge 1- n^{1-\frac{(\sqrt{a}-\sqrt{b})^2}{2} +o(1)} = 1-o(1).
$$
This completes the proof of the lemma.
\end{proof}

\newpage 

\setlength{\bibsep}{6pt}

{\small
\bibliographystyle{alpha}

\bibliography{SIBM}
}
\end{document}